\numberwithin{equation}{section}
\providecommand{\U}[1]{\protect\rule{.1in}{.1in}}
\newtheorem{theorem}{Theorem}[section]
\newtheorem{corollary}[theorem]{Corollary}
\newtheorem{lemma}[theorem]{Lemma}
\newtheorem{proposition}[theorem]{Proposition}
\newtheorem{remark}[theorem]{Remark}
\newcommand{\e}{\varepsilon}
\newcommand{\dx}{\, dx}
\newcommand{\dt}{\, dt}
\newcommand{\ds}{\, ds}
\newcommand{\dy}{\, dy}
\newcommand{\I}{\mathcal{I}_\Omega}
\newcommand{\V}{\mathfrak{v}}
\renewcommand{\r}{\mathfrak{r}}
\title{Second-Order $\Gamma$-limit for the Cahn--Hilliard Functional}
\author{Giovanni Leoni\\
	Carnegie Mellon University \\
	Pittsburgh, PA, USA
	\and Ryan Murray\\
	Carnegie Mellon University \\
	Pittsburgh, PA, USA}
\begin{document}
\maketitle

\begin{abstract}
The goal of this paper is to solve a long standing open problem, namely, the asymptotic development of order $2$ by $\Gamma$-convergence of the mass-constrained Cahn--Hilliard functional.
\end{abstract}

{\bf{Keywords:}} Second-order $\Gamma$-convergence, Rearrangement, Cahn--Hilliard functional.

{\bf{AMS Mathematics Subject Classification:}} 49J45.

\section{Introduction}

The goal of this paper is to study the asymptotic development by $\Gamma$-convergence of order $2$ of the Modica--Mortola or Cahn--Hilliard functional  (see \cite{GurtinMatano,Modica87,Sternberg88})
\begin{equation} 
F_\e(u) := \int_\Omega W(u) + \e^2 |\nabla u|^2 \dx,\qquad u \in H^1(\Omega), \label{F0Definition}
\end{equation}
subject to the mass constraint
\begin{equation} 
\int_\Omega u \dx = m. \label{massConstraintEquation}
\end{equation}
Here $\Omega \subset \mathbb{R}^n$ is an open, bounded set and $W$ is a double-well potential.

The notion of asymptotic development by $\Gamma$-convergence was introduced by Anzellotti and Baldo \cite{AnzellottiBaldo}. To be precise, given a metric space $X$ and a family of functions $\mathcal{F}_\e : X \to \overline{ \mathbb{R}}, \hspace{1mm}\e > 0$, we say that an \emph{asymptotic development} of \emph{order k}
\begin{equation}\label{asymptoticDevelopmentDefinition}
\mathcal{F}_\e = \mathcal{F}^{(0)} + \e\mathcal{F}^{(1)} + \dots + \e^k\mathcal{F}^{(k)} + o(\e^k)
\end{equation}
holds if there exist functions $\mathcal{F}^{(i)}:X\to\overline{\mathbb{R}}$, $i = 0,1,\dots,k$, such that the functions
\begin{equation} \label{higherOrderFunctionalDefinition}
\mathcal{F}^{(i)}_\e := \frac{\mathcal{F}^{(i-1)}_\e-\inf_X \mathcal{F}^{(i-1)}}{\e}
\end{equation}
are well-defined and
\begin{equation} \label{higherOrderGammaConvergenceDefinition}
\mathcal{F}^{(i)}_\e \xrightarrow{\Gamma} \mathcal{F}^{(i)},
\end{equation}
where $\mathcal{F}^{(0)}_\e := \mathcal{F}_\e$ and $\overline{\mathbb{R}}$ is the extended real line. Let
\begin{equation} \label{minimizingSetsDefinition}
\mathcal{U}_i := \{\text{minimizers of } \mathcal{F}^{(i)}\}.
\end{equation}
It can be shown that 
\begin{equation}\label{minimizingSetsNested1}
\mathcal{F}^{(i)} \equiv \infty \text{ in } X \backslash \mathcal{U}_{i-1},
\end{equation}
and that
\begin{equation} \label{minimizingSetsNested2}
\{\text{limits of minimizers of }\mathcal{F}_{\e_m}\} \subset \mathcal{U}_k \subset \dots \subset \mathcal{U}_0,
\end{equation}
with
\[
\inf \mathcal{F}_{\e_m} = \inf \mathcal{F}^{(0)} + \e_m \inf \mathcal{F}^{(1)} + \dots + \e_m^k \inf \mathcal{F}^{(k)} + o(\e_m^k)
\]
for every sequence $\e_m \to 0^+$, provided $\inf \mathcal{F}^{(i)} < \infty$ for all $i =0,\dots,k$.

Simple examples show that each of the inclusions in \eqref{minimizingSetsNested2} may be strict (see \cite{AnzellottiBaldo}). Thus asymptotic development by $\Gamma$-convergence provides a selection criteria for minimizers of $\mathcal{F}^{(0)}$. Some other works that describe asymptotic development via $\Gamma$-convergence include \cite{BraidesTruskinovsky}, \cite{FocardiReview}.

The first example of asymptotic development by $\Gamma$-convergence of order 2 for functionals of the type \eqref{F0Definition} was studied by Anzellotti and Baldo in \cite{AnzellottiBaldo}, who considered the case in which $n=1$, the wells of $W$ are not points but non-degenerate intervals and the mass constraint \eqref{massConstraintEquation} is replaced by a Dirichlet condition. Subsequently Anzellotti, Baldo and Orlandi \cite{Orlandi96} studied \eqref{F0Definition} in arbitrary dimension, in the case in which $W$ has only one well ($W(s) = s^2$) and again with Dirichlet boundary conditions in place of \eqref{massConstraintEquation}.

The problem of the asymptotic development of order $2$ for the Cahn--Hilliard functional \eqref{F0Definition} with $W$ a double-well potential  has remained an open problem, except when $n=1$. Indeed, in the one-dimensional case and for sufficiently smooth $W$, one can show that $\mathcal{F}^{(2)} = 0$. This can be deduced from the work of Carr, Gurtin and Slemrod \cite{CarrGurtinSlemrod}, Theorem 8.1, and from the recent paper \cite{Bellettini2013} of Bellettini, Nayam and Novaga, who gave a very precise higher-order asymptotic estimate for $\mathcal{F}_\e(u_\e)$, where $\{u_\e\} \subset H^1(\mathbb{T})$ is any sequence converging to $u \in BV(\mathbb{T};\{-1,1\})$ in $L^1(\mathbb{T})$, where $\mathbb{T}$ is the one-dimensional torus and $-1,1$ are the wells of $W$.

To our knowledge, the only result related to the second-order asymptotic development of \eqref{F0Definition} in the case $n\geq 2$ for \eqref{F0Definition}, \eqref{massConstraintEquation} has recently been obtained by the first author in collaboration with Dal Maso and Fonseca  in \cite{DalMasoFonsecaLeoni}. For a double-well potential satisfying
\begin{equation} \label{WEven}
W(s) = W(-s)
\end{equation}
 for all $s\in \mathbb{R}$ and
\begin{equation} \label{DMFLWAtWells}
W(s) = C|1-s|^{1+q}
\end{equation}
near $s=1$, for some $q \in (0,1)$, and \emph{under the additional assumption } that
\begin{equation} \label{DMFLDirichlet}
u=1 \text{ on } \partial \Omega,
\end{equation}
in addition to \eqref{massConstraintEquation}, it was shown that $\mathcal{F}^{(2)} = 0$. More generally, this was proved in the case in which $\e^2 \int_\Omega |\nabla u|^2 \dx$ is replaced by $\e^2 \int_\Omega \Phi^2(\nabla u) \dx$, with $\Phi: \mathbb{R}^n \to [0,\infty)$ an arbitrary norm. The Dirichlet condition \eqref{DMFLDirichlet} played a crucial role in the proof in \cite{DalMasoFonsecaLeoni} since it permitted the use of classical symmetrization techniques (see \cite{Kawohl}, \cite{KesavanBook}) in $H^1_0(\Omega)$ to reduce the problem to the radial case. Moreover, the behavior of $W$ near the wells (see \eqref{DMFLWAtWells}) did not allow for $C^2$ potentials $W$. The work of \cite{DalMasoFonsecaLeoni} left open several important questions, namely the characterization of $\mathcal{F}^{(2)}$ when
\begin{itemize}
\item the Dirichlet condition \eqref{DMFLDirichlet} is not imposed,
\item $W$ is of class $C^2$,
\item $W$ is not even.
\end{itemize}
In this paper we address all of these questions. In particular, we show that in general $\mathcal{F}^{(2)} \neq 0$ if $W$ is even and of class $C^2$, or if $W$ is not even.

Here we take $X:= L^1(\Omega)$ and define
\begin{equation}\label{formalFunctionalDefinition}
\mathcal{F}_\e(u) := \begin{cases}
F_\e(u) &\text{ if } u \in H^1(\Omega) \text{ and } (\ref{massConstraintEquation})\text{  holds},\\
\infty &\text{ otherwise in } L^1(\Omega).
\end{cases}
\end{equation}

The $\Gamma$-limit $\mathcal{F}^{(1)}$ of order $1$ (see \eqref{higherOrderFunctionalDefinition} and \eqref{higherOrderGammaConvergenceDefinition}) has been established by Carr, Gurtin and Slemrod \cite{CarrGurtinSlemrod} for $n=1$ and by Modica \cite{Modica87} and Sternberg \cite{Sternberg88} for $n \geq 2$ (see also \cite{Gurtin}, \cite{ModicaMortola}), and is known to be, under appropriate assumptions on $\Omega$ and $W$,
\begin{equation} \label{firstOrderFormalDefinition}
\mathcal{F}^{(1)}(u) := \begin{cases}
2c_W\operatorname*{P}(\{u=a\};\Omega) &\text{ if } u \in BV(\Omega;\{a,b\}) \text{ and \eqref{massConstraintEquation} holds},\\
\infty &\text{ otherwise in } L^1(\Omega),
\end{cases}
\end{equation}
where $\operatorname*{P}(\cdot;\Omega)$ is the perimeter in $\Omega$ (see \cite{AmbrosioFuscoPallara,EvansGariepy,Ziemer}), $a,b$ are the wells of $W$ and the constant $c_W$ is given by
\begin{equation} \label{c0Definition}
c_W := \int_a^b W^{1/2}(s)  \ds.
\end{equation}

In view of \eqref{minimizingSetsNested1}, in order to characterize the $\Gamma$-limit of order $2$, $\mathcal{F}^{(2)}$ (see \eqref{higherOrderFunctionalDefinition} \eqref{higherOrderGammaConvergenceDefinition}), it is important to understand the family $\mathcal{U}_1$ of minimizers of the functional $\mathcal{F}^{(1)}$ defined in \eqref{firstOrderFormalDefinition}. Observe that $u$ belongs to $\mathcal{U}_1$ if and only if $u \in BV(\Omega;\{a,b\})$ and the set $\{u=a\}$ is a solution of the classical \emph{partition problem}, namely, if it solves
\begin{equation}\label{partitionProblemDefinition}
\min \{\operatorname*{P}(E;\Omega): \: E\subset \Omega \text{ Borel}, \: \mathcal{L}^n(E) = \V_m\},
\end{equation}
where
\begin{equation}\label{partitionProblemMassConstraint}
\V_m := \frac{b\mathcal{L}^n(\Omega) - m}{b-a} .
\end{equation}

The properties of minimizers of \eqref{partitionProblemDefinition} have been studied by Gr\"uter \cite{GruterBoundaryRegularity} (see also \cite{GMT83,MaggiBook,SternbergZumbrunIsoPer}), who showed that when $\Omega$ is bounded and of class $C^2$, minimizers $E$ of \eqref{partitionProblemDefinition} exist, have constant generalized mean curvature $\kappa_E$, intersect the boundary of $\Omega$ orthogonally, and their singular set is empty if $n\leq 7$, and has dimension of at most $n-8$ if $n \geq 8$. Here and in what follows we use the convention that \emph{$\kappa_E$ is the average of the principal curvatures taken with respect to the outward unit normal to $\partial E$}.

A crucial hypothesis in our results  is that the \emph{isoperimetric function} or \emph{isoperimetric profile} (\cite{Ros}), given by
\begin{equation} \label{isoFunctionDefinition}
\I(\V) := \inf \{ \operatorname*{P}(E;\Omega) : E \subset \Omega \text{ Borel, } \mathcal{L}^n(E) = \V\}, \quad \V \in [0,\mathcal{L}^n(\Omega)] ,
\end{equation}
admits a Taylor expansion of order $2$ at the value $\V_m$ in \eqref{partitionProblemMassConstraint}. In particular the differentiability of $\I$ at $\V_m$ implies that (see \cite{MaggiBook})
\begin{equation} \label{isoPerDerivative}
\I'(\V_m) = (n-1)\kappa_E
\end{equation}
for every minimizer $E$ of \eqref{isoFunctionDefinition} at $\V = \V_m$. Hence, differentiability of $\I$ must fail whenever the mean curvature of minimizers of the partition problem \eqref{partitionProblemDefinition} is not uniquely determined. For example, if $\Omega$ is a square in $\mathbb{R}^2$, it can be shown that there exists a value of $\V_m$ for which there are two minimizers of \eqref{partitionProblemDefinition}, one being a line segment and the other being an arc of a circle. 

%Figure: Isoperimetric Function, Competing Minimizers
\begin{figure}
\centering
\begin{subfigure}{.5\textwidth}
  \centering
  \includegraphics[width = .8\linewidth]{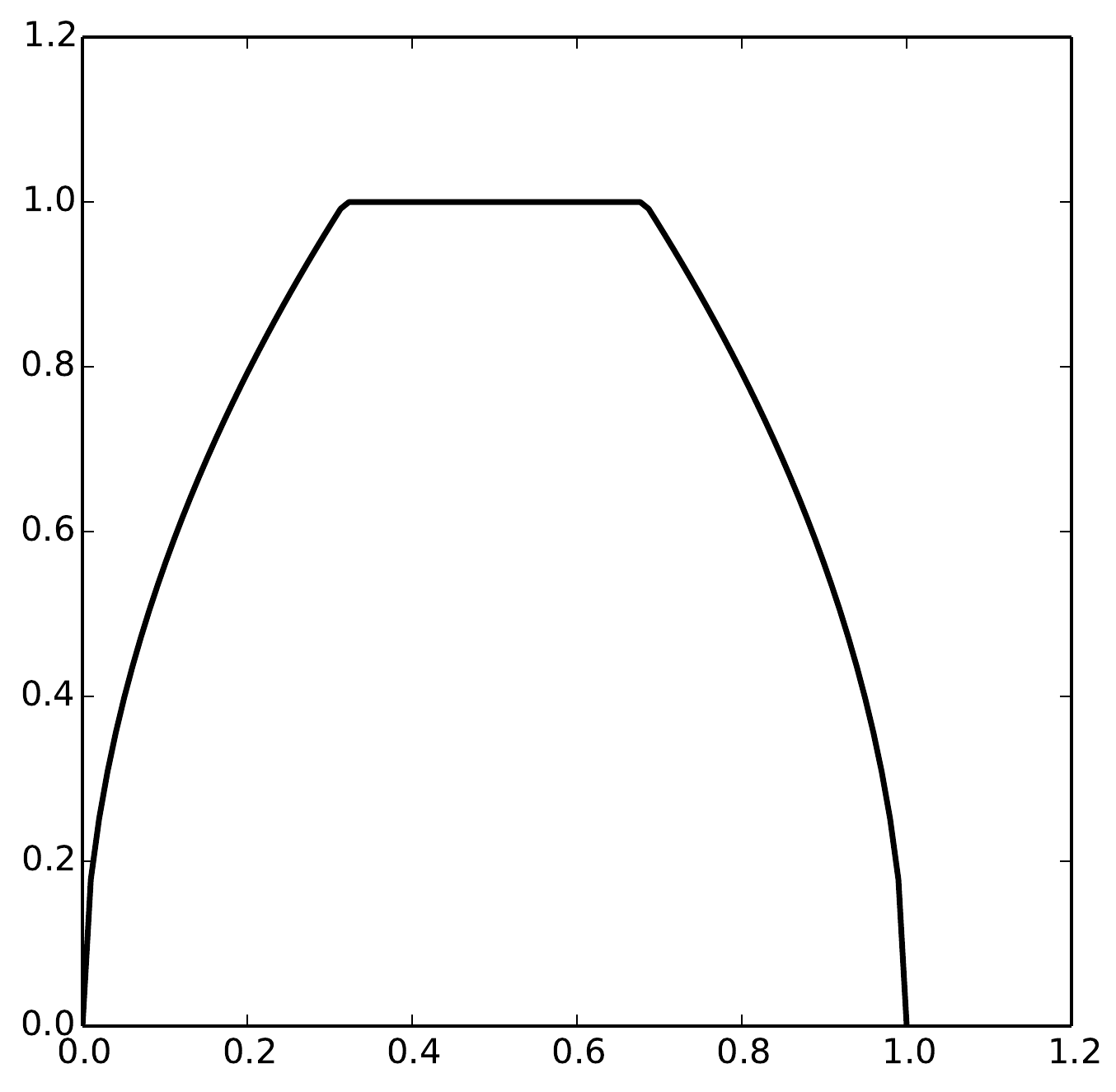}
  %\caption{A subfigure}
  \label{fig:sub1}
\end{subfigure}%
\begin{subfigure}{.5\textwidth}
  \centering
  \includegraphics[width = .7\linewidth]{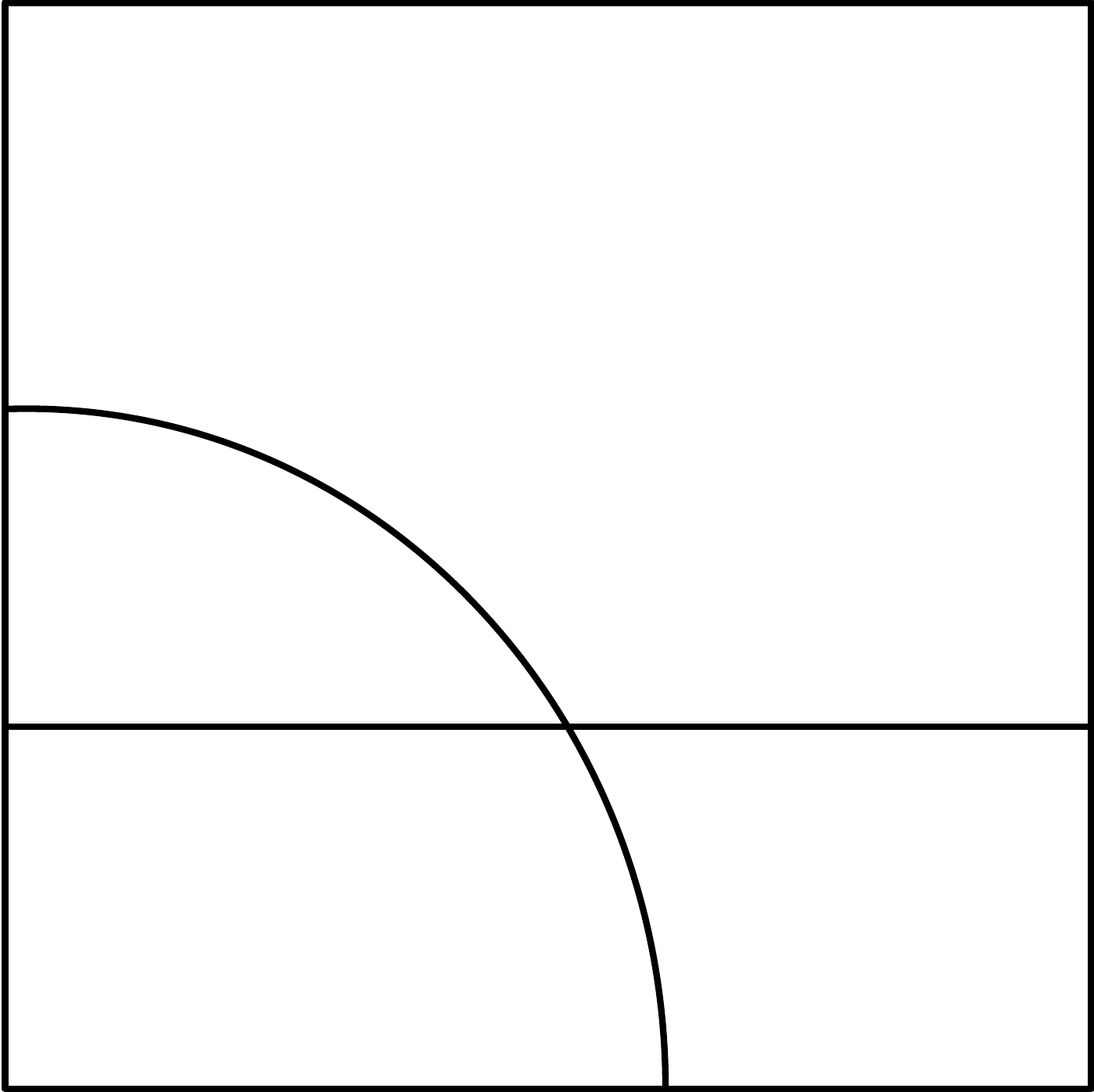}
  %\caption{A subfigure}
  \label{fig:sub2}
\end{subfigure}
\caption{$\I$ for the domain $\Omega = Q_2$, the cube in $\mathbb{R}^2$. When $\I$ is not differentiable there are two competing sets minimizing the perimeter, as shown.}
\label{fig:test}
\end{figure}

We observe that, as $\I$ is semi-concave if $\Omega$ is sufficiently smooth \cite{BavardPansu} or convex \cite{SternbergZumbrunIsoPer}, a Taylor expansion of order $2$ holds for $\mathcal{L}^1$ a.e. $\V$, or equivalently for $\mathcal{L}^1$ a.e. mass $m$ in \eqref{massConstraintEquation}. Under this assumption on $\I$ and under other technical hypotheses on $\Omega,m$ and $W$ (see Section 2) we will show that if $W$ is quadratic near the wells $a, b$ then the following theorem holds.
\begin{theorem}\label{mainThm1}
Assume that $\Omega,m,W$ satisfy hypotheses \eqref{domainAssumptions}-\eqref{WGurtin_Assumption} with $q =1$. Then
\begin{equation}\label{mainThmEqn1}
\mathcal{F}^{(2)}(u) = \frac{2c_W^2(n-1)^2}{W''(a)(b-a)^2}\kappa_u^2 + 2(c_{\operatorname*{sym}}+c_W\tau_u)(n-1)\kappa_u\operatorname*{P}(\{u=a\};\Omega) 
\end{equation}
if $ u \in \mathcal{U}_1$ and $\mathcal{F}^{(2)}(u) = \infty$ otherwise in $L^1(\Omega)$.
\end{theorem}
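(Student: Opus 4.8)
The plan is to establish \eqref{mainThmEqn1} via the usual two-sided matching of $\Gamma$-limits: a $\liminf$ inequality (compactness plus lower bound) and a $\limsup$ inequality (recovery sequence construction). By \eqref{minimizingSetsNested1} we may restrict throughout to $u\in\mathcal U_1$, so that $\{u=a\}=E$ is a minimizer of the partition problem \eqref{partitionProblemDefinition}; by Gr\"uter's regularity theorem $\partial E\cap\Omega$ is a smooth hypersurface of constant mean curvature $\kappa_E=\kappa_u$ meeting $\partial\Omega$ orthogonally, with negligible singular set when $n\le 7$. The second-order functional is $\mathcal F^{(2)}_\e(u)=\bigl(F_\e(u)-\inf\mathcal F_\e\bigr)/\e - \bigl(\mathcal F^{(1)}_\e(u)-\cdots\bigr)$; more precisely, unwinding \eqref{higherOrderFunctionalDefinition}, one is computing the $\Gamma$-limit of $\e^{-1}\bigl(\e^{-1}(F_\e(u)-\e\,2c_W\I(\V_m))\bigr)$ along sequences $u_\e\to u$ in $L^1$, after subtracting the first-order energy $2c_W\operatorname P(E;\Omega)$. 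So the real content is: among all sequences $u_\e\to u$, what is the sharp value of
\[
\liminf_{\e\to 0}\ \frac{F_\e(u_\e)-2c_W\e\,\I(\V_m)}{\e^2}\,?
\]

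The key steps, in order. (1) \emph{Reduction to a normal-coordinate tubular neighborhood.} Work in Fermi coordinates around $\partial E\cap\Omega$; away from a shrinking tube the cost is superlinear in $\e$ and does not contribute. The leading $O(\e)$ term is the optimal-profile energy $2c_W\operatorname P(E;\Omega)$ from the one-dimensional Modica–Mortola analysis; the $O(\e^2)$ corrections come from three sources, which will furnish the three effects appearing in \eqref{mainThmEqn1}. (2) \emph{Curvature term.} Expanding the area element in the tube produces a factor $1+\e\, t\,\kappa_E/(\text{scaled variable})+\dots$; substituting the optimal profile and its quadratic behaviour near the wells (here the hypothesis $q=1$, i.e. $W(s)\sim\tfrac12 W''(a)(s-a)^2$, is essential to get convergent moment integrals) yields, after an Euler–Lagrange/linearization computation, the term $\dfrac{2c_W^2(n-1)^2}{W''(a)(b-a)^2}\kappa_u^2$. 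This is where the optimal-profile ODE and its decay rate $e^{-\sqrt{W''(a)/2}\,|t|}$ enter; the prefactor is an explicit integral $\int_{\mathbb R}(\gamma-a)\dots$ evaluated in closed form. (3) \emph{Isoperimetric / volume-adjustment term.} Because $u_\e$ must satisfy the mass constraint exactly while $u$ already does, the transition layer is displaced by an $O(\e)$ amount; the energy cost of moving the interface off the minimizing set $E$ is governed by the second derivative of $\I$ at $\V_m$—this is exactly why the Taylor expansion of $\I$ is hypothesized. Matched with \eqref{isoPerDerivative}, the first-variation term $\I'(\V_m)=(n-1)\kappa_E$ couples to the displacement and reproduces $2c_W(n-1)\kappa_u\operatorname P(\{u=a\};\Omega)$ together with the "symmetrization constant" $c_{\operatorname{sym}}$ and the term $c_W\tau_u$ (the latter encoding the asymmetric second-order profile cost when $W$ is not even; for even $W$, $\tau_u=0$ and $c_{\operatorname{sym}}$ is the residual correction from the next-order profile). (4) \emph{Matching.} Optimize the displacement and the profile correction to get the $\limsup$; show the same quantities bound the $\liminf$ from below using the slicing/coarea method along the level sets of $u_\e$, the isoperimetric inequality in the form $\operatorname P(\{u_\e>s\};\Omega)\ge\I(\mathcal L^n\{u_\e>s\})$, and the Taylor expansion of $\I$.

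The main obstacle is step (4)'s lower bound, specifically showing that no sequence does better than the explicit value and that the three effects decouple at order $\e^2$. The one-dimensional slicing estimate $F_\e(u_\e)\ge 2\int_{\mathbb R}W^{1/2}(s)\,\operatorname P(\{u_\e>s\};\Omega)\,ds$ gives the first order for free but only a crude second-order remainder; to extract the sharp $\e^2$ coefficient one must (i) control the interface location to $O(\e)$ and its curvature to leading order—this requires density/regularity estimates for near-minimizers of the perimeter (improvement of flatness, cf. the partition-problem regularity theory), so that $\{u_\e>s\}$ is genuinely a small perturbation of $E$—and (ii) handle the possibility of extra transition layers or oscillations, ruling them out by the superlinear energy cost. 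The non-even case adds the difficulty that the corrector profile solves a linearized ODE with a non-symmetric forcing, so $c_{\operatorname{sym}}$ and $\tau_u$ must be identified as specific solvability integrals; making these constants well-defined (independent of $u\in\mathcal U_1$, depending only on $W$ and on the normal variation of $\partial E$) is the delicate bookkeeping at the heart of the theorem.
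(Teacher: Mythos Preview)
Your $\limsup$ construction is essentially the paper's: define $u_\e(x)=v_\e(d_E(x))$ for the one-dimensional profile $v_\e$ of \eqref{eqn:RecoverySequenceDefinition}, and use the coarea formula with the weight $\eta(t)=\mathcal H^{n-1}(\{d_E=t\})$; Lemma \ref{levelSetLemma} gives the needed regularity of $\eta$ at $t=0$.

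The $\liminf$ is where your approach diverges from the paper and has a genuine gap. You correctly identify the main obstacle---to get the sharp $O(\e^2)$ coefficient you would need to show that the level sets $\{u_\e>s\}$ are genuine $C^{1,\alpha}$ perturbations of $E$, via improvement-of-flatness estimates for near-minimizers---but you do not resolve it, and indeed such quantitative regularity for \emph{arbitrary} finite-energy sequences is not available. The paper sidesteps this entirely: instead of working geometrically in $\Omega$, it introduces (Section \ref{PolyaSzegoSection}) a monotone rearrangement $u\mapsto f_u$ on a fixed interval $I$ with weight $\eta=\I^*(V_\Omega)$, where $\I^*$ is a $C^{1,\beta}$ minorant of the isoperimetric profile touching it to second order at $\V_m$ (Proposition \ref{prop:IStar}). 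The P\'olya--Szeg\H o inequality (Theorem \ref{1DRearrangement}) gives $F_\e(u_\e)\ge G_\e(f_{u_\e})$ with mass preserved, and since $f_{u_\e}\to f_u=v_0$ in $L^1_\eta$ one is reduced to the second-order analysis of the \emph{one-dimensional} weighted functional, where no geometric regularity of sublevel sets is needed---only the Taylor expansion \eqref{isoFunctionSmooth} of $\I$, which is encoded in the smoothness of $\eta$ at $t_0$. The hard work is then purely one-dimensional (Section 4): studying local minimizers of $J_\e$ via barrier arguments and Euler--Lagrange analysis.

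Two smaller corrections to your heuristics. First, the $\kappa_u^2/W''(a)$ term does not arise from the curvature expansion of the area element; it comes from the \emph{bulk} contribution $\int_\Omega W(v_0-\lambda_0\e/W''(a))\,dx$ due to the vertical shift in \eqref{Eqn:MinimizerHeuristic} (see \eqref{eqn:limsupEstimate9} and \eqref{1DTailCost}). The curvature enters through $\eta'(t_0)=(n-1)\kappa_u\operatorname P(E;\Omega)$ and produces the \emph{other} term $2(c_{\operatorname{sym}}+c_W\tau_u)(n-1)\kappa_u\operatorname P(E;\Omega)$. Second, for even $W$ it is $c_{\operatorname{sym}}$ that vanishes, not $\tau_u$; indeed $\tau_u=-\Lambda_u/\bigl(W''(a)(b-a)\operatorname P(E;\Omega)\bigr)\ne 0$ is the horizontal shift needed to compensate the mass lost to the vertical shift (see the proof of Corollary \ref{cor:symmetricMainResult}).
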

Here $\kappa_u$ is the constant mean curvature of the set $\{u=a\}$,
\begin{equation}\label{c1Definition}
c_{\operatorname*{sym}} := \int_\mathbb{R} W(z(t))t  \dt,
\end{equation}
where $z$ is the solution to the Cauchy problem
\begin{equation} \label{profileCauchyProblem}
\begin{cases} z'(t) = \sqrt{W(z(t))} \quad &\text{ for } t \in \mathbb{R}, \\ z(0) = c,\quad &z(t) \in [a,b],\end{cases}
\end{equation}
with $c$ being the central zero of $W'$ (see \eqref{W_Number_Zeros}), and $\tau_u \in \mathbb{R}$ is a constant such that
\begin{equation}\label{deltaUDefinition}
\operatorname*{P}(\{u=a\};\Omega)\int_\mathbb{R} z(t - \tau_u) - \operatorname*{sgn}\nolimits_{a,b}(t)  \dt = \frac{2c_W(n-1)}{W''(a)(b-a)}\kappa_u,
\end{equation}
where
\begin{equation} \label{sgnabDefinition}
\operatorname*{sgn}\nolimits_{a,b}(t) := \begin{cases}
a &\text{ if } t \leq 0 ,\\
b &\text{ if } t > 0.
\end{cases}
\end{equation}

We note that when $W$ is quadratic near the wells, that is, when $q=1$ in \eqref{WPrimeLimits}, then the solution of the Cauchy problem \eqref{profileCauchyProblem} approaches $a$ and $b$ as $t \to -\infty$ and $\infty$ respectively, while when $W$ is subquadratic near the wells, that is, when $q < 1$ in \eqref{WPrimeLimits}, then the solution reaches $a$ and $b$ in finite time. \emph{This property plays a crucial role in our results, and helps explain why the two cases are different.}

We observe that, in view of \eqref{isoPerDerivative}, the quantities $\operatorname*{P}(\{u=a\};\Omega)$, $\kappa_u$ and $\tau_u$ are \emph{uniquely determined} by $\V_m$, $\I(\V_m)$ and $\I'(\V_m)$ for $u\in \mathcal{U}_1$.

Without assuming the differentiability of the isoperimetric function $\I$ at $\V_m$ one can only conclude that $(n-1)\kappa_u\in [ (\I)^\prime_{-}(\V_m),(\I)^\prime_{+}(\V_m)]$, where $(\I)^\prime_{-}$, $(\I)^\prime_{+}$ are the left and right derivatives of $\I$, which must exist as $\I$ is semi-concave \cite{BavardPansu}. We conjecture that \eqref{mainThmEqn1} continues to hold even in this case, but we have not been able to prove it. One potential avenue of investigation involves studying $L^1$ isolated families of perimeter minimizers where the mean curvature is unique. While this could potentially remove the issue of differentiability it does not remove the technical necessity of a higher-order Taylor expansion of $\I$ at $\V_m$.

If such a conjecture holds then \eqref{mainThmEqn1} would provide an additional selection criterion among minimizers of $\mathcal{U}_1$. In particular, when $W$ is symmetric about $\frac{a+b}{2}$ then surfaces with larger magnitude mean curvature are energetically favored (see Corollary \ref{cor:symmetricMainResult} below).

\medskip

We can offer a heuristic explanation for the terms in \eqref{mainThmEqn1}. Critical points $u_\e$ of \eqref{F0Definition} subject to \eqref{massConstraintEquation} satisfy the Neumann problem
\begin{equation}
\begin{cases}
2\e \Delta u_\e = \frac{1}{\e}  W'(u_\e) + \Lambda_\e &\text{ in } \Omega, \\
\frac{\partial u_\e}{\partial \nu} = 0 &\text{ on } \partial \Omega,
\end{cases}
\end{equation}
where $\nu$ is the outward unit normal to $\partial \Omega$ and $\Lambda_\e$ is a Lagrange multiplier that accounts for the constraint \eqref{massConstraintEquation}. In \cite{LuckhausModica}, Luckhaus and Modica proved that if $0<a<b$ and $\{u_\e\}$ is a sequence of non-negative minimizers of \eqref{F0Definition}, \eqref{massConstraintEquation}, uniformly bounded in $L^\infty(\Omega)$ and converging in $L^1(\Omega)$ to a minimizer of $\mathcal{F}^{(1)}$, then
\begin{equation} \label{def:lambdaNot}
\Lambda_\e \to \Lambda_u := \frac{2c_W(n-1) }{b-a}\kappa_u.
\end{equation}

Thus the first term in equation \eqref{mainThmEqn1} can be written as $\frac{\Lambda_u^2}{2W''(a)}$. Our proofs suggest (see \eqref{eqn:RecoverySequenceDefinition}) that minimizers $u_\e$ of the energy $E_\e$ will in fact be of the form
\begin{equation} \label{Eqn:MinimizerHeuristic}
u_\e(x) \approx z\left(\frac{d(x,\{u = a\}) -\e \tau_u}{\e}\right) - \frac{\Lambda_u \e}{W''(a)}.
\end{equation}
It turns out that the first term in equation \eqref{mainThmEqn1} is linked to a small vertical shift in the bulk values of minimizers, namely the second term in \eqref{Eqn:MinimizerHeuristic}. The $\tau_u$ term in \eqref{mainThmEqn1} is caused by the shift inside $z$ in the first term of \eqref{Eqn:MinimizerHeuristic}, which essentially pushes the transition layer ``outward'' along curved surfaces. We note that the horizontal shift caused by $\tau_u$ and the vertical shift in the bulk must be in some sense balanced so that the mass constraint is satisfied.

The term involving $c_{\operatorname*{sym}}$ may be thought of as a penalty for directional asymmetry. If the profiles are symmetric this term disappears entirely. This term is of order $\e$ for any $q$ that we consider.

On the other hand, if $W$ has subquadratic growth near the wells then the following theorem holds:
\begin{theorem}\label{mainThm2}
Assume that $\Omega,m,W$ satisfy hypotheses \eqref{domainAssumptions}-\eqref{WGurtin_Assumption} with $q \in (0,1)$. Then
\begin{equation}\label{mainThmEqn2}
\mathcal{F}^{(2)}(u) = \begin{cases}
2(c_{\operatorname*{sym}}+c_W\tau_u)(n-1)\kappa_u\operatorname*{P}(\{u=a\};\Omega) &\text{ if } u \in \mathcal{U}_1, \\
\infty &\text{ otherwise in } L^1(\Omega).
\end{cases}
\end{equation}
\end{theorem}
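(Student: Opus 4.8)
The plan is to establish the $\liminf$ and $\limsup$ inequalities for the $\Gamma$-convergence $\mathcal{F}^{(2)}_\e\xrightarrow{\Gamma}\mathcal{F}^{(2)}$, by an argument parallel to the one for Theorem \ref{mainThm1}. First, if $u_\e\to u$ in $L^1(\Omega)$ with $\liminf_\e\mathcal{F}^{(2)}_\e(u_\e)<\infty$, then $\mathcal{F}^{(1)}_\e(u_\e)=\inf\mathcal{F}^{(1)}+\e\,\mathcal{F}^{(2)}_\e(u_\e)\to\inf\mathcal{F}^{(1)}$, so by $\mathcal{F}^{(1)}_\e\xrightarrow{\Gamma}\mathcal{F}^{(1)}$ together with the Modica--Mortola compactness theorem $u$ minimizes $\mathcal{F}^{(1)}$; hence $\mathcal{F}^{(2)}(u)=\infty$ whenever $u\notin\mathcal{U}_1$, and from now on $u\in\mathcal{U}_1$, so that $E:=\{u=a\}$ solves \eqref{partitionProblemDefinition}, $\Sigma:=\partial E\cap\Omega$ is a smooth constant-mean-curvature hypersurface meeting $\partial\Omega$ orthogonally, $(n-1)\kappa_u=\I'(\V_m)$ by \eqref{isoPerDerivative}, $\operatorname*{P}(E;\Omega)=\I(\V_m)$, $\inf\mathcal{F}^{(0)}=0$ and $\inf\mathcal{F}^{(1)}=2c_W\I(\V_m)$. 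The feature of the regime $q\in(0,1)$ that we exploit is that the profile $z$ of \eqref{profileCauchyProblem} attains the wells $a,b$ at finite times $\mp T$: consequently transitions are supported in tubes of width $O(\e)$, with no exponential tails to estimate; and a vertical shift of the bulk values by $\delta$ costs energy of order $\delta^{1+q}$, which for $q<1$ is of smaller order than $\delta^2$, so that an $O(\e)$ change of mass effected through a bulk shift would cost strictly more than $\e^2$ and is incompatible with a bounded value of $\mathcal{F}^{(2)}_\e$. It follows that the mass constraint must be balanced by a horizontal translation of the profile alone; in accordance with this, $1/W''(a)=0$ in \eqref{deltaUDefinition} and $\tau_u$ is simply the unique real number for which $\int_\mathbb{R}(z(t-\tau_u)-\operatorname*{sgn}\nolimits_{a,b}(t))\dt=0$ — uniqueness being clear because $\tau\mapsto\int_\mathbb{R}(z(t-\tau)-\operatorname*{sgn}\nolimits_{a,b}(t))\dt$ is affine with slope $-(b-a)$ — and the $\kappa_u^2$-term of \eqref{mainThmEqn1} is absent.

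For the recovery sequence I would take $u_\e(x):=z\bigl((d_E(x)-\e\tau_u)/\e\bigr)$, with $d_E$ the signed distance to $E$ (negative in $E$), modified in a fixed-size neighborhood of $\partial\Omega$ to be admissible while respecting the orthogonality of $\Sigma$ and $\partial\Omega$ (which keeps that modification of energy $o(\e^2)$) and perturbed by an $o(\e^2)$-energy correction so that \eqref{massConstraintEquation} holds exactly — the residual mass error of the pure profile being $O(\e^2)$, arising from the curvature correction to the areas of the level sets of $d_E$ once $\tau_u$ is fixed as above, the $O(\e)$ term vanishing precisely because $\int_\mathbb{R}(z(t-\tau_u)-\operatorname*{sgn}\nolimits_{a,b}(t))\dt=0$. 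Because $q<1$, $u_\e$ equals $a$ or $b$ outside $\{|d_E|<\e T\}$, where $|\nabla u_\e|=|z'|/\e$ and the profile equation gives $W(u_\e)+\e^2|\nabla u_\e|^2=2W(z(\cdot))$; the coarea formula, the first variation of area $\mathcal{H}^{n-1}(\{d_E=t\}\cap\Omega)=\operatorname*{P}(E;\Omega)\bigl(1+(n-1)\kappa_u t+O(t^2)\bigr)$, and the substitution $t=\e s+\e\tau_u$ then yield
\[
\mathcal{F}_\e(u_\e)=2c_W\e\operatorname*{P}(E;\Omega)+2\e^2(n-1)\kappa_u\operatorname*{P}(E;\Omega)\!\int_\mathbb{R} W(z(s))(s+\tau_u)\ds+o(\e^2),
\]
and since $\int_\mathbb{R} W(z(s))\ds=c_W$ and $\int_\mathbb{R} W(z(s))s\ds=c_{\operatorname*{sym}}$, subtracting $2c_W\e\I(\V_m)$ and dividing by $\e^2$ gives exactly the right-hand side of \eqref{mainThmEqn2}.

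For the $\liminf$ inequality, given $u\in\mathcal{U}_1$ and $u_\e\to u$ with $\mathcal{F}_\e(u_\e)\le 2c_W\e\I(\V_m)+C\e^2$ (after a standard truncation we may assume $a\le u_\e\le b$), I would pass to Fermi coordinates $x=y+t\nu(y)$ about $\Sigma$, with Jacobian $J(y,t)=\prod_i(1+t\kappa_i(y))=1+(n-1)\kappa_u t+O(t^2)$, discard the nonnegative tangential-gradient and far-from-$\Sigma$ contributions, and bound the remaining energy below by $\int_\Sigma\bigl(\int_{-\delta}^\delta[W(U_\e(y,t))+\e^2|\partial_t U_\e(y,t)|^2]J(y,t)\dt\bigr)\,d\mathcal{H}^{n-1}(y)$, where $U_\e(y,t):=u_\e(y+t\nu(y))$. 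For fixed $y$, rescaling $t=\e\sigma$ and using the expansion of $J$ turns the inner integral into a Modica--Mortola functional in the weight $1+\e(n-1)\kappa_u\sigma$, whose first-order minimizers are exactly the translates of $z$ and whose second variation is nonnegative, degenerate only along translations; a sharp one-dimensional analysis then yields the lower bound $2c_W\e+2\e^2(n-1)\kappa_u(c_{\operatorname*{sym}}+c_W\tau_y)+o(\e^2)$, with $\tau_y$ the translation parameter associated, through the affine map $\tau\mapsto\int_\mathbb{R}(z(t-\tau)-\operatorname*{sgn}\nolimits_{a,b}(t))\dt$, to the transition mass $\int(U_\e(y,\cdot)-\operatorname*{sgn}\nolimits_{a,b}(\cdot))\dt$ along the normal line through $y$. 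Integrating over $\Sigma$ and invoking the mass constraint — which, since $O(\e)$ bulk shifts are precluded by the energy budget and that map is affine, forces $\frac{1}{\operatorname*{P}(E;\Omega)}\int_\Sigma\tau_y\,d\mathcal{H}^{n-1}(y)\to\tau_u$ — gives $\liminf_\e\mathcal{F}^{(2)}_\e(u_\e)\ge 2(c_{\operatorname*{sym}}+c_W\tau_u)(n-1)\kappa_u\operatorname*{P}(E;\Omega)$, matching the recovery sequence.

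The main obstacle is the rigidity underlying this one-dimensional reduction: proving that a sequence with $\mathcal{F}^{(2)}_\e(u_\e)$ bounded is, off an energetically negligible set, $o(\e)$-close in the appropriate quantitative sense to a graph $\phi_\e\circ d_E$ over $\Sigma$ carrying a single translation profile, so that the formal computations above are legitimate with all errors controlled at the delicate order $\e^2$, uniformly up to $\partial\Omega$. This is where the smoothness of $\Omega$, the orthogonal intersection of $\Sigma$ and $\partial\Omega$, the emptiness of the singular set in \eqref{partitionProblemDefinition}, and the second-order Taylor expansion of $\I$ at $\V_m$ are all needed, and it is the step that replaces the symmetrization argument available in \cite{DalMasoFonsecaLeoni} only under the Dirichlet condition. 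The subquadratic growth itself only streamlines the argument — the transition region is genuinely compact and no mass can be parked in the bulk — so beyond this bookkeeping the proof runs parallel to that of Theorem \ref{mainThm1}, with the $\kappa_u^2$-term and the shift relation \eqref{deltaUDefinition} collapsing together as $1/W''(a)\to 0$.
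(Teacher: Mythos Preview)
Your $\limsup$ construction---profile composed with signed distance, coarea formula, the level-set area expansion of Lemma~\ref{levelSetLemma}---matches the paper's Theorem~\ref{thm:nDLimsup}. The $\liminf$, however, takes a genuinely different route from the paper and contains the gap you yourself name: the Fermi-coordinate fibration over $\Sigma$ with a fiberwise one-dimensional lower bound requires the rigidity statement that competitors with bounded $\mathcal{F}_\e^{(2)}$ are, off a negligible set, $o(\e)$-close to single-profile graphs over $\Sigma$, and you do not prove it. Without that step the discarded tangential energy, the boundary interaction, and the passage from the fiberwise averages of $\tau_y$ to $\tau_u$ via the mass constraint are all uncontrolled at order $\e^2$; the affinity of $\tau\mapsto\int(z(\cdot-\tau)-\operatorname*{sgn}\nolimits_{a,b})$ helps only once you already know the transition is essentially one translated profile per fiber.

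The paper sidesteps this entirely. Its $\liminf$ (Theorem~\ref{thm:nDLiminf}) uses the global rearrangement of Section~\ref{PolyaSzegoSection}: a P\'olya--Szeg\H{o}-type inequality built from the isoperimetric function $\I$ of $\Omega$ (not from $\Sigma$), which produces the exact inequality $\mathcal{F}_\e(u_\e)\ge G_\e(f_{u_\e})$ of Corollary~\ref{Cor:EnergyRearrangement} for a \emph{single} weighted one-dimensional functional with weight $\eta=\I^*(V_\Omega)$. The rearrangement is an $L^1$-contraction (Proposition~\ref{rearrangementContraction}), so $f_{u_\e}\to f_u=v_0$ follows automatically from $u_\e\to u$, and hypothesis~\eqref{isoFunctionSmooth} together with \eqref{isoPerDerivative} gives $\eta(t_0)=\operatorname*{P}(E;\Omega)$ and $\eta'(t_0)=(n-1)\kappa_u\operatorname*{P}(E;\Omega)$. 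The second-order lower bound is then the purely one-dimensional Theorem~\ref{1DLiminfOrder2}, with no rigidity argument at all: the rearrangement inequality has already absorbed the multidimensional geometry in one stroke. This is also where \eqref{isoFunctionSmooth} is actually consumed---in Proposition~\ref{prop:IStar}, to make the smoothed profile $\I^*$ touch $\I$ to order $1+\beta$ at $\V_m$---rather than in controlling Fermi-coordinate Jacobians. Your diagnosis of the ``main obstacle'' is correct; the paper's resolution is a different tool, not a harder estimate along your lines.
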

Here now $\tau_u$ is a constant such that
\begin{equation}\label{deltaUDefinitionCase2}
\int_\mathbb{R} z(t - \tau_u) - \operatorname*{sgn}\nolimits_{a,b}(t)  \dt = 0.
\end{equation}
Note that \eqref{mainThmEqn2} and \eqref{deltaUDefinitionCase2} correspond to the case $W''(a) = \infty$ in \eqref{mainThmEqn1} and \eqref{deltaUDefinition} respectively.

To prove \eqref{mainThmEqn1} and \eqref{mainThmEqn2} we follow the approach of \cite{DalMasoFonsecaLeoni}, namely we use rearrangement to reduce the problem to a one-dimensional one. However, since we are not imposing boundary conditions \eqref{DMFLDirichlet} we cannot use standard symmetrization techniques in $H_0^1(\Omega)$ (see, e.g.,  \cite{DalMasoFonsecaLeoni,Kawohl,LeoniBook}). Thus we implement a different type of rearrangement technique \cite{Cianchi1996,CianchiPick2009}, which makes direct use of the \emph{isoperimetric function } \eqref{isoFunctionDefinition}.

In particular, if $W$ is symmetric about $(b+a)/2$, then the function $z$ in \eqref{profileCauchyProblem} is symmetric, and so the constants $c_{\operatorname*{sym}}$ and $\tau_u$ simplify to give the following:
\begin{corollary} \label{cor:symmetricMainResult}
Suppose that, additionally, $W$ is symmetric about $(b+a)/2$. Then for $u\in \mathcal{U}_1$ we have that
\[
\mathcal{F}^{(2)}(u) = \begin{cases}
-\frac{2c_W^2(n-1)^2}{W''(a)(b-a)^2}\kappa_u^2 
 & \text{ if } q = 1,\\
0 &\text{ if } q<1.
\end{cases}
\]
\end{corollary}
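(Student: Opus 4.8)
The plan is simply to substitute the symmetry hypothesis into the formulas provided by Theorems~\ref{mainThm1} and~\ref{mainThm2} and simplify; no new machinery is required. The starting observation is that the symmetry of $W$ about $(b+a)/2$ propagates to the profile $z$. Indeed, symmetry of $W$ forces the central zero of $W'$ to be $c=(b+a)/2$, and the function $t\mapsto a+b-z(-t)$ solves the Cauchy problem~\eqref{profileCauchyProblem} with the same initial datum $c$ (since $z'=\sqrt{W(z)}$ and $W(a+b-s)=W(s)$); by uniqueness we obtain the reflection identity $z(-t)=a+b-z(t)$ for all $t\in\mathbb{R}$. In the case $q<1$, where $z$ reaches the wells in finite time, this is read in the obvious way, with $z$ extended by the constants $a$ and $b$.

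From the reflection identity two simplifications follow. First, $W(z(-t))=W(a+b-z(t))=W(z(t))$, so the integrand in~\eqref{c1Definition} is odd and $c_{\operatorname*{sym}}=0$. Second, since $\operatorname*{sgn}\nolimits_{a,b}(-t)+\operatorname*{sgn}\nolimits_{a,b}(t)=a+b$ for $t\neq 0$, the map $t\mapsto z(t)-\operatorname*{sgn}\nolimits_{a,b}(t)$ is odd, hence $\int_\mathbb{R}\bigl(z(t)-\operatorname*{sgn}\nolimits_{a,b}(t)\bigr)\dt=0$. I would then compute, via the change of variables $s=t-\tau$ and a direct comparison with the $\tau=0$ integral --- the only contribution comes from the interval between $0$ and $-\tau$, on which the two sign functions differ --- the identity
\[
\int_\mathbb{R}\bigl(z(t-\tau)-\operatorname*{sgn}\nolimits_{a,b}(t)\bigr)\dt=-(b-a)\,\tau\qquad\text{for every }\tau\in\mathbb{R}.
\]

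With these two facts the corollary is immediate. If $q<1$, equation~\eqref{deltaUDefinitionCase2} becomes $-(b-a)\tau_u=0$, so $\tau_u=0$; together with $c_{\operatorname*{sym}}=0$, formula~\eqref{mainThmEqn2} gives $\mathcal{F}^{(2)}(u)=0$. If $q=1$, write $P:=\operatorname*{P}(\{u=a\};\Omega)$; then~\eqref{deltaUDefinition} becomes $-P(b-a)\tau_u=\frac{2c_W(n-1)}{W''(a)(b-a)}\kappa_u$, equivalently $c_W\tau_u P=-\frac{2c_W^2(n-1)}{W''(a)(b-a)^2}\kappa_u$. Substituting $c_{\operatorname*{sym}}=0$ and this value into~\eqref{mainThmEqn1} yields
\[
\mathcal{F}^{(2)}(u)=\frac{2c_W^2(n-1)^2}{W''(a)(b-a)^2}\kappa_u^2-\frac{4c_W^2(n-1)^2}{W''(a)(b-a)^2}\kappa_u^2=-\frac{2c_W^2(n-1)^2}{W''(a)(b-a)^2}\kappa_u^2,
\]
which is the claimed formula.

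The only step requiring any care --- and the closest thing here to an obstacle --- is the integrability bookkeeping behind the displayed identity for $\int_\mathbb{R}(z(t-\tau)-\operatorname*{sgn}\nolimits_{a,b}(t))\dt$: one must know that $z(t)-\operatorname*{sgn}\nolimits_{a,b}(t)$ is integrable over $\mathbb{R}$, so that the integral is well defined and the splitting into two pieces is legitimate. For $q=1$ this follows from the exponential convergence of $z$ to the wells, obtained by linearizing~\eqref{profileCauchyProblem} at $a$ and $b$ where $W$ is quadratic; for $q<1$ it is trivial, since $z-\operatorname*{sgn}\nolimits_{a,b}$ is then compactly supported. Everything else is a purely algebraic substitution into the already-established Theorems~\ref{mainThm1} and~\ref{mainThm2}.
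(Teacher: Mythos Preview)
Your proof is correct and follows essentially the same route as the paper: both derive the reflection identity $z(-t)=a+b-z(t)$, deduce $c_{\operatorname*{sym}}=0$ and $\int_\mathbb{R}(z-\operatorname*{sgn}\nolimits_{a,b})\,dt=0$, compute the shift integral to be $-(b-a)\tau$, and substitute. The only cosmetic difference is that the paper obtains the shift identity via the fundamental theorem of calculus and Fubini, while you obtain it by a change of variables and comparison of the two sign functions; your added remark on integrability is a nice touch that the paper leaves implicit.
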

Thus in the case $q \in (0,1)$, and with isotropic energy, we recover the result of \cite{DalMasoFonsecaLeoni} \emph{without the additional Dirichlet boundary condition} \eqref{DMFLDirichlet}. 

We conjecture that in the case $q \in (0,1)$ with symmetric potential, to obtain a nonzero asymptotic development of order two, one should replace the functionals $\mathcal{F}_\e^{(2)}$ in \eqref{higherOrderFunctionalDefinition} with  the family of functionals
\[
\mathcal{F}_{\e,q}^{(2)} := \frac{\mathcal{F}_\e^{(1)} - \min \mathcal{F}^{(1)}}{\e^{1/q}}.
\]
We have not been able to characterize the $\Gamma$-limit of $\mathcal{F}_{\e,q}^{(2)}$.

\begin{remark}
A straightforward calculation shows that in the case of the Cahn--Hilliard potential $W(s) = \frac12(1-s^2)^2$ the second-order $\Gamma$-limit takes the form
\[
\mathcal{F}^{(2)}(u) = -\frac{(n-1)^2}{9}\kappa_u^2.
\]
\end{remark}

%\begin{proof}
%For the standard potential we have that $\xi = 8$ and that $\kappa_0 = 4/3$. This then gives that
%\begin{equation*}
%F^{(2)}(u) = -1/2 \Lambda_u^2 W''(a)^{-1} = -1/16 h_u^2(n-1)^2 (4/3)^2 = -\frac{(h_u(n-1))^2}{9} .
%\end{equation*}
%\end{proof}

Besides their intrinsic interest, Theorems \ref{mainThm1} and \ref{mainThm2} have important applications in the study of the speed of motion of the associated gradient flow in dimension $n\ge2$. Indeed the asymptotic development of \eqref{F0Definition} in one dimension has been utilized by many authors to establish the slow motion of solutions of the gradient flows associated with \eqref{F0Definition} in different function spaces. We recall that the gradient flow associated with $F_\e$ in $L^2(\Omega)$ without the mass constraint \eqref{massConstraintEquation} is the \emph{Allen--Cahn equation}
\begin{equation}\label{AllenCahn}
\partial_t u = \e^2 \Delta u - W'(u),
\end{equation}
while mass-constrained gradient flows in $L^2(\Omega)$ and $H^{-1}(\Omega)$ of $F_\e$ are, respectively, the \emph{non-local Allen--Cahn equation}
\begin{equation}\label{NLAllenCahn}
\partial_t u = \e^2 \Delta u - W'(u) + \frac{1}{\mathcal{L}^n(\Omega)} \int_{\Omega} W'(u) \dx
\end{equation}
and the \emph{Cahn--Hilliard equation}
\begin{equation}\label{CahnHilliard}
\partial_t u = - \Delta ( \e^2 \Delta u - W'(u)  ),
\end{equation}
each taken with either Neumann or periodic boundary conditions. For background on these equations and their applications see, e.\,g.\,\cite{GarckeReview}. The phenomenon of slow motion of solutions to \eqref{AllenCahn} was analyzed via variational methods first by Bronsard and Kohn \cite{BronsardKohn} for $n=1$. They demonstrate that if $u_\e(0)$ converges in $L^1(J)$ to  $v$, with $J\subset \mathbb{R}$ an open bounded interval and $v$ a local minimizer of $F^{(1)}$, and if $F_\e^{(1)}(u_\e(0)) \leq F^{(1)}(v) + C\e^k$ for some $k$, then for any $l>0$ we have the following \emph{slow-motion inequality}:
\begin{equation}\label{SlowMotionInequality}
\lim_{\e \to 0^+} \sup_{0\leq t \leq l \e^{-k}} \int_J |u_\e(x,t) - v(x)| \dx = 0.
\end{equation}
A crucial estimate in their analysis is the following higher-order asymptotic estimate: that if $\|v_\e- v\|_{L^1} < \delta$ then
\begin{equation} \label{eqn:slowMotionEnergy}
F_\e(v_\e) \geq F^{(1)}(v) - C\e^k
\end{equation}
for appropriately chosen $C>0$.

 Later similar results were established for the non-local Allen--Cahn equation \eqref{NLAllenCahn}, as well as the Cahn--Hilliard equation \eqref{CahnHilliard} (see \cite{BronsardKohn2,BronsardStoth,Grant}). The strength of these results is that they prove this slow motion using transparent variational methods, for initial data that are generic in the sense that they only need have small initial energy. Even though the above-mentioned papers do not explicitly use the setting of $\Gamma$-convergence, they all rely on asymptotic energy inequalities of the form \eqref{eqn:slowMotionEnergy}, which is precisely the $\liminf$ part of the asymptotic development by $\Gamma$-convergence of order 2.
 
 More recently, a tight, higher-order asymptotic expansion of the family $F_\e$ was given by Bellettini, Nayam and Novaga \cite{Bellettini2013}. In that paper they use their result to prove a type of slow motion bound. Their results match the well-known results of Carr and Pego \cite{CarrPego}, which state that phase boundaries of the Allen--Cahn equation should move at speed $e^{-C\e^{-1}}$.
 
In the case $n=1$ these slow dynamics are generally understood to be related to the existence of \emph{ slow manifolds}, and many works focus on the existence of data that approximately moves along a slow manifold. Some critical first works in this direction include \cite{CarrPego,FuscoHale,Pego89}, while a more recent perspective can be found in \cite{OttoReznikoff} and the references therein. It can also be shown \cite{Chen1992} that the time it takes to approach the slow manifold from arbitrary initial data is generally very short.

The slow motion of phase boundaries in higher dimension has been studied by many authors (see, e.g.,  \cite{AlikakosBronsardFusco, AlikakosFusco, AlikakosFusco1998,AlikakosBatesChen}). These works generally focus on the existence of solutions that move very slowly, often along slow manifolds. Generally these results require some ansatz on the initial data, such as radial data or data parametrized by the distance from a manifold. The requirement of such an ansatz in higher dimensions is, in our opinion, due to the lack of higher-order asymptotics of the functional \eqref{F0Definition} in dimension greater than one.

An immediate consequence of \eqref{higherOrderFunctionalDefinition} and Theorems \ref{mainThm1} and \ref{mainThm2} is that when $v$ is a global minimizer of $\mathcal{F}^{(1)}$ we then have, for any sequence $v_\e$ converging to $v$,
\[
\mathcal{F}_\e^{(1)}(v_\e) \geq \mathcal{F}^{(1)}(v) - C \e,
\]
for some $C>0$. Using exactly the techniques from \cite{BronsardKohn} it is possible to establish generic, slow motion results similar to \eqref{SlowMotionInequality}, for the non-local Allen--Cahn and Cahn--Hilliard equations in dimension greater than one, for data that are close to \emph{global} perimeter minimizers \cite{MurrayRinaldi}. We are currently investigating extensions of this type of result in the more interesting case of local perimeter minimizers.

One other setting where a type of higher-order regularity has been studied for $\Gamma$-limits is in the setting of limits of gradient flows \cite{SerfatySandier2004}. Although the types of estimates we derive here are not precisely the type that they use to study convergence of gradient flows, they are certainly related.

This paper is organized as follows. In Section \ref{notationSection} we state our technical assumptions. In Section 3 we develop our new rearrangement result. In Section 4 we analyze a weighted, one-dimensional functional problem. In Section 5 we use the results in Sections 3 and 4 to prove Theorems \ref{mainThm1} and \ref{mainThm2}.

\section{Preliminaries and Main Assumptions} \label{notationSection}

In this paper we consider the Cahn--Hilliard functional \eqref{F0Definition}, where we assume that $\Omega \subset \mathbb{R}^n, n \leq 7,$ is an open, connected, bounded set with
\begin{equation}\label{domainAssumptions}
\mathcal{L}^n(\Omega) =1 \quad \text{ and } \quad \partial \Omega \text{ is of class }  C^{2,\alpha}, \quad \alpha \in (0,1] . 
\end{equation}
We observe that the restriction to $n \leq 7$ is necessary to guarantee regularity of minimizers of the problem \eqref{partitionProblemDefinition} \cite{GMT83,GruterBoundaryRegularity,MaggiBook,SternbergZumbrunIsoPer}, while the assumption that $\mathcal{L}^n(\Omega) = 1$ is for simplicity (the general case follows by a scaling argument). We assume that the mass $m$ in \eqref{massConstraintEquation} satisfies
\begin{equation} \label{originalMassRange}
a<m<b,
\end{equation}
where $a,b$ are the wells of $W$, and that the isoperimetric function $\I$ defined in \eqref{isoFunctionDefinition} satisfies the Taylor expansion
\begin{equation} \label{isoFunctionSmooth}
\I(\V) = \I(\V_m) + \I'(\V_m)(\V-\V_m) + O(|\V-\V_m|^{1+\beta})
\end{equation}
for all $\V$ close to $\V_m= \frac{b-m}{b-a}$ (see \eqref{partitionProblemMassConstraint}) and for some $\beta \in (0,1]$. As remarked in the introduction, for domains of class $C^2$, $\I$ is semi-concave (see \cite{BavardPansu}) and so \eqref{isoFunctionSmooth} holds with $\beta = 1$ at $\mathcal{L}^1$ a.e. $\V_m$ in $[0,1]$ (see \cite{ConvexBook}), or equivalently for $\mathcal{L}^1$ a.e. $m \in (a,b)$.

We also make the following assumptions on the potential $W: \mathbb{R} \to [0,\infty)$:
\begin{align}
\label{W_Smooth}&W \text{ is of class $C^2(\mathbb{R}\backslash \{a,b\})$ and has precisely two zeros at } a<b, \\
\label{WPrime_At_Wells}&\lim_{s \to a} \frac{W''(s)}{|s-a|^{q-1}} = \lim_{s \to b}\frac{W''(s)}{|s-b|^{q-1}} := \ell > 0, \quad q \in (0,1] ,\\
\label{W_Number_Zeros} &\text{ $W'$ has exactly 3 zeros  at $a<c<b$,} \quad W''(c)<0, \\
\label{WGurtin_Assumption}& \liminf_{|s| \to \infty} |W'(s)| > 0.
\end{align}

Most of these assumptions are standard (see \cite{GurtinMatano}). We note that in the case where $q=1$ we have that $\ell$ is simply $W''(a)$. In particular we note that $q=1$ when $W(s)=\frac12(s^2-1)^2$, which is the classical Cahn--Hilliard potential (see, e.g.,  \cite{CahnHilliard}). While it is possible to deal with different limits at $a$ and $b$ in \eqref{WPrime_At_Wells}, we do not handle those cases in our analysis for clarity of presentation.

\begin{remark}
In view of \eqref{W_Smooth}-\eqref{WGurtin_Assumption}, we have that there exists an $\hat L>0$ and $\hat T>0$ so that 
\begin{equation}
W(s) \geq \hat L|s| \label{WLinearGrowth}
\end{equation}
for all $|s| > \hat T$.
\end{remark}

\begin{remark}
In view of \eqref{W_Smooth} and \eqref{WPrime_At_Wells} if follows from de l'H\^{o}pital's rule that 
\begin{align} \label{W_Limits}
\lim_{s\to a} \frac{W(s)}{|s-a|^{1+q}} = \lim_{s\to b} \frac{W(s)}{|s-b|^{1+q}} = \frac{\ell}{q(1+q)},\\
\lim_{s\to a} \frac{W'(s)}{(s-a)|s-a|^{q-1}} = \lim_{s\to b} \frac{W'(s)}{(s-b)|s-b|^{q-1}} = \frac{\ell}{q}. \label{WPrimeLimits}
\end{align}
\end{remark}

In turn, by \eqref{W_Smooth}, there exist $c_1, c_2 >0$ such that $c_1^2(b-s)^{1+q} \leq W(s) \leq c_2^2(b-s)^{1+q}$ for all $s \in [\frac{a+b}{2},b]$. It follows that the solution $z$ of the Cauchy problem \eqref{profileCauchyProblem} satisfies
\begin{align}
\left[	 (b-z(t_0))^{\frac{1-q}2} - \frac{(1-q)c_2}{2}(t-t_0)\right]_+^{\frac2{1-q}} &\leq b-z(t) \nonumber \\
&\leq \left[	 (b-z(t_0))^{\frac{1-q}2} - \frac{(1-q)c_1}{2} (t-t_0)\right]_+^{\frac2{1-q}} \label{CPDecay1}
\end{align}
for all $t \geq t_0 \geq 0$ if $0<q < 1$ and
\begin{equation}\label{CPDecay2}
(b-z(t_0))e^{-c_2 (t-t_0)} \leq b-z(t) \leq (b-z(t_0))e^{-c_1(t-t_0)}
\end{equation}
for all $t \geq t_0 \geq 0$ for $q =1$, where $[\cdot]_+$ denotes the positive part. In particular, in the case $0<q<1$,  since $z(0) = c$, there exists a constant
\[
\left( \frac{b-a}{2}\right)^{\frac{1-q}2} \frac{2}{c_2(1-q)} \leq t_b \leq \left( \frac{b-a}{2}\right)^{\frac{1-q}2} \frac{2}{c_1(1-q)}
\]
such that
\begin{equation} \label{CPFiniteWidth}
z(t) \equiv b \quad \text{for all } t \geq t_b.
\end{equation}
Similar estimates hold near $a$, so that $z(t) \equiv a$  for all $t \leq t_a < 0$ when $0<q < 1$.

In what follows, given a non-empty set $E\subset \mathbb{R}^m$, we denote by $E^\circ$, $\bar E$ and $E^c$ the interior, closure and complement of $E$ respectively. We let $d(x,E)$ be the distance from $x$ to $E$ and we define $d_E$ to be the \emph{signed distance function} from the set $E$, namely
\begin{equation} \label{def:SignedDistance}
d_E(x) := \begin{cases} \phantom{-}d(x, \partial E) &\text{ if } x \in E^c, \\ -d(x,\partial E) &\text { if } x \in E. \end{cases}
\end{equation}
Also, $\mathcal{L}^m$ and $\mathcal{H}^m$ are the $m$-dimensional Lebesgue and Hausdorff measures, respectively.

In the remainder of the paper the constant $C$ \emph{varies from line to line} and is \emph{independent of $\e$}, without further mention.

\section{A P\'olya--Szeg\H o Type Inequality} \label{PolyaSzegoSection}

The classical P\'olya--Szeg\H o inequality states that in $\mathbb{R}^n$ the decreasing spherical rearrangement $u^*$ of a positive function $u \in W^{1,p}(\mathbb{R}^n)$ will not increase the $L^p$ norm of the gradient \cite{Kawohl,LeoniBook}. This permits complicated problems in arbitrary dimensions to be reduced to radial, one-dimensional problems. For Dirichlet problems it is often possible to obtain similar inequalities for functions on a bounded domain $\Omega$. Specifically, if $u \in H_0^1(\Omega)$ is positive then we can use the P\'olya--Szeg\H o inequality in the whole space (after extending the function $u$) to show that $ \|\nabla u^*\|_{L^p(\Omega^*)}\leq \|\nabla u\|_{L^p(\Omega)}$. A classical example where this technique is used is in the proof of Talenti's inequality \cite{Talenti1976}. In this section we study a type of rearrangement \cite{Cianchi1996}, which does not require extending functions to all of $\mathbb{R}^n$, and is thus better suited to analyzing certain Neumann problems. Although many of the techniques are identical to those used in proving the standard P\'olya--Szeg\H o inequality, we include all the proofs for the convenience of the reader.

In this section only we assume that $\Omega$ is bounded, connected, has measure $\mathcal{L}^n(\Omega) =1$ and has Lipschitz boundary. Then the isoperimetric function (see \eqref{isoFunctionDefinition}) satisfies
\begin{equation} \label{muBoundsNearZero}
\I(\V) \geq  C_1 \min\{\V,1-\V\}^{\frac{n-1}{n}} \quad \text{ for all } \V \in [0,1].
\end{equation}
Indeed, this bound follows from Corollary 3 in Section 5.2.1 of \cite{MazyaBook} (see also \cite{AlbericoCianchi} and \cite{CianchiMazya}). By considering sets and their complements it is clear that $\I(\V) = \I(1 - \V)$. We now prove an elementary proposition.

\begin{proposition}\label{prop:IStar}
Suppose that $\I$ satisfies \eqref{isoFunctionSmooth} and \eqref{muBoundsNearZero}. Then there exists a function $\I^* \in C^{1,\beta}_{\operatorname*{loc}}(0,1)$ such that
\begin{align}
\I^*(\V) &= \I^*(1-\V) \quad \text{ for all }  \V\in (0,1), \label{eqn:IStarSymmetric}\\
\I &\geq \I^*>0\quad  \text{ in } (0,1), \label{eqn:IStarPositive}\\
\I(\V_m) &= \I^*(\V_m),\quad \I'(\V_m) = (\I^*)'(\V_m),\label{eqn:IStarTouches}\\
\I^*(\V) &= C_0 \V^{\frac{n-1}{n}} \quad \text{ for all }  \V \in (0,\delta) \label{eqn:IStarTail}
\end{align}
for some $C_0>0$ and $0<\delta<1$.
\end{proposition}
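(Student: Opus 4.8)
The goal is to construct a lower barrier $\I^*$ for the isoperimetric profile $\I$ that is $C^{1,\beta}_{\mathrm{loc}}$, symmetric about $\V = 1/2$, strictly positive on $(0,1)$, agrees with $\I$ to first order at $\V_m$, and coincides with the model power $C_0\V^{(n-1)/n}$ near the endpoints. My plan is to build $\I^*$ by hand as a piecewise function: the endpoint power pieces $C_0\V^{(n-1)/n}$ on $(0,\delta)$ and $C_0(1-\V)^{(n-1)/n}$ on $(1-\delta,1)$, a linear piece tangent to $\I$ at $\V_m$ on a small interval around $\V_m$, and smooth interpolating bridges in between, then check all four properties.

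First I would fix the linear germ $L(\V) := \I(\V_m) + \I'(\V_m)(\V - \V_m)$. By semi-concavity of $\I$ (from \cite{BavardPansu}; this is why $\I'(\V_m)$ exists) one has $\I(\V) \le L(\V) + \Lambda|\V-\V_m|^2$ for a constant $\Lambda$, but more importantly the Taylor expansion \eqref{isoFunctionSmooth} gives $\I(\V) = L(\V) + O(|\V-\V_m|^{1+\beta})$, so $L \le \I$ may fail; however concavity-type information is not needed — what I need is that $L$ lies \emph{below} $\I$ near $\V_m$ after a downward perturbation. Concretely I would instead take, on a neighborhood of $\V_m$, the function $L(\V) - A|\V - \V_m|^{1+\beta}$ for $A$ large enough that \eqref{isoFunctionSmooth} forces this to be $\le \I(\V)$ on $|\V-\V_m|\le r_0$; this piece is $C^{1,\beta}$, sits strictly below $\I$ except it \emph{touches} to first order at $\V_m$ (value and derivative match there since the perturbation is $o(|\V-\V_m|)$ with vanishing derivative at $\V_m$ when $\beta>0$ — here I use $\beta \in (0,1]$ crucially so the exponent $1+\beta \le 2$ is still $>1$). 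That secures \eqref{eqn:IStarTouches} and local validity of \eqref{eqn:IStarPositive}, since $\I(\V_m)>0$ by \eqref{muBoundsNearZero} and $0<\V_m<1$ (which holds because $a<m<b$).

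Next I would choose $\delta$ small and $C_0$ small enough that the endpoint pieces $C_0\V^{(n-1)/n}$ and $C_0(1-\V)^{(n-1)/n}$ lie strictly below $\I$ on $(0,\delta]$ and $[1-\delta,1)$ respectively; this is exactly what \eqref{muBoundsNearZero} provides, taking $C_0 < C_1$ and $\delta$ below $1/2$ and below $r_0$. Then on the two remaining gaps — between $\delta$ and the left end of the linear window, and symmetrically on the right — I would glue the power piece to the linear piece by any fixed $C^{1,\beta}$ (indeed $C^\infty$) function lying strictly below $\I$; existence of such a bridge follows because on each compact gap interval $\I$ is bounded below by a positive constant (continuity plus \eqref{eqn:IStarPositive}-type bounds) while the prescribed endpoint data of the bridge are fixed numbers, so one can take, e.g., a sufficiently small smooth bump added to the affine interpolant of the boundary data, or simply scale the whole interior construction down by a further constant factor. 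Finally, to get the symmetry \eqref{eqn:IStarSymmetric} exactly, I would perform the construction only on $(0,1/2]$ using the endpoint piece at $0$ and then \emph{define} $\I^*(\V) := \I^*(1-\V)$ on $[1/2,1)$; the two definitions must be checked to match in a $C^{1,\beta}$ fashion at $\V=1/2$, which I can arrange by requiring the bridge near $1/2$ to have zero derivative at $1/2$ (legitimate since $\I(1/2)>0$ leaves room), or by noting that if $\V_m = 1/2$ the linear piece already has the right symmetry, and if $\V_m \ne 1/2$ then $1/2$ lies in an interpolation region where we are free to impose $(\I^*)'(1/2)=0$.

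The main obstacle, and the only place needing care rather than routine estimates, is simultaneously enforcing (a) the \emph{exact} first-order tangency to $\I$ at $\V_m$, (b) the \emph{exact} symmetry about $1/2$, and (c) the inequality $\I^* \le \I$ everywhere, with $C^{1,\beta}$ regularity across all the gluing points — in particular handling the generic case $\V_m \ne 1/2$, where the tangency constraint and the symmetry constraint involve different base points. The resolution is that these constraints are \emph{localized} to disjoint intervals (tangency near $\V_m$, symmetry near $1/2$, power behavior near $0,1$) separated by free interpolation zones, and on each interpolation zone $\I$ is bounded below by a positive constant while the prescribed $C^{1,\beta}$ boundary data are fixed, so one always has enough vertical room to interpolate strictly below $\I$; choosing the interpolation derivatives to vanish at $1/2$ makes the reflected halves match to all orders there. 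Once the intervals, constants $C_0,\delta,A,r_0$, and bridges are fixed in the above order, properties \eqref{eqn:IStarSymmetric}--\eqref{eqn:IStarTail} are immediate from the construction.
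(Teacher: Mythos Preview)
Your proposal is correct and takes essentially the same approach as the paper: both construct $\I^*$ near $\V_m$ as $L(\V) - A|\V - \V_m|^{1+\beta}$ (the paper takes $A = 2C_0$ where $C_0$ is the constant in the Taylor remainder), use \eqref{muBoundsNearZero} to justify the power profile near the endpoints, and then extend in between while preserving $\I^* \le \I$, $\I^* > 0$, and the symmetry. The paper is terser about the extension step and handles the symmetry by splitting into the cases $\V_m < 1/2$, $\V_m = 1/2$, $\V_m > 1/2$ rather than your reflect-across-$1/2$ device, but the content is the same; one minor remark is that the existence of $\I'(\V_m)$ is part of the hypothesis \eqref{isoFunctionSmooth}, not a consequence of semi-concavity as your parenthetical suggests.
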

\begin{proof}
Assume first that $\V_m \in (0,1/2)$. By \eqref{isoFunctionSmooth} there exists $C_0>0$ so that 
\begin{equation} \label{eqn:ITaylor}
|\I(\V) - \I(\V_m) - \I'(\V_m)(\V-\V_m)| \leq C_0|\V-\V_m|^{1+\beta}
\end{equation}
for all $\V \in (\V_m-\delta,\V_m+\delta)$, for all $\delta>0$ sufficiently small. 
Define 
\begin{equation} \label{eqn:IStarDefinition1}
\widehat \I (\V) : = \I(\V_m) + \I'(\V_m)(\V-\V_m) -2C_0|\V-\V_m|^{1+\beta}
\end{equation}
for $\V \in (\V_m-\delta,\V_m+\delta) \cap (0,1/2)$. Then by \eqref{eqn:ITaylor}, $\I \geq \widehat \I$ in $(\V_m-\delta,\V_m+\delta) \cap (0,1/2)$ with strict inequality for $\V \neq \V_m$. Moreover
\[
\widehat \I(\V) \geq \I(\V_m)/4 > 0
\]
for $\V \in (\V_m-\delta,\V_m+\delta) \cap (0,1/2)$, for all $\delta$ sufficiently small. Since $\I(\V) > \widehat \I(\V)$ for all $V \in (\V_m-\delta,\V_m+\delta) \cap (0,1/2) \backslash \{\V_m\}$, and since $\I(\V) \geq C_1 \V^{n-1}$ for $\V \in (0,1/2)$ and $\I$ is continuous and positive in $[0,1]$, and is symmetric, we can extend $\widehat \I$ to a function $\I^*$ that satisfies \eqref{eqn:IStarSymmetric}-\eqref{eqn:IStarTail}.

If $\V_m=1/2$ then $\I'(\V_m) = 0$, by the symmetry of $\I$ with respect to $1/2$. In turn the function $\widehat \I$ in \eqref{eqn:IStarDefinition1} is also symmetric with respect to $1/2$, and so we can define $\widehat \I$  as in \eqref{eqn:IStarDefinition1} for all $r \in (1/2-\delta,1/2+\delta)$ and continue as before. The case $\V_m \in (1/2,1)$ follows by symmetry.
\end{proof}

In subsequent sections it is more convenient to work with $\I^*$ instead of $\I$. The results of this section, however, hold when using $\I$ instead of $\I^*$.

%The regularity of this function, to our knowledge, is not well-known. In the appendix we prove that $\I$ is locally semi-concave, and thus is $C^2$ a.e.

Our goal now is to construct a rearranged domain $\Omega^*$ such that the perimeter of the set $\{x_n >t\}$ inside $\Omega^*$ matches the modified isoperimetric function $\I^*$ of $\Omega$ when evaluated at the measure of $\Omega^* \cap \{x_n>t\}$, in other words so that $P(\Omega^* \cap \{x_n>t\};\Omega^*) = \I^*(\mathcal{L}^n(\Omega^* \cap \{x_n>t\}))$ (see Lemma \ref{rearrangedVolumeLemma} below). 

To this end we define a function $V_\Omega$ as the solution to the following Cauchy problem:
\begin{equation} \label{DomainRearrangeODE}
\frac{d}{dt} V_\Omega(t) = \I^*(V_\Omega(t)),\hspace{4mm} V_\Omega(0) = 1/2 .
\end{equation}

We can extend $\I^*$ to be zero outside of $[0,1]$. Since $\I^*$ is bounded and continuous (see Proposition \ref{prop:IStar}), the Cauchy problem \eqref{DomainRearrangeODE} admits a global solution $V_\Omega: \mathbb{R} \to [0,1]$. It follows from inequality \eqref{muBoundsNearZero} that there is a $T > 0$ so that $0 < V_\Omega(t)$ for $-T<t<0$ and $V_\Omega(-T) = 0$. Moreover by equation \eqref{eqn:IStarSymmetric} we have that $V_\Omega(T) = 1$ and $V_\Omega(t) < 1$ for all $0<t<T$. Define
\begin{equation}
I := (-T,T). \label{DefI}
\end{equation}
Observe that $V_\Omega$ is uniquely defined because $\I^*$ is locally Lipschitz on any compact subset of $(0,1)$, and $\I^* =0$ outside of $(0,1)$.

In what follows for $y\in\mathbb{R}^n$ we use the notation $y=(y',y_n)\in \mathbb{R}^{n-1}\times \mathbb{R}$. Next we define a set $\Omega^* \subset \mathbb{R}^n$, which will be a type of rearrangement of $\Omega$. 
This set is defined by:
\begin{equation}\label{rearrangedDomainDefinition}
\Omega^* := \left\{y : \hspace{2mm}y_n \in I, \hspace{2mm}y' \in B_{n-1}(0,r(y_n)) \right\} ,
\end{equation}
where for $t \in I$,
\[
r(t):= \left(\frac{\I^*(V_\Omega(t))}{\alpha_{n-1}}\right)^{1/(n-1)} \text{ and } \alpha_{n-1} := \mathcal{L}^{n-1}(B_{n-1}(0,1)).
\]
Note that the definition of $r(t)$ implies that
\begin{equation} \label{rearrangedSlicesRelation}
\mathcal{L}^{n-1}(B_{n-1}(0,r(t))) = \I^*(V_\Omega(t))
\end{equation}
for all $t \in \overline{I}$.

%Figure: VOmega and Omega*
\begin{figure}
\centering
\begin{subfigure}{.5\textwidth}
  \centering
  \includegraphics[width = .8\linewidth]{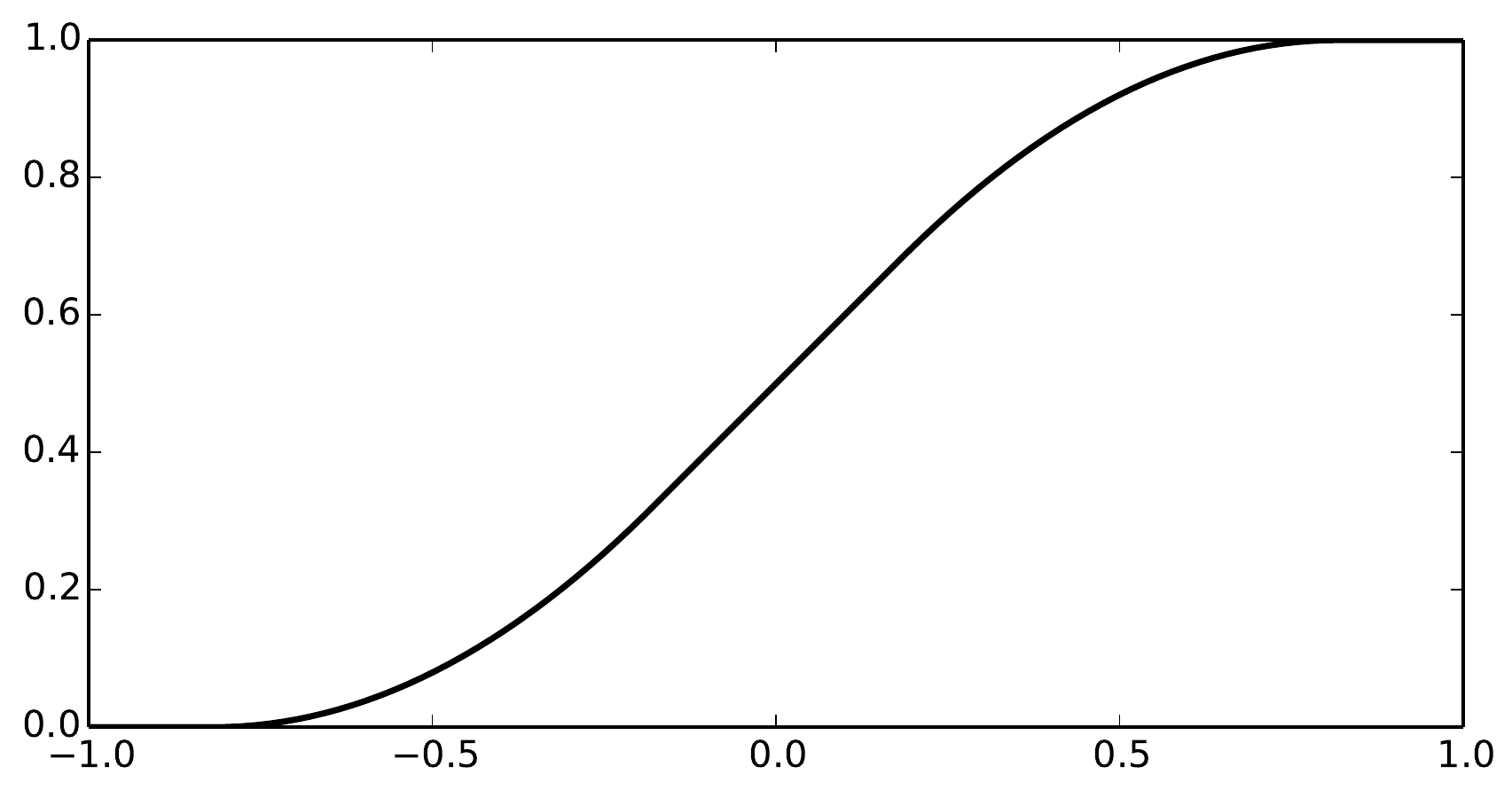}
  %\caption{A subfigure}
  \label{fig:vOmega}
\end{subfigure}%
\begin{subfigure}{.5\textwidth}
  \centering
  \includegraphics[width = .8\linewidth]{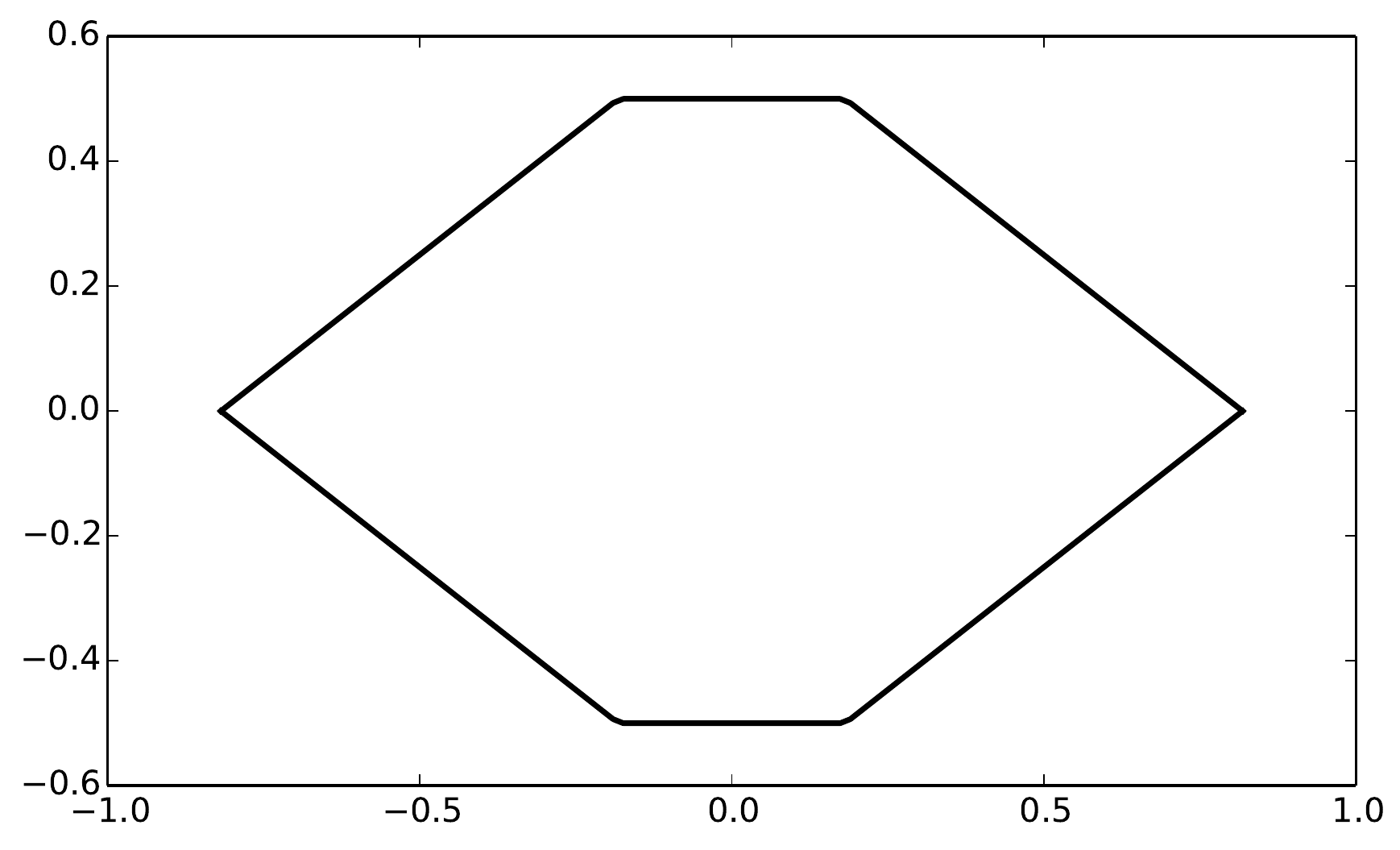}
  %\caption{A subfigure}
  \label{fig:omegaStar}
\end{subfigure}
\caption{$V_\Omega$ and $\Omega^*$ for the domain $\Omega = Q_2$, the cube in $\mathbb{R}^2$. In $\mathbb{R}^2$, the quantity $r(t)$ is half the height of $\Omega^*$. In this case $T = 1/2 + 1/\pi \approx .82$.}
\label{fig:rearrangedDomain}
\end{figure}

The following lemma motivates our choice of the Cauchy problem \eqref{DomainRearrangeODE}.
\begin{lemma}\label{rearrangedVolumeLemma}
For any $t \in \overline{I}$ the following equalities hold:
\begin{align}
V_\Omega(t) &= \mathcal{L}^n(\Omega^* \cap \{y_n < t\}) , \label{rearrangedVolumeRelation}\\
\I^*(V_\Omega(t)) &= \operatorname*{P}(\{y_n < t\}; \Omega^*). \label{rearrangedPerimeterRelation}
\end{align}
\end{lemma}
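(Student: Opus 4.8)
The plan is to prove the two identities by direct computation, resting on two facts built into the construction of $\Omega^*$: its hyperplane slices $\{y_n=s\}\cap\Omega^*$ are explicit Euclidean balls, and the relative perimeter $\operatorname*{P}(\,\cdot\,;\Omega^*)$ does not see the lateral boundary of $\Omega^*$. First I would record the elementary regularity of the radius $r$: since $\I^*\in C^{1,\beta}_{\mathrm{loc}}(0,1)$ is continuous on $[0,1]$ with $\I^*(0)=\I^*(1)=0$ by \eqref{eqn:IStarTail}, and $V_\Omega\in C^1$ solves \eqref{DomainRearrangeODE} with $V_\Omega(-T)=0$ and $V_\Omega(T)=1$, the function $r$ is continuous on $\overline I$, positive on $I$, and vanishes at $\pm T$; hence $\Omega^*$ in \eqref{rearrangedDomainDefinition} is a bounded open set whose slice at level $s\in I$ is $\{y_n=s\}\cap\Omega^*=\{s\}\times B_{n-1}(0,r(s))$, the slices at $s=\pm T$ being empty.

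For \eqref{rearrangedVolumeRelation}, I would fix $t\in\overline I$ and compute by Fubini's theorem and \eqref{rearrangedSlicesRelation},
\[
\mathcal{L}^n\big(\Omega^*\cap\{y_n<t\}\big)=\int_{-T}^{t}\mathcal{L}^{n-1}\big(B_{n-1}(0,r(s))\big)\ds=\int_{-T}^{t}\I^*\big(V_\Omega(s)\big)\ds .
\]
Since the integrand equals $\frac{d}{ds}V_\Omega(s)$ by \eqref{DomainRearrangeODE} and $V_\Omega(-T)=0$, the fundamental theorem of calculus gives $\mathcal{L}^n(\Omega^*\cap\{y_n<t\})=V_\Omega(t)$; the choice $t=T$ incidentally recovers $\mathcal{L}^n(\Omega^*)=1$.

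For \eqref{rearrangedPerimeterRelation}, I would again fix $t\in\overline I$ and use that the half-space $\{y_n<t\}$ has locally finite perimeter, with total variation equal to $\mathcal{H}^{n-1}$ restricted to the hyperplane $\{y_n=t\}$; since $\Omega^*$ is open and bounded, this yields
\[
\operatorname*{P}\big(\{y_n<t\};\Omega^*\big)=\mathcal{H}^{n-1}\big(\{y_n=t\}\cap\Omega^*\big)=\alpha_{n-1}\,r(t)^{n-1}=\I^*\big(V_\Omega(t)\big),
\]
the middle equality using the slice description above (so the endpoints $t=\pm T$ are included, the slice being empty there while $\I^*(V_\Omega(\pm T))=0$) and the last one the definition of $r$. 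The same value is obtained for $\operatorname*{P}(\Omega^*\cap\{y_n<t\};\Omega^*)$, since the relative perimeter in the open set $\Omega^*$ depends only on the trace of the set in $\Omega^*$.

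Every ingredient here is routine, so I do not expect a genuine obstacle; the one conceptual point — and the reason for the whole construction — is that the portion of the boundary of $\Omega^*\cap\{y_n<t\}$ lying \emph{inside} the open set $\Omega^*$ is precisely the flat disk $\{y_n=t\}\cap\Omega^*$, so that $\operatorname*{P}(\,\cdot\,;\Omega^*)$ never charges $\partial\Omega^*$. Should one prefer to invoke the De Giorgi structure theorem for $E_t:=\Omega^*\cap\{y_n<t\}$ directly, one needs the easily verified facts that $r$ is Lipschitz on $\overline I$ (being $C^1$ on compact subintervals of $I$ and linear near $\pm T$ by \eqref{eqn:IStarTail}), so that $\Omega^*$ is a Lipschitz domain, $E_t$ has finite perimeter, and $\partial^* E_t\cap\Omega^*=\{y_n=t\}\cap\Omega^*$.
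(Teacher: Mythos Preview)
Your proof is correct and follows essentially the same approach as the paper: Fubini's theorem together with \eqref{rearrangedSlicesRelation} and \eqref{DomainRearrangeODE} for the volume identity, and the slice description of $\Omega^*$ for the perimeter identity. The paper's proof is terser (it simply says the second equality ``follows immediately'' from \eqref{rearrangedSlicesRelation} and \eqref{rearrangedDomainDefinition}), while you spell out the regularity of $r$, the endpoint cases $t=\pm T$, and the reason the relative perimeter in $\Omega^*$ reduces to the flat slice; these are useful clarifications but not a different method.
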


\begin{proof}
We prove equation \eqref{rearrangedVolumeRelation} by using Fubini's theorem, equation \eqref{rearrangedSlicesRelation}, the Cauchy problem \eqref{DomainRearrangeODE}, the fundamental theorem of calculus, and the fact that $V_\Omega(-T) = 0$, in that order:
\begin{align*}
\mathcal{L}^n(\Omega^* \cap \{y_n < t\}) &= \int_{-T}^t \mathcal{H}^{n-1}(\Omega^* \cap \{y_n=s\}) \ds \\
&= \int_{-T}^t \I^*(V_\Omega(s))  \ds \\
&= V_\Omega(t) - V_\Omega(-T) = V_\Omega(t) .
\end{align*}

Equality \eqref{rearrangedPerimeterRelation} follows immediately from equation \eqref{rearrangedSlicesRelation} and definition \eqref{rearrangedDomainDefinition}.
\end{proof}

Now given any measurable function $u:\Omega \to \mathbb{R}$, we define the distribution function $\varrho_u(s) := \mathcal{L}^n(\{u > s\})$ and the following function:
\begin{equation} \label{gSubUDefinition}
g_u(t) := \sup \{s \in \mathbb{R} : \varrho_u(s) > V_\Omega(t)\} .
\end{equation}
We then define a function $u^*: \Omega^* \to \mathbb{R}$ as follows:
\begin{equation}\label{uStarDefinition}
u^*(y', y_n) := g_u(y_n) .
\end{equation}

%Figure: Rearrangement
\begin{figure}
\centering
\begin{subfigure}{.33\textwidth}
  \centering
  \includegraphics[width = \linewidth]{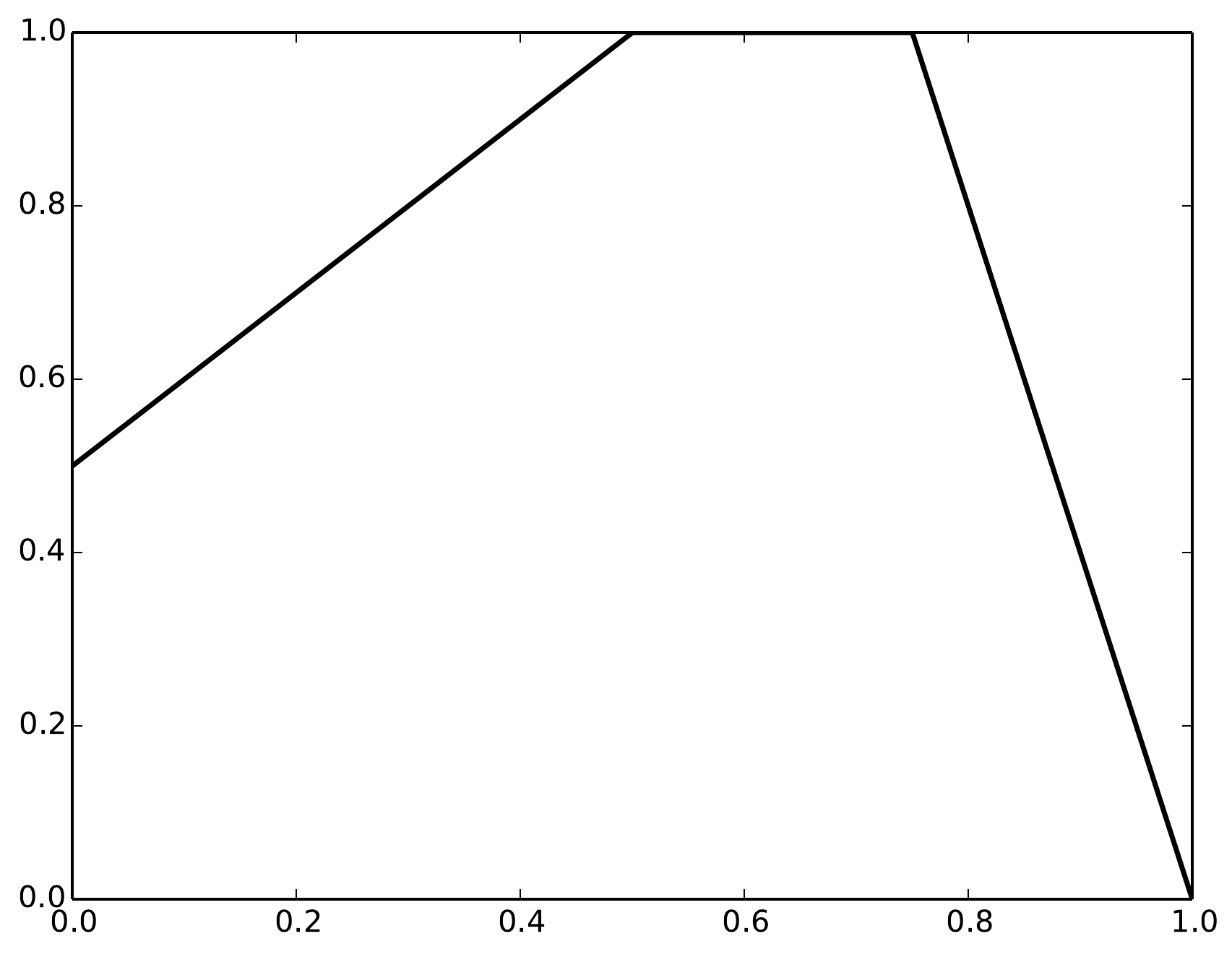}
  %\caption{A subfigure}
  \label{fig:rearrange1}
\end{subfigure}%
\begin{subfigure}{.33\textwidth}
  \centering
  \includegraphics[width = \linewidth]{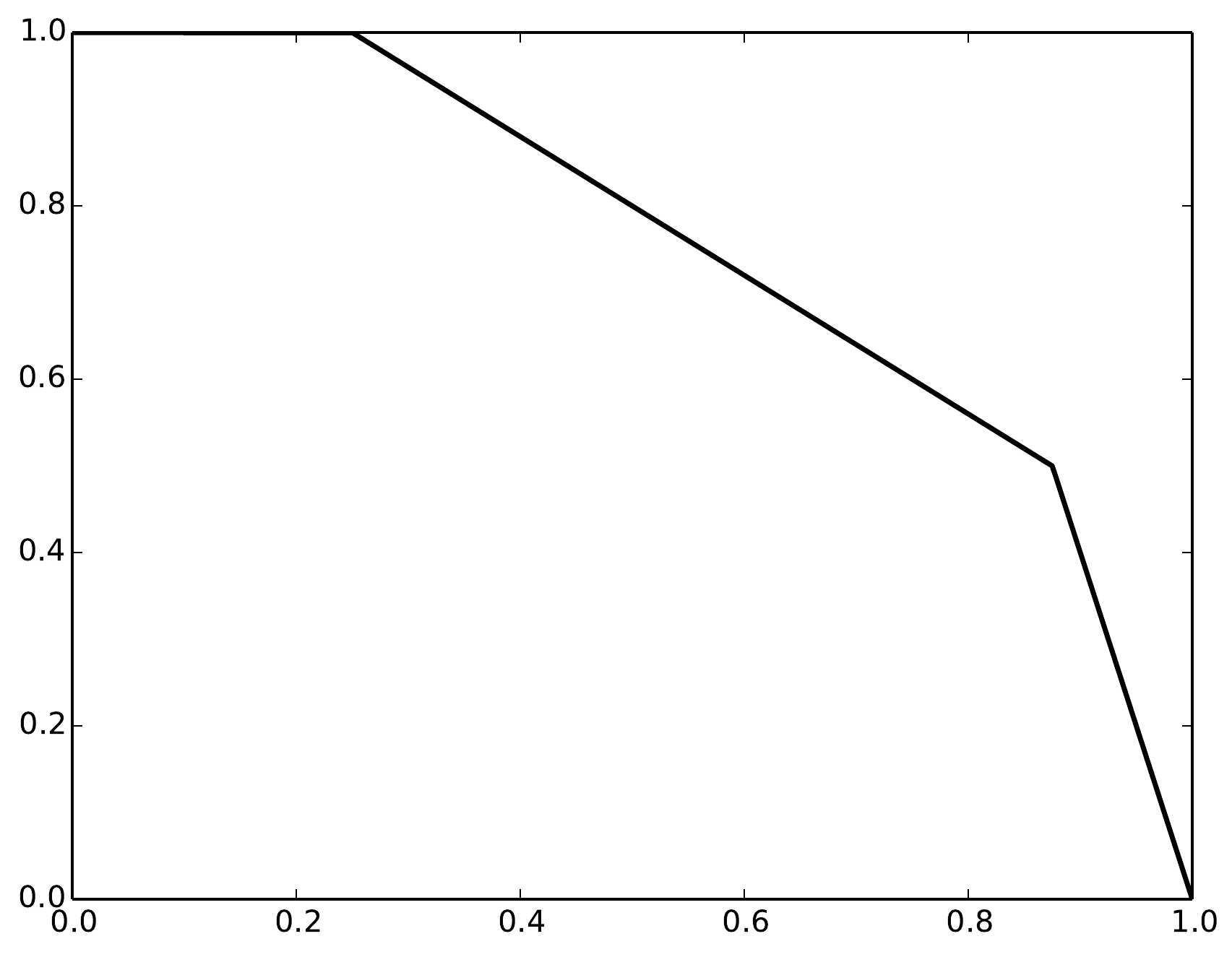}
  %\caption{A subfigure}
  \label{fig:rearrange2}
\end{subfigure}%
\begin{subfigure}{.33\textwidth}
  \centering
  \includegraphics[width = \linewidth]{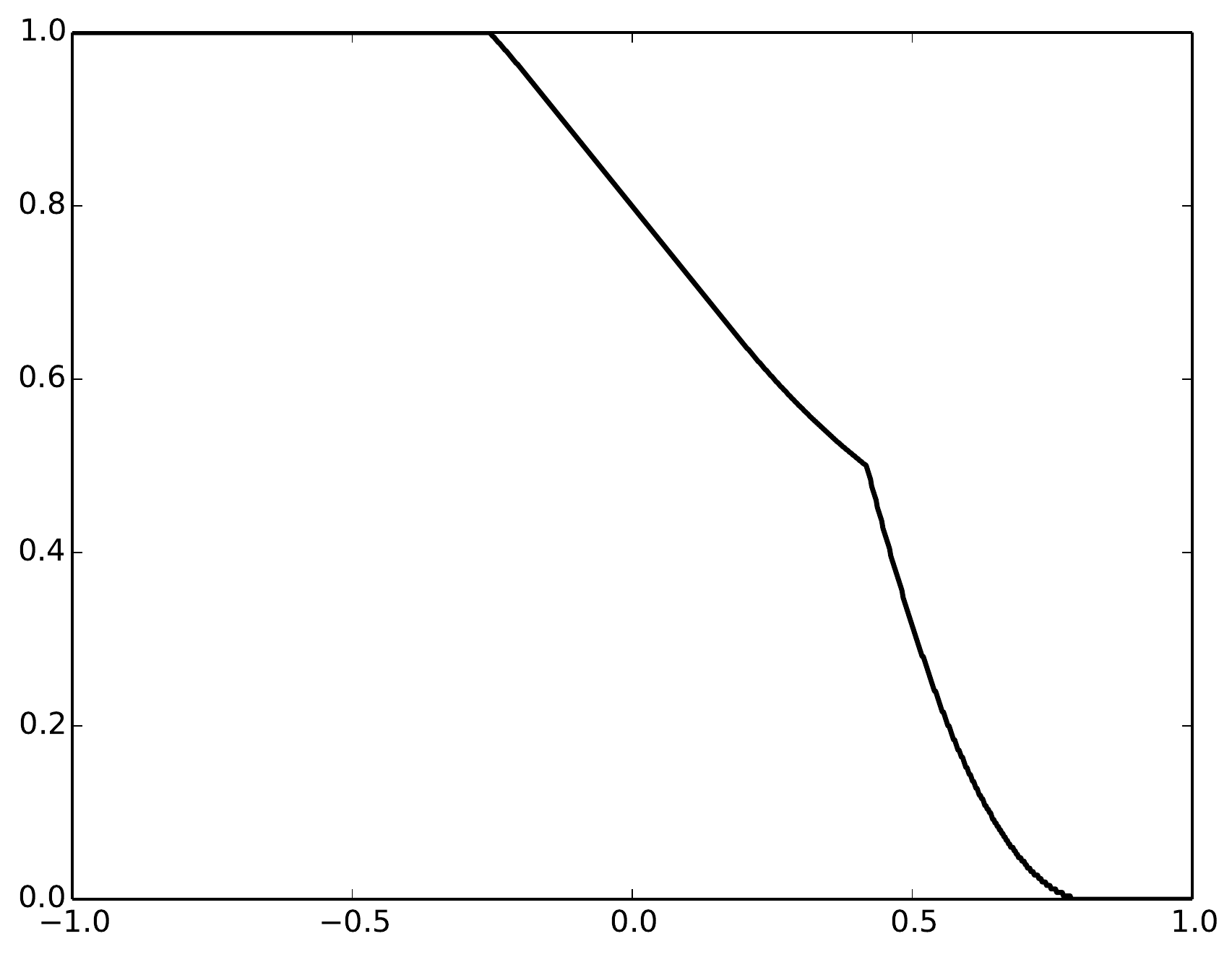}
  %\caption{A subfigure}
  \label{fig:rearrange3}
\end{subfigure}
\caption{A piecewise constant function, its decreasing rearrangement, and the corresponding $g_u$}
\label{fig:rearrangeMain}
\end{figure}

The first important property of our rearranged function can be summarized by the following lemma:

\begin{lemma}\label{Equimeasurable}
Let $u:\Omega \to \mathbb{R}$ be a measurable function. Then the functions $u^*$ and $u$ are equimeasurable, meaning that $\varrho_u= \varrho_{u^*}$. This implies that for any Borel function $\psi:\mathbb{R} \to \mathbb{R}$,
\[
\int_\Omega \psi (u) \dx= \int_{\Omega^*} \psi (u^*) \dy = \int_I \psi(g_u) \I^*(V_\Omega)  \dt,
\]
assuming that the previous integrals are well-defined. In particular the $L^p$ norms of $u$ and $u^*$ are preserved, as well as the integral of $W\circ u$.
\end{lemma}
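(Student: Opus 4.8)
The plan is to reduce the statement to the classical fact that a real-valued function and its monotone rearrangement have equal distribution functions, by recognizing that $u^{\ast}$ is nothing but the decreasing rearrangement of $u$, reparametrized through the measure-preserving change of variables encoded in the Cauchy problem \eqref{DomainRearrangeODE}. First I would record the elementary facts needed. By \eqref{DomainRearrangeODE} and \eqref{eqn:IStarPositive}, $V_\Omega$ is $C^1$ and strictly increasing on $I=(-T,T)$ (see \eqref{DefI}), with $V_\Omega(-T)=0$, $V_\Omega(T)=1$, so $V_\Omega:I\to(0,1)$ is a bijection. Since $u$ is real-valued and $\mathcal{L}^n(\Omega)=1<\infty$, the distribution function $\varrho_u$ is non-increasing and right-continuous (continuity from below of $\mathcal L^n$ applied to $\{u>s+1/k\}\uparrow\{u>s\}$), with $\varrho_u(s)\to1$ as $s\to-\infty$ and $\varrho_u(s)\to0$ as $s\to+\infty$; hence for each $t\in I$ the set $\{s:\varrho_u(s)>V_\Omega(t)\}$ is a non-empty half-line bounded above and $g_u(t)\in\mathbb{R}$ in \eqref{gSubUDefinition} is well-defined. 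Moreover $g_u$ is non-increasing (as $V_\Omega$ is increasing), hence Borel, so $u^{\ast}$ in \eqref{uStarDefinition} is a Borel function on $\Omega^{\ast}$.

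The key step is the equivalence
\[
g_u(t)>s \quad\Longleftrightarrow\quad \varrho_u(s)>V_\Omega(t),\qquad t\in I,\ s\in\mathbb{R}.
\]
Here ``$\Leftarrow$'' uses right-continuity of $\varrho_u$ (if $\varrho_u(s)>V_\Omega(t)$ then $\varrho_u(s')>V_\Omega(t)$ for some $s'>s$, so $g_u(t)\ge s'>s$), and ``$\Rightarrow$'' uses monotonicity of $\varrho_u$. From \eqref{uStarDefinition} this gives
\[
\{\,y\in\Omega^{\ast}:u^{\ast}(y)>s\,\}=\Omega^{\ast}\cap\{\,y\in\Omega^{\ast}:V_\Omega(y_n)<\varrho_u(s)\,\}.
\]
Since $V_\Omega$ is a strictly increasing bijection of $I$ onto $(0,1)$, the right-hand side equals $\Omega^{\ast}\cap\{y_n<\tau_s\}$ with $\tau_s:=V_\Omega^{-1}(\varrho_u(s))$ when $\varrho_u(s)\in(0,1)$, and equals $\emptyset$, respectively $\Omega^{\ast}$, when $\varrho_u(s)=0$, respectively $\varrho_u(s)=1$ (using $\mathcal L^n(\Omega^{\ast})=V_\Omega(T)=1$). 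In every case Lemma \ref{rearrangedVolumeLemma}, equation \eqref{rearrangedVolumeRelation} applied at $t=\tau_s$, $t=-T$, or $t=T$, yields $\mathcal{L}^n(\{u^{\ast}>s\})=\varrho_u(s)$, that is, $\varrho_{u^{\ast}}=\varrho_u$.

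Equimeasurability then upgrades to the integral identities by a standard push-forward argument: let $\nu$ and $\nu^{\ast}$ be the push-forwards of $\mathcal L^n$ restricted to $\Omega$ and to $\Omega^{\ast}$ under $u$ and $u^{\ast}$ respectively. These are finite Borel measures on $\mathbb{R}$, both of total mass $1$, and by the previous paragraph they agree on every half-line $(s,\infty)$; since these half-lines form a $\pi$-system generating the Borel $\sigma$-algebra and exhausting $\mathbb R$, we get $\nu=\nu^{\ast}$, whence $\int_\Omega\psi(u)\dx=\int_{\mathbb R}\psi\,d\nu=\int_{\mathbb R}\psi\,d\nu^{\ast}=\int_{\Omega^{\ast}}\psi(u^{\ast})\dy$ for every Borel $\psi$ for which one of the sides makes sense. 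Finally, Fubini's theorem applied to $\Omega^{\ast}$ as defined in \eqref{rearrangedDomainDefinition}, together with \eqref{rearrangedSlicesRelation}, gives
\[
\int_{\Omega^{\ast}}\psi(u^{\ast})\dy=\int_I\psi(g_u(t))\,\mathcal L^{n-1}\big(B_{n-1}(0,r(t))\big)\dt=\int_I\psi(g_u(t))\,\I^*(V_\Omega(t))\dt,
\]
which is the third expression in the statement; the claims about $L^p$ norms and $\int_\Omega W\circ u$ are the cases $\psi(s)=|s|^p$ and $\psi=W$.

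The only genuinely delicate point — and where I expect the bookkeeping to require care — is the treatment of the generalized inverse $g_u$: one must prove the equivalence $g_u(t)>s\iff\varrho_u(s)>V_\Omega(t)$ correctly (right-continuity of $\varrho_u$ is essential there and easy to forget), and one must handle the degenerate values $\varrho_u(s)\in\{0,1\}$ and the fact that a single level slice $\{y_n=\tau_s\}$ of $\Omega^{\ast}$ has $\mathcal L^n$-measure zero, so that it does not disturb the identity $\mathcal L^n(\Omega^{\ast}\cap\{y_n<\tau_s\})=V_\Omega(\tau_s)$. Everything else is routine Fubini and the change of variables induced by \eqref{DomainRearrangeODE}.
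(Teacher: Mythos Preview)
Your proof is correct and somewhat cleaner than the paper's. The paper introduces the auxiliary function $h(t):=\sup\{s:g_u(s)>t\}$, writes $\varrho_{u^*}(t)=V_\Omega(h(t))$, and then shows $V_\Omega(h(t))=\varrho_u(t)$ through a chain of nested suprema together with a small contradiction argument using right-continuity of $\varrho_u$. You instead isolate the single equivalence $g_u(t)>s\iff\varrho_u(s)>V_\Omega(t)$ (the standard ``generalized inverse'' identity, with right-continuity of $\varrho_u$ used exactly where you say) and read off $\varrho_{u^*}(s)=\varrho_u(s)$ directly via Lemma~\ref{rearrangedVolumeLemma}. The underlying ideas are the same---strict monotonicity of $V_\Omega$ and the interplay between a monotone function and its generalized inverse---but your packaging avoids the extra layer of indirection and the case analysis at the end, at the modest cost of having to handle the endpoint values $\varrho_u(s)\in\{0,1\}$ separately (which you do correctly). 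For the integral identity the paper invokes a layer-cake formula, while your push-forward/$\pi$-system argument is equivalent and equally standard.
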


\begin{proof}
First we note that, by standard arguments, $\varrho_u$ is decreasing and right continuous and that $g_u$ is decreasing and left continuous (see, e.g.,  \cite{LeoniBook}, p. 478).

Let $h(t) := \sup \{s :  g_u(s) > t\}$. Since $g_u$ is decreasing we have that
\begin{align*}
\varrho_{u^*}(t) &= \mathcal{L}^n(\{y \in \Omega^* : g_u(y_n) > t\})\\
&=\mathcal{L}^n(\{y \in \Omega^* : y_n < h(t)\}) = V_\Omega(h(t)), 
\end{align*}
where in the last equality we have used Lemma \ref{rearrangedVolumeLemma}.

We then claim that $V_\Omega(h(t)) = \varrho_u(t)$. To see this observe that since $\I^*>0$ in $(0,1)$, by \eqref{DomainRearrangeODE} we have that $V_\Omega$ is strictly increasing and of class $C^1$ in $I$. Hence:
\begin{align}
V_\Omega(h(t)) &= V_\Omega(\sup\{s: g_u(s) > t\}) = \sup\{ V_\Omega(s) : g_u(s) > t\} \nonumber \\
&= \sup \{V_\Omega(s) : \sup\{\varkappa: \varrho_u(\varkappa) > V_\Omega(s)\} > t\} \nonumber \\
&= \sup\{\rho : \sup \{\varkappa: \varrho_u(\varkappa)>\rho\} > t\} . \label{VOmegaSupEquality}
\end{align}

For every $\rho$ such that $\sup\{\varkappa: \varrho_u(\varkappa) > \rho\} > t$, there exists $\varkappa > t$ such that $\varrho_u(\varkappa) > \rho$. But since $\varrho_u$ is decreasing we have that $\varrho_u(t) \geq \varrho_u(\varkappa) > \rho$, which then shows that
\[
V_\Omega(h(t)) = \sup\{\rho : \sup \{\varkappa: \varrho_u(\varkappa)>\rho\} > t\} \leq \varrho_u(t).
\]

Now if $V_\Omega(h(t)) < \varrho_u(t)$, then $V_\Omega(h(t)) < \varrho_u(t) - \epsilon$ for some $\epsilon>0$. By equation \eqref{VOmegaSupEquality} this implies that
\[
\sup\{ \varkappa: \varrho_u(\varkappa) > \varrho_u(t) - \e\} \leq t .
\]
By the right continuity of $\varrho_u$ for some $\delta > 0$ we have that $\varrho_u(t + \delta) > \varrho_u(t) -\epsilon$, which violates the previous inequality. This then implies that $\varrho_u(t) = V_\Omega(h(t)) = \varrho_{u^*}(t)$ for all $t$, which is the desired conclusion.

To see the integral equality stated, we note that (see, e.g.,  Theorem B.61 in \cite{LeoniBook}):
\[
\int_\Omega \psi(u(x)) \dx = \int_{\mathbb{R}} \psi(s) \, d\varrho_u(s) = \int_{\mathbb{R}} \psi(s) \, d\varrho_{u^*}(s) = \int_{\Omega^*} \psi(u^*(y))\dy .
\]

This concludes the proof.

\end{proof}

Next we prove that the operation of rearrangement is a contraction in $L^1$. It should be possible to prove a more general version of this proposition, but this suffices for our purposes.

\begin{proposition} \label{rearrangementContraction}
Suppose that $u_1,u_2 \in L^1(\Omega)$. Then
\begin{equation}\label{eqn:ContractionInequality}
\|u_1^*-u_2^*\|_{L^1(\Omega^*)} \leq \|u_1 - u_2\|_{L^1(\Omega)}.
\end{equation}
\end{proposition}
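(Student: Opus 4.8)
The plan is to reduce the $L^1$ estimate to a comparison, level by level, of the Lebesgue measures of the symmetric differences of superlevel sets, and then to exploit the fact that all superlevel sets of a rearranged function are nested, since $u^*(y',y_n)=g_u(y_n)$ with $g_u$ decreasing. I would begin from the elementary identity that for all $a,b\in\mathbb{R}$,
\[
|a-b| = \int_{\mathbb{R}} \bigl|\mathbf{1}_{(s,\infty)}(a) - \mathbf{1}_{(s,\infty)}(b)\bigr|\ds ,
\]
which holds because the integrand equals $1$ exactly when $s$ lies strictly between $\min(a,b)$ and $\max(a,b)$. Taking $a=u_1(x)$, $b=u_2(x)$, integrating over $\Omega$, and invoking Tonelli's theorem (legitimate since the integrand is nonnegative and measurable) gives
\[
\|u_1-u_2\|_{L^1(\Omega)} = \int_{\mathbb{R}} \mathcal{L}^n\bigl(\{u_1>s\}\triangle\{u_2>s\}\bigr)\ds ,
\]
and the same identity on $\Omega^*$ for $u_1^*,u_2^*$. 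Thus it suffices to prove, for every $s\in\mathbb{R}$, that $\mathcal{L}^n(\{u_1^*>s\}\triangle\{u_2^*>s\})\leq \mathcal{L}^n(\{u_1>s\}\triangle\{u_2>s\})$.

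For the right-hand side I would use only that for measurable sets $A,B$ of finite measure one has $\mathcal{L}^n(A\triangle B)\geq |\mathcal{L}^n(A)-\mathcal{L}^n(B)|$, so that $\mathcal{L}^n(\{u_1>s\}\triangle\{u_2>s\})\geq |\varrho_{u_1}(s)-\varrho_{u_2}(s)|$. For the left-hand side I would use the special structure of the rearrangement: since each $g_{u_i}$ is decreasing and left-continuous (as recorded in the proof of Lemma \ref{Equimeasurable}), the set $\{t\in I: g_{u_i}(t)>s\}$ is a down-set, hence equals $(-T,h_i)$ or $(-T,h_i]$ for some $h_i\in\overline{I}$, and therefore $\{u_i^*>s\}$ coincides $\mathcal{L}^n$-a.e.\ with the cylinder $\Omega^*\cap\{y_n<h_i\}$. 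Consequently, for each fixed $s$, the two sets $\{u_1^*>s\}$ and $\{u_2^*>s\}$ are nested up to a null set, so that $\mathcal{L}^n(\{u_1^*>s\}\triangle\{u_2^*>s\}) = |\mathcal{L}^n(\{u_1^*>s\})-\mathcal{L}^n(\{u_2^*>s\})|$, which equals $|\varrho_{u_1}(s)-\varrho_{u_2}(s)|$ by the equimeasurability of $u_i$ and $u_i^*$ in Lemma \ref{Equimeasurable}. Combining these facts and integrating in $s$ gives \eqref{eqn:ContractionInequality}.

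The only point that needs care, and the step I expect to be the main (though minor) obstacle, is the identification of $\{u_i^*>s\}$ with the cylinder $\Omega^*\cap\{y_n<h_i\}$ modulo an $\mathcal{L}^n$-null set: one has to observe that whether or not the slice $\Omega^*\cap\{y_n=h_i\}$ belongs to the superlevel set depends on whether $g_{u_i}$ attains a value exceeding $s$ there, but that slice is $(n-1)$-dimensional and hence $\mathcal{L}^n$-negligible, so the inclusion of one superlevel set in the other holds up to measure zero and the symmetric-difference identities above remain valid. Everything else is routine manipulation with Tonelli's theorem together with Lemmas \ref{rearrangedVolumeLemma} and \ref{Equimeasurable}, so I do not anticipate any further difficulty.
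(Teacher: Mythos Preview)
Your proof is correct and self-contained. The paper takes a different, shorter route: it observes that the rearrangement map is order-preserving (if $u_1\le u_2$ then $u_1^*\le u_2^*$, which is immediate from the definitions \eqref{gSubUDefinition}--\eqref{uStarDefinition}) and integral-preserving (by Lemma~\ref{Equimeasurable}), and then invokes the abstract result of Crandall and Tartar that any integral-preserving, order-preserving map on $L^1$ is automatically an $L^1$ contraction. Your argument instead unpacks the level-set structure directly via the layer-cake identity $\|u_1-u_2\|_{L^1}=\int_{\mathbb{R}}\mathcal{L}^n(\{u_1>s\}\triangle\{u_2>s\})\,ds$ and the observation that the superlevel sets of $u_1^*,u_2^*$ are nested cylinders, so that their symmetric difference has measure exactly $|\varrho_{u_1}(s)-\varrho_{u_2}(s)|$. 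This is more hands-on but entirely elementary, avoiding the external citation; the paper's approach is quicker but relies on a black box whose proof is, in spirit, close to what you wrote out.
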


\begin{proof}
By the definition of $u^*$ (see \eqref{gSubUDefinition} and \eqref{uStarDefinition}) we have  that if $u_1 \leq u_2$ then $u_1^* \leq u_2^*$. In light of Lemma \ref{Equimeasurable}, we may apply Proposition 1 in \cite{CrandallTartar}, which states that a mapping from $L^1$ to $L^1$ that preserves integrals and is order preserving must be a contraction in $L^1$, to obtain \eqref{eqn:ContractionInequality}.
\end{proof}

Next we prove a lemma stating that truncation and rearrangement commute. This will later allow us to establish that the rearrangement of a Sobolev function is still a Sobolev function.

\begin{lemma}\label{TrucationRearrangementLemma} 
Let $u: \Omega \to \mathbb{R}$ be measurable. Given $s_1<s_2$, let $\operatorname*{Tr}\nolimits_{s_1,s_2}(s) := (s \vee s_1) \wedge s_2, s \in \mathbb{R}$. Then the following equality holds:
\[
\operatorname*{Tr}\nolimits_{s_1,s_2}(u^*) = (\operatorname*{Tr}\nolimits_{s_1,s_2}(u))^* .
\]
\end{lemma}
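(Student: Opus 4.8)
The plan is to prove the truncation–rearrangement commutation by tracking how truncation acts on distribution functions, then pushing the identity through the explicit formulas \eqref{gSubUDefinition}–\eqref{uStarDefinition}. The key observation is that $\operatorname*{Tr}\nolimits_{s_1,s_2}$ is nondecreasing and continuous, so its interaction with sublevel/superlevel sets is transparent: for any measurable $v$ one has $\{\operatorname*{Tr}\nolimits_{s_1,s_2}(v) > s\} = \Omega$ if $s < s_1$, equals $\{v > s\}$ if $s_1 \le s < s_2$, and equals $\emptyset$ if $s \ge s_2$. Consequently the distribution function transforms as $\varrho_{\operatorname*{Tr}_{s_1,s_2}(v)}(s) = \mathcal{L}^n(\Omega) = 1$ for $s < s_1$, $= \varrho_v(s)$ for $s_1 \le s < s_2$, and $= 0$ for $s \ge s_2$. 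Since both $u^*$ and $(\operatorname*{Tr}_{s_1,s_2}(u))^*$ are built from $u$ only through $\varrho_u$ via $g_u$ (and by Lemma \ref{Equimeasurable} the rearrangement depends on a function only through its distribution function), it is natural to work entirely at the level of $g$.

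Concretely, first I would compute $g_{\operatorname*{Tr}_{s_1,s_2}(u)}$ in terms of $g_u$. From the definition $g_v(t) = \sup\{s : \varrho_v(s) > V_\Omega(t)\}$ and the three-regime formula for $\varrho_{\operatorname*{Tr}_{s_1,s_2}(u)}$ above, one checks directly: if $V_\Omega(t)$ is large enough that already $\varrho_u(s) \le V_\Omega(t)$ for all $s \ge s_1$ (equivalently $g_u(t) \le s_1$), then the supremum is governed by the constant-$1$ regime and equals $s_1$ (using $V_\Omega(t) < 1$ on $I$, which holds by construction of $I$ after \eqref{DefI}); if instead $g_u(t) \ge s_2$, the superlevel condition $\varrho_u(s) > V_\Omega(t)$ persists for some $s \ge s_2$ but truncation caps it, and the supremum becomes $s_2$; and in the intermediate range the formula for $\varrho$ agrees with $\varrho_u$ on $[s_1,s_2)$, so $g_{\operatorname*{Tr}_{s_1,s_2}(u)}(t) = g_u(t)$. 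In all cases this is exactly $g_{\operatorname*{Tr}_{s_1,s_2}(u)}(t) = (g_u(t) \vee s_1) \wedge s_2 = \operatorname*{Tr}\nolimits_{s_1,s_2}(g_u(t))$. Then \eqref{uStarDefinition} gives $(\operatorname*{Tr}_{s_1,s_2}(u))^*(y',y_n) = g_{\operatorname*{Tr}_{s_1,s_2}(u)}(y_n) = \operatorname*{Tr}_{s_1,s_2}(g_u(y_n)) = \operatorname*{Tr}_{s_1,s_2}(u^*(y',y_n))$, which is the claim.

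The one step requiring genuine care — and the place I expect to do the most bookkeeping — is the boundary behavior of the $\sup$ in the definition of $g_v$ at the two truncation thresholds, since $\varrho_v$ is only right-continuous and the value at $s = s_1$ (where the regime switches from the constant $1$ to $\varrho_u$) and at $s = s_2$ need careful handling to confirm the supremum is attained/approached correctly rather than being off by the jump. Here I would lean on the same right-continuity arguments already used in the proof of Lemma \ref{Equimeasurable}: one shows that $\sup\{s : \varrho_v(s) > V_\Omega(t)\}$ is insensitive to the value of $\varrho_v$ at individual points and depends only on where $\varrho_v$ eventually drops to or below $V_\Omega(t)$. A clean alternative, which avoids regime-splitting entirely, is to note that $\operatorname*{Tr}_{s_1,s_2}$ is a nondecreasing \emph{continuous} function and that for such functions composition commutes with the ``generalized inverse'' operation $v \mapsto g_v$ in general — this is a standard fact about monotone rearrangements — but since the paper prefers self-contained proofs, I would present the direct three-case verification above, being explicit that $V_\Omega(t) \in (0,1)$ for $t$ in the open interval $I$ so that the constant-$1$ regime of $\varrho_{\operatorname*{Tr}_{s_1,s_2}(u)}$ genuinely contributes the value $s_1$ and not something larger.
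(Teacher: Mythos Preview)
Your approach is correct and shares the paper's overall strategy: both reduce the claim to $g_{\operatorname*{Tr}_{s_1,s_2}(u)} = \operatorname*{Tr}_{s_1,s_2}\circ g_u$ and verify this by case analysis. The organization differs slightly. You compute the distribution function $\varrho_{\operatorname*{Tr}_{s_1,s_2}(u)}$ explicitly in three $s$-regimes and then read off $g_v(t)$ from the definition, splitting according to where $g_u(t)$ lies relative to $s_1,s_2$; the fact that $\{s:\varrho_u(s)>V_\Omega(t)\}$ is an interval (since $\varrho_u$ is decreasing) makes the supremum computation clean and handles the thresholds without a separate continuity step. The paper instead partitions the $t$-axis at $t_1:=\sup\{t:g_u(t)\ge s_2\}$ and $t_2:=\inf\{t:g_u(t)\le s_1\}$, invokes the equimeasurability Lemma~\ref{Equimeasurable} to match the flat pieces, checks the middle region via $\varrho_u=\varrho_v$ on $(s_1,s_2)$, and then appeals to left-continuity of $g_u$ and $g_v$ to close the argument at the two junction points. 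Your version is marginally more self-contained (it does not call Lemma~\ref{Equimeasurable}) and sidesteps the separate left-continuity patching, at the cost of the explicit three-regime bookkeeping you flag; the paper's version reuses machinery already in place. Either is fine.
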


\begin{proof}
Set $v := \operatorname*{Tr}\nolimits_{s_1,s_2}(u)$. By definition \eqref{uStarDefinition} it suffices to show that $\operatorname*{Tr}\nolimits_{s_1,s_2} g_u = g_v$. Let $t_1 := \sup\{t : g_u(t) \geq s_2\}$ and $t_2 := \inf \{t : g_u(t) \leq s_1\}$.

\textbf{Step 1:} By Lemma \ref{Equimeasurable} and the definition of truncation we can deduce the following:
\begin{align*}
\mathcal{L}^n(\{\operatorname*{Tr}\nolimits_{s_1,s_2} (u^*) = s_2\}) &= \mathcal{L}^n(\{u^* \geq s_2\}) = \mathcal{L}^n(\{u \geq s_2\}) \\
&= \mathcal{L}^n(\{v = s_2\}) = \mathcal{L}^n(\{v^* = s_2\}) .
\end{align*}

As $g_u$ is decreasing we find that for $t < t_1$, $\operatorname*{Tr}\nolimits_{s_1,s_2} g_u(t) = s_2$. Since $g_v$ is decreasing and is bounded above by $s_2$ the previous chain of equalities implies that for $t<t_1$ we have that $g_v(t) = s_2$, which then implies that $g_v(t) = \operatorname*{Tr}\nolimits_{s_1,s_2} g_u(t)$ for such $t$.

Using an identical argument we find that for $t > t_2$ we have that $\operatorname*{Tr}\nolimits_{s_1,s_2} g_u(t) = g_v(t)$.

\textbf{Step 2:} Next we consider $t \in (t_1,t_2)$. First, for $s \in (s_1,s_2)$ we have that $\varrho_u(s) = \varrho_v(s)$. Next, we note that for $t \in (t_1,t_2)$ we have that $g_u(t) \in (s_1,s_2)$ and that $g_v(t) \in (s_1,s_2)$. Thus we can write the following for $t \in (t_1,t_2)$:
\begin{align*}
\operatorname*{Tr}\nolimits_{s_1,s_2} (g_u(t)) &= g_u(t) = \sup\{s \in \mathbb{R} : \varrho_u(s) > V_\Omega(t)\} \\
&= \sup\{s \in (s_1,s_2) : \varrho_u(s) > V_\Omega(t)\} \\
&= \sup\{s \in (s_1,s_2) : \varrho_v(s) > V_\Omega(t)\} \\
&= \sup\{s \in \mathbb{R} : \varrho_v(s) > V_\Omega(t)\} = g_v(t).
\end{align*}

\textbf{Step 3:} As $\operatorname*{Tr}\nolimits_{s_1,s_2} (g_u)$ and $g_v$ are both left continuous, we then have that $\operatorname*{Tr}\nolimits_{s_1,s_2} g_u = g_v$ everywhere, as desired.
\end{proof}

Next we state a simple identity related to level sets of functions. This is well-known (see \cite{FuscoCianchi}), but we include the proof for completeness.

\begin{lemma} \label{Lem:CoareaTrick}
For $u \in W^{1,1}(\Omega)$ there exists a representative of $u$ such that  the following equality holds for all $s_1<s_2$:
\begin{equation}\label{coareaTrick}
\int_{s_1}^{s_2} \int_{\{u=s, \nabla u \neq 0\}} |\nabla u(x)|^{-1}\, d\mathcal{H}^{n-1}  \ds = \mathcal{L}^n(\{x \in \Omega : u(x) \in (s_1,s_2), \nabla u(x) \neq 0\} ) .
\end{equation}
\end{lemma}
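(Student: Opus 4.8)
The plan is to prove \eqref{coareaTrick} via the coarea formula for Sobolev functions. Recall that for $u \in W^{1,1}(\Omega)$ the coarea formula (see, e.g., \cite{AmbrosioFuscoPallara, EvansGariepy}) gives, for every Borel function $g \geq 0$ on $\Omega$,
\[
\int_\Omega g(x)|\nabla u(x)| \dx = \int_{\mathbb{R}} \left( \int_{\{u=s\}} g \, d\mathcal{H}^{n-1} \right) \ds .
\]
First I would fix a precise representative of $u$: after modification on a null set we may assume $u$ is defined everywhere and is approximately differentiable $\mathcal{L}^n$-a.e., with $\nabla u$ denoting the approximate gradient, so that the coarea formula applies and, moreover, for $\mathcal{L}^1$-a.e.\ $s$ the set $\{u=s\}$ is $\mathcal{H}^{n-1}$-rectifiable and the slices behave well. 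Then I would apply the coarea formula with the specific choice
\[
g(x) := \frac{\chi_{\{s_1 < u(x) < s_2,\ \nabla u(x) \neq 0\}}}{|\nabla u(x)|},
\]
which is a legitimate nonnegative Borel function since on its support $|\nabla u| > 0$, and $g(x)|\nabla u(x)| = \chi_{\{s_1 < u(x) < s_2,\ \nabla u(x)\neq 0\}}$ pointwise. The left-hand side of the coarea identity then becomes exactly $\mathcal{L}^n(\{x \in \Omega : u(x) \in (s_1,s_2),\ \nabla u(x) \neq 0\})$, which is the right-hand side of \eqref{coareaTrick}.

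For the right-hand side of the coarea identity, I would observe that $\int_{\{u=s\}} g \, d\mathcal{H}^{n-1}$ vanishes unless $s \in (s_1,s_2)$, and for $s$ in that range it equals $\int_{\{u=s\}} |\nabla u|^{-1}\chi_{\{\nabla u \neq 0\}} \, d\mathcal{H}^{n-1} = \int_{\{u=s,\ \nabla u \neq 0\}} |\nabla u|^{-1} \, d\mathcal{H}^{n-1}$; integrating over $s$ reproduces the left-hand side of \eqref{coareaTrick}. One point to be slightly careful about is whether the condition ``$\nabla u(x) \neq 0$'' in the integrand on the slice $\{u=s\}$ is meaningful: since $\nabla u$ is only defined $\mathcal{L}^n$-a.e., I would note that the coarea formula is insensitive to modifications of $g$ on an $\mathcal{L}^n$-null set (both sides change by zero), so the slice integrals $\int_{\{u=s\}} |\nabla u|^{-1}\chi_{\{\nabla u \neq 0\}} d\mathcal{H}^{n-1}$ are well-defined for $\mathcal{L}^1$-a.e.\ $s$ and that suffices after integrating in $s$.

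The only mild obstacle is the interpretation of the pointwise statement ``there exists a representative of $u$ such that \eqref{coareaTrick} holds for all $s_1 < s_2$'': since both sides of \eqref{coareaTrick} are expressed as integrals over an interval $(s_1,s_2)$ — the left side literally, and the right side because $s \mapsto \mathcal{L}^n(\{u \in (s_1,s)\})$ is absolutely continuous with the appropriate density by the argument above — the identity for all $s_1 < s_2$ follows from the identity of the two integrands for $\mathcal{L}^1$-a.e.\ $s$, which is precisely what the coarea formula delivers. No choice of representative is actually needed beyond fixing one for which the approximate gradient is defined a.e.; I would simply phrase it that way to match the statement. I expect this to be entirely routine given the coarea formula, so the ``hard part'' is essentially bookkeeping of null sets.
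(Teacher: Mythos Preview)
Your proposal is correct and takes essentially the same approach as the paper: both arguments reduce the identity to a direct application of the coarea formula with the integrand $|\nabla u|^{-1}\chi_{\{s_1<u<s_2,\,\nabla u\neq 0\}}$. The only cosmetic difference is that the paper first applies coarea with the bounded approximation $H_\e := (\e+|\nabla u|)^{-1}$ and then passes to the limit via monotone convergence, whereas you invoke the coarea formula directly for nonnegative Borel $g$; either route is fine.
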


\begin{proof}
Let $H_\e := (\e + |\nabla u|)^{-1}$. By the coarea formula (see \cite{EvansGariepy}) we find that:
\begin{align*}
\int_{\{s_1 < u< s_2 , \hspace{1mm} \nabla u \neq 0\}} H_\e |\nabla u| \dx &=\int_{\{s_1 < u < s_2\}} H_\e |\nabla u|\dx \\
&= \int_{s_1}^{s_2} \int_{u^{-1}(s)} H_\e \, d\mathcal{H}^{n-1}  \ds .
\end{align*}
By noting that $H_\e \to |\nabla u|^{-1}$ monotonically in the set $\{\nabla u \neq 0\}$, we find that \eqref{coareaTrick} holds.
\end{proof}

Next we state and prove a simple lemma, which is essentially an isoperimetric inequality.

\begin{lemma} \label{RearrangedIsoperimetricInequality}
Given $u \in BV(\Omega)$, for any $t$ the following must hold:
\[
\operatorname*{P}(\{u^* > t\};\Omega^*) \leq \operatorname*{P}(\{u>t\};\Omega).
\]
\end{lemma}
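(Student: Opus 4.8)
The plan is to reduce the claim to the isoperimetric property of $\Omega^*$ established in Lemma~\ref{rearrangedVolumeLemma}. Fix $t$ and set $E := \{u > t\} \subset \Omega$. By the definition \eqref{gSubUDefinition}--\eqref{uStarDefinition} of $u^*$ and the fact (proved in Lemma~\ref{Equimeasurable}) that $g_u$ is decreasing and $u^*$ and $u$ are equimeasurable, the superlevel set $\{u^* > t\}$ is a ``horizontal slab'' of the form $\Omega^* \cap \{y_n < s^*\}$ for an appropriate $s^* \in \overline{I}$, up to an $\mathcal{L}^n$-null set. Indeed, equimeasurability gives $\mathcal{L}^n(\{u^* > t\}) = \mathcal{L}^n(E) =: \V$, and by Lemma~\ref{rearrangedVolumeLemma} there is a unique $s^*$ with $V_\Omega(s^*) = \V$, so that $\{u^* > t\}$ and $\Omega^* \cap \{y_n < s^*\}$ are two slabs of equal measure, hence coincide (up to null sets since $V_\Omega$ is strictly increasing on $I$).

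Next I would invoke the defining property of $\Omega^*$: by \eqref{rearrangedPerimeterRelation},
\[
\operatorname*{P}(\{u^* > t\};\Omega^*) = \operatorname*{P}(\Omega^* \cap \{y_n < s^*\};\Omega^*) = \I^*(V_\Omega(s^*)) = \I^*(\V).
\]
On the other hand, $E = \{u > t\}$ is a Borel subset of $\Omega$ with $\mathcal{L}^n(E) = \V$, so by the definition \eqref{isoFunctionDefinition} of the isoperimetric function and the inequality $\I \geq \I^*$ from \eqref{eqn:IStarPositive} in Proposition~\ref{prop:IStar}, we get
\[
\operatorname*{P}(\{u > t\};\Omega) = \operatorname*{P}(E;\Omega) \geq \I(\V) \geq \I^*(\V) = \operatorname*{P}(\{u^* > t\};\Omega^*),
\]
which is exactly the claim. (If one instead uses $\I$ directly, as remarked after Proposition~\ref{prop:IStar}, the middle inequality is an equality and the same conclusion holds.)

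The only genuine subtlety — and the step I expect to require the most care — is the first one: verifying that $\{u^* > t\}$ really is (a.e.) a slab $\{y_n < s^*\}\cap\Omega^*$ and handling the boundary/degenerate cases. One must check that $g_u$ being decreasing forces $\{y \in \Omega^* : g_u(y_n) > t\} = \{y \in \Omega^* : y_n < h(t)\}$ with $h(t) := \sup\{s : g_u(s) > t\}$ — this is precisely the computation already carried out inside the proof of Lemma~\ref{Equimeasurable} — and then identify $s^* = h(t)$ via $V_\Omega(h(t)) = \varrho_u(t) = \mathcal{L}^n(E)$, again established there. The edge cases $t \geq \operatorname*{ess\,sup} u$ (where $\{u^*>t\}$ is empty and both sides vanish) and $t < \operatorname*{ess\,inf} u$ (where $\{u^* > t\} = \Omega^*$, a slab with $s^* = T$) should be noted but are immediate. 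No BV fine-structure is needed beyond knowing that superlevel sets of a $BV$ function have finite perimeter for a.e. $t$, which is standard, and in fact the inequality as stated holds for every $t$ with the convention that both sides may be $+\infty$.
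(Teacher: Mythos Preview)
Your proposal is correct and follows essentially the same route as the paper: identify $\{u^*>t\}$ as a slab $\Omega^*\cap\{y_n<s\}$ via the monotonicity of $g_u$, use Lemma~\ref{rearrangedVolumeLemma} and equimeasurability to write its perimeter as $\I^*(\varrho_u(t))$, and then conclude from $\I^*\leq\I$ and the definition of $\I$. The paper's version is slightly terser but the argument is the same.
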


\begin{proof}
As $g_u$ is a decreasing function (see \eqref{gSubUDefinition}), we note that the set $\{u^*>t\}$ is actually a set of the form $\{y_n < s\}$ or $\{y_n \leq s\}$. Since hyperplanes have $\mathcal{L}^n$ measure zero, by Lemma \ref{rearrangedVolumeLemma} we have that
\[
V_\Omega(s) =  \mathcal{L}^n(\Omega^* \cap \{y_n < s\}) = \varrho_{u^*}(t).
\]
By then recalling that $u$ and $u^*$ are equimeasurable (see Lemma \ref{Equimeasurable}) and by Lemma \ref{rearrangedVolumeLemma} we have the following:
\begin{align*}
\operatorname*{P}(\{u^*>t\};\Omega^*) &= \I^*(\varrho_{u^*}(t)) = \I^*(\varrho_u(t)) \leq \I(\varrho_u(t)) \\
&\leq \operatorname*{P}(\{u>t\};\Omega) ,
\end{align*}
where we have used the fact that $\I^*\leq \I$ and \eqref{isoFunctionDefinition}. This concludes the proof.
\end{proof}

Next we prove two lemmas that are preliminary to establishing our P\'olya--Szeg\H o type result.

\begin{lemma}\label{rearrangedTVLemma}
Given $u \in BV(\Omega)$, we have that $u^* \in BV(\Omega^*)$ and that the following inequality holds:
\[
\int_I\I^*(V_\Omega(s)) d |Dg_u|(s) = |Du^*|(\Omega^*) \leq |Du|(\Omega) .
\]
\end{lemma}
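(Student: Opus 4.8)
The plan is to prove the identity and the inequality separately, with the inequality being the substantive part. The first equality, $\int_I \I^*(V_\Omega(s))\, d|Dg_u|(s) = |Du^*|(\Omega^*)$, is essentially a slicing/Fubini computation: since $u^*(y',y_n) = g_u(y_n)$ depends only on the last coordinate and $g_u$ is monotone (hence $BV$ on $I$ as soon as we know $u^* \in BV(\Omega^*)$, which in turn follows once the inequality $|Du^*|(\Omega^*) \le |Du|(\Omega) < \infty$ is in hand), the total variation of $u^*$ on $\Omega^*$ is obtained by integrating the one-dimensional variation measure $|Dg_u|$ against the measure of the slices $\Omega^* \cap \{y_n = s\}$, which is exactly $\mathcal{L}^{n-1}(B_{n-1}(0,r(s))) = \I^*(V_\Omega(s))$ by \eqref{rearrangedSlicesRelation}. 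I would make this rigorous either by a direct Fubini argument on the product-like structure of $\Omega^*$ or, more cleanly, by the coarea formula for $BV$ functions together with Lemma \ref{RearrangedIsoperimetricInequality}.

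For the inequality $|Du^*|(\Omega^*) \le |Du|(\Omega)$, the strategy is the standard coarea route underlying the P\'olya--Szeg\H{o} inequality. By the coarea formula for $BV$,
\[
|Du|(\Omega) = \int_{\mathbb{R}} \operatorname*{P}(\{u>t\};\Omega)\, dt, \qquad |Du^*|(\Omega^*) = \int_{\mathbb{R}} \operatorname*{P}(\{u^*>t\};\Omega^*)\, dt,
\]
so it suffices to compare the integrands slice by slice. But that comparison is precisely the content of Lemma \ref{RearrangedIsoperimetricInequality}, which gives $\operatorname*{P}(\{u^*>t\};\Omega^*) \le \operatorname*{P}(\{u>t\};\Omega)$ for every $t$. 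Integrating in $t$ yields $|Du^*|(\Omega^*) \le |Du|(\Omega)$, and in particular $u^* \in BV(\Omega^*)$ since the right-hand side is finite by hypothesis. One should also check that $\{u^*>t\}$ has finite perimeter in $\Omega^*$ for a.e. $t$ so that the coarea formula applies on the rearranged side; this follows from Lemma \ref{RearrangedIsoperimetricInequality} itself together with $\operatorname*{P}(\{u>t\};\Omega) \in L^1_t$.

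The main obstacle, I expect, is not the coarea bookkeeping but justifying the first displayed equality with full rigor — that is, identifying $|Du^*|(\Omega^*)$ with the weighted one-dimensional variation $\int_I \I^*(V_\Omega(s))\, d|Dg_u|(s)$. The subtlety is that $g_u$ is only monotone and left-continuous, so $Dg_u$ may have atoms (jumps) and a Cantor part, and one must verify that the "fattening" by the slice-area weight $\I^*(V_\Omega(\cdot))$ interacts correctly with each part of the measure; the continuity of $s \mapsto \I^*(V_\Omega(s))$ (Proposition \ref{prop:IStar} and smoothness of $V_\Omega$) is what makes this work, since a continuous weight times a measure is unambiguous. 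Concretely I would argue: for $t$ in the range of $g_u$, the set $\{u^*>t\}$ is a half-slab $\{y_n < h(t)\}$ (as in the proof of Lemma \ref{RearrangedIsoperimetricInequality}) with perimeter $\I^*(V_\Omega(h(t))) = \I^*(\varrho_u(t))$; then
\[
|Du^*|(\Omega^*) = \int_{\mathbb{R}} \operatorname*{P}(\{u^*>t\};\Omega^*)\, dt = \int_{\mathbb{R}} \I^*(\varrho_u(t))\, dt,
\]
and a change of variables $t = g_u(s)$ (equivalently, using that $\varrho_u$ and $g_u$ are generalized inverses) converts $\int_{\mathbb{R}} \I^*(\varrho_u(t))\, dt$ into $\int_I \I^*(V_\Omega(s))\, d|Dg_u|(s)$, because the push-forward of $|Dg_u|$ under $s \mapsto g_u(s)$ is (up to the monotone reparametrization) Lebesgue measure on the range of $g_u$. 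This last change of variables, handled carefully across jumps of $g_u$, is where I would spend the most care.
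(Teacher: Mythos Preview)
Your proposal is correct and follows essentially the same route as the paper: the inequality via coarea plus Lemma~\ref{RearrangedIsoperimetricInequality} is exactly what the paper does, and for the identity the paper takes your first option---a direct duality/Fubini computation on $\Omega^*$, testing against $\phi\in C_0(\Omega^*)$ and integrating out the $y'$ variable to pick up the slice area $\I^*(V_\Omega(y_n))$---rather than the coarea-plus-change-of-variables route you flag as the main obstacle. So the subtlety you anticipate about pushing $|Dg_u|$ forward through jumps and Cantor parts is simply sidestepped; the continuity of $\I^*(V_\Omega(\cdot))$ is all that is needed to make the Fubini argument go through.
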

\begin{proof}
By Lemma \ref{Equimeasurable} we have that $u^* \in L^1(\Omega^*)$. By \eqref{gSubUDefinition} and by the fact that $g_u$ is decreasing, it follows that $g_u \in BV_{\operatorname*{loc}}(I)$ (see, e.g.,  Theorem 7.2 in \cite{LeoniBook}).

Moreover by our definition of $u^*$ (see \eqref{rearrangedDomainDefinition}, \eqref{rearrangedSlicesRelation}, \eqref{gSubUDefinition}, and Lemma \ref{Equimeasurable}) we can write the following:
\begin{align*}
&|Du^*|(\Omega^*) =  \sup\left\{\int_{\Omega^*} \phi(y',y_n) \,dy' \,d(Dg_u)(y_n): \phi \in C_0(\Omega^*), \|\phi\|_{C_0} \leq 1 \right\} \\
&= \sup\left\{\int_I \left(\int_{B_{n-1}(0,r(y_n))}\phi(y', y_n) \, dy' \right)\, d(Dg_u)(y_n): \phi \in C_0(\Omega^*), \|\phi\|_{C_0} \leq 1 \right\} \\
&= \sup\left\{\int_I \I^*(V_\Omega(y_n)) \psi(y_n) \, d(Dg_u)(y_n): \psi \in C_0(-T,T), \|\psi\|_{C_0} \leq 1\right\} \\
&= \int_I \I^*(V_\Omega(y_n)) \, d|Dg_u|(y_n) .
\end{align*}

Next we utilize the coarea formula and Lemma \ref{RearrangedIsoperimetricInequality} as follows:
\[
|Du^*|(\Omega^*) = \int_{\mathbb{R}} \operatorname*{P}(\{u^* > t\};\Omega^*)  \dt \leq \int_{\mathbb{R}} \operatorname*{P}(\{u>t\};\Omega) \dt = |Du|(\Omega).
\]

This proves the desired lemma.
\end{proof}

\begin{lemma}\label{sobolevPreservationLemma}
Given $u \in W^{1,1}(\Omega)$, it follows that $u^*\in W^{1,1}(\Omega^*)$.
\end{lemma}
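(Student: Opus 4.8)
The plan is to upgrade the conclusion of Lemma \ref{rearrangedTVLemma} (which already gives $u^* \in BV(\Omega^*)$) to membership in $W^{1,1}(\Omega^*)$ by showing that the distributional derivative $Du^*$ has no singular part. Since $u^*(y',y_n) = g_u(y_n)$ depends only on $y_n$, and $\Omega^*$ is a rotationally symmetric domain with radius function $r(y_n)$, it suffices to show that the one-dimensional function $g_u$ belongs to $W^{1,1}_{\mathrm{loc}}(I)$ with the right weighted integrability, i.e. that the decreasing function $g_u$ has no jumps and no Cantor part on $I$. Equivalently, recalling the layer-cake formula $|Du^*|(\Omega^*) = \int_{\mathbb R} \operatorname{P}(\{u^*>t\};\Omega^*)\,dt$ from the proof of Lemma \ref{rearrangedTVLemma}, I want to rule out both (a) a set of positive $\mathcal L^1$-measure of values $t$ for which $\{u^* > t\}$ and $\{u^* \ge t\}$ differ by a set of positive measure (which would force a jump in $g_u$, i.e. an atom contributing a genuine $(n{-}1)$-dimensional interface inside $\Omega^*$ of the wrong type), and (b) a Cantor part.

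The key steps, in order: First, reduce to truncations. By Lemma \ref{TrucationRearrangementLemma}, $\operatorname{Tr}_{s_1,s_2}(u^*) = (\operatorname{Tr}_{s_1,s_2}(u))^*$, and $\operatorname{Tr}_{s_1,s_2}(u) \in W^{1,1}(\Omega)$ whenever $u$ is; since $W^{1,1}$ membership of $u^*$ is a local-in-range property (it follows once $\operatorname{Tr}_{s_1,s_2}(u^*)\in W^{1,1}$ for every $s_1<s_2$, by monotone convergence on the derivatives), we may assume $u$ and hence $u^*$ are bounded, say with values in $[s_1,s_2]$. Second, use the coarea formula for $u\in W^{1,1}(\Omega)$ together with Lemma \ref{Lem:CoareaTrick}: for a.e. $t$, $\mathcal H^{n-1}(\{u=t\}\cap\{\nabla u = 0\})$ contributes nothing, the set $\{u = t\}$ has $\mathcal L^n$-measure zero for all but countably many $t$, and $\operatorname{P}(\{u>t\};\Omega) = \mathcal H^{n-1}(\{u=t\}\cap\{\nabla u\ne 0\}) < \infty$ for a.e. $t$; this already forces $\varrho_u$ to be continuous (no atoms), hence $g_u$ is continuous, which kills the jump part (a). Third, and this is the main point, I estimate the Cantor/derivative part. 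Using $|Du^*|(\Omega^*) = \int_I \I^*(V_\Omega(y_n))\,d|Dg_u|(y_n) = \int_{\mathbb R}\operatorname{P}(\{u^*>t\};\Omega^*)\,dt$ and Lemma \ref{RearrangedIsoperimetricInequality}, together with the fact that $t\mapsto \varrho_{u^*}(t) = \varrho_u(t)$ is now continuous and strictly related to $y_n$ via the strictly increasing $C^1$ map $V_\Omega$, I change variables so that $d|Dg_u|$ is transported to a measure on the range of $u$; the bound $\operatorname{P}(\{u^*>t\};\Omega^*) \le \operatorname{P}(\{u>t\};\Omega) = \frac{d}{dt}\mathcal L^n(\{u\le t\}\cap\{\nabla u\ne 0\}) + (\text{contribution of }\{\nabla u=0\})$ via Lemma \ref{Lem:CoareaTrick} shows the pushforward of the derivative of $g_u$ is absolutely continuous with respect to $\mathcal L^1$ on the range. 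Pulling back through $V_\Omega \in C^1$ with $V_\Omega' = \I^*(V_\Omega) > 0$ (so $V_\Omega^{-1}$ is Lipschitz on compacta of $I$), absolute continuity is preserved, giving $g_u \in W^{1,1}_{\mathrm{loc}}(I)$ and hence $u^*\in W^{1,1}(\Omega^*)$.

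The main obstacle I expect is step three: carefully showing that the Cantor part of $Dg_u$ vanishes. The subtlety is that the coarea formula controls $\operatorname{P}(\{u>t\};\Omega)$ for a.e.\ $t$ but one must combine this with the equimeasurability $\varrho_{u^*} = \varrho_u$ and Lemma \ref{RearrangedIsoperimetricInequality} to transfer regularity of the level-set function back to $g_u$, and one must be careful that the weight $\I^*(V_\Omega(\cdot))$ in $\int_I \I^*(V_\Omega)\,d|Dg_u|$ is bounded below only on compact subsets of $I$ (it degenerates like $V_\Omega^{(n-1)/n}$ near $\pm T$), which is why the conclusion is $W^{1,1}_{\mathrm{loc}}(I)$ and one works on compact subintervals — but since $g_u$ is bounded, $\operatorname{Tr}$ arguments confine everything to a compact subinterval of $I$ where $\I^*(V_\Omega(\cdot)) \ge c > 0$, and there $d|Dg_u| \le c^{-1}\,d|Du^*|$-type bounds close the argument.
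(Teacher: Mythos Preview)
Your high-level target (show $g_u \in W^{1,1}_{\mathrm{loc}}(I)$, working on compact subintervals where $\I^*(V_\Omega)$ is bounded below) matches the paper, and you correctly flag the truncation lemma and Lemma \ref{rearrangedTVLemma} as the relevant tools. However, the execution has real gaps.

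In Step 2 the implication is backwards. Continuity of $\varrho_u$ (no atoms) does \emph{not} force continuity of $g_u$; a jump of $g_u$ at $t_0$ corresponds to a \emph{flat} stretch of $\varrho_u$ (i.e.\ $\varrho_u \equiv V_\Omega(t_0)$ on an interval of $s$-values), not to an atom of $\varrho_u$. Moreover, for $u\in W^{1,1}(\Omega)$ the distribution function $\varrho_u$ can certainly have atoms: $u$ may equal a constant on a set of positive $\mathcal L^n$-measure. So Step~2, as written, neither has a correct hypothesis nor a correct deduction.

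In Step 3 the displayed identity is wrong: $\operatorname{P}(\{u>t\};\Omega)$ is not the $t$-derivative of $\mathcal L^n(\{u\le t\}\cap\{\nabla u\ne 0\})$. Lemma \ref{Lem:CoareaTrick} gives $\frac{d}{dt}\mathcal L^n(\{u\le t,\ \nabla u\ne 0\}) = \int_{\{u=t,\ \nabla u\ne 0\}} |\nabla u|^{-1}\,d\mathcal H^{n-1}$, which is a different surface integral from the perimeter. The subsequent ``pushforward is absolutely continuous / pull back through $V_\Omega$'' sketch is too vague to rule out a Cantor part, and without the correct relation above there is no mechanism in your argument that actually uses $u\in W^{1,1}$ (as opposed to $BV$) to kill singular measure.

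The paper bypasses all of this by verifying the $\varepsilon$--$\delta$ definition of absolute continuity of $g_u$ on $[t_0,t_1]\Subset I$ directly. The one-line engine is: for $s_1=g_u(b_k)<s_2=g_u(a_k)$, applying Lemmas \ref{TrucationRearrangementLemma} and \ref{rearrangedTVLemma} to $\operatorname{Tr}_{s_1,s_2}u$ yields
\[
\Bigl(\min_{[t_0,t_1]}\I^*(V_\Omega)\Bigr)\,|g_u(a_k)-g_u(b_k)| \le \int_{\{g_u(b_k)<u<g_u(a_k)\}} |\nabla u|\,dx.
\]
The set on the right has measure controlled by $\sum_k (b_k-a_k)$ (via $V_\Omega'=\I^*(V_\Omega)$), and then the \emph{uniform integrability} of $|\nabla u|\in L^1(\Omega)$ --- absolute continuity of $E\mapsto\int_E|\nabla u|$ --- closes the argument. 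This uniform-integrability step is the key idea you are missing; it is exactly what distinguishes $W^{1,1}$ from $BV$ and simultaneously kills both the jump and Cantor parts without having to treat them separately.
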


\begin{proof}
By \eqref{uStarDefinition} and Lemma \ref{rearrangedTVLemma} it suffices to show that $g_u$ is absolutely continuous on any sub-interval $[t_0,t_1]$ compactly contained in $I$, that is, that for any $\epsilon>0$ there exists $\delta_0>0$ such that for any finite collection of non-overlapping subintervals $(a_k, b_k)$ of $[t_0,t_1]$ satisfying $\sum_{k=1}^N (b_k - a_k) \leq \delta_0$ we have $\sum_{k=1}^N |g_u(b_k)-g_u(a_k)| \leq \epsilon$. Fix $\epsilon >0$, and let $\delta$ be small enough such that for any measurable $E \subset \Omega$ with $\mathcal{L}^n(E) \leq \delta$ the following holds (see \eqref{eqn:IStarPositive}):
\begin{equation} \label{uniformlyIntegrableBound}
\int_E |\nabla u| \dx \leq \epsilon \min_{t \in [t_0,t_1]}\I^*(V_\Omega(t)) .
\end{equation}
Now consider any finite collection of non-overlapping subintervals $(a_k, b_k)$ of $[t_0,t_1]$, satisfying
\begin{equation}\label{ACIntervalBoundSize}
\sum_{k=1}^N (b_k - a_k) \leq \frac{\delta}{\max_{t \in [t_0,t_1]}\I^*(V_\Omega(t))} .
\end{equation}

The following estimate holds by \eqref{DomainRearrangeODE}, \eqref{rearrangedVolumeRelation}, \eqref{gSubUDefinition}, \eqref{uStarDefinition}, Lemma \ref{Equimeasurable} and \eqref{ACIntervalBoundSize}:
\begin{align}
&\mathcal{L}^n\left(\bigcup_{k=1}^N \{x \in \Omega : g_u(b_k) < u(x) < g_u(a_k)\}\right)  \nonumber \\
&= \sum_{k=1}^N \mathcal{L}^n( \{y \in \Omega^* : g_u(b_k) < u^*(y) < g_u(a_k)\}) \nonumber \\
& \leq \sum_{k=1}^N (V_\Omega(b_k) - V_\Omega(a_k)) \leq \max_{t \in [t_0,t_1]} \I^*(V_\Omega(t))  \sum_{k=1}^N (b_k - a_k)  \leq \delta . \label{ACEstimate}
\end{align}

Next, set $s_1 := g_u(b_k)$ and $s_2 := g_u(a_k)$ and let $v := \operatorname*{Tr}\nolimits_{s_1,s_2} u$. As the pointwise variation and the total variation of the decreasing and left continuous function $g_u$ on an interval coincide (see Theorem 7.2 in \cite{LeoniBook}), by applying Lemma \ref{TrucationRearrangementLemma}, Lemma \ref{rearrangedTVLemma} above we obtain
\begin{align*}
&\min_{t \in [t_0,t_1]}\I^*(V_\Omega(t))  |g_u(a_k) - g_u(b_k)| \\
&\leq \int_{a_k}^{b_k} \I^*(V_\Omega(t)) \, d |D g_u|(t) = \int_I \I^*(V_\Omega(t)) \, d |D (\operatorname*{Tr}\nolimits_{s_1,s_2} g_u)|(t) \\
&= |D v^*|(\Omega^*) \leq |Dv|(\Omega) = \int_{\{g_u(b_k)<u  < g_u(a_k)\}} |\nabla u|  \dx .
\end{align*}

We then find the following:
\begin{align*}
&\min_{t \in [t_0,t_1]}\I^*(V_\Omega(t))  \sum |g_u(a_k) - g_u(b_k)|  \leq \int_{\bigcup_k \{g_u(b_k) <u  < g_u(a_k)\}} |\nabla u| \dx \\
& \leq \min_{t \in [t_0,t_1]}(\I^*(V_\Omega(t))) \epsilon,
\end{align*}
where we have used \eqref{uniformlyIntegrableBound} and \eqref{ACEstimate}. This implies that $g_u$ is absolutely continuous on $[t_0,t_1]$, as claimed.

\end{proof}

Now we prove the main result of this section.

\begin{theorem} \label{1DRearrangement}
If $u \in W^{1,p}(\Omega)$ for $1 \leq p \leq \infty$ then $u^* \in W^{1,p}(\Omega^*)$ and furthermore:
\[
\int_I |g_u'|^p \I^*(V_\Omega)  \ds = \int_{\Omega^*} |\nabla u^*|^p \dy \leq \int_\Omega |\nabla u|^p \dx.
\]
\end{theorem}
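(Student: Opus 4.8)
The plan is to follow the classical route to the P\'olya--Szeg\H o inequality, with $\I^*$ playing the role of the isoperimetric profile of Euclidean space; all the ingredients are already available from the lemmas above. The claimed \emph{equality} requires no inequality at all: by \eqref{uStarDefinition} we have $u^*(y',y_n)=g_u(y_n)$, so $\nabla u^*(y)=g_u'(y_n)e_n$ for a.e.\ $y\in\Omega^*$, where $g_u'$ exists a.e.\ on $I$ by Lemma \ref{sobolevPreservationLemma}; integrating $|\nabla u^*|^p=|g_u'(y_n)|^p$ over the slices $B_{n-1}(0,r(y_n))$ and invoking \eqref{rearrangedSlicesRelation} and Fubini's theorem gives $\int_{\Omega^*}|\nabla u^*|^p\dy=\int_I|g_u'|^p\I^*(V_\Omega)\ds$.

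For the \emph{inequality}, the case $p=1$ is Lemma \ref{rearrangedTVLemma}. For $1<p<\infty$ I would first reduce to $u\in W^{1,p}(\Omega)\cap L^\infty(\Omega)$: applying Lemma \ref{TrucationRearrangementLemma} to $u_k:=\operatorname*{Tr}\nolimits_{-k,k}(u)$ gives $(u_k)^*=\operatorname*{Tr}\nolimits_{-k,k}(u^*)$, hence $g_{u_k}=\operatorname*{Tr}\nolimits_{-k,k}(g_u)$; since $|\nabla u_k|=|\nabla u|\chi_{\{|u|<k\}}\uparrow|\nabla u|$ and $|g_{u_k}'|\uparrow|g_u'|$ a.e., monotone convergence carries the estimate for bounded functions to the general one, and in particular shows $\int_{\Omega^*}|\nabla u^*|^p\dy<\infty$, so that $u^*\in W^{1,p}(\Omega^*)$ (with Lemma \ref{Equimeasurable} giving $u^*\in L^p$). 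The case $p=\infty$ then follows by letting $p\to\infty$ in the finite-exponent estimate, using $\mathcal{L}^n(\Omega)=\mathcal{L}^n(\Omega^*)=1$ and $\I^*(V_\Omega)>0$ on $I$.

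So fix $1<p<\infty$ and bounded $u$, taken in the good representative provided by Lemma \ref{Lem:CoareaTrick}, and write, for $s\in\mathbb{R}$,
\[
L(s):=\int_{\{u=s,\,\nabla u\neq 0\}}|\nabla u|^{-1}\,d\mathcal{H}^{n-1},\qquad P(s):=\mathcal{H}^{n-1}(\{u=s,\,\nabla u\neq 0\}).
\]
The three inputs, valid for a.e.\ $s$, are: (i) $-\varrho_u'(s)\ge L(s)$, from Lemma \ref{Lem:CoareaTrick} and the monotonicity of $\varrho_u$; (ii) $\I^*(\varrho_u(s))=\operatorname*{P}(\{u^*>s\};\Omega^*)\le\operatorname*{P}(\{u>s\};\Omega)\le P(s)$, where the identity and first inequality are Lemma \ref{RearrangedIsoperimetricInequality} and its proof, and $\operatorname*{P}(\{u>s\};\Omega)\le P(s)$ holds for a.e.\ $s$ since $\partial^*\{u>s\}\subseteq\{u=s\}$ and $\mathcal{H}^{n-1}(\{u=s,\nabla u=0\})=0$ for a.e.\ $s$ (again by coarea); (iii) H\"older's inequality on $\{u=s,\nabla u\neq 0\}$ with exponents $p$ and $p/(p-1)$, giving $P(s)^p\le L(s)^{p-1}\int_{\{u=s,\nabla u\neq 0\}}|\nabla u|^{p-1}\,d\mathcal{H}^{n-1}$. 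Changing variables $s=g_u(t)$ in $\int_I|g_u'|^p\I^*(V_\Omega)\ds$ via $\tfrac{d}{dt}V_\Omega=\I^*(V_\Omega)$, $V_\Omega(t)=\varrho_u(g_u(t))$ and the ensuing a.e.\ identity $|g_u'(t)|=\I^*(V_\Omega(t))/(-\varrho_u'(g_u(t)))$ (restricting to $\{L>0\}$, since elsewhere the integrand vanishes on both sides by (ii)), one gets
\begin{align*}
\int_I|g_u'(t)|^p\I^*(V_\Omega(t))\dt &=\int_{\mathbb{R}}\frac{\I^*(\varrho_u(s))^p}{(-\varrho_u'(s))^{p-1}}\ds\;\le\;\int_{\mathbb{R}}\frac{P(s)^p}{L(s)^{p-1}}\ds\\
&\le\;\int_{\mathbb{R}}\int_{\{u=s,\,\nabla u\neq 0\}}|\nabla u|^{p-1}\,d\mathcal{H}^{n-1}\ds\;=\;\int_\Omega|\nabla u|^p\dx,
\end{align*}
where the first inequality uses (i)--(ii), the second uses (iii), and the last equality is the coarea formula with $g=|\nabla u|^{p-1}$.

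The hard part is purely measure-theoretic: making the change of variables $s=g_u(t)$ and the attendant chain rule rigorous when $\varrho_u$ and $g_u$ are merely monotone (not continuous, not strictly monotone), and excising the critical set $\{\nabla u=0\}$ carefully so that quotients like $P(s)^p/L(s)^{p-1}$ never degenerate to $0/0$ with nonzero numerator. As in the classical P\'olya--Szeg\H o argument this is handled by restricting to the full-measure set of $s$ at which $\varrho_u$ is approximately differentiable with $-\varrho_u'(s)>0$, verifying that $g_u$ maps $I$ onto the essential range of $u$ up to a negligible set, and using the local absolute continuity of $g_u$ from Lemma \ref{sobolevPreservationLemma} to justify the substitution; everything else is the standard chain of coarea formula, the isoperimetric inequality recorded in Lemma \ref{RearrangedIsoperimetricInequality}, and H\"older's inequality.
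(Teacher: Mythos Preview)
Your proposal is correct and follows essentially the same classical route as the paper: coarea formula, the isoperimetric comparison of Lemma~\ref{RearrangedIsoperimetricInequality}, the one-sided derivative bound \eqref{distributionFunctionDerivativeInequality} for $\varrho_u$, and H\"older/Jensen on level sets. The paper does not truncate to bounded $u$ nor argue $p=\infty$ by letting $p\to\infty$, but these are minor variations.

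The one place where your argument diverges substantively is exactly the step you flag as ``the hard part'': justifying
\[
\int_I |g_u'(t)|^p\,\I^*(V_\Omega(t))\,dt \;=\; \int_{\mathbb{R}} \frac{\I^*(\varrho_u(s))^p}{(-\varrho_u'(s))^{p-1}}\,ds
\]
via the change of variables $s=g_u(t)$ and the chain rule $|g_u'(t)|=\I^*(V_\Omega(t))/(-\varrho_u'(g_u(t)))$. The paper avoids this change of variables entirely. Instead it computes $\int_{\Omega^*}|\nabla u^*|^p\,dy$ directly by the coarea formula in $\Omega^*$, uses that $\nabla u^*$ is constant on each level set of $u^*$, and then needs only the \emph{equality}
\[
\varrho_{u^*}'(t) \;=\; -\int_{\{u^*=t,\,\nabla u^*\neq 0\}} |\nabla u^*|^{-1}\,d\mathcal{H}^{n-1}
\]
for a.e.\ $t$. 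This is obtained by showing that the singular part $f_1^{u^*}(t)=\mathcal{L}^n(\{u^*>t,\,\nabla u^*=0\})$ of the decomposition \eqref{distributionFunctionRepresentation} has vanishing derivative a.e., which in turn follows (following Cianchi--Fusco) from the fact that $g_u\in W^{1,1}_{\mathrm{loc}}(I)$ maps its critical set to a Lebesgue-null set, so that $|Df_1^{u^*}|$ is concentrated on a set of $\mathcal{L}^1$-measure zero. Once that equality is in hand, equimeasurability ($\varrho_{u^*}=\varrho_u$) gives the displayed identity without any substitution in the $t$-variable. This is a cleaner resolution of precisely the measure-theoretic issue you identified; your route can be made rigorous too, but the paper's device of proving $\frac{d}{dt}f_1^{u^*}=0$ a.e.\ is the missing lemma that closes the gap most directly.
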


\begin{proof}

Lemmas \ref{rearrangedTVLemma} and \ref{sobolevPreservationLemma} immediately give this inequality if $p = 1$. For $p>1$ we can still apply the previous lemmas to show that $u^* \in W^{1,1}(\Omega^*)$, because $\Omega$ has finite measure.

Next we note that the following equality holds (by using the coarea formula)
\begin{equation} \label{distributionFunctionRepresentation}
\varrho_u(t) = \mathcal{L}^n(\{u > t\} \cap \{\nabla u = 0\}) + \int_t^\infty \int_{\{u = s,\,\nabla u \neq 0\}} |\nabla u|^{-1} \, d\mathcal{H}^{n-1}  \ds =: f_1^u(t) + f_2^u(t) .
\end{equation}
Clearly $f_2^u$ is absolutely continuous, and $f_1^u$ is decreasing. Thus by the Lebesgue differentiation theorem (see Theorems 1.21 and 3.30 in \cite{LeoniBook}), $\varrho_u$ is differentiable for a.e.  $t$, with
\begin{equation} \label{distributionFunctionDerivativeInequality}
\varrho_u'(t) \leq -\int_{\{u = t, \nabla u \neq 0\}} |\nabla u|^{-1} \, d\mathcal{H}^{n-1} .
\end{equation}

Next we claim that (following \cite{FuscoCianchi}) for a.e. $t$,
\begin{equation} \label{h1Derivative}
 \frac{d}{dt} f_1^{u^*}(t)= \frac{d}{dt} \mathcal{L}^n(\{u^* > t,\,\nabla u^* = 0\}) = 0 .
\end{equation}
To establish this claim, we first note that for any open interval $J$ we have the following
\[
\mathcal{L}^1(g_u(J)) \leq \int_{J} |g_u'| \ds .
\]
By approximating measurable sets with disjoint open intervals we can then establish that
\[
\mathcal{L}^1(g_u( \{g'_u = 0\})) \leq \int_{\{g_u' = 0\}} |g_u'| \ds=0 .
\]
Following \cite{FuscoCianchiBVRearrangement} we then find that
\begin{align*}
\mathcal{L}^1( u^* (\{\nabla  u^* = 0\}))= \mathcal{L}^1(g_u( \{g'_u = 0\}))= 0 .
\end{align*}
Thus there exists a Borel set $F_0$ in $\mathbb{R}$ so that $\mathcal{L}^1(F_0) = 0$ and so that $u^*(\{\nabla u^* = 0\}) \subset F_0$. 

We then claim that for any Borel set $B$ in $\mathbb{R}$ we have that
\[
|Df_1^{u^*}|(B) = \mathcal{L}^n( (u^*)^{-1}(B)\cap \{\nabla u^* = 0\}) .
\]
To see this, we first note that $f_1^{u^*}$ is right continuous and decreasing. We then have that
\begin{align*}
&|Df_1^{u^*}|((t_1,t_2)) = f_1^{u^*}(t_1) - \lim_{t\to t_2^- }f_1^{u^*}(t_2) \\
&= \mathcal{L}^n(\{u^* > t_1,\, \nabla u^* = 0\}) - \lim_{t\to t_2^- }\mathcal{L}^n(\{u^* > t,\, \nabla u^* = 0\})\\
&= \mathcal{L}^n(\{u^* > t_1,\, \nabla u^* = 0\}) - \mathcal{L}^n(\{u^* \ge  t_2,\, \nabla u^* = 0\})\\
&= \mathcal{L}^n((u^*)^{-1}((t_1,t_2))\cap\{\nabla u^* =0\} ) .
\end{align*}
As both $|Df_1^{u^*}|$ and $\mathcal{L}^n(  (u^*)^{-1}(\cdot)\cap \{\nabla u^* = 0\})$ are Borel measures, and as they are equal on open intervals, they must be equal on all Borel sets. This and the fact that $u^*(\{\nabla u^* = 0\}) \subset F_0$ immediately give that
\[
|Df_1^{u^*}|(\mathbb{R} \backslash F_0) = \mathcal{L}^n((u^*)^{-1}(\mathbb{R} \backslash F_0)\cap\{\nabla u^* = 0\}  ) = \mathcal{L}^n(\emptyset) = 0,
\]
which proves \eqref{h1Derivative}. Utilizing \eqref{distributionFunctionRepresentation} this then immediately implies that for a.e. $t$,
\begin{equation}\label{derivativeFSubU}
\varrho_{u^*}'(t) =  -\int_{\{u^* = t, \nabla u^* \neq 0\}} |\nabla u^*|^{-1} \, d\mathcal{H}^{n-1} .
\end{equation}

By the coarea formula we can write the following:
\begin{align*}
\int_{\Omega^*} |\nabla u^*|^p \dy &= \int_{\Omega^*\cap \{\nabla u^* \neq 0\}} |\nabla u^*|^p \dy \\
&= \int_{\mathbb{R}} \int_{\{u^* = t,\, \nabla u^* \neq 0\}} |\nabla u^*|^{p-1} \,d\mathcal{H}^{n-1}  \dt .
\end{align*}
By \eqref{uStarDefinition} we know that $\nabla u^*(y) = (0,g_u'(y_n)) \in \mathbb{R}^{n-1}\times \mathbb{R}$. Since $g_u$ is decreasing we have that the set $\{u^* = t\}$ is a set of the form $\{y: \, y_n \in J,\, y' \in B_{n-1}(0,r(y_n))\}$, for some (possibly degenerate) interval $J$, with endpoints $t_1\le t_2$. If $t_1 = t_2$ then clearly $\nabla u^*$ is constant on the set $\{u^* = t\}$. If $t_1 \neq t_2$ then $g_u'$ is zero on the set $(t_1,t_2)$, and is either zero at $t_1,t_2$ or is undefined. Since $\nabla u^*$ is constant on level sets of $u^*$ (where it's defined) by Lemma \ref{Lem:CoareaTrick}, with $u^*$ in place of $u$, we can then write
\[
\int_{\Omega^*} |\nabla u^*|^p \dy =\int_{\mathbb{R}} \frac{\left(\mathcal{H}^{n-1}(\{u^* = t,\,\nabla u^* \neq 0\})\right)^p}{(\int_{\{u^* = t,\, \nabla u^* \neq 0\}} |\nabla u^*|^{-1} \,d\mathcal{H}^{n-1})^{p-1}}  \dt  .
\]
By \eqref{derivativeFSubU} we have that
\[
\int_{\Omega^*} |\nabla u^*|^p \dy =  \int_{\mathbb{R}} \frac{\operatorname*{P}(\{u^* > t\};\Omega^*)^p}{(-\varrho_{u^*}'(t))^{p-1}}  \dt .
\]
Next we utilize Lemma \ref{Equimeasurable} and Lemma \ref{RearrangedIsoperimetricInequality} to find that
\[
\int_{\Omega^*} |\nabla u^*|^p \dy \leq \int_{\mathbb{R}} \frac{\operatorname*{P}(\{u > t\};\Omega)^p}{(-\varrho_{u}'(t))^{p-1}}  \dt .
\]
Next \eqref{distributionFunctionDerivativeInequality} gives
\begin{align*}
\int_{\Omega^*} |\nabla u^*|^p \dy &\leq \int_{\mathbb{R}} \frac{\operatorname*{P}(\{u > t\};\Omega)^p}{(\int_{\{u = t,\, \nabla u \neq 0\}} |\nabla u|^{-1} \,d\mathcal{H}^{n-1})^{p-1}}  \dt \\
&= \int_{\mathbb{R}} \frac{\operatorname*{P}(\{u > t\};\Omega)^p}{(\mathcal{H}^{n-1}(\{u = t,\, \nabla u \neq 0\}))^{(p-1)}}\left(\int_{\{u = t,\, \nabla u \neq 0\}} |\nabla u|^{-1} \,d\mu_t\right)^{-(p-1)}\dt \\
&\leq  \int_{\mathbb{R}} \frac{\operatorname*{P}(\{u > t\};\Omega)^p}{(\mathcal{H}^{n-1}(\{u = t,\, \nabla u \neq 0\}))^{(p-1)}}\int_{\{u = t,\, \nabla u \neq 0\}} |\nabla u|^{(p-1)} \,d\mu_t\dt \\
&= \int_{\mathbb{R}} \int_{\{u = t\}} |\nabla u|^{p-1} \,d\mathcal{H}^{n-1}  \dt,
\end{align*}
where we use Jensen's inequality with $f(s) := s^{-(p-1)}$ and the probability measure
\[
\mu_t := \frac{d\mathcal{H}^{n-1}}{\mathcal{H}^{n-1}(\{u = t,\, \nabla u \neq 0\})}.
\] 
The result then follows after applying the coarea formula.

\end{proof}

\begin{remark} \label{increasingRearrangementRemark}
In this section we have considered a rearrangement of the function $u$, via the decreasing function $g_u: I \to \mathbb{R}$. However, all of the arguments would hold for an increasing rearrangement. Indeed, by utilizing \eqref{eqn:IStarSymmetric} and by Lemma \ref{Equimeasurable} for any Borel function $\psi:\mathbb{R} \to [0,\infty)$, the function $f_u(t) := g_u(-t)$ satisfies the following
\begin{align*}
\int_I \psi(f_u(t)) \I^*(V_\Omega(t))\dt &= \int_I \psi(g_u(t)) \I^*(V_\Omega(t)) \dt = \int_\Omega \psi(u) \dx,\\
\int_I |f_u'(t)|^p \I^*(V_\Omega(t))\dt &=\int_I |g_u'(t)|^p \I^*(V_\Omega(t)) \dt \leq \int_\Omega |\nabla u|^p \dx.
\end{align*}
We chose to work with the decreasing rearrangement in this section because that is the standard convention chosen in the literature involving rearrangement. However, we will work with the increasing rearrangement $f_u$ of $u$ in subsequent sections because we prefer to think of phase transitions as increasing functions.
\end{remark}

The following corollary is the motivation for our development of the rearrangement in this section and is a simple application of Lemma \ref{Equimeasurable} and Theorem \ref{1DRearrangement}.

\begin{corollary} \label{Cor:EnergyRearrangement}
Let $u \in H^1(\Omega)$. Then the following inequality holds:
\begin{equation}\label{eqn:weightedVersion}
\int_\Omega W(u) + \e^2 |\nabla u|^2 \dx \geq \int_I (W(f_u) + \e^2 (f_u')^2)\I^*(V_\Omega) \dt.
\end{equation}
Moreover 
\[
\int_\Omega u \dx = \int_I f_u \I^*(V_\Omega) \dt.
\]
\end{corollary}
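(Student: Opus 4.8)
The plan is to obtain this corollary as a direct bookkeeping consequence of Lemma~\ref{Equimeasurable} and Theorem~\ref{1DRearrangement}, combined with the observation in Remark~\ref{increasingRearrangementRemark} that allows one to pass from the decreasing rearrangement $g_u$ to the increasing rearrangement $f_u(t):=g_u(-t)$. No new ideas are needed.

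First I would handle the potential term. Since $W\ge 0$ is Borel, the integral $\int_\Omega W(u)\dx$ is well-defined in $[0,\infty]$, so Lemma~\ref{Equimeasurable} applies with $\psi=W$ and yields
\[
\int_\Omega W(u)\dx=\int_I W(g_u)\,\I^*(V_\Omega)\dt=\int_I W(f_u)\,\I^*(V_\Omega)\dt ,
\]
where the last equality is the change of variables $t\mapsto -t$ together with the symmetry \eqref{eqn:IStarSymmetric} of $\I^*$, exactly as recorded in Remark~\ref{increasingRearrangementRemark}. Next, for the gradient term, I would apply Theorem~\ref{1DRearrangement} with $p=2$ to $u\in W^{1,2}(\Omega)$; this gives $u^*\in W^{1,2}(\Omega^*)$ and
\[
\int_I|f_u'|^2\,\I^*(V_\Omega)\dt=\int_I|g_u'|^2\,\I^*(V_\Omega)\dt\le\int_\Omega|\nabla u|^2\dx .
\]
Multiplying the last inequality by $\e^2$ and adding it to the preceding identity produces \eqref{eqn:weightedVersion}.

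For the mass identity, I would note that $\Omega$ bounded implies $u\in L^1(\Omega)$, so $\int_\Omega |u|\dx<\infty$ and Lemma~\ref{Equimeasurable} may again be invoked, this time with $\psi(s)=s$ (for which all the relevant integrals are finite), yielding $\int_\Omega u\dx=\int_I g_u\,\I^*(V_\Omega)\dt=\int_I f_u\,\I^*(V_\Omega)\dt$, using Remark~\ref{increasingRearrangementRemark} once more.

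I do not expect any genuine obstacle here. The only points that deserve a word of care are (i) verifying that the integrals appearing in Lemma~\ref{Equimeasurable} are well-defined in the two instances used — automatic for $\psi=W\ge 0$, and a consequence of $u\in L^1(\Omega)$ for $\psi(s)=s$ — and (ii) the passage from the decreasing rearrangement $g_u$ used throughout Section~\ref{PolyaSzegoSection} to the increasing rearrangement $f_u$, which is precisely the content of Remark~\ref{increasingRearrangementRemark}.
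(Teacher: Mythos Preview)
Your proposal is correct and follows exactly the approach the paper indicates: the corollary is stated as ``a simple application of Lemma~\ref{Equimeasurable} and Theorem~\ref{1DRearrangement}'' (combined with Remark~\ref{increasingRearrangementRemark} to pass to the increasing rearrangement), and you have filled in precisely those details. There is nothing to add.
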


\section{A 1D Functional Problem}

In light of Corollary \ref{Cor:EnergyRearrangement}, one possible avenue for studying the $\Gamma$-$\liminf$ of $\mathcal{F}_\e^{(1)}$ is to study the weighted, one-dimensional functional in \eqref{eqn:weightedVersion}. This was precisely the approach in \cite{DalMasoFonsecaLeoni}, where the radial case was studied. Because we do not have a specific form for $\I^*(V_\Omega)$, it is necessary for us to consider a much more general class of weights. The general weighted case, to our knowledge, has only been studied in \cite{kurata2001one}. They studied monotonicity properties of minimizers along curved strips in $\mathbb{R}^2$. Our work focuses on an entirely different question and applies to a wider class of weights.

We recall \eqref{DefI}, namely
\[
I := (-T,T).
\]
In this section only $T$ may be any positive number. We will assume that the weight $\eta$ satisfies the following:
\begin{align}
\eta &\in C^{1,\beta}(I), \quad \eta > 0 \text{ in } I,\label{etaSmooth}\\
d_1 (t+T)^{n_1-1} &\leq \eta(t) \leq d_2(t+T)^{n_1-1} \text{ for } t \in (-T,-T+t^*],\label{etaTail1}\\
d_3 (T-t)^{n_2-1} &\leq \eta(t) \leq d_4(T-t)^{n_2-1} \text{ for } t \in [T-t^*,T),\label{etaTail2}\\
|\eta'(t)| &\leq \frac{d_5 \eta(t)}{\min\{T-t,t+T\}}  \quad \text{ for } t\in I, \label{etaPrimeRatio}
\end{align}
for some constants $\beta \in (0,1]$, $d_1, \dots,  d_5>0$, $n_1,n_2 \in \mathbb{N}$ and $t^*>0$. These assumptions are naturally satisfied if $\eta(t) = \I^*(V_\Omega(t))$.

\begin{remark}
Two important weights are covered in our analysis. The unweighted case $\eta \equiv 1$  can be recovered by taking $n_1 = n_2 = 1$ and $d_i = 1$ for $i= 1,\dots,4$, while the radial weight $\eta (t)= (T+t)^{n-1}$ can be obtained by taking $n_1 = n$, $n_2 = 1$, $d_1 = d_2 = 1$ and appropriate $d_3$ and $d_4$. Both of these cases have been previously studied by various authors (see, e.g.,  \cite{Bellettini2013, NiethammerRadial}).
\end{remark}

By way of notation, we will write $L_\eta^p$ to be the space $L^p(I;\mathbb{R},\eta)$, where $p\geq 1$. We will also write $BV_\eta$ to be the space $BV(I;\mathbb{R}, \eta)$ with weight $\eta$, meaning that
\[
\|v\|_{BV_\eta} := \int_I |v(t)| \eta(t)  \dt + \int_I \eta(t) \, d|Dv|(t).
\]
For $v \in BV_\eta$ we will also write the weighted total variation of the derivative in the following manner
\[
|Dv|_\eta (E) = \int_E \eta(t) \, d|Dv|(t).
\]
We will write $H_\eta^1$ to be the analogous weighted version of $H^1$. We conduct our analysis in the weighted spaces because it is the natural setting for this variational problem.

In this section we study the functional
\begin{equation} \label{GDefinition}
G_\e(v) := \int_I (W(v) + \e^2 (v')^2)\eta \dt,\quad v \in H_\eta^1,
\end{equation}
subject to the constraint that
\begin{equation}\label{1DMassConstraint}
\int_I v \eta \dt =m \in \left(a\int_I \eta \dt,b\int_I \eta \dt\right).
\end{equation}
We extend $G_\e$ to $L_\eta^1$ by setting $G_\e(v): = \infty$ if $v \in L_\eta^1 \backslash H_\eta^1$ or if \eqref{1DMassConstraint} fails.

\subsection{Zero and First-Order $\Gamma$-limit of $G_\e$}

We begin by establishing the zeroth-order $\Gamma$-limit of the functional $G_\epsilon$.

\begin{theorem}\label{1DZeroGammaLimit}
Assume that $W$ satisfies hypotheses \eqref{W_Smooth}-\eqref{WGurtin_Assumption} and that $\eta$ satisfies hypotheses \eqref{etaSmooth}-\eqref{etaPrimeRatio}. Then the family $\{G_\e\}$ $\Gamma$-converges to $G^{(0)}$ in $L_\eta^1$, where
\begin{equation*}
G^{(0)}(v) := \begin{cases}
\int_I W(v) \eta \dt& \text{ if } v \in L_\eta^1 \text{ and } \int_I v \eta  \dt = m,\\
\infty& \text{ otherwise in  } L_\eta^1.
\end{cases}
\end{equation*}
\end{theorem}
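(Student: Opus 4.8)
The plan is to verify the two defining conditions of $\Gamma$-convergence in $L^1_\eta$: the $\liminf$ inequality along arbitrary convergent sequences, and the existence of a recovery sequence. The zeroth-order limit is the ``easy'' order because the gradient term $\e^2(v')^2\eta$ vanishes in the limit and plays no role in the lower bound; the only subtlety is the persistence of the mass constraint \eqref{1DMassConstraint} under $L^1_\eta$-convergence and the integrability of $W(v)$.

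\textbf{Step 1: the $\liminf$ inequality.} Suppose $v_\e \to v$ in $L^1_\eta$ with $\liminf_\e G_\e(v_\e) < \infty$ (otherwise there is nothing to prove); pass to a subsequence realizing the $\liminf$ and, by dropping to a further subsequence, assume $v_\e \to v$ pointwise $\mathcal{L}^1$-a.e.\ in $I$. Since $\e^2(v_\e')^2 \eta \geq 0$, we have $G_\e(v_\e) \geq \int_I W(v_\e)\eta\dt$, and Fatou's lemma together with $W \geq 0$ and continuity of $W$ gives $\liminf_\e \int_I W(v_\e)\eta\dt \geq \int_I W(v)\eta\dt = G^{(0)}(v)$ \emph{provided} $v$ satisfies the mass constraint; if $v$ does not satisfy \eqref{1DMassConstraint} then $G^{(0)}(v) = \infty$ and we must instead show $\liminf_\e G_\e(v_\e) = \infty$. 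To handle the constraint: since every $v_\e$ entering the $\liminf$ is admissible, $\int_I v_\e \eta\dt = m$ for all such $\e$; one then needs that $L^1_\eta$-convergence forces $\int_I v_\e\eta \to \int_I v\eta$. This is immediate because $|\int_I v_\e\eta - \int_I v\eta| \leq \|v_\e - v\|_{L^1_\eta} \to 0$. Hence $v$ is automatically admissible whenever $G_\e(v_\e)$ stays bounded, so the Fatou argument applies and also shows $\int_I W(v)\eta\dt < \infty$, i.e.\ $v \in L^1_\eta$ with $W(v)\eta$ integrable.

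\textbf{Step 2: the recovery sequence.} Given $v$ with $G^{(0)}(v) < \infty$ — so $v \in L^1_\eta$, $\int_I v\eta\dt = m$, and $\int_I W(v)\eta\dt < \infty$ — we must produce $v_\e \to v$ in $L^1_\eta$ with $\limsup_\e G_\e(v_\e) \leq G^{(0)}(v)$. The natural first attempt is $v_\e := v$ itself, but $v$ need not lie in $H^1_\eta$, so we must regularize. The plan is: first truncate $v$ at levels $[a - \delta, b + \delta]$ (or simply at large levels), then mollify to get a smooth approximation $\tilde v_\delta \in H^1_\eta$ with $\tilde v_\delta \to v$ in $L^1_\eta$ and $\int_I W(\tilde v_\delta)\eta\dt \to \int_I W(v)\eta\dt$ (using continuity of $W$, the linear growth bound \eqref{WLinearGrowth}, and dominated convergence after the truncation controls the tails); then restore the mass constraint by adding a small constant $c_\delta \to 0$ chosen so that $\int_I(\tilde v_\delta + c_\delta)\eta\dt = m$ — this is possible since $\int_I \eta\dt \in (0,\infty)$ and the constraint is an open interval around nothing problematic, and $c_\delta \to 0$ since $\int_I \tilde v_\delta\eta \to \int_I v\eta = m$. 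For the regularized-and-corrected function $v_\e := \tilde v_{\delta(\e)}$ with $\delta(\e) \to 0$ chosen slowly, $\e^2\int_I(v_\e')^2\eta\dt \to 0$ (for each fixed $\delta$ this is finite, so a diagonal choice $\delta(\e)$ kills it), giving $\limsup_\e G_\e(v_\e) \leq \int_I W(v)\eta\dt = G^{(0)}(v)$. One should double-check mollification is compatible with the weight $\eta$ near the endpoints $\pm T$: since $\eta$ is continuous and positive on the open interval $I$ and the growth bounds \eqref{etaTail1}--\eqref{etaTail2} control it near $\pm T$, mollifying on slightly shrunk intervals and then matching boundary behavior is routine.

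\textbf{Main obstacle.} The genuinely delicate point is the recovery sequence's interaction with the weight degeneracy at the endpoints: $\eta$ vanishes like $(t+T)^{n_1-1}$ and $(T-t)^{n_2-1}$, so a crude mollification could create an unbounded gradient contribution $\e^2\int(v_\e')^2\eta$ that does not vanish, or could fail to converge in $L^1_\eta$ if $v$ is unbounded near the endpoints. I expect this is handled by first truncating $v$ (which is harmless for $G^{(0)}$ since $W$ grows and we can arrange $W(\operatorname{Tr}(v)) \leq W(v)$ off a controlled set, or by a direct dominated-convergence estimate using \eqref{WLinearGrowth}), so that the function being mollified is bounded; then for bounded functions the mollified gradient is bounded independently of $\delta$ after suitable scaling, and $\e^2\int(v_\e')^2\eta\dt \leq C\e^2/\delta^2 \cdot \int_I\eta\dt \to 0$ along a diagonal. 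The mass correction is then a trivial additive shift. Everything else is standard lower-semicontinuity and density machinery.
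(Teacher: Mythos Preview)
Your proposal is correct and follows essentially the same approach as the paper: Fatou's lemma for the $\liminf$ inequality, and for the $\limsup$ a truncation-then-mollification scheme with an additive constant to restore the mass constraint, followed by a diagonal/lower-semicontinuity argument to pass from bounded to general $v$. Your concern about the weight degeneracy at the endpoints is somewhat overblown---once $v$ is truncated to be bounded, the mollified function satisfies $\int_I (\tilde v_\e')^2 \eta \dt \leq C\delta_\e^{-2}\int_I \eta \dt$, which is finite since $\int_I \eta < \infty$, so choosing $\delta_\e \sim \e^{1/2}$ already kills the gradient term; the paper treats this in one line without singling out the endpoints.
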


\begin{proof}
For the $\liminf$ inequality assume that $v_\e \to v$ in $L_\eta^1$. By utilizing Fatou's lemma along with \eqref{W_Smooth} we have that
\[
\liminf_{\e \to 0^+} G_\e(v_\e) \geq \liminf_{\e \to 0^+} \int_I W(v_\e) \eta \dt \geq \int_I W(v) \eta  \dt.
\]
For the $\limsup$ inequality, we begin by assuming that $v$ is bounded and satisfies \eqref{1DMassConstraint} (the case where $v$ does not satisfy \eqref{1DMassConstraint} is trivial). Let $\phi_\delta$ be the standard mollifier, let $\tilde v$ be $v$ extended to all of $\mathbb{R}$ by zero and consider $\tilde v_\e := \phi_{\delta_\e}*\tilde v$, where we select $\delta_\e$ so that $\|v-\tilde v_\e\|_{L_\eta^1} = o(1)$ and so that
\[
\int_I (\tilde v_\e')^2 \eta  \dt \leq C\e^{-1}.
\]

We then select $d_\e \in \mathbb{R}$ so that $v_\e :=\tilde  v_\e + d_\e$ satisfies \eqref{1DMassConstraint}. It is evident that $d_\e = o(1)$. Finally, by the Lebesgue dominated convergence theorem we have that
\[
\lim_{\e \to 0^+} \int_I W(v_\e) \eta \dt = \int_I W(v)\eta  \dt,
\]
which gives the desired result for $v$ bounded. Now if $v \in L_\eta^1$ and $\int_I v \eta  \dt = m$ we can construct a sequence $\{v_k\}$ of truncations of $v$, so that $W(v_k) \leq W(v_{k+1})$ (see \eqref{W_Number_Zeros}) and so that $\int_I v_k \eta \dt = m$. Since the $\Gamma$-$\limsup$ is lower semicontinuous (see Proposition 6.8 in \cite{DalMasoBook}), by applying the Lebesgue monotone convergence theorem we have that
\begin{equation}\label{Eqn:GLimSupTrick}
\Gamma \text{-}\limsup G_\e(v) \leq \liminf_{k\to \infty} \Gamma \text{-}\limsup G_\e(v_k) \leq \liminf_{k \to \infty} \int_I W(v_k) \eta \dt = \int_I W(v) \eta \dt,
\end{equation}
which concludes the proof.

\end{proof}

Clearly we have that $\inf G^{(0)} = 0$, and thus
\begin{equation}\label{Def:G1}
G_\e^{(1)}(v) = \e^{-1} G_\e(v) = \int_I \left(\frac{W(v)}{\e} + \e |v'|^2 \right) \eta \dt
\end{equation}
for all $v \in H_\eta^1$ satisfying \eqref{1DMassConstraint}, and $G_\e^{(1)}(v) = \infty$ otherwise in $L_\eta^1$. We now state a compactness result, which utilizes arguments from \cite{FonsecaTartar1989}.

\begin{proposition} \label{1DCompactnessLemma}
Let $v_\e \in H^1_\eta$ be such that $\sup_\e G_\e^{(1)}(v_\e) < \infty$. Then up to a subsequence $v_\e \to v\in \mathcal{C}$ in $L^1_\eta$, where 
\begin{equation}
\mathcal{C} := \{v \in BV_\eta(I;\{a,b\}) : v \text{ satisfies } \eqref{1DMassConstraint}\} .\label{Def:C}
\end{equation}
\end{proposition}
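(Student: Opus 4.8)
The plan is to establish the two assertions separately: first that a subsequence of $\{v_\e\}$ converges in $L^1_\eta$ to some $v \in BV_\eta(I;\{a,b\})$, and second that this limit automatically satisfies the mass constraint \eqref{1DMassConstraint}, so that $v \in \mathcal{C}$. The mass constraint part is the easy one: since $\int_I v_\e \eta \dt = m$ for every $\e$ and $v_\e \to v$ in $L^1_\eta$, we immediately get $\int_I v \eta \dt = m$ by passing to the limit, so the only real work is the compactness and the ``two-phase'' structure of the limit.

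For the compactness, the standard device (the one going back to Modica--Mortola and used in \cite{FonsecaTartar1989}) is to introduce the primitive of $\sqrt{W}$. Define $\Phi(s) := \int_a^s \sqrt{W(\sigma)}\,d\sigma$ and set $w_\e := \Phi(v_\e)$. Then by the pointwise inequality $2\sqrt{W(v_\e)}\,|v_\e'| \le \frac{W(v_\e)}{\e} + \e |v_\e'|^2$ and the chain rule $|w_\e'| = \sqrt{W(v_\e)}\,|v_\e'|$, we get
\[
\int_I |w_\e'|\,\eta \dt = \int_I \sqrt{W(v_\e)}\,|v_\e'|\,\eta \dt \le \tfrac12 G_\e^{(1)}(v_\e) \le C.
\]
Thus $\{w_\e\}$ is bounded in $W^{1,1}_\eta(I)$. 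The weight $\eta$ is bounded above and below on compact subsets of $I$ (it only degenerates at the endpoints $\pm T$, where it behaves like a power by \eqref{etaTail1}--\eqref{etaTail2}), so on any compact $K \Subset I$ the sequence $\{w_\e\}$ is bounded in $W^{1,1}(K)$, hence precompact in $L^1(K)$ by Rellich; a diagonal argument over an exhausting sequence of compacts, combined with the uniform integrability near the endpoints coming from the power-type decay of $\eta$ and the a priori bound, upgrades this to precompactness of $\{w_\e\}$ in $L^1_\eta(I)$. Extracting a subsequence gives $w_\e \to w$ in $L^1_\eta$ and a.e., with $w \in BV_\eta(I)$. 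Since $\Phi$ is continuous and strictly increasing (because $W > 0$ off $\{a,b\}$), it has a continuous inverse on its range, and $v_\e = \Phi^{-1}(w_\e) \to \Phi^{-1}(w) =: v$ a.e.; a further argument using the linear growth bound \eqref{WLinearGrowth} on $W$ (which controls $\int_I |v_\e|\,\eta\dt$ via $G_\e^{(1)}(v_\e) \ge \int_I W(v_\e)\eta\dt/\e \ge \int_I W(v_\e)\eta\dt$ for $\e$ small, hence gives equi-integrability of $\{v_\e\}$) promotes the a.e. convergence to $L^1_\eta$ convergence.

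It remains to show $v$ takes only the values $a$ and $b$. This follows from the $\liminf$ bound together with $\inf G^{(0)} = 0$: since $G_\e^{(1)}(v_\e) \ge G_\e(v_\e) = \int_I W(v_\e)\eta\dt + \e^2\int_I(v_\e')^2\eta\dt$ wait — more directly, $G_\e^{(1)}(v_\e) \ge \frac{1}{\e}\int_I W(v_\e)\eta\dt$, and the left side is bounded, so $\int_I W(v_\e)\eta\dt \le C\e \to 0$; by Fatou, $\int_I W(v)\eta\dt = 0$, and since $\eta > 0$ on $I$ and $W$ vanishes only at $a,b$, we conclude $v(t) \in \{a,b\}$ for a.e. $t$. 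Combined with $v \in BV_\eta$ and the mass constraint, this gives $v \in \mathcal{C}$. I expect the main obstacle to be the careful handling of convergence near the degenerate endpoints $\pm T$ of $I$: one must verify that the power-type lower bounds \eqref{etaTail1}--\eqref{etaTail2} on $\eta$, together with the energy bound, prevent mass from escaping to the boundary and give genuine $L^1_\eta$ (not merely $L^1_{\mathrm{loc}}$) convergence — this is where assumptions \eqref{etaTail1}--\eqref{etaTail2} and the linear growth of $W$ at infinity \eqref{WLinearGrowth} are essential, and where the argument of \cite{FonsecaTartar1989} needs to be adapted to the weighted setting.
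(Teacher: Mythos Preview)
Your argument is essentially the same Modica--Mortola/Fonseca--Tartar strategy the paper uses, with one technical difference worth noting. The paper works with the \emph{truncated} potential $W_1(s):=\min\{W(s),K\}$, $K:=\max_{[a,b]}W$, and $\Phi_1(t):=\int_a^t W_1^{1/2}(s)\,ds$; since $\Phi_1$ is globally Lipschitz and $\Phi_1(a)=0$, the $L^1_\eta$ bound on $\Phi_1\circ v_\e$ follows immediately from the $L^1_\eta$ bound (equi-integrability) on $v_\e$ already obtained from \eqref{WLinearGrowth}. Your untruncated $\Phi$ need not be Lipschitz (only $\sqrt{W}$ has linear growth at infinity guaranteed), so your assertion that ``$\{w_\e\}$ is bounded in $W^{1,1}_\eta(I)$'' is not justified as written: you proved the derivative bound but not the $L^1_\eta$ bound on $w_\e$ itself. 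This can be repaired (e.g.\ by an anchoring argument on compact subintervals combined with the tail estimates you allude to), but the truncation sidesteps the issue entirely and is the cleaner route. The rest of your outline---equi-integrability of $v_\e$ via \eqref{WLinearGrowth}, pointwise convergence through $\Phi^{-1}$, Vitali to upgrade to $L^1_\eta$, Fatou to force $v\in\{a,b\}$, and the mass constraint passing to the limit---matches the paper's proof.
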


\begin{proof}
We first show that $\{v_\e\}$ is uniformly bounded in $L_\eta^1$ and equi-integrable. This is since, by applying \eqref{WLinearGrowth},
\[
\int_{|v_\e|>\hat T} |v_\e| \eta \dt \leq \hat L^{-1} \int_I W(v_\e) \eta \dt \leq C \e G_\e^{(1)}(v_\e) \leq C\e,
\]
which, in turn, implies that
\[
\int_E |v_\e| \eta \dt \leq \hat T \int_E \eta \dt + C\e.
\]
As $\int_I \eta \dt < \infty$ and using the fact that any finite collections of $L_\eta^1$ functions in $L_\eta^1$ is equi-integrable, we obtain that the sequence $\{v_\e\}$ is bounded in $L_\eta^1$ and equi-integrable.

Next, define 
\begin{equation}\label{Eqn:W1Def}
W_1(s) := \min\{W(s), K\},\quad \Phi_1(t) := \int_a^t W_1^{1/2}(s) \ds,
\end{equation}
where $K :=\max_{s \in [a,b]} W(s)$. Using Young's inequality, and the fact that $0\leq W_1 \leq W$ we have that
\[
2\int_I W_1^{1/2}(v_\e) |v_\e'| \eta \dt \leq G_\e^{(1)} (v_\e) \leq C.
\]
Utilizing the chain rule, we find that
\[
\int_I |(\Phi_1 \circ v_\e)'|\eta \dt \leq C.
\]
Furthermore, as $\Phi_1$ is Lipshitz and $\Phi_1(a) = 0$, we have that $\Phi_1 \circ v_\e$ is uniformly bounded in $L_\eta^1$. This then implies, by BV compactness, that, up to a subsequence, not relabeled,
\begin{equation} \label{phiConvergence}
\Phi_1 \circ v_\e \to \tilde v\quad\text{in } L^1_\eta
\end{equation}
for some function $\tilde v\in BV_\eta$. It is easy to show, using \eqref{W_Number_Zeros}, that $\Phi_1$ has a continuous inverse. This implies that, up to a subsequence, $v_\e$ must converge pointwise to $v := \Phi_1^{-1}(\tilde v)$. Thus, up to a subsequence, the $v_\e$ converge in $L_\eta^1$ to $v$. Using Fatou's lemma and the fact that $\sup_\e G_\e^{(1)}(v_\e)<\infty$, it must be $W(v(t))=0$ for a.e. $t \in I$, or, in other words, that $v \in L_\eta^1(I;\{a,b\})$ by \eqref{W_Smooth}. As $\tilde v \in BV_\eta$, this implies that $v \in BV_\eta(I;\{a,b\})$. The $L^1_\eta$ convergence of the $v_\e$ then implies that $v$ satisfies \eqref{1DMassConstraint}. This concludes the proof.

\end{proof}

We now state the first main theorem of this section, which characterizes the first-order $\Gamma$-limit of $G_\e$.

\begin{theorem}\label{1DFirstGammaLimit}
Assume that $W$ satisfies \eqref{W_Smooth}-\eqref{WGurtin_Assumption} and that $\eta$ satisfies \eqref{etaSmooth}-\eqref{etaPrimeRatio}. Then the family $\{G_\e^{(1)}\}$ $\Gamma$-converges to the functional
\begin{equation}\label{G0Definition}
G^{(1)}(v) = \begin{cases}  \frac{2c_W}{b-a} |Dv|_\eta(I) &\text{ if } v \in \mathcal{C}, \\ 
\infty &\text{ otherwise in } L_\eta^1,
\end{cases}
\end{equation}
where $c_W$ is the constant given in \eqref{c0Definition} and $\mathcal{C}$ defined in \eqref{Def:C}.
\end{theorem}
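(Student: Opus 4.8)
The plan is to prove Theorem \ref{1DFirstGammaLimit} by the now-standard Modica--Mortola / Bouchitté argument, adapted to the weighted setting, splitting into the $\liminf$ inequality and the construction of a recovery sequence.

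\textbf{The $\liminf$ inequality.} Suppose $v_\e \to v$ in $L^1_\eta$ with $\liminf_\e G^{(1)}_\e(v_\e) < \infty$ (otherwise there is nothing to prove). By Proposition \ref{1DCompactnessLemma} we know $v \in \mathcal{C}$, i.e.\ $v \in BV_\eta(I;\{a,b\})$ and $v$ satisfies \eqref{1DMassConstraint}. Introduce the primitive $\Phi(t) := \int_a^t W^{1/2}(s)\,ds$, so that $\Phi(b) - \Phi(a) = c_W$ by \eqref{c0Definition}. Using Young's inequality $\frac{W(v_\e)}{\e} + \e|v_\e'|^2 \geq 2 W^{1/2}(v_\e)|v_\e'|$ and the chain rule, we get
\[
G^{(1)}_\e(v_\e) \geq 2\int_I W^{1/2}(v_\e)|v_\e'|\,\eta\,dt = 2\int_I |(\Phi\circ v_\e)'|\,\eta\,dt = 2|D(\Phi\circ v_\e)|_\eta(I).
\]
Here one must be slightly careful: $\Phi$ is only locally Lipschitz because $W$ grows, but since $\sup_\e\int_I W(v_\e)\eta\,dt \le C\e \to 0$ the functions $v_\e$ are essentially bounded in the relevant region, or one truncates $W$ as in the proof of Proposition \ref{1DCompactnessLemma} and passes to the limit at the end via monotone convergence. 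Since $v_\e \to v$ in $L^1_\eta$ and $\Phi$ is continuous, $\Phi\circ v_\e \to \Phi\circ v$ in $L^1_\eta$ (again after a truncation/uniform integrability argument), and lower semicontinuity of the weighted total variation $|D\cdot|_\eta(I)$ with respect to $L^1_\eta$ convergence — which holds because $\eta \in C^{1,\beta}(I)$ is continuous and positive, so the weighted variation is a standard lower semicontinuous functional on any compact subinterval, and one exhausts $I$ — gives
\[
\liminf_\e G^{(1)}_\e(v_\e) \geq 2|D(\Phi\circ v)|_\eta(I).
\]
Finally, since $v$ takes only the values $a,b$, $\Phi\circ v = \frac{\Phi(b)-\Phi(a)}{b-a}(v - a) + \Phi(a) = \frac{c_W}{b-a}(v-a) + \text{const}$ on the set where $v=a$ or $v=b$, hence $|D(\Phi\circ v)|_\eta(I) = \frac{c_W}{b-a}|Dv|_\eta(I)$, which yields the claimed bound $G^{(1)}(v)$.

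\textbf{The recovery sequence.} Given $v \in \mathcal{C}$, write $Dv$ as a finite sum of atoms located at points $t_1 < \dots < t_N$ in $I$ (finitely many because $|Dv|_\eta(I) < \infty$ and $\eta$ is bounded below on compact subsets, forcing finitely many jumps away from $\pm T$; jumps can accumulate only at the endpoints, but the weight degenerates there and one checks the tails contribute negligibly, or one first approximates $v$ by functions with finitely many jumps all in a compact subinterval, using the lower semicontinuity of the $\Gamma$-$\limsup$ as in \eqref{Eqn:GLimSupTrick}). Near each $t_i$ insert the optimal one-dimensional transition layer: let $z$ be the solution of \eqref{profileCauchyProblem} (suitably recentered at each $t_i$), rescaled as $z((t-t_i)/\e)$, interpolating between $a$ and $b$ (or $b$ and $a$, matching the sign of the jump), and glue these to the constant values $a,b$ of $v$ away from the $t_i$. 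Because $z' = W^{1/2}(z)$, on each layer the energy density $W(v_\e)/\e + \e|v_\e'|^2$ equals exactly $2W^{1/2}(z((t-t_i)/\e)) |z'((t-t_i)/\e)|/\e \cdot \e \cdot \eta$... more precisely, a change of variables $s = (t-t_i)/\e$ turns $\int (\frac{W}{\e}+\e|v_\e'|^2)\eta\,dt$ over the $i$-th layer into $\int ( W(z(s)) + |z'(s)|^2 ) \eta(t_i + \e s)\,ds = 2\int W^{1/2}(z(s))|z'(s)|\eta(t_i+\e s)\,ds$, which converges as $\e \to 0$ to $2\eta(t_i)\int_a^b W^{1/2}(\sigma)\,d\sigma = 2 c_W \eta(t_i)$ by dominated convergence (using the exponential/polynomial decay \eqref{CPDecay1}--\eqref{CPDecay2} of $z$ to ensure the tails of the layer, where $\eta$ is evaluated away from $t_i$, are integrable and negligible, and that overlapping of distinct layers is exponentially small). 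Summing over $i$ gives $\limsup_\e G^{(1)}_\e(v_\e) \le 2c_W\sum_i \eta(t_i) = \frac{2c_W}{b-a}|Dv|_\eta(I)$, since each unit jump of $v$ contributes $(b-a)$ to $|Dv|$ weighted by $\eta(t_i)$. The one remaining technical point is the mass constraint \eqref{1DMassConstraint}: the naive $v_\e$ will violate it by an $O(\e)$ amount coming from the layer profiles (the integral of $z(s)-\operatorname{sgn}_{a,b}(s)$ is generally nonzero unless one recenters — indeed this is exactly the $\tau_u$ of Theorems \ref{mainThm1}--\ref{mainThm2}), so one corrects by adding a spatially-constant shift $d_\e = O(\e)$, or by slightly translating one of the layers by $O(\e)$; either correction changes $G^{(1)}_\e$ by $o(1)$ (for the shift, because $W$ is $C^2$ away from the wells / has controlled growth, so $\int_I |W(v_\e + d_\e) - W(v_\e)|\eta\,dt = O(\e)$, and the gradient term is unchanged by a constant shift).

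\textbf{Main obstacle.} I expect the genuinely delicate part to be the behaviour at the degenerate endpoints $\pm T$ of $I$, where the weight $\eta$ vanishes like $(T\mp t)^{n_i - 1}$. One must ensure (i) in the $\liminf$ bound, that the weighted lower semicontinuity argument survives the exhaustion of $I$ by compact subintervals without losing mass of $|Dv|_\eta$ near the endpoints — this is fine because $|Dv|_\eta(I)$ is a finite measure and $\eta \to 0$ there, but it needs to be said; and (ii) in the recovery sequence, that if $v$ has a jump accumulating toward an endpoint, or simply a jump close to an endpoint, the inserted transition layer — which has width $O(\e)$ but whose profile tails extend further — does not see the degeneracy or the boundary in a way that spoils the energy estimate. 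The standard fix is to first reduce, via the lower semicontinuity of the $\Gamma$-$\limsup$ functional (as already used in \eqref{Eqn:GLimSupTrick} and in Theorem \ref{1DZeroGammaLimit}), to $v\in\mathcal{C}$ with only finitely many jumps, all contained in a fixed compact subinterval of $I$, where $\eta$ is bounded above and below by positive constants; then all the estimates above are uniform. The decay estimates \eqref{CPDecay1}--\eqref{CPDecay2} are what make the tail contributions of the profile rigorously negligible. I would organize the proof so that this reduction is the first step, after which both inequalities are routine.
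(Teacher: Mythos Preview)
Your proposal is correct and follows essentially the same Modica--Mortola strategy as the paper: Young's inequality plus lower semicontinuity of the weighted total variation (with the truncation $W_1 = \min\{W,K\}$ to make $\Phi_1$ globally Lipschitz) for the $\liminf$, and an optimal-profile construction with a density argument for the $\limsup$.

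The one implementation difference worth noting is in the recovery sequence. You use the exact profile $z$ from \eqref{profileCauchyProblem} and then repair the mass constraint by an additive shift $d_\e=O(\e)$ or a layer translation. The paper instead uses Modica's perturbed profile $\phi_\e$, defined as the inverse of $\Psi_\e(s)=\int_a^s \bigl(\e^2/(\e+W(r))\bigr)^{1/2}\,dr$; this profile reaches $a$ and $b$ exactly at finite distance $\xi_\e\le(b-a)\e^{1/2}$, so transitions are compactly supported, never overlap, and the mass constraint is fixed by a single translation $\tau_\e\in(0,\xi_\e)$ of the signed-distance argument (an intermediate value argument, since $\phi_\e(f(t))\le v(t)\le\phi_\e(f(t)+\xi_\e)$). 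Both routes give the same limit; the paper's choice avoids having to invoke the decay estimates \eqref{CPDecay1}--\eqref{CPDecay2} to control tails and overlaps, while yours makes the limiting energy per layer appear more transparently as $2\int W(z)\,ds$.
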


We note that here $$|Dv|_\eta=(b-a) \sum \eta(t_i),$$ where $t_i$ are the locations of jumps of the function $v$. We also note that Proposition \ref{1DCompactnessLemma} and Theorem \ref{1DFirstGammaLimit} are completely analogous to classical results (e.\,g.\,\cite{Modica87,Sternberg88}) in the unweighted, higher-dimensional case.

\begin{proof}
We first characterize the $\Gamma$-$\limsup$. Specifically, given a $v \in \mathcal{C}$, we construct a family of functions $ v_\e$ that converge in $L^1_\eta$ to $v$ satisfying
\begin{equation} \label{firstOrderLimSup}
\limsup_{\e \to 0^+} G_\e^{(1)}( v_\e) \leq G^{(1)}(v) .
\end{equation}
To begin with, we assume that $v$ is of the form
\[
v(t) = \begin{cases} a &\text{ if } t \in [t_{2k},t_{2k+1}), \\
b &\text{ otherwise,}\end{cases}
\]
where $-T=t_0<t_1<\cdots<t_{2N}=T$. Define 
\begin{equation} \label{f200}
f(t) := \begin{cases} t-t_1 &\text{ if } t \in [t_0,t_1), \\ -\min\{t-t_{2k},t_{2k+1}-t\}  &\text{ if } t \in [t_{2k},t_{2k+1}),\text{ and } k\ge 1, \\ \min\{t-t_{2k+1},t_{2k+2}-t\} &\text { if }  t \in (t_{2k+1},t_{2k+2}], \text{ and } k < N-1, \\ t-t_{2N-1} &\text{ if } t \in [t_{2N-1}, t_{2N}). \end{cases}
\end{equation}
Observe that $f$ is the signed distance function (see \eqref{def:SignedDistance}) of the set $E:=\{t\in I:\,v(t)=a\}$, where we naturally are considering $\partial E$ relative to $I$, not $\mathbb{R}$. We note that $v(t) = \operatorname*{sgn}\nolimits_{a,b}(f(t))$, where $\operatorname*{sgn}\nolimits_{a,b}$ is the function given in \eqref{sgnabDefinition}. Thus our goal is to construct smooth approximations of the function $\operatorname*{sgn}\nolimits_{a,b}$ that make the energy $G_\e^{(1)}$ small.

We will follow the construction of \cite{Modica87}. Although the argument is almost identical, we include it for completeness. Consider the function
\begin{equation}\label{CandidateInverseProfile}
\Psi_\e(s) := \int_{a}^s \left(\frac{\e^2}{\e + W(r)} \right)^{1/2}  \,dr,
\end{equation}
and define the constant
\[
\xi_\e := \Psi_\e(b).
\]
We note that since $W \geq 0$, equation \eqref{CandidateInverseProfile} gives
\begin{equation} \label{xiBound}
0\leq \xi_\e \leq (b-a)\e^{1/2} .
\end{equation}

Note that $\Psi_\e$ is strictly increasing and differentiable. Now define $\phi_\e : [0,\xi_\e] \to [a,b]$ to be the inverse of $\Psi_\e$ on the interval $[a,b]$. By the fundamental theorem of calculus and the inverse function theorem $\phi_\e$ will satisfy the equation
\[
\e \phi_\e'(t) = (\e + W(\phi_\e(t)))^{1/2} .
\]
Next, extend $\phi_\e$ to be equal to $a$ for $t<0$ and $b$ for $t>\xi_\e$. Note that for all $t \in \mathbb{R}$ we have that $\phi_\e(t) \leq \operatorname*{sgn}\nolimits_{a,b}(t)$ and that $\phi_\e(t+\xi_\e) \geq \operatorname*{sgn}\nolimits_{a,b}(t)$. Thus as $v \in \mathcal{C}$ we can find a $\tau_\e \in (0,\xi_\e)$ that gives
\[
\int_I \phi_\e(f(t) +  \tau_\e) \eta(t)  \dt = m .
\]

Define $v_\e (t):= \phi_\e(f(t) + \tau_\e)$. As $\{v_\e\}$ converges to $v$ pointwise and $|v_\e|<C$ we have that $v_\e \to v$ in $L^1_\eta$. We then examine the energy associated with $v_\e$, when $\e$ is sufficiently small that transition layers do not overlap or leave $\overline{I}$:
\begin{align*}
G_\e^{(1)}(v_\e) &= \sum_{k=1}^{2N-1}\int_0^{\xi_\e} \left( \e (\phi_\e'(t))^2 + \e^{-1}W(\phi_\e(t)) \right)\eta(t_k+(t-\tau_\e)(-1)^{k+1})  \dt \\
&\leq  \sum_{k=1}^{2N-1}\int_0^{\xi_\e} 2(\e + W(\phi_\e(t)))^{1/2} \phi_\e'(t) \eta(t_k+(t-\tau_\e)(-1)^{k+1})  \dt \\
&\leq \sum_{k=1}^{2N-1}\sup \{\eta(t_k+(s-\tau_\e)(-1)^{k+1}):\, s \in (0,\xi_\e) \}\int_0^{\xi_\e} 2(\e + W(\phi_\e(t)))^{1/2} \phi_\e'(t) \dt \\
&= \sum_{k=1}^{2N-1}\sup \{\eta(t_k+(s-\tau_\e)(-1)^{k+1}):\, s \in (0,\xi_\e) \}\int_{a}^b  2(\e + W(s))^{1/2}  \ds .
\end{align*}
Thus taking the limit as $\e\to 0^+$ we find that
\[
\limsup_{\e \to 0^+} G_\e^{(1)}(v_\e) \leq 2c_W \sum_{k=1}^{2N-1} \eta(t_k) = G^{(1)}(v).
\]
The cases where $v$ has a finite number of jump points, but starting or ending at different values than we assumed are analogous. Reasoning as in \eqref{Eqn:GLimSupTrick}, by noting that functions with a finite number of jumps are dense in $\mathcal{C}$, and as the $\Gamma$-$\limsup$ is lower semicontinuous, we then have \eqref{firstOrderLimSup}.

Next we will establish our $\Gamma$-$\liminf$. Assume that $v_\e \to v$ in $L_\eta^1$. By Proposition \ref{1DCompactnessLemma} if $v \notin \mathcal{C}$ then $\liminf_{\e \to 0^+} G_\e^{(1)} = \infty$, and there is nothing to prove. We claim that for any sequence $\{v_\e\}$ that converges in $L_\eta^1$ to some $v \in \mathcal{C}$ the following inequality holds:
\begin{equation} \label{firstOrderLimInf}
\liminf_{\e \to 0^+} G_\e^{(1)}(v_\e) \geq G^{(1)}(v).
\end{equation}
To establish this inequality we use Young's inequality, the chain rule and lower semicontinuity of $\|\cdot\|_{BV_\eta}$  (see, e.g., \cite{Spector2011}) and the definition \eqref{Eqn:W1Def} as follows:
\begin{align*}
\liminf_{\e \to 0^+} G_\e^{(1)}(v_\e) &\geq \liminf_{\e \to 0^+}\int_I (\e^{-1} W_1(v_\e) + \e (v_\e')^2)\eta  \dt\\
&\geq \liminf_{\e \to 0^+} 2 \int_I |(\Phi_1 \circ v_\e)'| \eta  \dt \geq 2\int_I \eta \,d |D\Phi_1(v)| \\
&= 2\int_I \eta \,d |D\Phi(v)|  = \frac{2c_W}{b-a}\int_I \eta \,d |Dv| = G^{(1)}(v_0).
\end{align*}
Here we have used the fact that $\Phi_1 \circ v_\e$ converges to $\Phi_1 \circ v$ in $L_\eta^1$ (because $\Phi_1$ is Lipschitz), and the fact that $\Phi_1 \circ v = \Phi \circ v$, where $\Phi := \int_a^t W^{1/2}(s) \ds$. This proves the claim.
\end{proof}

Properties of $\Gamma$-limits \cite{DalMasoBook} along with Proposition \ref{1DCompactnessLemma} then establishes the following corollary.

\begin{corollary}
Under the hypotheses of Theorem \ref{1DFirstGammaLimit} if $v_\e$ are minimizers of $G_\e^{(1)}$ then, up to a subsequence, they converge in $L^1_\eta$ to $v$ which is a minimizer of $G^{(1)}$. Furthermore the $v_\e$ will satisfy the following
\begin{equation} \label{1DLimitEnergy}
\lim_{\e \to 0^+}G_\e^{(1)}(v_\e) = G^{(1)}(v).
\end{equation}
\end{corollary}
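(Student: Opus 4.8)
The plan is to invoke the ``fundamental theorem of $\Gamma$-convergence'': $\Gamma$-convergence together with equicoercivity yields convergence of minimizers and of minimum values. Here the $\Gamma$-convergence is Theorem \ref{1DFirstGammaLimit} and the equicoercivity is Proposition \ref{1DCompactnessLemma}, so the argument is short and essentially formal; existence of the minimizers $v_\e$ for each fixed $\e>0$ is taken as given in the statement (it follows in any case from the direct method, using the coercivity supplied by the term $\e\int_I(v')^2\eta\dt$ and the lower semicontinuity of $G_\e$ on the weakly closed constraint set $\{v\in H^1_\eta:\eqref{1DMassConstraint}\}$).

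First I would produce a uniform energy bound. Since $m\in(a\int_I\eta\dt,\,b\int_I\eta\dt)$, the intermediate value theorem lets one split $I$ into two subintervals on which a $\{a,b\}$-valued function takes the values $a$ and $b$ so as to satisfy \eqref{1DMassConstraint}; call this function $w$, so that $w\in\mathcal{C}$, hence $\mathcal{C}\neq\emptyset$ and $\inf G^{(1)}<\infty$. By the $\Gamma$-$\limsup$ half of Theorem \ref{1DFirstGammaLimit} there is a recovery sequence $w_\e\to w$ in $L^1_\eta$ with $\limsup_{\e\to0^+}G_\e^{(1)}(w_\e)\le G^{(1)}(w)<\infty$. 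Minimality of $v_\e$ then gives $G_\e^{(1)}(v_\e)\le G_\e^{(1)}(w_\e)$, whence $\sup_\e G_\e^{(1)}(v_\e)<\infty$.

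Next I would apply Proposition \ref{1DCompactnessLemma} to extract a subsequence (not relabeled) with $v_\e\to v\in\mathcal{C}$ in $L^1_\eta$. The $\Gamma$-$\liminf$ inequality of Theorem \ref{1DFirstGammaLimit} gives $G^{(1)}(v)\le\liminf_{\e\to0^+}G_\e^{(1)}(v_\e)$. On the other hand, for \emph{any} $\tilde v\in\mathcal{C}$, taking a recovery sequence $\tilde v_\e$ and using minimality as above yields $\limsup_{\e\to0^+}G_\e^{(1)}(v_\e)\le\limsup_{\e\to0^+}G_\e^{(1)}(\tilde v_\e)\le G^{(1)}(\tilde v)$; since $\tilde v\in\mathcal{C}$ is arbitrary and $G^{(1)}\equiv\infty$ off $\mathcal{C}$, this gives $\limsup_{\e\to0^+}G_\e^{(1)}(v_\e)\le\inf G^{(1)}$. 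Chaining the two estimates,
\[
\inf G^{(1)}\le G^{(1)}(v)\le\liminf_{\e}G_\e^{(1)}(v_\e)\le\limsup_{\e}G_\e^{(1)}(v_\e)\le\inf G^{(1)},
\]
so every inequality is an equality: $v$ minimizes $G^{(1)}$ and \eqref{1DLimitEnergy} holds.

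The only step requiring any care — and it is mild — is the first one, namely exhibiting a competitor with finite limiting energy so that the compactness hypothesis $\sup_\e G_\e^{(1)}(v_\e)<\infty$ is satisfied; this is exactly where the admissibility of the mass constraint (i.e.\ $\mathcal{C}\neq\emptyset$, guaranteed by $m$ lying strictly between the bulk masses in \eqref{1DMassConstraint}) enters. Everything else is the standard $\Gamma$-convergence bookkeeping recalled above.
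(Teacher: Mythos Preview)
Your proposal is correct and is precisely the standard ``fundamental theorem of $\Gamma$-convergence'' argument the paper invokes by citing \cite{DalMasoBook} together with Proposition \ref{1DCompactnessLemma}. You have simply written out the details the paper leaves implicit; nothing is missing or different.
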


To conclude this subsection we prove two theorems that will be important later in our analysis. We select $t_0$ so that
\begin{equation}\label{Def:V0}
v_0(t):= \operatorname*{sgn}\nolimits_{a,b}(t-t_0)
\end{equation}
satisfies \eqref{1DMassConstraint}. By \eqref{etaSmooth} it is clear that $t_0$ is uniquely determined. We note that in general, $v_0$ is \emph{not a global minimizer} of $G^{(1)}$. However, we prove here that $v_0$ is an isolated local minimizer of $G^{(1)}$ in $L_\eta^1$.

\begin{theorem}
	\label{theorem local minimizer G0}Assume that $W$ satisfies \eqref{W_Smooth}-\eqref{WGurtin_Assumption} and that $\eta$ satisfies \eqref{etaSmooth}-\eqref{etaPrimeRatio}. Then there exists $\delta>0$ such that $v_{0}$ is
	an isolated $\delta$-local minimizer of $G^{(1)}$ in $L_{\eta}^{1}$, that is,
	there is no $v_{1}\in \mathcal{C}$ (see \eqref{Def:C}), with $0<\left\Vert v_{1}%
	-v_{0}\right\Vert _{L_{\eta}^{1}}\leq\delta$ such that
	\[
	G^{(1)}(v_{1})\leq G^{(1)}(v_{0}).
	\]
	
\end{theorem}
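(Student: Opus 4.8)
The plan is to argue by contradiction, leveraging the structure of $G^{(1)}$ as a sum of weighted jump contributions. Suppose no such $\delta$ exists. Then we can find a sequence $v_k \in \mathcal{C}$ with $\|v_k - v_0\|_{L^1_\eta} \to 0$, $v_k \neq v_0$, and $G^{(1)}(v_k) \leq G^{(1)}(v_0)$. Since each $v_k \in BV_\eta(I;\{a,b\})$ satisfies the mass constraint \eqref{1DMassConstraint}, the $L^1_\eta$-convergence $v_k \to v_0$ forces the jump sets of $v_k$ to ``concentrate'' near the single jump point $t_0$ of $v_0$ — more precisely, for any neighborhood $U$ of $t_0$, the total variation $|Dv_k|_\eta(I \setminus U) \to 0$, while the mass constraint prevents $v_k$ from having no jumps at all (since $v_0$ is the unique sign-type function with a single jump satisfying the constraint, and $v_k \to v_0$ with $v_k \neq v_0$ means $v_k$ must have at least two jumps or a single jump at a location forced away from $t_0$ by the constraint — in fact the mass constraint pins the ``average jump location'' so $v_k$ must have $\geq 3$ jump points once $k$ is large).

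The key quantitative step is then a lower bound on $G^{(1)}(v_k)$ exceeding $G^{(1)}(v_0) = 2 c_W \eta(t_0)$. Write the jump points of $v_k$ as $-T < s_1^k < \dots < s_{2N_k - 1}^k < T$ (an odd number, since $v_0$ goes from $a$ to $b$; cases with other boundary behavior are symmetric), so that $G^{(1)}(v_k) = 2 c_W \sum_i \eta(s_i^k)$. Because all $s_i^k$ concentrate near $t_0 \in I$ where $\eta$ is continuous and strictly positive, and because $N_k \geq 2$ for large $k$, we get $G^{(1)}(v_k) = 2 c_W \sum_{i=1}^{2N_k-1}\eta(s_i^k) \geq 2 c_W (2N_k - 1)\min_i \eta(s_i^k) \geq 2 c_W \cdot 3 \cdot (\eta(t_0) - o(1)) > 2 c_W \eta(t_0) = G^{(1)}(v_0)$ for $k$ large, a contradiction. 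The essential observations making this work are that $\eta(t_0) > 0$ (from \eqref{etaSmooth}, as $t_0 \in I$ is interior) and that the number of jumps of any competitor near $v_0$ is at least three.

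The main obstacle — and the step requiring the most care — is establishing rigorously that any $v_1 \in \mathcal{C}$ with $v_1 \neq v_0$ and $\|v_1 - v_0\|_{L^1_\eta}$ small must have at least three jump points (equivalently, that $v_0$ is the \emph{unique} element of $\mathcal{C}$ with a single jump, so a nearby competitor cannot be a one-jump function). This follows because a single-jump function $\operatorname{sgn}_{a,b}(t - s)$ satisfies the mass constraint \eqref{1DMassConstraint} only for $s = t_0$ (strict monotonicity of $s \mapsto \int_I \operatorname{sgn}_{a,b}(t-s)\eta\,dt$, from $\eta > 0$), while a function with exactly two jumps at $s_1 < s_2$ takes values $b$ on $(-T,s_1)$, $a$ on $(s_1,s_2)$, $b$ on $(s_2,T)$ (or the reversed pattern), which is an order-one $L^1_\eta$ distance from $v_0$ unless one of $s_1,s_2$ is forced near $\pm T$, but then its total variation still involves $\eta$ evaluated near a boundary point — and here one must invoke the tail bounds \eqref{etaTail1}--\eqref{etaTail2} to see that even a jump near the boundary contributes a controlled but strictly positive amount, so that combined with the jump that must remain near $t_0$ we again exceed $2c_W\eta(t_0)$. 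Handling this boundary case, and ruling out competitors whose extra jumps escape to $\pm T$ where $\eta$ may vanish, is the delicate part; I expect the argument to use \eqref{etaTail1}--\eqref{etaTail2} together with the mass constraint to show such configurations are either not $L^1_\eta$-close to $v_0$ or still have strictly larger energy.
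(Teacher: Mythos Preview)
Your overall contradiction strategy matches the paper's, but your core estimate has a real gap that you yourself flag at the end without resolving. The assertion that ``all $s_i^k$ concentrate near $t_0$'' is false as a statement about jump \emph{locations}: what follows from $L^1_\eta$-convergence is only that $|Dv_k|_\eta(I\setminus U)\to 0$, which allows arbitrarily many jumps near $\pm T$ where $\eta$ may vanish. Consequently your inequality $G^{(1)}(v_k)\ge 2c_W(2N_k-1)\min_i\eta(s_i^k)$ with $\min_i\eta(s_i^k)\approx\eta(t_0)$ breaks down: $\min_i\eta(s_i^k)$ can tend to zero. A genuine competitor is a function with exactly one jump near $t_0$ (at some $t_1$) and one additional jump $t_3$ near $-T$ (or $T$); such a $v_1$ has energy $2c_W(\eta(t_1)+\eta(t_3))$, and since $\eta(t_3)\to 0$ as $t_3\to -T$, the question is whether this can dip below $2c_W\eta(t_0)$.

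The paper's proof confronts exactly this configuration quantitatively (Step~4). The mass constraint forces $|t_1-t_0|$ to be comparable to $\int_{-T}^{t_3}\eta\,dt\le C(T+t_3)^{n_1}$ by \eqref{etaTail1}, so by the mean value theorem $\eta(t_1)\ge \eta(t_0)-M_0\,C(T+t_3)^{n_1}$. Meanwhile \eqref{etaTail1} also gives $\eta(t_3)\ge d_1(T+t_3)^{n_1-1}$. Hence
\[
G^{(1)}(v_1)\ge 2c_W\eta(t_0)+2c_W(T+t_3)^{n_1-1}\bigl(d_1-C(T+t_3)\bigr),
\]
which is strictly larger than $G^{(1)}(v_0)$ once $\r_0$ (and hence $T+t_3$) is small enough. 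The point is the power mismatch $n_1-1$ versus $n_1$: the energy gained from the boundary jump dominates the energy lost by displacing the interior jump. Your proposal gestures at this (``tail bounds \dots together with the mass constraint'') but does not supply the comparison, and without it the argument is incomplete. The paper also first shows (Steps~1--3) that any competitor has exactly one jump near $t_0$, in the correct direction, and at most one additional jump in a fixed interior interval $I_0$; these steps replace your ``at least three jumps'' count, which is not the right mechanism here.
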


\begin{proof}
	Assume by contradiction that such $v_{1}$ exists. By continuity of $\eta$, for
	every $\epsilon>0$ there is $\r_{\epsilon}>0$ such that
	\begin{equation}
	|\eta(t)-\eta(t_{0})|\leq\epsilon\label{continuity eta}%
	\end{equation}
	for all $t\in\lbrack t_{0}-\r_{\epsilon},t_{0}+\r_{\epsilon}]$. Let $M_0 := \max |\eta'|+1$ and fix
	\begin{equation}
		0<\r_0<\min\left\{ \frac12 t^*,T-t_{0},T+t_{0},\frac{d_1 n_1\eta(t_{0})}{2 d_2 M_{0}},\frac{d_3 n_2\eta(t_{0})}{2 d_4 M_{0}}\right\},   \label{choice h0}%
	\end{equation}
	where $t^*,n_1,n_2$ and the constants $d_i, i=1 \dots 4$ are given in \eqref{etaTail1} and \eqref{etaTail2}. Then define
	\begin{equation}
	I_{0}:=[-T+\r_0,T-\r_0], \label{I0}%
	\end{equation}
	and fix 
	\begin{equation}\label{Def:eps1}
	0<\epsilon_{1}<\min\{\min_{I_{0}}\eta,\eta(t_{0})/2\}
	\end{equation}
	 in	\eqref{continuity eta} and let $\r_{\epsilon_1}$ be the corresponding $\r_{\epsilon}$.

	\noindent\textbf{Step 1: }We claim that $v_{1}$ has a jump at some $t_{1}\in
	B(t_0,\r_{\epsilon_1})$. If not, then either $v_{1}\equiv a$ in
	$B(t_0,\r_{\epsilon_1})$ or $v_{1}\equiv b$ in $B(t_0,\r_{\epsilon_1})$. Assume that $v_{1}\equiv a$ in $B(t_0,\r_{\epsilon_1})$.
	Then by \eqref{continuity eta},
	\[
	\delta\geq\int_{B(t_0,\r_{\epsilon_1})}|v_{1}-v_{0}|\eta~dt\geq(b-a)\frac{\eta(t_{0})}{2}%
	\r_{\epsilon_1},
	\]
	where we used the fact that $0<\epsilon_{1}<\eta(t_{0})/2$. Since the case $v_1 \equiv b$ gives an identical estimate, the claim follows provided
	\begin{equation}
	0<\delta<(b-a)\frac{\eta(t_{0})}{2}\r_{\epsilon_1}. \label{delta condition 1}%
	\end{equation}
	
	\noindent\textbf{Step 2: }We claim that $v_{1}$ has no jump other than $t_{1}$
	in $I_{0}$. Indeed, assume that there is a second jump $t_{2}\neq t_{1}$ in
	$I_{0}$. Then by \eqref{continuity eta} and Step 1,
	\begin{align*}
	G^{(1)}(v_{1})  &  \geq 2c_W(\eta(t_{1})+\eta(t_{2}))\\
	&  \geq 2c_W(\eta(t_{0})-\epsilon_{1}+\min_{I_{0}}\eta
	)>2c_W\eta(t_{0})=G^{(1)}(v_{0}),
	\end{align*}
	where in the last inequality we used the fact that $0<\epsilon_{1}<\min
	_{I_{0}}\eta$. This is impossible since we are assuming that $G^{(1)}%
	(v_{1})\le G^{(1)}(v_{0})$.
	
	\noindent\textbf{Step 3: } We claim that $v_1$ jumps from $a$ to $b$ at $t_1$. 	Suppose not, and suppose that $t_1 \leq t_0$. Then
	\[
	\delta\geq\int_{B(t_0,\r_{\epsilon_1})}|v_{1}-v_{0}|\eta~dt \geq  (b-a)\frac{\eta(t_{0})}{2}%
	\r_{\epsilon_1},
	\]
	which again leads to a contradiction if $\delta$ is chosen small enough. The case $t_1 > t_0$ is analogous.

	\noindent\textbf{Step 4: } We claim that $t_{1}=t_{0}$.  Indeed, if $t_{1}>t_{0}$, then
	\[
	0=\int_I (v_{1}-v_{0})\eta~dt=\int_{-T}^{-T+\r_0}(v_{1}-a)\eta
	~dt+\int_{t_{0}}^{t_{1}}(a-b)\eta~dt+\int_{T-\r_0}^{T}(v_{1}-b)\eta~dt,
	\]
	which implies, as the last two terms are negative, that there must be a jump $t_{3}$ that belongs to $(-T,-T+\r_0)$, with
	\begin{equation}
	0<\frac{\eta(t_{0})}{2}(b-a)(t_{1}-t_{0})\leq\int_{t_{0}}^{t_{1}}%
	(b-a)\eta~dt\leq(b-a)\int_{-T}^{t_{3}}\eta~dt \leq d_{2}(b-a)\frac
	{(T+t_{3})^{n_1}}{n_1}, \label{ineq 100}%
	\end{equation}
	where in the last equality we used \eqref{etaTail1}, in conjunction with \eqref{choice h0}. By the mean
	value theorem and inequality \eqref{ineq 100}, for some $\theta \in (t_0,t_1)$,
	\begin{align*}
	\eta(t_{1})  &  =\eta(t_{0})+\eta^{\prime}(\theta)(t_{1}-t_{0})\geq\eta
	(t_{0})-M_{0}|t_{1}-t_{0}|\\
	&  \geq\eta(t_{0})-\frac{2M_{0}d_{2}}{n_1\eta(t_{0})}(T+t_{3})^{n_1}.
	\end{align*}
	Hence by \eqref{choice h0},%
	\begin{align*}
	G^{(1)}(v_{1})  &  \geq2c_W(\eta(t_{1})+\eta(t_{3}))\\
	&  \geq2c_W\eta(t_{0})-2c_W\frac{2M_{0}d_{2}%
	}{n_1\eta(t_{0})}(T+t_{3})^{n_1}+2c_Wd_{1}(T+t_{3})^{n_1-1}\\
	&  >2c_W\eta(t_{0})=G^{(1)}(v_{0}),
	\end{align*}
	which violates our assumption. The case  $t_1<t_0$ is analogous. This proves that $t_1=t_0$, and so $G^{(1)}(v_{1}) \ge 2c_W\eta(t_0)=G^{(1)}(v_{0})$, which implies that $G^{(1)}(v_{1})=G^{(1)}(v_{0})$. In particular, $v_1$ 	has no jumps in $I \backslash I_0$. But then $v_1 = v_0$, which is a contradiction. This completes the proof.
\end{proof}

We have seen in Theorem \ref{theorem local minimizer G0} that $v_0$ is a local minimizer for $G^{(1)}$. In general $v_0$ will not be a global minimizer without further assumptions on $\eta$ (e.g., $\eta \equiv$ constant). However, for the applications in the $n$-dimensional case later on it will be important to study a type of second-order asymptotic development of $G_\e$ where in the definition of $G_\e^{(2)}$ (see \eqref{higherOrderFunctionalDefinition}) in place of $\inf G^{(1)}$ we take $G^{(1)}(v_0)$. We will see that this corresponds to studying the second-order asymptotic development of the localized functional
\begin{equation} \label{def:JEps}
J_\e(v) := \begin{cases}
G_\e(v) &\text{if } \|v-v_0\|_{L_\eta^1}\leq \delta, \\
\infty &\text{ otherwise}.
\end{cases}
\end{equation}
When we apply the following theorem in $n$-dimensions we will need slightly weaker assumptions on $\eta$, and thus this theorem differs in its assumptions.

\begin{theorem} \label{1DLimsupOrder2}
	Assume that $W$ satisfies \eqref{W_Smooth}-\eqref{WGurtin_Assumption}, and that $\eta:I \to [0,\infty)$ is measurable, bounded, differentiable at $t_0$, $\eta(t_0)>0$ and
	\begin{equation}\label{eqn:limsupEstimate6}
	|\eta(t) - \eta(t_0) - \eta'(t_0)(t-t_0)| \leq C|t-t_0|^{1+\beta}
	\end{equation}
	for some constant $C>0$ and for all $t$ in a neighborhood of $t_0$. Then there exists a sequence $\{v_\e\}$ converging to $v_0$ in $L_\eta^1$ so that 
	\begin{equation}\label{eqn:limsup}
	\begin{aligned}
	\limsup_{\e \to 0^+} \frac{G_\e^{(1)}(v_\e) - 2c_W \eta(t_0)}{\e} &\leq 2\eta'(t_0)(  \tau_0c_W + c_{\operatorname*{sym}}) \\
	&\quad+ \begin{cases}
	\frac{\lambda_0^2}{2W''(a)} \int_I \eta  \ds \quad &\text{ if } q = 1, \\
	0 &\text{ if } q<1,
	\end{cases}
	\end{aligned}
	\end{equation}
	where $c_W$ and $c_{\operatorname*{sym}}$ are given by \eqref{c0Definition}, \eqref{c1Definition}, $\tau_0$ is determined by the equation
	\begin{equation}\label{limSupDelta0Definition}
	\eta(t_0) \int_\mathbb{R} (z(s-\tau_0) - \operatorname*{sgn}\nolimits_{a,b})  \ds = \begin{cases}
	 \frac{\lambda_0}{W''(a)} \int_I \eta  \dt &\text{ if } q =1, \\
	0 &\text{ if } q<1 ,
	\end{cases}
	\end{equation}
	where $z$ is the solution to \eqref{profileCauchyProblem} and $\lambda_0$ is defined by
	\begin{equation} \label{def:rhoNot}
	\lambda_0 := \frac{2\eta'(t_0) c_W}{(b-a)\eta(t_0)}.
	\end{equation}
\end{theorem}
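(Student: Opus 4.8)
The plan is to build an explicit recovery sequence of the form $v_\e(t) := z\!\left(\frac{f(t) - \e\tau_0}{\e}\right) + \e\, w_\e(t)$, where $f$ is the signed distance function to $\{v_0 = a\}$ (here simply $f(t) = t - t_0$ as in \eqref{f200} with a single jump), $z$ is the profile solving \eqref{profileCauchyProblem}, and $\e w_\e$ is a small vertical correction — essentially the constant $-\e\lambda_0/W''(a)$ on the two bulk regions, smoothly interpolated — chosen so that the mass constraint \eqref{1DMassConstraint} is satisfied. The shift parameter $\tau_0$ is exactly the degree of freedom that lets us absorb the mass defect created by the vertical correction: equation \eqref{limSupDelta0Definition} is precisely the first-order ($O(\e)$) balance of $\int_I (v_\e - v_0)\eta\,\dt = 0$, using that $\int_I \eta \approx \eta(t_0) \cdot |I|$-type terms and that the transition layer contributes $\eta(t_0)\int_\mathbb{R}(z(s-\tau_0) - \operatorname*{sgn}_{a,b}(s))\,\ds$ to leading order. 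One first checks solvability of \eqref{limSupDelta0Definition} for $\tau_0$: the left side is, by \eqref{CPDecay1}–\eqref{CPDecay2}, a continuous strictly monotone function of $\tau_0$ ranging over all of $\mathbb{R}$ when $q=1$ (and when $q<1$ the integral is finite and \eqref{deltaUDefinitionCase2}-type solvability holds since $z$ reaches the wells in finite time), so $\tau_0$ exists and is unique.

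The core computation is then to expand $G_\e^{(1)}(v_\e) = \int_I \big(\e^{-1}W(v_\e) + \e (v_\e')^2\big)\eta\,\dt$ to order $\e$. First I would localize: outside a $O(\e|\log\e|)$ neighborhood of $t_0$ (or, for $q<1$, outside $t_0 + \e[t_a,t_b]$) the function $v_\e$ equals a constant $a + O(\e)$ or $b + O(\e)$, so $W(v_\e) = W(a) + \tfrac12 W''(a)(v_\e - a)^2 + o(\e^2) = \tfrac12 W''(a)(\e\lambda_0/W''(a))^2 + o(\e^2)$ when $q=1$ (and $W(v_\e) \equiv 0$ when $q<1$, since $W$ vanishes identically near the wells is false — rather $v_\e \equiv a,b$ exactly there so $W = 0$), and $v_\e' = O(\e)$ there, contributing $\e^{-1}\int_{\text{bulk}} W(v_\e)\eta \to \frac{\lambda_0^2}{2W''(a)}\int_I \eta$ when $q=1$ and $0$ when $q<1$. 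In the transition region I change variables $t = t_0 + \e s$ (so $f(t) = \e s$, and the argument of $z$ is $s - \tau_0$), use $\e (v_\e')^2 \approx \e^{-1}(z'(s-\tau_0))^2 = \e^{-1}W(z(s-\tau_0))$ by \eqref{profileCauchyProblem}, so that $\e^{-1}W(v_\e) + \e(v_\e')^2 \approx 2\e^{-1}W(z(s-\tau_0))$ (the cross terms from the $\e w_\e$ correction are $O(\e)$ and integrable), giving $\int 2 W(z(s-\tau_0))\,\eta(t_0 + \e s)\,\ds$. Taylor-expanding $\eta(t_0 + \e s) = \eta(t_0) + \e \eta'(t_0) s + O(\e^2 |s|^{1+\beta})$ via \eqref{eqn:limsupEstimate6}, the leading term is $2\eta(t_0)\int_\mathbb{R} W(z(s-\tau_0))\,\ds = 2\eta(t_0) c_W$ (by the substitution $r = z(s-\tau_0)$, $\dr = z'\,\ds = \sqrt{W}\,\ds$), i.e. $2c_W\eta(t_0)$; and the $O(\e)$ term is $2\e\,\eta'(t_0)\int_\mathbb{R} W(z(s-\tau_0))\, s\,\ds = 2\e\,\eta'(t_0)\big(c_{\operatorname*{sym}} + c_W\tau_0\big)$, where the shift $s \mapsto s + \tau_0$ inside the integral splits $\int W(z(s-\tau_0))s\,\ds$ into $\int W(z(t))t\,\dt + \tau_0\int W(z(t))\,\dt = c_{\operatorname*{sym}} + c_W\tau_0$, matching \eqref{c1Definition} and \eqref{c0Definition}. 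Dividing by $\e$ and taking $\limsup$ yields \eqref{eqn:limsup}.

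The main obstacle I anticipate is controlling the error terms \emph{uniformly in} $\e$ when $q < 1$ versus $q = 1$, and making the interpolation of the vertical correction $w_\e$ compatible with all three requirements simultaneously: (i) it must be $O(1)$ in $L^\infty$ with $\e w_\e' $ contributing only $O(\e)$ to the Dirichlet energy, (ii) it must exactly enforce the mass constraint after the $O(\e)$ balance \eqref{limSupDelta0Definition} is imposed — so one needs an implicit-function/continuity argument showing the residual mass defect (which is $o(\e)$ by construction) can be killed by an additional correction of size $o(1)$ without spoiling the energy estimate, and (iii) the transition and bulk regions must be glued so that $v_\e \in H^1_\eta$ with no spurious interface energy. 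The decay estimates \eqref{CPDecay1}–\eqref{CPDecay2} are what make the tail integrals $\int_{|s| > R} W(z(s-\tau_0))(1 + |s|)\,\ds$ converge (polynomially for $q<1$ — in fact compactly supported by \eqref{CPFiniteWidth} — and exponentially for $q=1$), so the Taylor remainder $\int \e^2|s|^{1+\beta} W(z(s-\tau_0))\,\ds = O(\e^2)$ is genuinely negligible; verifying this carefully, together with the cross-term bookkeeping between $z((f(t)-\e\tau_0)/\e)$ and $\e w_\e(t)$ in $W(v_\e)$ and in $(v_\e')^2$, is the technical heart of the argument, but each piece is a routine (if delicate) estimate given the machinery already assembled.
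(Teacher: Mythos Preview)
Your proposal is correct and follows essentially the paper's approach: the same recovery ansatz (shifted profile plus, when $q=1$, the constant vertical correction $-\e\lambda_0/W''(a)$; none when $q<1$), the same transition/bulk energy split, and the same identification of the $2\eta'(t_0)(c_{\operatorname*{sym}} + c_W\tau_0)$ and $\frac{\lambda_0^2}{2W''(a)}\int_I\eta$ contributions. The paper sidesteps your anticipated obstacle (ii) by letting the shift $\tau_\e$ itself vary with $\e$ so that the mass constraint holds \emph{exactly} and then proving $\tau_\e \to \tau_0$, and it organizes the transition-layer computation via the identity $\e^{-1}W(v_\e) + \e(v_\e')^2 = (\e^{-1/2}W^{1/2}(v_\e) - \e^{1/2}v_\e')^2 + 2W^{1/2}(v_\e)v_\e'$, which reduces your cross-term bookkeeping to the single estimate $|W^{1/2}(v_\e) - W^{1/2}(z_\e)| \le C\e$ coming from Lipschitz continuity of $W^{1/2}$ on $[a-1,b+1]$.
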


\begin{proof}
\textbf{ Step 1: }Assume $q = 1$. Define $z_\e(t) := z(\frac{t-t_0}{\e})$ and then define
	\begin{equation}\label{eqn:RecoverySequenceDefinition}
	v_\e(t) := z_\e(t-\e\tau_\e) - \frac{\lambda_0 \e}{W''(a)},
	\end{equation}
	where $\tau_\e$ is selected so that \eqref{1DMassConstraint} is satisfied. We first claim that 
	\begin{equation} \label{Eqn:LimTau}
	\lim_{\e\to 0^+} \tau_\e = \tau_0.
	\end{equation}
	 To this end, we can write, via \eqref{1DMassConstraint},
	\[
	\int_I v_\e \eta \dt = \int_I v_0 \eta \dt = m.
	\]
	In turn this implies that
	\begin{equation} \label{eqn:LimsupMass}
	\begin{aligned}
	\int_I (z_\e(t-\e \tau_\e) - z_\e(t-\e \tau_0)) \eta \dt &= \int_I (\operatorname*{sgn}\nolimits_{a,b}(t-t_0)-z_\e(t-\e\tau_0)) \eta \dt \\
	&\quad+  \frac{\e \lambda_0}{W''(a)} \int_I \eta \dt.
	\end{aligned}
	\end{equation}
	After the change of variables $s = \frac{t-t_0}{\e}$ we can write the right-hand side as
	\begin{equation}\label{eqn:limsupEstimate0}
	\e \int_{\frac{-T-t_0}{\e}}^{\frac{T-t_0}{\e}} (\operatorname*{sgn}\nolimits_{a,b}(s) - z(s-\tau_0)) \eta(\e s+t_0) \ds + \frac{\e \lambda_0}{W''(a)} \int_I \eta \dt.
	\end{equation}
	By our choice of $\tau_0$ (via \eqref{limSupDelta0Definition}) and \eqref{sgnabDefinition} this is equal to
	\begin{equation} \label{eqn:limsupDeltaEstimate} \begin{aligned}
	&\e \int_{\frac{-T-t_0}{\e}}^{\frac{T-t_0}{\e}} (\operatorname*{sgn}\nolimits_{a,b}(s) - z(s-\tau_0)) (\eta(\e s+t_0)-\eta(t_0)) \ds \\
	&-  \e\eta(t_0)\int_{-\infty}^{\frac{-T-t_0}{\e}} (a - z(s-\tau_0)) \ds - \e\eta(t_0)\int_{\frac{T-t_0}{\e}}^\infty (b - z(s-\tau_0))\ds. \end{aligned}
	\end{equation}
	By \eqref{eqn:limsupEstimate6} there exists a $R_0>0$ such that $|\eta(t) - \eta(t_0)| \leq (|\eta'(t_0)|+1)|t-t_0|$ for all $t \in B(t_0,R_0)$. Since $\eta$ is bounded by assumption, we thus have for all $t \in I\backslash B(t_0,R_0)$,
	\[
	|\eta(t)-\eta(t_0)| \leq 2\|\eta\|_\infty \leq 2\frac{\|\eta\|_\infty}{R_0} |t-t_0|.
	\]
	Hence for all $t \in I$ we have that $|\eta(t)-\eta(t_0)| \leq C_\eta|t-t_0|$ for some $C_\eta>0$. Thus, using \eqref{CPDecay2}, the first term in \eqref{eqn:limsupDeltaEstimate} can be bounded by
\begin{equation}\label{eqn:limsupEstimate1}
	2(b-a) \e \int_{\frac{-T-t_0}{\e}}^{\frac{T-t_0}{\e}} e^{-c_1 |s|} |\eta(\e s+t_0)-\eta(t_0)| \ds \leq 2(b-a) C_\eta \e^2 \int_\mathbb{R} e^{-c_1 |s|} |s| \ds.
\end{equation}
  By \eqref{CPDecay2} we know that the last two terms of \eqref{eqn:limsupDeltaEstimate} are bounded from above by $\frac{(b-a)}{c_1}\|\eta\|_\infty \e^2 e^{-\frac{c_1T_1}{\e}}$, where $T_1 := \min(T-t_0,T+t_0) > 0$.  Hence, the right-hand side of \eqref{eqn:LimsupMass} is bounded from above by $C\e^2$ for all $\e>0$ sufficiently small.

Now assume that the $\tau_\e$ do not converge to $\tau_0$. Assume without loss of generality that for some subsequence (not relabeled) the $\tau_\e \leq \tau_0 - k_0$ for some $k_0>0$ (the case where $\tau_\e \geq \tau_0 + k_0$ is similar). Since $z$ is increasing (see \eqref{profileCauchyProblem}), by \eqref{eqn:LimsupMass} and what we just proved, 
	\begin{align*}
	C\e^2&\geq\int_I (z_\e(t-\e \tau_\e) - z_\e(t-\e \tau_0)) \eta(t) \dt \geq \inf_{B(t_0+\e \tau_0,k_1\e)} \eta \int_{B(t_0 +\e \tau_0, k_1\e)} \int_{t-\e \tau_0}^{t-\e \tau_\e} z_\e'(s) \ds \dt \\
	&\geq \inf_{B(t_0+\e \tau_0,k_1\e)} \eta \int_{B(t_0 +\e \tau_0, k_1\e)} \int_{t-\e \tau_0}^{t-\e (\tau_0-k_0)} \e^{-1}\sqrt{W(z(\e^{-1}(s-t_0))}\ds \dt \\
	&\geq  2k_1k_0 \e \inf_{ t\in B(0,k_1+k_0)} \sqrt{W(z(t))} \inf_{B(t_0+\e \tau_0,k_1\e)} \eta,
	\end{align*}
where $0<k_1<1$ and where we have used the facts that $\eta$ is continuous at $t_0$ and that $\eta(t_0)>0$. Since $z(0) = c$, by taking $k_0$ and $k_1$ sufficiently small we can assume that $z(t) \in B(c,\min \{\frac{c-a}{2},\frac{b-c}{2}\})$ for all $t \in B(0,k_0+k_1)$. In turn the right-hand side of the previous inequality is bounded from below by $C_1 \e$ for some $C_1>0$. This is a contradiction, which proves our claim.

Next we prove \eqref{eqn:limsup}. We will write $R_\e := C_k\e |\log \e|$, with $C_k>0$ to be chosen later. We then write
\begin{equation}\label{eqn:limsupEstimate3}
\begin{aligned}
\frac{G_\e^{(1)}(v_\e) - 2c_W \eta(t_0)}{\e} &= \e^{-1}\left(\int_{B(t_0,R_\e)} (\e^{-1} W(v_\e) + \e(v_\e')^2) \eta \dt - 2c_W \eta(t_0)\right) \\
&\quad+ \int_{I \backslash B(t_0,R_\e)} (\e^{-2} W(v_\e) + (v_\e')^2) \eta \dt.
\end{aligned}
\end{equation}
First we examine the second term, namely the tail integral. We first note that by \eqref{CPDecay2} and the fact that the $\tau_\e\to \tau_0$ we then have that
\[
b-z_\e(t-\e \tau_\e) \leq \frac{b-a}{2}e^{c_1(1+|\tau_0|)}\e^{c_1C_k} \leq \e^k
\]
for $t \in [t_0+R_\e,T]$ and for $\e$ small, provided $C_k \geq 2\frac{k}{c_1}$. Similarly, $z_\e(t-\e\tau_\e)-a < \e^k$ for $t \in [-T,t_0 - R_\e]$. Thus for $t \in I\backslash B(t_0,R_\e)$ we have that
\begin{equation}\label{eqn:limsupEstimate5}
|z_\e(t-\e \tau_\e) - v_0(t)| \leq \e^k
\end{equation}
which in turn implies, after recalling \eqref{eqn:RecoverySequenceDefinition}, that, for $k$ large,
\begin{equation}\label{eqn:tailEstimate2}
(v_\e(t)-v_0)^2 \leq \frac{\lambda_0^2 \e^2}{W''(a)^2} +  C \e^{k+1}
\end{equation}
for all $t \in I\backslash B(t_0,R_\e)$ and for some fixed $C >0$.

We then fix $\gamma>0$. By \eqref{W_Limits} there exists $s_\gamma$ such that
\begin{equation}\label{eqn:WellEstimate1}
W(s) \leq \left(\frac{W''(a)}{2} + \gamma\right) (s-a)^2
\end{equation}
for all $s$ with $|s-a| \leq s_\gamma$, and
\begin{equation}\label{eqn:WellEstimate2}
W(s) \leq \left(\frac{W''(a)}{2} + \gamma\right) (s-b)^2
\end{equation}
for all $s$ with $|s-b| \leq s_\gamma$. By \eqref{eqn:tailEstimate2}, \eqref{eqn:WellEstimate1} and \eqref{eqn:WellEstimate2} we then have for $\e$ sufficiently small that
\begin{equation}
\int_{I\backslash B(t_0,R_\e)} W(v_\e) \eta \dt \leq \left(\frac{W''(a)}{2} + \gamma\right)\e^2\lambda_0^2 W''(a)^{-2} \int_I \eta \dt + O(\e^{k+1}).
\end{equation}

On the other hand, using \eqref{profileCauchyProblem}, \eqref{eqn:limsupEstimate5}, \eqref{eqn:WellEstimate1}, and \eqref{eqn:WellEstimate2},
\[
(v_\e'(t))^2 = \frac{1}{\e^2}W(z_\e(t + \e \tau_\e)) \leq \frac{C}{\e^2}(z_\e(t  + \e \tau_\e) - v_0(t))^2 \leq C\e^{2k-2}
\]
for $t \in I \backslash B(t_0,R_\e)$. After taking limits (first as $\e \to 0^+$ and then as $\gamma \to 0^+$) we thus find that
\begin{equation}\label{eqn:limsupEstimate9}
\limsup_{\e \to 0^+}\int_{I \backslash B(t_0,R_\e)} (\e^{-2} W(v_\e) + (v_\e')^2) \eta \dt \leq \frac{\lambda_0^2}{2W''(a)} \int_I \eta  \dt.
\end{equation}

Next we estimate the energy in the region $B(t_0,R_\e)$. We will define $s_1^\e := v_\e(t_0 - R_\e)$ and $s_2^\e := v_\e(t_0+R_\e)$. Note that by \eqref{eqn:tailEstimate2}, $s_1^\e = a + O(\e)$ and $s_2^\e = b+O(\e)$. Thus recalling the definition of $c_W$, \eqref{c0Definition}, and \eqref{W_Limits},  we find that
	\[
	c_W =  \int_{s_1^\e}^{s_2^\e} W^{1/2}(s) \ds + O(\e^2) = \int_{B(t_0,R_\e)} W^{1/2}(v_\e) v_\e'  \dt + O(\e^2),
	\]
	where we have used the change of variables $s = v_\e(t)$. Thus we have that
	\begin{equation} \label{116}
	\begin{aligned}
	&\int_{B(t_0,R_\e)}(\e^{-1}W(v_\e) + \e  (v_\e')^2)\eta  \dt - 2c_W \eta(t_0) \\
	&= \int_{B(t_0,R_\e)} (\e^{-1/2}W^{1/2}(v_\e) - \e^{1/2}v_\e')^2\eta  + W^{1/2}(v_\e) v_\e' (2\eta - 2\eta(t_0))  \dt + O(\e^2).
	\end{aligned}
	\end{equation}	
   We now estimate the terms on the right-hand side of \eqref{116}. Recalling the fact that $|W^{1/2}(s_1) - W^{1/2}(s_2)| \leq C|s_1-s_2|$ for all $s_1,s_2 \in [a-1,b+1]$ (see \eqref{W_Smooth} and \eqref{WPrime_At_Wells}), it follows from \eqref{profileCauchyProblem}, \eqref{eqn:RecoverySequenceDefinition}, and the boundedness of $\eta$, that
	\begin{align}
	\int_{B(t_0,R_\e)} (\e^{-1/2}W^{1/2}(v_\e)-\e^{1/2}v_\e')^2\eta \dt &\leq \e^{-1}\int_{B(t_0,R_\e)} (W^{1/2}(v_\e(t)) - W^{1/2}(z_\e(t-\e \tau_\e)))^2 \eta(t)  \dt \nonumber\\
	&\leq C \e^{-1}\int_{B(t_0,R_\e)} \left(\frac{\e \lambda_0}{ W''(a)}\right)^{2} \eta  \dt \leq C \e^2 |\log \e| .\label{radialTransitionCost1}
	\end{align}
%	Thus combining \eqref{116} with \eqref{radialTransitionCost1} we find that
%	\begin{equation}
%	\int_{B(t_0,R_\e)}(\e^{-1}W(v_\e) + \e  (v_\e')^2)\eta \dt - 2c_W \eta(t_0) = \int_{B(t_0,R_\e)} W^{1/2}(v_\e) v_\e' (2\eta - 2\eta(t_0))  \dt +O(\e^2 |\log \e|).
%	\end{equation}
	Next we will use \eqref{profileCauchyProblem}, \eqref{eqn:limsupEstimate6} and \eqref{eqn:RecoverySequenceDefinition} to obtain:
	\begin{align*}
	&2\int_{B(t_0,R_\e)} W^{1/2}(v_\e) v_\e' (\eta - \eta(t_0)) \dt \\
	&= 2\int_{B(t_0,R_\e)} W^{1/2}(v_\e(t)) v_\e'(t) (\eta'(t_0) (t-t_0) + O(|t-t_0|^{1+\beta}))  \dt\\
	&= 2\eta'(t_0) \int_{B(t_0,R_\e)} W^{1/2}(v_\e(t)) v_\e'(t) (t-t_0)  \dt + O(\e^{1+\beta} |\log\e|^{2+\beta}) .
	\end{align*}
	Changing variables to $s = \frac{t-t_0 - \e \tau_\e}{\e}$ we can then write
	\begin{align}
	&2\int_{B(t_0,R_\e)} W^{1/2}(v_\e) v_\e' (\eta - \eta(t_0))  \dt \nonumber \\
	&= 2\eta'(t_0) \e \int_{B(\tau_\e,C_k |\log \e|)} W^{1/2}(z(s)-\lambda_0 W''(a)^{-1} \e) z'(s) (\tau_\e+s) \ds + O(\e^{1+\beta} |\log \e|^{2+\beta}). \label{radialTransitionCost2.1}
	\end{align}
	We remark that, by \eqref{c0Definition} and \eqref{c1Definition} and \eqref{Eqn:LimTau},  the integral on the right-hand side of the previous equality converges to
	\[
	\int_{\mathbb{R}} W^{1/2}(z(s)) z'(s) (\tau_0+s) \ds = \tau_0 c_W + c_{sym}.
	\]
	By then combining estimates \eqref{eqn:limsupEstimate3}, \eqref{eqn:limsupEstimate9}, \eqref{116},  \eqref{radialTransitionCost1}, \eqref{radialTransitionCost2.1}, to find that
	\begin{align*}
	\limsup_{\e \to 0^+} \frac{G_\e^{(1)}(v_\e) - 2c_W\eta(t_0)}{\e} &\leq 2\eta'(t_0)\left(\tau_0c_W + c_{\operatorname*{sym}}\right) +  \frac{\lambda_0^2}{2W''(a)} \int_I \eta  \dt ,
	\end{align*}
	which is the desired conclusion.
	
	\textbf{Step 2: } The case $q<1$ is simpler since by \eqref{CPFiniteWidth} the function $z $ in \eqref{profileCauchyProblem} satisfies $z(t) \equiv b$ for $t \geq t_b$ and $z(t) \equiv a$ for $t \leq t_a$. We define $v_\e(t) := z_\e(t-\e \tau_\e)$. Then the second term in the right-hand side of \eqref{eqn:LimsupMass} should be replaced by $0$, while \eqref{eqn:limsupEstimate0} becomes
	\[
	\e\int_{t_a+\tau_0}^{t_b + \tau_0} (\operatorname*{sgn}\nolimits_{a,b}(s) - z(s-\tau_0)) \eta(\e s + t_0) \ds.
	\]
	In turn, in \eqref{eqn:limsupDeltaEstimate} the first integral is over $[t_a + \tau_0,t_b+\tau_0]$, while the other two integrals vanish. Using the regularity of $\eta$ near $t_0$ we can bound the integral in the new \eqref{eqn:limsupDeltaEstimate} by $2(b-a)C_\eta \e^2(t_b-t_a)$. We can continue as before to conclude that $\tau_\e \to \tau_0$.
	
	By \eqref{c0Definition} and \eqref{profileCauchyProblem}, in place of \eqref{eqn:limsupEstimate3} we now have
	\[
	\frac{G_\e^{(1)}(v_\e)-2c_W \eta(t_0)}{\e} = \e^{-1}\int_{t_0 + \e \tau_\e + \e t_a}^{t_0 + \e \tau_\e + \e t_b}W^{1/2}(v_\e(t)) v_\e'(t) (\eta(t) - \eta(t_0)) \dt.
	\]
	Using \eqref{eqn:limsupEstimate6} and the fact that $\tau_\e \to \tau_0$, the right-hand side can be bounded from above by
	\begin{align*}
	&\leq 2\e^{-1} \eta'(t_0)  \int_{t_0 + \e \tau_\e + \e t_a}^{t_0 + \e \tau_\e + \e t_b}W^{1/2}(v_\e(t)) v_\e'(t) (t-t_0) \dt + O(\e^{\beta}) \\
	&= 2 \eta'(t_0) \int_{t_a}^{t_b} W^{1/2}(z(s)) z'(s)(s + \tau_\e) \ds + O(\e^{\beta}),
	\end{align*}
	where we have used a change of variables $s = \frac{t-t_0-\e \tau_\e}{\e}$. It now suffices to let $\e \to 0^+$.

\end{proof}

\subsection{Local Minimizers of $G_\e$}
In this subsection we prove the existence of certain types of local minimizers of $G_\e$ and study their qualitative properties. In the next subsection these properties will enable us to characterize the second-order asymptotic development of the family $J_\e$ defined in \eqref{def:JEps}. 
We begin with the following proposition, which is based on an argument from \cite{KohnSternberg} (see also \cite{BraidesLocalMinNotes}). We include the proof for completeness.

\begin{proposition} \label{JLocalMinimizers}
	Assume that $W$ satisfies \eqref{W_Smooth}-\eqref{WGurtin_Assumption} and that $\eta$ satisfies \eqref{etaSmooth}-\eqref{etaPrimeRatio}. Then for all $\e >0$ there exists a global minimizer $v_\e$ of the functional $J_\e$. Furthermore, the functions $v_\e$ must converge to $v_0$ in $L_\eta^1$, and thus for $\e$ small enough $v_\e$ is a local minimizer of $G_\e$. Additionally, the following equality holds:
	\begin{equation} \label{eqn:JepsLimitEnergy}
	\lim_{\e \to 0^+} J_\e^{(1)}(v_\e) = G^{(1)}(v_0).
	\end{equation}
\end{proposition}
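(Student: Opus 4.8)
The plan is to proceed in four stages: existence of a minimizer, $L^1_\eta$-compactness of minimizers, identification of the limit as $v_0$, and the energy convergence \eqref{eqn:JepsLimitEnergy}. For existence, I would use the direct method. Fix $\e>0$. The set $\{v\in H^1_\eta:\|v-v_0\|_{L^1_\eta}\le\delta,\ \eqref{1DMassConstraint}\text{ holds}\}$ is nonempty (it contains, e.g., the mollified/shifted profiles built in Theorem~\ref{1DFirstGammaLimit}), so $\inf J_\e<\infty$. Take a minimizing sequence $\{v_k\}$. Since $G_\e(v_k)$ is bounded and $\e$ is fixed, $\int_I (v_k')^2\eta\,\dt$ and $\int_I W(v_k)\eta\,\dt$ are bounded; combined with the linear growth \eqref{WLinearGrowth} this gives a bound on $\|v_k\|_{L^1_\eta}$, and since $\eta$ is locally bounded below on compact subsets of $I$ (see \eqref{etaSmooth}) one gets $H^1_{\mathrm{loc}}$ bounds, hence (up to a subsequence) $v_k\rightharpoonup v_\e$ weakly in $H^1_{\mathrm{loc}}(I)$ and $v_k\to v_\e$ in $L^1_\eta$ using equi-integrability near the endpoints exactly as in the proof of Proposition~\ref{1DCompactnessLemma}. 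Lower semicontinuity of $\int_I(v')^2\eta\,\dt$ under weak convergence and Fatou for the $W$ term give $J_\e(v_\e)\le\liminf J_\e(v_k)$; the constraint \eqref{1DMassConstraint} and the closed constraint $\|v_\e-v_0\|_{L^1_\eta}\le\delta$ pass to the limit, so $v_\e$ is a global minimizer of $J_\e$.

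For the convergence $v_\e\to v_0$, the key point is an upper bound on $\inf J_\e$. Using the recovery sequence from Theorem~\ref{1DFirstGammaLimit} (or Theorem~\ref{1DLimsupOrder2}) with target $v_0$, one has a competitor $w_\e$ with $\|w_\e-v_0\|_{L^1_\eta}\to 0$ (so $\|w_\e-v_0\|_{L^1_\eta}\le\delta$ for small $\e$) and $G_\e(w_\e)\le \e\big(2c_W\eta(t_0)+o(1)\big)$, hence $J_\e^{(1)}(v_\e)=\e^{-1}J_\e(v_\e)\le 2c_W\eta(t_0)+o(1)$. In particular $\sup_\e G_\e^{(1)}(v_\e)<\infty$, so by Proposition~\ref{1DCompactnessLemma} (whose hypotheses apply since the extra localization constraint only shrinks the admissible class) a subsequence of $\{v_\e\}$ converges in $L^1_\eta$ to some $v\in\mathcal{C}$ with $\|v-v_0\|_{L^1_\eta}\le\delta$. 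The $\Gamma$-$\liminf$ inequality \eqref{firstOrderLimInf} gives
\[
G^{(1)}(v)\le\liminf_{\e\to0^+}G_\e^{(1)}(v_\e)\le 2c_W\eta(t_0)=G^{(1)}(v_0).
\]
By Theorem~\ref{theorem local minimizer G0}, $v_0$ is the unique element of $\mathcal{C}$ in the closed ball $\{\|\cdot-v_0\|_{L^1_\eta}\le\delta\}$ with $G^{(1)}$-value at most $G^{(1)}(v_0)$ (provided $\delta$ is chosen no larger than the radius in that theorem), so $v=v_0$. Since the limit is the same along every subsequence, the whole family converges: $v_\e\to v_0$ in $L^1_\eta$. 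Consequently, for $\e$ small, $\|v_\e-v_0\|_{L^1_\eta}<\delta$ strictly, so the localization constraint in \eqref{def:JEps} is inactive at $v_\e$ and $v_\e$ is therefore a local minimizer of the unconstrained (apart from the mass constraint) functional $G_\e$.

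Finally, \eqref{eqn:JepsLimitEnergy} follows by combining the two one-sided bounds already in hand: the $\limsup$ bound $J_\e^{(1)}(v_\e)\le 2c_W\eta(t_0)+o(1)=G^{(1)}(v_0)+o(1)$ from the competitor $w_\e$, and the $\liminf$ bound $\liminf_\e J_\e^{(1)}(v_\e)\ge G^{(1)}(v_0)$ from \eqref{firstOrderLimInf} applied to $v_\e\to v_0$. I expect the main obstacle to be the compactness argument for the minimizing sequence at fixed $\e$ near the endpoints of $I$, where $\eta$ degenerates (see \eqref{etaTail1}--\eqref{etaTail2}): one must ensure no mass or energy escapes to $\partial I$, which is handled exactly as in Proposition~\ref{1DCompactnessLemma} via the linear growth of $W$ and the integrability $\int_I\eta\,\dt<\infty$, but care is needed since here $\e$ is fixed rather than tending to zero. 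The remaining subtlety is bookkeeping the constant $\delta$: it must be taken small enough to lie within the local-minimality radius of Theorem~\ref{theorem local minimizer G0}, which we may assume without loss of generality.
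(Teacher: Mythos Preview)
Your proposal is essentially the same approach as the paper's: direct method for existence, then the upper bound from a recovery sequence for $v_0$ combined with Proposition~\ref{1DCompactnessLemma} and the $\liminf$ inequality to force the limit to be $v_0$ via Theorem~\ref{theorem local minimizer G0}, and the energy convergence from the resulting sandwich. The one technical point you do not quite close is verifying that the weak limit $v_\e$ actually lies in $H^1_\eta$ (not merely $H^1_{\mathrm{loc}}$): the bound on $\int_I (v_k')^2\eta\,\dt$ gives $v_\e'\in L^2_\eta$, but membership $v_\e\in L^2_\eta$ does not follow from the $L^1_\eta$ equi-integrability you invoke, and without it $G_\e(v_\e)=\infty$ by definition. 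The paper handles this by a direct pointwise estimate near the endpoints: writing $v_\e(t)=v_\e(-T+t^*)-\int_t^{-T+t^*}v_\e'(s)\,ds$, applying H\"older with the weight, and using \eqref{etaTail1} to control $\eta(t)\int_t^{-T+t^*}\eta(s)^{-1}\,ds$ uniformly, which yields $v_\e^2\eta$ integrable near $-T$ (and similarly near $T$). This is precisely the endpoint issue you flag as the main obstacle, but the resolution uses the specific power-law tails \eqref{etaTail1}--\eqref{etaTail2} rather than the $W$-growth argument of Proposition~\ref{1DCompactnessLemma}.
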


\begin{proof}
	First we prove the existence of a global minimizer. Fix $\e > 0$ and suppose that $\{f_k\}$ is a minimizing sequence in the sense that
	\begin{equation}\label{GminimizingSequence}
	\lim_{k \to \infty} J_\e(f_k) = \inf_v J_\e(v) < \infty.
	\end{equation}
	In particular, $\|f_k-v_0\|_{L_\eta^1} \leq \delta$ for all $k$ sufficiently large. By \eqref{Def:G1} and \eqref{def:JEps} it follows that $\{f_k'\}$ is bounded in $L_\eta^2$. Since $\{f_k\}$ is bounded in $L_\eta^1$, by \eqref{etaSmooth} and a diagonal argument, we may find a function $v_\e \in H_{\eta,\operatorname*{loc}}^1$ such that $f_k' \rightharpoonup v_\e'$ in $L_\eta^2$ and $f_k \to v_\e$ in $L_{\eta,\operatorname*{loc}}^1$, and pointwise a.e.. By Fatou's lemma and the weak lower semi-continuity of the $L_\eta^2$ norm, we then have, provided that $v_\e \in H_\eta^1$ (see \eqref{GDefinition}), that
	\[
	G_\e(v_\e) \leq \liminf_{k \to \infty} G_\e(f_k) = \inf_v J_\e(v)
	\]
	and that $\|v_\e - v_0\|_{L_\eta^1} \leq \delta$. Thus it remains to show that $v_\e \in L_\eta^2$. Since $v_\e$ is locally absolutely continuous, by H\"older's inequality, for $-T<t< -T+t^*$ we have
	\begin{align}
	v_\e^2(t) \eta(t) &= \eta(t) \left(v_\e(-T+t^*) - \int_t^{-T+t^*} v_\e'(s) \ds \right)^2 \\
	&\leq 2\eta(t) v_\e^2(-T+t^*) + 2\eta(t) \left( \int_t^{-T+t^*}v_\e'(s) \frac{\eta^{1/2}(s)}{\eta^{1/2}(s)}\ds \right)^2 \\
	&\leq 2\eta(t) v_\e^2(-T+t^*) + 2\eta(t) \int_t^{-T+t^*} \frac{1}{\eta(s)}\ds \int_t^{-T+t^*} |v_\e'(s)|^2 \eta(s) \ds \\
	&\leq 2\eta(t) v_\e^2(-T+t^*) + 2 \frac{d_2}{d_1}t^* \int_I |v_\e'(s)|^2 \eta(s) \ds,
	\end{align}
	where we have used the fact that if $t<s<-T+t^*$ then $\eta(s) \geq \frac{d_1}{d_2}\eta(t)$ (see \eqref{etaTail1}). By integrating in $t$ over $(-T,-T+t^*)$ we observe that $v_\e \in L_\eta^2((-T,-T+t^*))$. A similar estimate can be obtained on the interval $(T-t^*,T)$. On the other hand, by \eqref{etaSmooth}, we have that $\eta \geq \eta_0 >0$ in $[-T+t^*,T-t^*]$, and thus $v_\e \in L^2((-T+t^*,T-t^*))$, which then implies that $v_\e \in L_\eta^2$, as desired. This establishes the existence of a global minimizer, $v_\e$.

%	
%	By truncating above and below (as in Lemma \ref{GurtinMatanoLemma}), and using the fact that $v_0$ is bounded, we may assume that $|f_k|<C$. This combined with the form of $G_\e$ implies that the $f_k$ are uniformly bounded in $H_\eta^1$. Compactness then gives that the $f_k$ converge (up to a subsequence) in $L_\eta^2$ (and thus in $L_\eta^1$) to some $v_\e \in H_\eta^1$, with $\|v_\e - v_0\|_{L_\eta^1} \leq \delta$, and that the $f_k'$ converge weakly in $L_\eta^2$ to $v_\e'$. By Fatou's lemma and weak lower semicontinuity of the $L_\eta^2$ norm of the derivative we then have that $G_\e(v_\e) \leq \liminf_{\e \to 0^+} G_\e(f_k)$. These facts imply that $v_\e$ is a global minimizer of $J_\e$, as desired. Furthermore the $|v_\e|<C$ independent of $\e$. 

 By Theorem \ref{1DFirstGammaLimit} we know that there exists a sequence $\{\tilde v_\e\}$ converging to $v_0$ in $L_\eta^1$ with $G_\e^{(1)}(\tilde v_\e) \to G^{(1)}(v_0)$. In particular $\|\tilde v_\e -v_0\|_{L_\eta^1} \leq \delta$ for $\e$ sufficiently small. Since $v_\e$ is a global minimizer of $J_\e$ we then know that $G_\e(v_\e) \leq G_\e(\tilde v_\e)$ for $\e$ small. Thus
\begin{equation}
\limsup_{\e \to 0^+} G^{(1)}_\e(v_\e) \leq \limsup_{\e \to 0^+} G_\e^{(1)}(\tilde v_\e) \leq G^{(1)}(v_0). \nonumber
\end{equation}
By Proposition \ref{1DCompactnessLemma} we then have that (up to a subsequence, not relabeled), $v_\e \to \tilde v$ in $L_\eta^1$, with $\tilde v \in \mathcal{C}$ and with $\|\tilde v - v_0\|_{L_\eta^1} \leq \delta$. By again applying Theorem \ref{1DFirstGammaLimit} we find that
\begin{equation}  \label{eqn:IsolatedMin}
G^{(1)}(\tilde v) \leq \liminf_{\e \to 0^+} G^{(1)}_\e(v_\e) \leq \limsup_{\e \to 0^+} G^{(1)}_\e(v_\e) \leq G^{(1)}(v_0).
\end{equation}
Theorem \ref{theorem local minimizer G0} then implies that $\tilde v = v_0$, which along with \eqref{eqn:IsolatedMin} implies \eqref{eqn:JepsLimitEnergy}. As $v_\e \to v_0$ in $L_\eta^1$ we then have that the $v_\e$ must be local minimizers of $G_\e$, for $\e$ sufficiently small. This completes the proof.
\end{proof}

In light of the fact that the global minimizers of $J_\e$ are local minimizers of $G_\e$ for $\e$ sufficiently small we can then establish the Euler--Lagrange equations.

\begin{theorem} \label{1DBVP}
	Under the hypotheses of Proposition \ref{JLocalMinimizers} the sequence $\{v_\e\}$ of global minimizers of the functionals $J_\e$ will satisfy the following Euler--Lagrange equations (for $\e$ sufficiently small):
	\begin{equation} \label{1DEulerLagrange}
	2\e^2(v_\e'(t) \eta(t))' - W'(v_\e(t)) \eta(t) = \e \lambda_\e \eta(t),
	\end{equation}
	where $\lambda_\e \in \mathbb{R}$. Moreover the Lagrange multipliers $\lambda_\e$ satisfy
	\begin{equation} \label{1DMultiplierLimit}
	\lim_{\e\to 0^+}\lambda_\e=  \lambda_0, 
	\end{equation}
	where $\lambda_0$ is the number given in \eqref{def:rhoNot}.
\end{theorem}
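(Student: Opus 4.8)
The plan is to derive the Euler--Lagrange system for the minimizers $v_\e$ and then extract the limit of the multipliers $\lambda_\e$ by testing the equation against a \emph{localized stress--energy field}, exploiting the concentration of energy at $t_0$ that follows from Theorem~\ref{1DFirstGammaLimit}. First, by Proposition~\ref{JLocalMinimizers}, for $\e$ small $v_\e$ lies in the interior of $\{\,\|v-v_0\|_{L^1_\eta}\le\delta\,\}$, hence it is a local minimizer of $G_\e$ subject only to the mass constraint $\int_I v\eta\dt=m$. A standard truncation argument based on minimality and the growth of $W$ (see \eqref{WGurtin_Assumption}, \eqref{WLinearGrowth}) gives $\|v_\e\|_{L^\infty(I)}\le M$ uniformly in $\e$, so in particular $W'(v_\e)\in L^1_\eta$. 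Since $v\mapsto\int_I v\eta\dt$ is a nonzero continuous linear functional on $H^1_\eta$, the Lagrange multiplier rule produces $\mu_\e\in\mathbb R$ with $\int_I(2\e^2 v_\e'\phi'+W'(v_\e)\phi)\eta\dt=\mu_\e\int_I\phi\eta\dt$ for all $\phi\in H^1_\eta$. Setting $\e\lambda_\e:=-\mu_\e$ and testing with $\phi\in C^\infty_c(I)$, the facts that $W\in C^1$ and $\eta\in C^{1,\beta}(I)$ is positive in $I$ allow one to bootstrap to $v_\e\in C^2(I)$ and to the classical identity \eqref{1DEulerLagrange} (with the natural weighted Neumann condition at any endpoint where $\eta\ne 0$).

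Next I fix $\zeta\in C^\infty_c(I)$ with $0\le\zeta\le 1$ and $\zeta\equiv 1$ on a neighbourhood $U$ of $t_0$, multiply \eqref{1DEulerLagrange} by $\zeta v_\e'$, use the identities $2\e^2(v_\e'\eta)'v_\e'=(\e^2\eta(v_\e')^2)'+\e^2\eta'(v_\e')^2$ and $W'(v_\e)v_\e'\eta=(\eta W(v_\e))'-\eta'W(v_\e)$, integrate over $I$, and integrate by parts the total-derivative term (no boundary term survives, since $\zeta$ is compactly supported). This produces the localized first integral
\[
-\int_I\zeta'\eta\bigl(\e^2(v_\e')^2-W(v_\e)\bigr)\dt+\int_I\zeta\eta'\bigl(\e^2(v_\e')^2+W(v_\e)\bigr)\dt=\e\lambda_\e\int_I\zeta v_\e'\eta\dt .
\]
I then divide by $\e$ and write $e_\e:=\e^{-1}(\e^2(v_\e')^2+W(v_\e))\ge 0$, so that $\int_I e_\e\eta\dt=G_\e^{(1)}(v_\e)\to 2c_W\eta(t_0)=G^{(1)}(v_0)$ by Proposition~\ref{JLocalMinimizers}. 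Applying the $\liminf$ argument of Theorem~\ref{1DFirstGammaLimit} on the open set $U$ gives $\liminf_{\e\to0^+}\int_U e_\e\eta\dt\ge\frac{2c_W}{b-a}|Dv_0|_\eta(U)=2c_W\eta(t_0)$; combined with the convergence of the total mass, this forces $e_\e\eta\,\dt\rightharpoonup 2c_W\eta(t_0)\,\delta_{t_0}$ weakly-$*$ and $\int_K e_\e\eta\dt\to 0$ for every compact $K\subset I\setminus\{t_0\}$. Hence the first term is bounded by $\|\zeta'\|_\infty\int_{\operatorname{supp}\zeta'}e_\e\eta\dt\to 0$ (using $|\e^2(v_\e')^2-W(v_\e)|\le\e\,e_\e$ and $\operatorname{supp}\zeta'\subset I\setminus U$); the second term equals $\int_I\frac{\zeta\eta'}{\eta}(e_\e\eta)\dt\to 2c_W\eta(t_0)\cdot\frac{\zeta(t_0)\eta'(t_0)}{\eta(t_0)}=2c_W\eta'(t_0)$, since $\zeta\eta'/\eta\in C_c(I)$ and $\zeta(t_0)=1$; and integrating the right-hand side by parts against $\zeta\eta\in C^1_c(I)$ and using $v_\e\to v_0$ in $L^1_\eta$ gives $\int_I\zeta v_\e'\eta\dt\to-\int_I(\zeta\eta)'v_0\dt=(b-a)\eta(t_0)$. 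As this limit is nonzero, $\lambda_\e\to\frac{2c_W\eta'(t_0)}{(b-a)\eta(t_0)}=\lambda_0$, which is \eqref{1DMultiplierLimit}.

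The main obstacle is the careful set-up of the Euler--Lagrange equation in the degenerate weighted spaces (the uniform $L^\infty$ bound and the interior regularity of $v_\e$), and, to a lesser extent, the energy-concentration statement invoked above, which must be squeezed out of the localized lower-semicontinuity inequality of Theorem~\ref{1DFirstGammaLimit}. Testing with the \emph{compactly supported} field $\zeta v_\e'$ instead of $v_\e'$ itself is the crucial trick: it makes all boundary terms vanish and removes any need to control $v_\e$ or the energy density near $\partial I$, where $\eta$ may vanish and where $v_\e$ approaches a constant only slowly.
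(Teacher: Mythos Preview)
Your approach is essentially the same as the paper's: both test the Euler--Lagrange equation against $\psi v_\e'$ with $\psi\in C_c^\infty(I)$, obtain the very same integrated identity, and pass to the limit via concentration of the energy density at $t_0$. Your organization carries a small but genuine simplification: by insisting that $\zeta\equiv 1$ on a neighborhood of $t_0$, the ``discrepancy'' term $\e^{-1}\int_I\zeta'\eta(\e^2(v_\e')^2-W(v_\e))\,dt$ is supported on a compact subset of $I\setminus\{t_0\}$, where the crude bound $|\e^2(v_\e')^2-W(v_\e)|\le\e\,e_\e$ together with energy localization already kills it. The paper instead proves the stronger equipartition statement $\e^{-1}W(v_\e)-\e(v_\e')^2\to 0$ in $L^1_{\mathrm{loc}}(I)$ (via $\e^{-1/2}W^{1/2}(v_\e)-\e^{1/2}|v_\e'|\to 0$ in $L^2_\eta$), and then handles the remaining term through the weak-$*$ convergence $W^{1/2}(v_\e)|v_\e'|\eta\,\mathcal L^1\rightharpoonup c_W\eta(t_0)\delta_{t_0}$ rather than your direct use of $e_\e\eta\,dt\rightharpoonup 2c_W\eta(t_0)\delta_{t_0}$.

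One caution on the setup: you invoke a uniform $L^\infty$ bound on $v_\e$ by ``a standard truncation argument''. In the paper this bound is only established \emph{after} the present theorem (Theorem~\ref{Thm:1DBounds}), using the Euler--Lagrange equation itself, and a naive truncation is not immediate because of the mass constraint. Fortunately your argument does not actually need it: since $\eta>0$ on $I$ one has $v_\e\in H^1_{\mathrm{loc}}(I)\hookrightarrow C(I)$, which already gives $W'(v_\e)\in C(I)$ and hence interior $C^2$ regularity by bootstrapping; and every integral in your limit computation involves only compactly supported test functions, so global $L^\infty$ control is irrelevant.
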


\begin{proof}
	Reasoning somewhat as in the proof of step 4 in \cite{DalMasoFonsecaLeoni} we have that $v_\e \in C^2(I)$ and satisfies \eqref{1DEulerLagrange}. Next, we will prove \eqref{1DMultiplierLimit}, namely the limit of the Lagrange multipliers $\lambda_\e$. The argument here follows \cite{LuckhausModica}, with the necessary adaptations to the weighted setting.
	
	To prove \eqref{1DMultiplierLimit}, fix some $\psi \in C_c^\infty (I)$. We multiply the Euler--Lagrange equations \eqref{1DEulerLagrange} by $\psi v_\e'$ and integrate to obtain
	\[
	\e \lambda_\e \int_I \psi v_\e' \eta \dt = \int_I (2\e^2(v_\e''\eta + v_\e' \eta') - W'(v_\e) \eta) \psi v_\e'  \dt.
	\]
	Integrating by parts, we find that
	\begin{equation} \label{1DIntegratedForm1}
	\e \lambda_\e \int_I \psi v_\e' \eta \dt = \int_I (W(v_\e)-\e^2 v_\e'^2)(\eta \psi)' + 2\e^2 (v_\e')^2 \eta' \psi  \dt .
	\end{equation}
	By Theorem \ref{1DFirstGammaLimit} and Proposition \ref{JLocalMinimizers}  we know that
	\[
	\lim_{\e \to 0^+} \int_I (\e^{-1}W(v_\e) + \e (v_\e')^2) \eta  \dt = 2 c_W \eta(t_0).
	\]
Furthermore, as in the proof of \eqref{firstOrderLimInf}, by lower semicontinuity
	\begin{equation}\label{100lower}
	\liminf_{\e \to 0^+} 2\int_I W^{1/2}(v_\e) |v_\e'| \eta \dt = \liminf_{\e \to 0^+} 2\int_I |(\Phi(v_\e))'| \eta  \dt   \geq  2 c_W \eta(t_0) ,
	\end{equation}
	where we recall that $\Phi(t) := \int_a^t W^{1/2}(s) \ds$. 
	These together give the following:
	\begin{align*}
	0 &\leq \limsup_{\e \to 0^+} \int_I (\e^{-1/2}W^{1/2}(v_\e) - \e^{1/2} (v_\e'))^2 \eta  \dt \\
	& = \limsup_{\e \to 0^+} \int_I (\e^{-1}W(v_\e) + \e (v_\e')^2 - 2W^{1/2}(v_\e)|v_\e'|)\eta  \dt \leq 0 .
	\end{align*}
	We thus have that $\e^{-1/2}W^{1/2}(v_\e) - \e^{1/2} |v_\e'|$ goes to zero in $L^2_\eta$. Moreover, the liminf in \eqref{100lower} is actually a limit and equality holds, so that
	\begin{equation}\label{101lower}
	\lim_{\e \to 0^+} \int_I W^{1/2}(v_\e) |v_\e'| \eta \dt    =   c_W \eta(t_0) .
		\end{equation}

	Additionally, we can write the following:
	\begin{align*}
	&\lim_{\e \to 0^+} \int_I |\e^{-1}W(v_\e) - \e (v_\e')^2|\eta  \dt \\
	&= \lim_{\e \to 0^+} \int_I \left|\e^{-1/2}W^{1/2}(v_\e) - \e^{1/2} |v_\e'|\right|\left|\e^{-1/2}W^{1/2}(v_\e) + \e^{1/2} |v_\e'|\right|\eta  \dt \\
	& \leq \lim_{\e \to 0^+}\left(\int_I\left(\e^{-1/2}W^{1/2}(v_\e) - \e^{1/2} |v_\e'|\right)^2\eta  \dt\right)^{1/2}\\
	&\quad\times \left(\int_I\left(\e^{-1/2}W^{1/2}(v_\e) + \e^{1/2} |v_\e'|\right)^2\eta  \dt\right)^{1/2} \\
	&\leq \lim_{\e \to 0^+}C\left(\int_I\left(\e^{-1/2}W^{1/2}(v_\e) - \e^{1/2} |v_\e'|\right)^2\eta  \dt\right)^{1/2} =0,
	\end{align*}
	where we have used H\"{o}lder's inequality in the first inequality, Young's inequality and the boundedness of $G_\e^{(1)}(v_\e)$ in the second. By \eqref{etaSmooth} we can deduce that $\e^{-1}W(v_\e) - \e (v_\e')^2$ goes to zero in $L^1_{\operatorname*{loc}}(I)$. Thus by dividing \eqref{1DIntegratedForm1} by $\e$, and recalling that $\psi$ is compactly supported in $I$, we obtain
	\[
	\lim_{\e \to 0^+} \lambda_\e \int_I \psi v_\e' \eta \dt =  \lim_{\e \to 0^+} 2\int_I \e (v_\e')^2 \eta' \psi  \dt .
	\]
	We then use the $L^2$ convergence shown above to estimate the following
	\begin{align*}
	&\lim_{\e \to 0^+} \left|\int_I (\e (v_\e')^2- W^{1/2}(v_\e)|v_\e'|) \eta' \psi  \dt\right| \\
	&= \lim_{\e \to 0^+} \left|\int_I \e^{1/2} |v_\e'|(\e^{1/2}|v_\e'| - \e^{-1/2}W^{1/2}(v_\e) )\eta'\psi  \dt\right|\\
	&\leq \lim_{\e \to 0^+}\left(\int_I \e (v_\e')^2 \left(\frac{\eta'\psi}{\eta}\right)^2 \eta \dt\right)^{1/2} \left(\int_I (\e^{1/2}|v_\e'| - \e^{-1/2}W^{1/2}(v_\e) )^2  \eta \dt\right)^{1/2} = 0 ,
	\end{align*}
	where we have used the fact that $\frac{\psi \eta'}{\eta}$ is uniformly bounded, since $\psi$ has compact support in $I$.
	
	Thus we can write the following:
	\begin{equation}\label{multiplier0}
	\lim_{\e \to 0^+} \lambda_\e \int_I \psi v_\e' \eta \dt =  \lim_{\e \to 0^+} 2\int_I  W^{1/2}(v_\e)|v_\e'| \eta' \psi   \dt.
	\end{equation}
We know that $ v_\e'\mathcal{L}^1\lfloor I\stackrel{*}{\rightharpoonup} Dv_0=(b-a)\delta_{t_0}$ and $ W^{1/2}(v_\e) v_\e'\mathcal{L}^1\lfloor I \stackrel{*}{\rightharpoonup} D(\Phi \circ v_0)=c_W\delta_{t_0}$, both in $(C_0(\overline{I}))'$. In turn, 
	$ W^{1/2}(v_\e) v_\e'\eta\mathcal{L}^1\lfloor I \stackrel{*}{\rightharpoonup} c_W\eta(t_0)\delta_{t_0}$. In view of \eqref{101lower}, it follows from   Proposition 4.30 in \cite{MaggiBook} that $ W^{1/2}(v_\e) |v_\e'|\eta\mathcal{L}^1\lfloor I \stackrel{*}{\rightharpoonup} c_W\eta(t_0)\delta_{t_0}$. 
	Hence,
	\[
	\lim_{\e \to 0^+} \int_I  W^{1/2}(v_\e)|v_\e'| \eta' \psi   \dt=\lim_{\e \to 0^+} \int_I  W^{1/2}(v_\e)|v_\e'|\eta \frac{\eta'}{\eta} \psi   \dt=c_W\eta(t_0)\frac{\eta'(t_0)}{\eta(t_0)} \psi(t_0).
	\] 
We thus take limits in \eqref{multiplier0}  to find that
	\[
	\lim_{\e \to 0^+} \lambda_\e (b-a) \psi(t_0) \eta(t_0) = 2 \eta'(t_0) c_W \psi(t_0).
	\]
	This then gives the desired conclusion, namely that \eqref{1DMultiplierLimit} holds.
	
\end{proof}

Next we establish tight bounds on the functions $v_\e$, as well as a Neumann condition.
	
	\begin{theorem}\label{Thm:1DBounds}
	Under the hypotheses of Proposition \ref{JLocalMinimizers}, for all $\e>0$ sufficiently small the minimizers $v_\e$ of $J_\e$ satisfy
	\begin{align}
	a_\e &\leq v_\e(t) \leq b_\e, \quad t \in I, \label{1DTightBounds}\\
	v_\e'(-T) &= v_\e'(T) = 0\label{1DNeumannCondition},
	\end{align}
	where $a_\e < c_\e<b_\e$ are the only zeros of $W' + \lambda_\e \e$. Moreover 
	\begin{align}
    a_\e &= a - \lambda_\e |\lambda_\e|^{1/q-1}  (q/\ell)^{1/q}\e^{1/q}+ o(\e^{1/q}), \label{aEpsilonForm}\\
    c_\e &= c - \lambda_\e W''(c)^{-1} \e + o(\e), \label{cEpsilonForm}\\
	b_\e &= b - \lambda_\e  |\lambda_\e|^{1/q-1}(q/\ell)^{1/q} \e^{1/q} + o(\e^{1/q}) \label{bEpsilonForm},
\end{align}
where $\ell$ is given in \eqref{WPrime_At_Wells}.
	\end{theorem}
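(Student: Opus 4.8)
The plan is to deduce everything from the Euler--Lagrange equation \eqref{1DEulerLagrange}, the interior regularity $v_\e\in C^2(I)$, and the convergence $\lambda_\e\to\lambda_0$ supplied by Theorem \ref{1DBVP}, by means of a maximum principle adapted to the degenerate weighted ODE.

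First I would pin down the zeros and establish \eqref{aEpsilonForm}--\eqref{bEpsilonForm}. By \eqref{W_Number_Zeros} the function $W'$ vanishes only at $a<c<b$, changing sign at each of them (the wells $a,b$ are minima and $W''(c)<0$), while \eqref{WGurtin_Assumption} keeps $|W'|$ bounded below near $\pm\infty$. Hence for $\e$ small the perturbation $W'+\e\lambda_\e$ has exactly three zeros, one near each of $a,c,b$, which I call $a_\e<c_\e<b_\e$, with $W'+\e\lambda_\e<0$ on $(-\infty,a_\e)\cup(c_\e,b_\e)$ and $W'+\e\lambda_\e>0$ on $(a_\e,c_\e)\cup(b_\e,\infty)$. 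Near $c$ the potential $W$ is $C^2$ with $W''(c)<0$, so solving $W'(c_\e)=-\e\lambda_\e$ and using \eqref{1DMultiplierLimit} gives \eqref{cEpsilonForm} by the implicit function theorem. Near $a$ and $b$, \eqref{WPrimeLimits} yields $W'(s)=\tfrac{\ell}{q}(s-a)|s-a|^{q-1}(1+o(1))$ as $s\to a$, and likewise at $b$; solving $W'(a_\e)=-\e\lambda_\e$ then forces $|a_\e-a|^{q}=\tfrac{q}{\ell}\e|\lambda_\e|(1+o(1))$ with $\operatorname{sgn}(a_\e-a)=-\operatorname{sgn}(\lambda_\e)$, which rearranges to \eqref{aEpsilonForm}; \eqref{bEpsilonForm} is identical, and when $q=1$ one has $\ell=W''(a)=W''(b)$.

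Next, the Neumann conditions \eqref{1DNeumannCondition}. By Proposition \ref{JLocalMinimizers} the constraint $\|v-v_0\|_{L_\eta^1}\le\delta$ in \eqref{def:JEps} is inactive at $v_\e$ for $\e$ small, so $v_\e$ minimizes $G_\e$ (see \eqref{GDefinition}) subject only to the mass constraint, and therefore the first variation vanishes against \emph{every} $\phi\in H_\eta^1$, i.e. $\int_I(2\e^2 v_\e'\phi'+W'(v_\e)\phi)\eta\dt+\e\lambda_\e\int_I\phi\eta\dt=0$. Taking $\phi$ bounded and Lipschitz with $\phi\equiv 1$ near an endpoint and using \eqref{1DEulerLagrange} together with $v_\e\in C^2(I)$, an integration by parts collapses to $\lim_{t\to\pm T}v_\e'(t)\eta(t)=0$. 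When $\eta$ does not vanish at that endpoint (the cases $n_1=1$ or $n_2=1$ in \eqref{etaTail1}, \eqref{etaTail2}) this is \eqref{1DNeumannCondition}; when $\eta(t)\sim d(t+T)^{n_1-1}$ with $n_1\ge 2$, integrating \eqref{1DEulerLagrange} and using that $v_\e$ is bounded gives $v_\e'(t)\eta(t)=O((t+T)^{n_1})$, hence $v_\e'(t)=O(t+T)\to0$.

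Now the bounds \eqref{1DTightBounds}. The set $\{v_\e>b_\e\}$ is relatively open in $I$; on any connected component $(\alpha,\beta)$ one has $W'(v_\e)+\e\lambda_\e>0$, hence $(v_\e'\eta)'>0$ and $v_\e'\eta$ is strictly increasing there. If $\alpha\in(-T,T)$ then $v_\e(\alpha)=b_\e$ by continuity and $v_\e'(\alpha)\ge0$ (since $v_\e>b_\e=v_\e(\alpha)$ to the right of $\alpha$), while if $\beta\in(-T,T)$ then similarly $v_\e'(\beta)\le 0$; as $\eta>0$ on $I$, this is incompatible with $v_\e'(\alpha)\eta(\alpha)<v_\e'(\beta)\eta(\beta)$. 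If $\beta=T$, strict monotonicity of $v_\e'\eta$ together with $v_\e'(T)\eta(T)=0$ forces $v_\e'\eta<0$ on the component, so $v_\e$ strictly decreases there, contradicting $v_\e>b_\e=v_\e(\alpha)$ when $\alpha>-T$; the subcase $\alpha=-T$ is symmetric. The only remaining possibility, $(\alpha,\beta)=(-T,T)$, is excluded because $v_\e>b_\e$ on all of $I$ together with $\{v_0=a\}=(-T,t_0)$ of positive measure (see \eqref{Def:V0}) would give $\|v_\e-v_0\|_{L_\eta^1}\ge(b_\e-a)\int_{-T}^{t_0}\eta\dt$, contradicting $v_\e\to v_0$. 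Hence $\{v_\e>b_\e\}=\emptyset$, and $v_\e\ge a_\e$ follows symmetrically. \textbf{The main obstacle} is the degeneracy of $\eta$ at $\pm T$: there \eqref{1DEulerLagrange} is singular and $v_\e$ is a priori only in the weighted space $H_\eta^1$, so one must control the boundary behavior of $v_\e'\eta$ and the global boundedness of $v_\e$ in tandem, in particular ruling out blow-up of $v_\e$ or of $v_\e'\eta$ at an endpoint, which would violate either the finiteness of $G_\e(v_\e)$ or the $L_\eta^1$-proximity of $v_\e$ to $v_0$. Once $v_\e$ is known to be bounded and continuous on $\overline I$ with $v_\e'\eta\to0$ at $\pm T$, the remaining arguments are elementary.
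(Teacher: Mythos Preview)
Your approach is essentially the same maximum-principle argument as the paper's: on a maximal subinterval of $\{v_\e>b_\e\}$ (or $\{v_\e<a_\e\}$) the Euler--Lagrange equation gives strict monotonicity of $v_\e'\eta$, and one derives a contradiction from the endpoint behavior. The asymptotics for $a_\e,c_\e,b_\e$ are also obtained the same way.

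There is, however, a circularity in your ordering that you flag as the ``main obstacle'' but do not actually resolve. You deduce $\lim_{t\to\pm T}v_\e'(t)\eta(t)=0$ from the first-variation identity with a test function $\phi\equiv1$ near a degenerate endpoint, but for that identity to even make sense one needs $W'(v_\e)\eta\in L^1$ near that endpoint, which is not available until $v_\e$ is known to be bounded there. The paper avoids the boundary variational identity entirely: within the bounds proof, on a maximal subinterval of $\{v_\e<a_\e\}$ reaching $-T$, monotonicity of $v_\e'\eta$ already guarantees that $M_\e:=\lim_{t\to-T^+}v_\e'(t)\eta(t)$ exists in $\mathbb{R}\cup\{\infty\}$; if $M_\e\neq0$ and $n_1\ge2$ then $|v_\e'(t)|\gtrsim(T+t)^{-(n_1-1)}$ forces $\int_I|v_\e'|^2\eta=\infty$, contradicting $v_\e\in H_\eta^1$. (For $n_1=1$, $\eta$ is bounded below near $-T$, so $v_\e$ is continuous up to $-T$ and the Neumann condition comes directly from minimality.) With $M_\e=0$ in hand, the integrated ODE gives $v_\e'<0$ on the subinterval, contradicting that the left endpoint carries the infimum. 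Only \emph{after} the bounds $a_\e\le v_\e\le b_\e$ are established does the paper integrate \eqref{1DEulerLagrange} (now with bounded right-hand side) to obtain $|v_\e'(t)|\le C(T+t)$ near $-T$, i.e.\ the full Neumann condition \eqref{1DNeumannCondition}. In short: use finite energy $\int_I|v_\e'|^2\eta<\infty$, not the boundary variational identity, to control $v_\e'\eta$ at the degenerate endpoints, and prove the bounds \emph{before} the Neumann condition.
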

	
	\begin{proof}
	By hypothesis \eqref{WGurtin_Assumption}, $|W'(s)| \geq w_0 > 0$ for all $|s| \geq  C$. Since $W'$ has only three zeros at $a,b,c$ and is strictly monotonic in a ball centered at each of these points with radius $\zeta_0 > 0$ (see \eqref{WPrime_At_Wells} and \eqref{W_Number_Zeros}), by taking $w_0$ smaller we can assume that $|W'(s)| \geq w_0$ for all $s \in \mathbb{R}\setminus( B(a,\zeta_0)\cup B(c,\zeta_0)\cup B(b,\zeta_0))$. By \eqref{1DMultiplierLimit}, $|\e \lambda_\e| \leq w_0/2$ for all $\e>0$ small. Hence $W'+\e \lambda_\e$ has only three zeros
\begin{equation} \label{80}
a_\e<b_\e<c_\e,
\end{equation}
for all $\e>0$ small. Furthermore by \eqref{W_Number_Zeros} and \eqref{WPrimeLimits} we can derive the explicit forms in \eqref{aEpsilonForm}-\eqref{bEpsilonForm}.

Next, consider the open set $U_\e := \{t \in I : v_\e(t) < a_\e\}$. We claim that $U_\e$ is empty. Indeed, if not, let $I_\e$ be a maximal subinterval of $U_\e$, and since $W'(v_\e) + \e \lambda_\e < 0$ for all $t \in I_\e$ by \eqref{1DEulerLagrange} we have that $(v_\e'(t) \eta(t))'<0$ for all $t \in I_\e$. Since $\eta>0$ on $I$ by \eqref{etaSmooth}, this implies that $v_\e'$ has at most one zero in $\overline{I_\e}$. Hence there exist $\lim_{t \to t_\e^+}v_\e(t)= \ell_\e$ and $\lim_{t \to T_\e^-}v_\e(t) = L_\e$, where $t_\e,T_\e$ are the left and right endpoints of $I_\e$, respectively. Note that $\ell_\e, L_\e$ could be infinite if one of the endpoints is $-T$ or $T$. Consider $\inf_{I_\e} v_\e$. If there exists $s_\e \in I_\e^\circ$ such that $v_\e(s_\e) = \inf_{I_\e} v_\e$, then $v_\e'(s_\e) = 0$ and $v_\e''(s_\e) \geq 0$. This is impossible, as $(v_\e' \eta)' < 0$ on $I_\e$. Thus it follows that $\inf_{I_\e} v_\e$ is either $\ell_\e$ or $L_\e$. Assume first that $\inf_{I_\e} v_\e = \ell_\e$. By the definition of $I_\e$ it cannot be that $\ell_\e = a_\e$,  but then, by the maximality of $I_\e$, necessarily $t_\e = -T$. By \eqref{1DEulerLagrange} for all $t_1,t_2 \in I_\e$, with $t_1 < t_2$:
\begin{equation}\label{Eqn:ELIntegrated}
2\e^2 v_\e'(t_2)\eta(t_2) - 2\e^2 v_\e'(t_1)\eta(t_1 ) = \int_{t_1}^{t_2} (W'(v_\e(s)) + \e \lambda_\e)\eta(s) \ds.
\end{equation}
Since $W'(v_\e(t)) + \e \lambda_\e < 0$ for all $t \in I_\e$, the integral $\int_{-T}^{t_2} (W'(v_\e(s)) + \e \lambda_\e) \eta(s) \ds$ is well-defined in $\mathbb{R} \cup \{-\infty\}$. Hence, letting $t_1 \to -T^+$ in \eqref{Eqn:ELIntegrated}, it follows that there exists
\begin{equation}\label{Eqn:LimitVPrime}
\lim_{t \to -T^+} v_\e'(t) \eta(t) = M_\e \in \mathbb{R} \cup \{\infty\}.
\end{equation}
Assume, for the sake of contradiction, that $M_\e \neq 0$. Then by \eqref{etaTail1} and \eqref{Eqn:LimitVPrime}, $|v_\e'(t)| \geq C_0 (T+t)^{-n_1+1}$ for all $t \in (-T,-T+\delta_\e)$, for some $\delta_\e > 0$. It would then follow that
\[
\int_{-T}^{-T+\delta_\e} |v_\e'|^2 \eta \dt \geq d_1 \int_{-T}^{-T+\delta_\e} C_0^2 (T+t)^{-n_1+1} \dt = \infty
\]
if $n_1 \geq 2$. On the other hand, if $n_1 = 1$ then $v_\e'(-T) = 0$, since $v_\e$ is a minimizer. Thus in both cases we must have that $M_\e = 0$. In turn, letting $t_1 \to -T^+$ in \eqref{Eqn:ELIntegrated} it follows that $v'_\e(t) < 0$ for all $t \in I_\e$, which contradicts the fact that $\ell_\e = \inf_{I_\e} v_\e$. Using a similar argument we can exclude the case that $L_\e = \inf_{I_\e} v_\e$. This proves that $I_\e$, and in turn $U_\e$, is empty. Thus $v_\e \geq a_\e$ in $I$. Similarly, we can show that $v_\e \leq b_\e$ in $I$.

It remains to prove the Neumann boundary condition \eqref{1DNeumannCondition}. If $n_i = 1$ then this comes from the minimality of $v_\e$. When $n_i \geq 2$, since $v_\e$ is bounded by what we just proved, it follows that the integral on the right-hand side of \eqref{Eqn:ELIntegrated} is bounded for all $t \in I$. Hence as in the first part of the proof we can conclude that the limit $M_\e$ in \eqref{Eqn:LimitVPrime} exists and must be zero. Hence letting $t_1 \to -T^+$ in \eqref{Eqn:ELIntegrated} we obtain
\[
2\e^2v_\e'(t)\eta(t) = \int_{-T}^t (W'(v_\e) + \lambda_\e \e) \eta(s) \ds.
\]
Using again the fact that $v_\e$ is bounded, along with \eqref{W_Smooth} and \eqref{etaTail1}, we have that 
\[
0 \leq 2\e^2 |v_\e'(t)| \leq \frac{C}{d_1(T+t)^{n_1-1}} \int_{-T}^t d_2(T+s)^{n_1-1} \ds = \frac{Cd_2}{d_1 n_1}(T+t) \to 0
\]
as $t \to -T^+$. A similar estimate holds near $T$. This completes the proof.
	\end{proof}

In the following theorem we specify the qualitative behavior of $v_\e$, which are global minimizers of $J_\e$. Despite the fact that $v_\e \to v_0 \in L_\eta^1$ by Proposition \ref{JLocalMinimizers}, $v_\e$ need not be increasing. Indeed in the radial case $\eta (t) \equiv (t+T)^{n-1}$, on an unbounded domain and for $n$ large, Ni \cite{Ni1983} has shown that all positive solutions of \eqref{1DEulerLagrange} approach $b_\e$ as $t \to \infty$ in an oscillatory way. The presence of possible oscillations makes the analysis significantly more involved. However, the overall idea of the proof is the same as the proof of Theorem \ref{theorem local minimizer G0}.

Fix 
\begin{equation}\label{def: theta i}
\theta_i \in \left(\frac{1}{n_i},\frac{1}{n_i-1}\right),\quad i = 1,2,
\end{equation}
where $n_i$ are the exponents given in \eqref{etaTail1} and \eqref{etaTail2}. Let $k\in\mathbb{N}$ and define
\begin{equation}
O_{\e}:=\{t\in [-T+c(n_1)\e^{\theta_1},T-c(n_2)\e^{\theta_2}]:~a_{\e}+\e^{k}\leq
v_{\e}(t)\leq b_{\e}-\e^{k}\},
\label{set O epsilon}%
\end{equation}
with $c(n_i): = 0$ if $n_i = 1$ and $1$ otherwise.

\begin{theorem}\label{thm:1DMinProperties}
Assume that $W$ satisfies \eqref{W_Smooth}-\eqref{WGurtin_Assumption}, and that $\eta$ satisfies \eqref{etaSmooth}-\eqref{etaPrimeRatio}.  Let $v_\e$ be a minimizer of $J_\e$. Write $I_0 := [-T+\r_0, T-\r_0]$, with $\r_0>0$ a constant to be defined. Then for $\delta$ sufficiently small in \eqref{def:JEps} and for all $\e>0$ sufficiently small the following properties hold:
\begin{enumerate}
\item $\Gamma_\e := O_\e \cap I_0$ has exactly one component $[T_1^\e,T_2^\e]$, with $v_\e(T_1^\e)=a_\e + \e^k$ and $v_\e(T_2^\e)=b_\e -\e^k$. Moreover, there exists $0<\r_1<\r_0$ so that $\Gamma_\e \subset B(t_0,\r_1)$.
\item For every fixed $\e$, the points in $\Gamma_\e$ where $v_\e = c_\e$ are at most distance $C\e$ apart, for some $C>0$ independent of $\e$.
\item For $t\in(-T,T_1^\e)$ we have that $v_\e(t) \in [a_\e, a_\e + \e^k)$ except on a set of $\eta \mathcal{L}^1$ measure $o(\e)$. Similarly for $t\in(T_2^\e,T)$ we have that  $v_\e(t) \in (b_\e-\e^k, b_\e]$ except on a set of $\eta \mathcal{L}^1$ measure $o(\e)$.
\end{enumerate}
\end{theorem}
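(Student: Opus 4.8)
The plan is to mimic the structure of the proof of Theorem \ref{theorem local minimizer G0}, using the quantitative energy bound $J_\e^{(1)}(v_\e) = G_\e^{(1)}(v_\e) \to 2c_W\eta(t_0)$ from Proposition \ref{JLocalMinimizers}, together with the pointwise bounds $a_\e \le v_\e \le b_\e$ and the decay estimates \eqref{CPDecay1}--\eqref{CPDecay2} for the profile $z$. The key idea throughout is that each "full" transition of $v_\e$ from near $a_\e$ to near $b_\e$ (or back) inside a region where $\eta$ is not too small costs at least $2c_W \eta(t)(1-o(1))$ of energy, by the Modica--Mortola / Young's inequality lower bound $G_\e^{(1)}(v_\e;J) \ge 2\int_J W^{1/2}(v_\e)|v_\e'|\eta\,\dt$ applied on a subinterval $J$; since the total energy is only $2c_W\eta(t_0)+o(1)$ and $\eta$ is continuous at $t_0$ with $\eta(t_0)>0$, there is only enough energy for \emph{one} such transition, and it must occur near $t_0$. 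First I would fix $\r_0, \r_1, \epsilon_1$ exactly as in \eqref{continuity eta}--\eqref{Def:eps1} (using also \eqref{etaTail1}--\eqref{etaTail2} to control $\eta$ near $\pm T$), and record that $G_\e^{(1)}(v_\e) \le 2c_W\eta(t_0) + \gamma$ for $\e$ small, where $\gamma>0$ is at our disposal.

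For part (i): I would first show $v_\e$ must cross the value $c_\e$ somewhere in $B(t_0,\r_1)$ — otherwise $v_\e$ stays (say) below $c_\e$ on all of $B(t_0,\r_{\epsilon_1})$, forcing $\|v_\e - v_0\|_{L^1_\eta} \ge (b-a)\tfrac{\eta(t_0)}{2}\r_{\epsilon_1} - o(1)$, contradicting $v_\e \to v_0$. Next, if $v_\e$ had a full transition (crossing from $a_\e+\e^k$ up to $b_\e-\e^k$, or the reverse) in two disjoint subintervals of $I_0$, each would cost $\ge 2c_W(\min_{I_0}\eta - o(1))$; two of these together exceed $2c_W\eta(t_0)+\gamma$ by the choice $\epsilon_1 < \min_{I_0}\eta$, a contradiction. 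Hence $\Gamma_\e = O_\e \cap I_0$ has exactly one connected component; the endpoint values $v_\e(T_1^\e)=a_\e+\e^k$, $v_\e(T_2^\e) = b_\e-\e^k$ follow from the definition of $O_\e$, the intermediate value theorem, and the fact that the transition is monotone "on the large scale" (i.e. it goes up, not down — again by the mass/energy argument, as in Steps 3--4 of Theorem \ref{theorem local minimizer G0}, that $v_0$ jumps up at $t_0$). The containment $\Gamma_\e \subset B(t_0,\r_1)$ follows because outside $B(t_0,\r_1)$ there is not enough remaining energy for $v_\e$ to reach both $a_\e+\e^k$ and $b_\e-\e^k$: the transition layer uses up essentially all of $2c_W\eta(t_0)$, and $\eta(t)\ge \eta(t_0)-\epsilon_1$ near $t_0$ forces it to sit within $\r_1$ of $t_0$.

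For part (ii): within $\Gamma_\e$, rescale by $s = (t-t_0)/\e$ and use the Euler--Lagrange equation \eqref{1DEulerLagrange} together with the bounds \eqref{1DTightBounds} and $\eta \in C^{1,\beta}$, $\eta(t_0)>0$: on $\Gamma_\e \subset B(t_0,\r_1)$ the rescaled functions converge (in $C^1_{\mathrm{loc}}$, by elliptic estimates) to the profile $z$ solving \eqref{profileCauchyProblem}, which crosses $c$ exactly once with $z'(0)=\sqrt{W(c)}>0$. Hence near any point where $v_\e = c_\e$ the function is strictly monotone with $|v_\e'| \gtrsim \e^{-1}$ on a scale $\gtrsim \e$, so two successive crossings of $c_\e$ are at most $C\e$ apart. (One must also rule out crossings of $c_\e$ far out in $\Gamma_\e$ on the $\e$-scale, which again follows from the decay estimates \eqref{CPDecay1}--\eqref{CPDecay2} and the energy budget: once $v_\e$ is within $\e^k$ of a well it cannot come back to $c_\e$ inside $\Gamma_\e$ without incurring extra energy $\gtrsim c_W\eta(t_0)$.) For part (iii): for $t \in (-T, T_1^\e)$, the set where $v_\e(t) \notin [a_\e, a_\e+\e^k)$ — but $v_\e \ge a_\e$ always by \eqref{1DTightBounds} — is where $v_\e > a_\e + \e^k$; on any such subinterval not containing $T_1^\e$, either $v_\e$ returns below $a_\e + \e^k$, costing a full "bump" of energy $\gtrsim c_W \min\eta$ (impossible, as the budget is exhausted), or it connects to a crossing of $c_\e$ (also impossible by part (i)). So $v_\e \le a_\e + \e^k$ except on a set where the energy density $\e^{-1}W(v_\e)\eta$ is controlled; quantifying via $W(s)\ge c(s-a)^{1+q}$ near $a$ and the decay \eqref{CPDecay1}--\eqref{CPDecay2}, together with the Neumann condition \eqref{1DNeumannCondition}, one gets that this exceptional set has $\eta\mathcal{L}^1$-measure $o(\e)$; symmetrically near $T$.

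The main obstacle I expect is part (iii) combined with the possible oscillatory behavior of $v_\e$ near $\pm T$ flagged in the text (Ni's example): a priori $v_\e$ could oscillate around $a_\e$ infinitely often as $t \to -T^+$, and one must show the \emph{total} $\eta\mathcal{L}^1$-measure of $\{v_\e > a_\e + \e^k\}$ in $(-T,T_1^\e)$ is $o(\e)$, not merely that each excursion is short. The way through is a summable energy-accounting argument: each excursion above $a_\e + \e^k$ (and back, since $v_\e$ cannot reach $c_\e$ there) costs at least $2\int W^{1/2}(v_\e)|v_\e'|\eta$ over that excursion, which is bounded below by $2\eta_{\min}\int_{a_\e}^{a_\e+\e^k}\cdots$ — too crude by itself — so instead one uses the full Euler--Lagrange equation and the fact that $W'(s) + \e\lambda_\e$ has a definite sign when $v_\e < a_\e$ (used already in Theorem \ref{Thm:1DBounds} to prove $v_\e \ge a_\e$) to get a differential inequality forcing geometric decay of the oscillation amplitudes, hence summability, and then the total excess measure is $O(\e^{1+\sigma})$ for some $\sigma>0$. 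The weight degeneracy near $\pm T$ (the factors $(T\pm t)^{n_i-1}$) is exactly what the cutoffs $c(n_i)\e^{\theta_i}$ with $\theta_i \in (1/n_i, 1/(n_i-1))$ in \eqref{set O epsilon} are designed to absorb, so the bookkeeping must be done separately on $(-T, -T+c(n_1)\e^{\theta_1})$ (handled crudely by $\|v_\e\|_\infty \le C$ and the small measure of this interval against $\eta$) and on the rest.
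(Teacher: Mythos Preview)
Your outline for (i) is essentially the paper's Steps 2--4. For (ii), note that the paper does \emph{not} rescale and invoke profile convergence --- that is established later (Lemma \ref{profilesConverge}) \emph{using} this theorem, so your route risks circularity. Instead Step 3 argues directly: either $v_\e\in[c-\rho,c+\rho]$ throughout $[t_3^\e,t_4^\e]$, in which case $\min_{[c-\rho,c+\rho]}W>0$ and the energy bound give $t_4^\e-t_3^\e\le C\e$, or $v_\e$ exits this strip between the first and last crossing, which costs an extra $\eta(t_0)\int_c^{c+\rho}W^{1/2}\,ds$ of energy and contradicts the choice of $\epsilon_1$ in \eqref{Eqn:Eps1Bound}.

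The genuine gap is in (iii). Your proposed mechanism --- a differential inequality using the sign of $W'+\e\lambda_\e$ ``when $v_\e<a_\e$'' to force summable geometric decay of oscillation amplitudes --- cannot work as written: $v_\e\ge a_\e$ everywhere by Theorem \ref{Thm:1DBounds}, and in the relevant range $a_\e<v_\e<c_\e$ the sign $(v_\e'\eta)'>0$ gives monotonicity of $v_\e'\eta$ but no decay estimate on $v_\e-a_\e$. The paper uses two ingredients you are missing. First, a barrier/maximum-principle comparison (Lemma \ref{BarrierLemma}): on any maximal subinterval of $[-T+c(n_1)\e^{\theta_1},T-c(n_2)\e^{\theta_2}]$ where $v_\e\le c-\rho$, the quantitative convexity $-(W'(s)+\e\lambda_\e)\ge 2\mu^2(a_\e-s+\,\cdot\,)$ and comparison with explicit exponentials give $v_\e-a_\e\le Ce^{-\mu d(t,\cdot)/\e}$; the $\e^{\theta_i}$ cutoffs enter precisely here, to control $\e|\eta'|/\eta$ via \eqref{etaPrimeRatio}. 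Second --- and this is the step your outline does not supply --- one must first show that $v_\e<c-\rho$ on the \emph{entire} interval $[-T+c(n_1)\e^{\theta_1},-T+2\r_0]$, i.e.\ rule out any excursion up to level $c-\rho$ in the boundary layer, not merely bound small oscillations near $a_\e$. Step 5 does this by coupling the mass constraint to the energy: if $v_\e(t_5^\e)=c-\rho$ for some such $t_5^\e$, the mass identity (estimates \eqref{601}--\eqref{606}) forces $|t_3^\e-t_0|\le C(\e|\log\e|+(T+t_5^\e)^{n_1})$, while the excursion itself costs energy $\ge 2d_1(T+t_5^\e)^{n_1-1}\int_{a+\rho}^{c-\rho}W^{1/2}\,ds$ by \eqref{etaTail1}; combining these with the $O(\e)$ energy surplus yields \eqref{600}, a contradiction for $\r_0$ small since $\theta_1(n_1-1)<1$. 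Only after Step 5 can the barrier lemma be applied on all of $[-T+\e^{\theta_1},T_1^\e]$, giving $v_\e<a_\e+\e^k$ except within $O(\e|\log\e|)$ of the endpoints; the $\eta\mathcal{L}^1$-measure of the remaining bad set near $-T$ is then $O(\e^{n_1\theta_1})=o(\e)$ by the choice $\theta_1>1/n_1$.
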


We delay the proof of this theorem until after we establish some preliminary results. Let $\r_0>0$ be chosen as in \eqref{choice h0}. As $v_{\e}\rightarrow v_{0}$ in  $L^1_\eta$, by
selecting a subsequence, we can assume that $v_{\e}(t)\rightarrow
v_{0}(t)$ for $\mathcal{L}^{1}$ a.e. $t\in I$. Hence, given 
\begin{equation}\label{rho-chosen}
0<\rho<\frac12\min\{c-a,b-c\},
\end{equation} there exists $\e_\rho>0$ such that 
\begin{equation}
|v_{\e}(T_{1})-a|<\rho,\quad|v_{\e}(T_{2})-a|<\rho,\quad|v_{\e}(T_{3})-b|<\rho,\quad|v_{\e}(T_{4}%
)-b|<\rho \label{limits}%
\end{equation}
for all $0<\e\leq\e_\rho$ sufficiently small and some $T_1 \in (-T,-T+\r_0)$, $T_2 \in (-T+2\r_0,t_0-\r_0)$, $T_3 \in (t_0+\r_0,T-2\r_0)$ and $T_4 \in (T-\r_0,T)$. Fix $\e>0$ sufficiently small so that \eqref{limits} holds.

First, we prove adaptations of two lemmas from \cite{SternbergZumbrunBarrier}.

\begin{lemma}
	\label{lemma c epsilon}Let $s_{0},s_{1}>0$ be such that $a_{\e}%
	+s_{0}<c_{\e}<b_{\e}-s_{1}$ for all $\e>0$
	sufficiently small. Fix any such $\e$. Let $I_\e\subseteq I$ be a non-empty
	maximal interval such that $a_{\e}+s_{0}<v_{\e
	}(t)<b_{\e}-s_{1}$ for all $t\in I_\e$. Then there exists $t_\e\in\overline{I_\e}$ such
	that $v_{\e}(t_\e)=c_{\e}$.
\end{lemma}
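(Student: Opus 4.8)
The natural plan is to argue by contradiction. Suppose $v_\e(t)\neq c_\e$ for every $t\in\overline{I_\e}$. Since $v_\e\in C^2(I)$ (Theorem \ref{1DBVP}) and $\overline{I_\e}$ is connected, $v_\e-c_\e$ has a constant sign on $\overline{I_\e}$; by symmetry — exchanging the roles of $a_\e+s_0$ and $b_\e-s_1$ and reversing the inequalities below — it suffices to treat the case $v_\e>c_\e$ on $\overline{I_\e}$, so that $c_\e<v_\e<b_\e-s_1$ on $I_\e$. By Theorem \ref{Thm:1DBounds} the only zeros of $W'+\e\lambda_\e$ are $a_\e<c_\e<b_\e$; since $W$ has minima at $a,b$ and a maximum at $c$ (because $W''(c)<0$), $W'<0$ on $(c,b)$, hence for $\e$ small $W'+\e\lambda_\e<0$ on $(c_\e,b_\e)$. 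The Euler--Lagrange equation \eqref{1DEulerLagrange} then gives $2\e^2(v_\e'\eta)'=(W'(v_\e)+\e\lambda_\e)\eta<0$ on $I_\e$, using $\eta>0$ on $I$ (see \eqref{etaSmooth}). Thus $g:=v_\e'\eta$ is strictly decreasing on $I_\e$, and since $\eta>0$ the sign of $v_\e'$ equals the sign of $g$, so $v_\e$ is strictly increasing and then strictly decreasing on $I_\e$, with either phase possibly empty; in particular $v_\e$ attains its minimum over $\overline{I_\e}$ at an endpoint.

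Next I would use the maximality of $I_\e$ to control its endpoints. If the left endpoint $\alpha$ of $I_\e$ lies in the open interval $I$, maximality forces $v_\e(\alpha)\in\{a_\e+s_0,\,b_\e-s_1\}$, and since $v_\e(\alpha)>c_\e>a_\e+s_0$ necessarily $v_\e(\alpha)=b_\e-s_1$; as $v_\e<b_\e-s_1$ on $I_\e$, the increasing-then-decreasing structure forces $v_\e$ to be strictly decreasing on all of $I_\e$ (otherwise $v_\e$ would exceed $b_\e-s_1$ just to the right of $\alpha$). Symmetrically, if the right endpoint $\beta\in I$ then $v_\e(\beta)=b_\e-s_1$ and $v_\e$ is strictly increasing on all of $I_\e$. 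These two conclusions are incompatible, so $\alpha=-T$ or $\beta=T$. In each remaining configuration I would invoke the boundary limits $\lim_{t\to-T^+}v_\e'(t)\eta(t)=\lim_{t\to T^-}v_\e'(t)\eta(t)=0$ established in the proof of Theorem \ref{Thm:1DBounds} (the quantity $M_\e$ there): if $\alpha=-T$ and $\beta\in I$, then $v_\e$ is strictly increasing on $I_\e$, so $g>0$ there, while $g$ is strictly decreasing on $(-T,\beta)$ with left limit $0$, hence $g<0$ on $I_\e$ — a contradiction; the case $\beta=T$, $\alpha\in I$ is symmetric; and $I_\e=I$ is impossible since a strictly decreasing function cannot have limit $0$ at both ends of $I$. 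The case $v_\e<c_\e$ on $\overline{I_\e}$ runs identically with $(c_\e,b_\e-s_1)$ replaced by $(a_\e+s_0,c_\e)$, so that $W'(v_\e)+\e\lambda_\e>0$, $g$ is strictly increasing, $v_\e$ is decreasing-then-increasing, the interior endpoint value equals $a_\e+s_0$, and the same dichotomy yields a contradiction.

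I expect the only delicate part to be the bookkeeping in the endpoint dichotomy combined with the correct use of $M_\e=0$; no new estimate is needed, since everything rests on the sign of $(v_\e'\eta)'$ dictated by which component of $\{W'+\e\lambda_\e\neq 0\}$ the range of $v_\e$ on $I_\e$ lies in, together with the monotonicity this forces on $g=v_\e'\eta$. A secondary subtlety worth flagging in the write-up is that $v_\e$ need not be monotone on $I_\e$ in general — the oscillatory behaviour near the wells observed by Ni \cite{Ni1983} — but the one-signedness of $(v_\e'\eta)'$ on $I_\e$ still forces the weak unimodality used above.
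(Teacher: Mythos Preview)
Your proposal is correct and follows essentially the same approach as the paper: argue by contradiction, use the sign of $W'(v_\e)+\e\lambda_\e$ on the relevant subinterval to obtain strict monotonicity of $v_\e'\eta$ from the Euler--Lagrange equation, deduce that the minimum of $v_\e$ over $\overline{I_\e}$ lies at an endpoint, and then rule out each endpoint possibility using maximality of $I_\e$ together with the boundary limit $M_\e=0$ from the proof of Theorem~\ref{Thm:1DBounds}. Your write-up is more explicit in the endpoint dichotomy than the paper, which simply refers back to the argument of \eqref{1DTightBounds}, but the substance is identical.
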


\begin{proof}
	If not, then either $a_{\e}+s_{0}\leq v_{\e}(t)<c_{\e}$
	for all $t\in\overline{I_\e}$ or $c_{\e}<v_{\e}%
	(t)\leq b_{\e}-s_{1}$ for all $t\in\overline{I_\e}$. Consider the second
	case. Then $W^{\prime}(v_{\e}(t))+\e\lambda_{\e}<0$
	for all $t\in I_\e$, and so by \eqref{1DEulerLagrange} we have that $(v_{\e
	}^{\prime}(t)\eta(t))^{\prime}<0$ for all $t\in I_\e$. Let $\tilde t\in\overline{I_\e}$
	be the point of minimum of $v_{\e}$ in $\overline{I_\e}$. Reasoning as
	in the proof of \eqref{1DTightBounds}, we have that $\tilde t$ cannot belong
	to $I_\e$, and so $\tilde t\in\partial I_\e$. If $\tilde t\in I$, then necessarily,
	$v_{\e}(\tilde t)=c_{\e}$, which contradicts the fact that
	$c_{\e}<v_{\e}(t)<b_{\e}-s_{1}$ for all
	$t\in\overline{I_\e}$. it follows that $\tilde t\in\{-T,T\}$. We can now continue as
	in the proof of \eqref{1DTightBounds} to exclude this possibility.
\end{proof}

\begin{lemma}\label{BarrierLemma}
	Let $\rho$ be as in \eqref{rho-chosen} and suppose that $I_\e$ is a maximal subinterval of the set $\{t\in [-T+c(n_1)\e^{\theta_1},T-c(n_2)\e^{\theta_2}]:\,  v_\e(t) \geq c+\rho\}$. Then there exists a $\mu > 0$ such that we have the following estimate for all $t \in I_\e$:
	\[
	b_{\e}-v_{\e}(t)\leq2(b_{\e}-c-\rho)e^{-\mu d(t,I_\e^c)\e^{-1}}.%
	\]
	In addition an analogous bound holds for the set $\{t \in [-T+c(n_1)\e^{\theta_1},T-c(n_2)\e^{\theta_2}]:\, v_\e(t) \leq c-\rho\}$.
\end{lemma}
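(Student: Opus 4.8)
The plan is to prove Lemma \ref{BarrierLemma} by constructing an explicit exponential supersolution (barrier) for the linearized-at-$b$ operator and invoking a comparison principle against $v_\e$ on the interval $I_\e$. First I would record the structure of the ODE near the well $b$: on $I_\e$ we have $v_\e(t) \geq c+\rho > c_\e$ for $\e$ small (using \eqref{1DMultiplierLimit} so that $c_\e \to c$), hence $W'(v_\e) + \e\lambda_\e > 0$ there, and moreover for $s$ in the range $[c+\rho, b_\e]$ the convexity near $b$ from \eqref{WPrime_At_Wells}--\eqref{W_Number_Zeros} gives a one-sided bound of the form $W'(s) + \e\lambda_\e \geq \mu_0^2 (s - b_\e)$ for some fixed $\mu_0 > 0$ independent of $\e$ (this uses that $W'' \geq \mu_0^2$ on a neighborhood of $b$, that $b_\e \to b$, and monotonicity of $W'$ near $b$; on the ``far'' part of the range $[c+\rho, b-\zeta_0]$ one instead uses $|W'(s)+\e\lambda_\e|$ bounded below by a positive constant, which a fortiori dominates $\mu_0^2(s-b_\e)$). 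Writing $w_\e := b_\e - v_\e \geq 0$ on $I_\e$ and using the Euler--Lagrange equation \eqref{1DEulerLagrange} in the form $2\e^2 (v_\e' \eta)' = (W'(v_\e) + \e\lambda_\e)\eta$, we get $2\e^2 (w_\e' \eta)' = -(W'(v_\e)+\e\lambda_\e)\eta \leq -\mu_0^2 \eta w_\e \cdot(\text{sign considerations})$; more precisely $(W'(v_\e)+\e\lambda_\e) \geq \mu_0^2(v_\e - b_\e) = -\mu_0^2 w_\e$ is the wrong sign, so I should be careful: since $W'(v_\e)+\e\lambda_\e \geq 0$ and $\geq \mu_0^2|v_\e - b_\e| = \mu_0^2 w_\e$ on the part where $v_\e$ is near $b_\e$, we obtain $2\e^2(w_\e'\eta)' = -(W'(v_\e)+\e\lambda_\e)\eta \leq -\mu_0^2 \eta w_\e$, i.e. $w_\e$ is a subsolution of $2\e^2(\cdot'\eta)' + \mu_0^2 \eta(\cdot) = 0$ wherever $v_\e$ stays in the convex regime; on the remaining range one uses the stronger constant lower bound.

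Next I would build the barrier. Set $\mu := \mu_0/\sqrt{2}$ (adjusting constants) and consider, on $I_\e = (\alpha_\e, \beta_\e)$ (one or both endpoints may hit the restriction set $[-T+c(n_1)\e^{\theta_1}, T - c(n_2)\e^{\theta_2}]$, but on the relevant part $d(t, I_\e^c)$ is what appears), the comparison function
\[
\psi_\e(t) := 2(b_\e - c - \rho)\bigl(e^{-\mu d(t,\partial I_\e)\e^{-1}}\bigr),
\]
or more conveniently $\psi_\e(t) = (b_\e-c-\rho)\bigl(e^{-\mu(t-\alpha_\e)\e^{-1}} + e^{-\mu(\beta_\e - t)\e^{-1}}\bigr)$, which dominates $2(b_\e-c-\rho)e^{-\mu d(t,I_\e^c)\e^{-1}}$ only up to the factor complication, so I would instead compare on each half of $I_\e$ separately toward the nearer endpoint. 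The key computation is that $\Phi_\e(t) := b_\e - c - \rho$ times the single exponential $e^{-\mu(t-\alpha_\e)/\e}$ satisfies $2\e^2 \Phi_\e'' = 2\mu^2 \Phi_\e = \mu_0^2 \Phi_\e$, and the weight correction $2\e^2 \Phi_\e' \eta'/\eta$ is lower order: by \eqref{etaPrimeRatio} on $I_0$ (where $|\eta'/\eta| \leq M$ for a fixed $M$, since $\r_0$ keeps us away from $\pm T$) we have $|2\e^2 \Phi_\e' \eta'| \leq 2\e M \mu \eta \Phi_\e \cdot \e = o(\e) \cdot (\text{main term})$, so for $\e$ small $\Phi_\e$ is a genuine supersolution of $2\e^2(\Phi'\eta)' \geq -\tfrac12\mu_0^2 \eta \Phi$ — and we should choose $\mu$ slightly smaller than $\mu_0/\sqrt 2$ at the outset precisely to absorb this error. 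At the endpoint $t = \alpha_\e$ we have $\Phi_\e(\alpha_\e) = b_\e - c - \rho \geq b_\e - v_\e(\alpha_\e) = w_\e(\alpha_\e)$ because $v_\e(\alpha_\e) = c+\rho$ (maximality of $I_\e$), and on the other side $w_\e \leq b_\e - a_\e \leq 2(b-a)$ is bounded while $\Phi_\e$ is not small there only if we keep the two-exponential form; this is why splitting $I_\e$ at its midpoint and comparing each half-interval against the exponential decaying from the corresponding endpoint (using $w_\e \leq w_\e^{\max}$ at the midpoint, absorbed by the prefactor) is the cleanest route.

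Then I would invoke the comparison principle: on the open set where $\Phi_\e < w_\e$ (if nonempty) the difference $w_\e - \Phi_\e$ is positive, vanishes at the endpoints of that set, and satisfies $2\e^2((w_\e - \Phi_\e)'\eta)' \leq -\tfrac12 \mu_0^2 \eta (w_\e - \Phi_\e) + (\text{where } v_\e \text{ leaves convex regime})$ — but wherever $w_\e > \Phi_\e$ is small, $v_\e$ is automatically near $b_\e$, hence in the convex regime, so the inequality is clean there. A positive interior maximum of $w_\e - \Phi_\e$ would force $(w_\e - \Phi_\e)'' \leq 0$ and $(w_\e-\Phi_\e)' = 0$, contradicting $2\e^2((w_\e-\Phi_\e)'\eta)' < 0$ at that point combined with the sign of the right-hand side; the standard weighted maximum principle (as already used in the proof of \eqref{1DTightBounds} in Theorem \ref{Thm:1DBounds}) then yields $w_\e \leq \Phi_\e$ throughout, which is the claimed estimate. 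The main obstacle I anticipate is bookkeeping at the left endpoint of $I_\e$ when it coincides with $-T + c(n_1)\e^{\theta_1}$ rather than with a point where $v_\e = c+\rho$: there $v_\e$ could a priori be anything in $[c+\rho, b_\e]$, so the barrier prefactor $b_\e - c - \rho$ still bounds $w_\e(\text{endpoint})$ and the argument goes through, but one must double-check that the weight bound \eqref{etaPrimeRatio} is still usable that close to $-T$ — indeed $\e^{\theta_1} \gg \e$ since $\theta_1 < 1$, so $\e^2 |\eta'|/\eta \lesssim \e^2/\e^{\theta_1} = \e^{2-\theta_1} = o(\e)$ relative to the main term, which is exactly the reason $\theta_i$ was chosen in $(1/n_i, 1/(n_i-1))$; this is the delicate point that makes the restriction to $[-T+c(n_1)\e^{\theta_1}, T-c(n_2)\e^{\theta_2}]$ necessary. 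The analogous lower barrier near $a$ is symmetric, replacing $b_\e - v_\e$ by $v_\e - a_\e$ and $W''(b)$ by $W''(a)$.
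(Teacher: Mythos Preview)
Your approach is essentially the paper's: establish a linear lower bound $-(W'(s)+\e\lambda_\e)\ge 2\mu^2(b_\e-s)$ on all of $[c+\rho,b_\e]$, build an exponential barrier, control the weight correction via $\e|\eta'|/\eta\le d_5\e^{1-\theta_i}\to 0$, and apply the maximum principle to $g-\phi$ with $g=b_\e-v_\e$. Two small points where the paper is cleaner than your sketch: first, the paper takes the \emph{two}-exponential barrier $\phi(t)=(b_\e-v_\e(t_1))e^{-\mu(t-t_1)/\e}+(b_\e-v_\e(t_2))e^{-\mu(t_2-t)/\e}$ on all of $I_\e=[t_1,t_2]$ at once, which gives the correct boundary values at \emph{both} endpoints simultaneously and bounds $2(b_\e-c-\rho)e^{-\mu d(t,I_\e^c)/\e}$ directly, so your midpoint-splitting detour is unnecessary; second, since the linear bound holds on the full range $[c+\rho,b_\e]$ (your own first paragraph), the differential inequality $(g'\eta)'\ge 2\e^{-2}\mu^2 g\eta$ is valid everywhere on $I_\e$, and your worry about ``where $v_\e$ leaves the convex regime'' inside the comparison argument never arises. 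Also note a slip in your weight estimate: the correction term is $2\e^2\Phi_\e'\eta'=-2\e\mu\Phi_\e\eta'$, so the relevant ratio is $\e|\eta'|/\eta\lesssim \e^{1-\theta_i}$, not $\e^{2-\theta_i}$; since $\theta_i<1$ this still tends to zero and can be absorbed into $\mu$ as you intend.
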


We recall that $d(t,E)$ is the distance from $t$ to the set $E$ and $E^c$ is the complement of $E$ (see Section \ref{notationSection}).

\begin{proof}

	First, we claim that there exists a $\mu$ such that for any $s \in [c+\rho,b_\e]$ the following inequality holds
	\begin{equation} \label{eqn:WPrimeLinearized}
		-(W'(s) + \e \lambda_\e) \geq 2 \mu^2 (b_\e - s).
	\end{equation}
	If $q = 1$ in \eqref{WPrime_At_Wells}, then also by \eqref{W_Smooth} we have that $W \in C^2(\mathbb{R})$. Since $W''(b) > 0$ by continuity we have that $W''(s) \geq 2\mu^2 > 0$ for all $s \in B(b,R_1)$, for some $\mu \neq 0$, and $R_1 >0$. It follows from \eqref{80} that
	\[
	W'(s) + \e \lambda_\e = -\int_s^{b_\e} W''(r) \,dr \leq -2\mu^2 (b_\e -s)
	\]
	for all $s \in B(b,R_1)$, with $s < b_\e$. Using the fact that $W'+\e\lambda_\e < 0$ in $(c_\e,b_\e)$ (see Theorem \ref{Thm:1DBounds}), and by taking $\mu$ smaller, if necessary, we can assume that
	\[
	W'(s) +\e \lambda_\e \leq -2\mu^2 (b_\e - s)
	\]
	for all $s \in [c+\rho,b_\e]$. Note that $\mu$ depends upon $\rho$ but not on $\e$. On the other hand, if $0<q<1$ then since $\lim_{s \to b} W''(s) = \infty$ by \eqref{WPrime_At_Wells}, we can still assume that $W''(s) \geq \mu^2 >0$ near $b$. Hence we can continue as before to conclude that \eqref{eqn:WPrimeLinearized} holds even in this case. This proves the claim.

	Write $I_\e = [t_1,t_2]$  and define
	\begin{equation} \label{phiBarrierDefinition}
	\phi(t) := (b_\e - v_\e(t_1))e^{-\mu (t-t_1)\e^{-1}} + (b_\e - v_\e(t_2))e^{-\mu (t_2-t)\e^{-1}}
	\end{equation}
	with $\mu$ fixed by \eqref{eqn:WPrimeLinearized}. We note that $\phi$ satisfies the following differential inequality:
	\begin{align*}
	(\phi' \eta) ' &= \frac{\mu^2}{\e^2} \phi \eta + \frac{\mu}{\e}\eta' \left(-(b_\e - v_\e(t_1))e^{-\mu (t-t_1)\e^{-1}} + (b_\e - v_\e(t_2))e^{-\mu (t_2-t)\e^{-1}}\right) \\
	&\leq \frac{1}{\e^2}\left(\mu^2 + \e\frac{ |\eta'|}{\eta}\mu\right) \phi \eta.
	\end{align*}
	If $n_1>1$ in \eqref{etaTail1}, then $c(n_1)=1$ in \eqref{set O epsilon} and so by \eqref{etaPrimeRatio},
	\[\e\frac{|\eta'(t)|}{\eta(t)}\leq \frac{\e d_5}{t+T}
	\leq d_5\e^{1-\theta_1}\leq \mu
		\]
	for all $t\in [-T+\e^{\theta_1},0]$ and all $\e$ sufficiently small. On the other hand, if $n_1=1$ in \eqref{etaTail1}, then $c(n_1)=0$ in \eqref{set O epsilon} and so by  \eqref{etaSmooth}  and \eqref{etaTail2}, $\eta(t)\ge \eta_0>0$ for all $t\in [-T,0]$. Thus, \[\e\frac{|\eta'(t)|}{\eta(t)}\leq \e\frac{\max|\eta'|}{\eta_0}\leq \mu
	\]
	for all $t\in [-T,0]$  and all $\e$ sufficiently small. 	Similar inequalities hold in $[0,T-c(n_2)\e^{\theta_2}]$. Thus in $I_\e$, 
	\begin{equation}\label{eqn:Barrier1}
	(\phi' \eta) ' \leq 2\e^{-2}\mu^2  \phi \eta.
	\end{equation}
	We then set $g(t): = b_\e - v_\e(t)$ and using \eqref{1DEulerLagrange} and \eqref{eqn:WPrimeLinearized} we have that
	\begin{equation} \label{eqn:Barrier2}
	(g' \eta)' = -\e^{-2}(W'(v_\e) + \e \lambda_\e) \eta \geq 2\e^{-2}\mu^2 g \eta.
	\end{equation}
	We define $\Psi := g-\phi$. By \eqref{phiBarrierDefinition}, \eqref{eqn:Barrier1} and \eqref{eqn:Barrier2}, for $\e$ small we have the following:
	\begin{align*}
	&(\Psi' \eta)' \geq 2 \e^{-2}\mu^2 \Psi \eta, \\
	&\Psi(t_1)\leq  0,\quad \Psi(t_2) \leq  0.
	\end{align*}
	The maximum principle implies that $\Psi\leq 0$ for all $t \in I_\e$. Thus
	\begin{equation}\label{Eqn:preciseDecayBarrier}
	b_\e - v_\e(t)\leq (b_\e - v_\e(t_1))e^{-\mu (t-t_1)\e^{-1}} + (b_\e - v_\e(t_2))e^{-\mu (t_2-t)\e^{-1}} \leq 2(b_\e - c-\rho)) e^{-\mu \e^{-1} d(t,I_\e^c)},
	\end{equation}
	which is the desired result.
\end{proof}

\begin{corollary}
	\label{corollary diameter} Let $\rho$ be as in \eqref{rho-chosen} and let
	\begin{align}
	A_{\e} &  :=\{t\in[-T+c(n_1)\e^{\theta_1},T-c(n_2)\e^{\theta_2
	}]:~a_{\e}+\e^{k}\leq v_{\e}(t)\leq c-\rho
	\},\label{set A epsilon}\\
	B_{\e} &  :=\{t\in [-T+c(n_1)\e^{\theta_1},T-c(n_2)\e^{\theta_2
	}]:~c+\rho\leq v_{\e}(t)\leq b_{\e}-\e^{k}%
	\}.\label{set B epsilon}%
	\end{align}
	Then for any maximal interval $I_\e$ contained in
	$A_{\e}\cup B_{\e}$,
	\begin{equation}
	\operatorname*{diam}I_{\e}\leq C\e|\log\e|\label{diam A epsilon}%
	\end{equation}
for
all $\e>0$ sufficiently small and 	for some constant $C>0$ depending only on $W$, $k$, $\mu$, $\rho$, where $\mu$ is given in Lemma \ref{BarrierLemma}.
\end{corollary}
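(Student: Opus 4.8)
The plan is to reduce to the case $I_\e\subseteq B_\e$ (the case $I_\e\subseteq A_\e$ being symmetric) and then play the exponential barrier of Lemma \ref{BarrierLemma} against the defining lower bound $b_\e-v_\e\ge\e^k$ on $B_\e$. First I would note that no point $t$ with $v_\e(t)\in(c-\rho,c+\rho)$ lies in $A_\e\cup B_\e$, and that for $\e$ small one has $a_\e+\e^k<c-\rho<c+\rho<b_\e-\e^k$ by \eqref{rho-chosen} and Theorem \ref{Thm:1DBounds}; since $v_\e$ is continuous on the ambient interval $[-T+c(n_1)\e^{\theta_1},T-c(n_2)\e^{\theta_2}]$, any connected interval contained in $A_\e\cup B_\e$ must therefore lie entirely in $A_\e$ or entirely in $B_\e$. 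As $A_\e\cup B_\e$ is closed, a maximal such interval is a closed interval $I_\e=[t_1,t_2]$, and if $I_\e\subseteq B_\e$ it is a maximal subinterval of $B_\e$.

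Next I would let $\tilde I_\e=[\tilde t_1,\tilde t_2]$ be the maximal subinterval of $\{t\in[-T+c(n_1)\e^{\theta_1},T-c(n_2)\e^{\theta_2}]:\,v_\e(t)\ge c+\rho\}$ containing $I_\e$ (this set is closed and contains $I_\e$, so such $\tilde I_\e$ exists). Lemma \ref{BarrierLemma} then gives
\[
b_\e-v_\e(t)\le 2(b_\e-c-\rho)\,e^{-\mu d(t,\tilde I_\e^c)/\e}\qquad\text{for all }t\in\tilde I_\e .
\]
Since $t_1,t_2\in B_\e$ we have $b_\e-v_\e(t_i)\ge\e^k$, whence, for $\e$ small (using that $b_\e-c-\rho$ is bounded and $k$ is fixed),
\[
d(t_i,\tilde I_\e^c)\le \frac{\e}{\mu}\log\frac{2(b_\e-c-\rho)}{\e^k}\le C\e|\log\e|,\qquad i=1,2 .
\]

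Finally I would convert these bounds into a bound on $\operatorname*{diam}I_\e=t_2-t_1$. Writing $L:=t_1-\tilde t_1\ge0$ and $R:=\tilde t_2-t_2\ge0$, one has $d(t_1,\tilde I_\e^c)=\min\{L,\operatorname*{diam}I_\e+R\}$ and $d(t_2,\tilde I_\e^c)=\min\{\operatorname*{diam}I_\e+L,R\}$. If either minimum is realized by the term containing $\operatorname*{diam}I_\e$, the bound on $d(t_i,\tilde I_\e^c)$ immediately yields $\operatorname*{diam}I_\e\le C\e|\log\e|$. Otherwise $L,R\le C\e|\log\e|$, and a one-line computation from $L<\operatorname*{diam}I_\e+R$ and $R<\operatorname*{diam}I_\e+L$ shows the midpoint $t_m:=\tfrac12(\tilde t_1+\tilde t_2)$ of $\tilde I_\e$ lies in $(t_1,t_2)\subseteq B_\e$; applying the barrier at $t_m$, where $d(t_m,\tilde I_\e^c)=\tfrac12\operatorname*{diam}\tilde I_\e$ and $b_\e-v_\e(t_m)\ge\e^k$, gives $\operatorname*{diam}\tilde I_\e\le C\e|\log\e|$ and hence $\operatorname*{diam}I_\e\le\operatorname*{diam}\tilde I_\e\le C\e|\log\e|$. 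The case $I_\e\subseteq A_\e$ is identical, using the estimate $v_\e(t)-a_\e\le 2(c-\rho-a_\e)e^{-\mu d(t,I_\e^c)/\e}$ from Lemma \ref{BarrierLemma} together with $v_\e-a_\e\ge\e^k$ on $A_\e$; tracking the constants shows $C$ depends only on $W,k,\mu,\rho$. The only mildly delicate point is this last step: the barrier controls the distance to the boundary of the enlarged interval $\tilde I_\e$ rather than $\operatorname*{diam}I_\e$ itself, so the degenerate configuration in which $I_\e$ fills almost all of $\tilde I_\e$ has to be ruled out separately by the midpoint observation.
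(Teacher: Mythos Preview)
Your argument is correct. The paper's proof is more direct: it simply evaluates the barrier at the midpoint $t_m=\tfrac12(t_1+t_2)$ of $I_\e$ itself. Since $I_\e\subset\tilde I_\e$ one has $d(t_m,\tilde I_\e^c)\ge d(t_m,I_\e^c)=\tfrac12(t_2-t_1)$, so the barrier on $\tilde I_\e$ already gives $b_\e-v_\e(t_m)\le 2(b_\e-c-\rho)e^{-\mu(t_2-t_1)/(2\e)}$ (alternatively, the proof of Lemma~\ref{BarrierLemma} does not use maximality and applies to any subinterval of $\{v_\e\ge c+\rho\}$). Since $t_m\in B_\e$, the lower bound $b_\e-v_\e(t_m)\ge\e^k$ then yields $\operatorname*{diam}I_\e\le C\e|\log\e|$ in one line. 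Your route via the endpoints $t_1,t_2$, the distances $L,R$ to $\partial\tilde I_\e$, and a case analysis reaches the same conclusion but with extra bookkeeping; the midpoint evaluation you invoke in your final case is in fact sufficient on its own and is exactly what the paper does from the start.
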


\begin{proof}
	Assume $(t_1,t_2) = I_{\e}^\circ \subset B_{\e}$. By Lemma \ref{BarrierLemma} we have that for $t = \frac{t_1+t_2}{2}$:
	\[
	\e^{k}\leq b_{\e}-v_{\e}(t) \leq2(b_{\e
	}-c-\rho)e^{-\mu 2^{-1}(t_2-t_1)\e^{-1}},
	\]
	which implies that $-\frac{\mu}{2}(t_2-t_1)\e^{-1}\geq k\log\e
	-\log 2(b_{\e}-c-\rho)$, that is,
	\[
	0\leq t_2-t_1\leq2\mu^{-1}k\e|\log\e|+2\mu^{-1}\e
	\log 2(b_{\e}-c-\rho).
	\]
	This shows that $\operatorname*{diam}I_{\e}\leq C\e
	|\log\e|$. The proof for the case $I_{\e}\subset
	A_{\e}$ is similar, and we omit it.
\end{proof}

Next we state a lemma from \cite{SternbergZumbrunBarrier}, which allows us to estimate the size of certain sets. In what follows given a set $E$ and $s > 0$ we define the set
\begin{equation} \label{defSetWidening}
E^s := \{ x \in \mathbb{R}^n : d(x,E) \leq s \}
\end{equation}

\begin{lemma} \label{SetSizeLemma}
Given a measurable set $A \subset \mathbb{R}^n$, for all numbers $0<s_1<s_2$ we have that
\begin{equation}
\frac{\mathcal{L}^n(A^{s_2})}{\mathcal{L}^n(A^{s_1})} \leq C_n \left(\frac{s_2}{s_1}\right)^n,
\end{equation}
where we are using the notation \eqref{defSetWidening}.
\end{lemma}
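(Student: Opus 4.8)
The statement is a purely geometric/measure-theoretic fact about $s$-neighborhoods (tubular neighborhoods) of an arbitrary measurable set $A\subset\mathbb{R}^n$: enlarging the thickening parameter from $s_1$ to $s_2$ can increase the volume of $A^s$ by at most a dimensional constant times $(s_2/s_1)^n$. The natural approach is a Vitali-type covering argument. First I would reduce to the case $A$ closed and nonempty (if $A$ is empty the claim is vacuous, and thickening by $s$ only depends on $\bar A$ since $d(x,A)=d(x,\bar A)$). Then observe that $A^{s_2}\subset A^{s_1}$ is false in general — rather, $A^{s_1}\subset A^{s_2}$ — so the content is the \emph{reverse} control $\mathcal{L}^n(A^{s_2})\le C_n(s_2/s_1)^n\mathcal{L}^n(A^{s_1})$.

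The key step: cover $A$ by balls $B(x_i, s_1/2)$ with $x_i\in A$ chosen so that the balls $B(x_i,s_1/10)$ (say) are pairwise disjoint, via the Vitali covering lemma / a maximal packing argument. Then for any $x\in A^{s_2}$ there is $a\in A$ with $|x-a|\le s_2$, and $a$ lies in some $B(x_i,s_1/2)$, so $|x-x_i|\le s_2 + s_1/2 \le \tfrac{3}{2}s_2$ (using $s_1<s_2$). Hence $A^{s_2}\subset \bigcup_i B(x_i, \tfrac32 s_2)$, giving $\mathcal{L}^n(A^{s_2})\le \sum_i \alpha_n (\tfrac32 s_2)^n$. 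On the other hand, the disjoint balls $B(x_i,s_1/10)$ all lie inside $A^{s_1}$ (since $x_i\in A$), so $\mathcal{L}^n(A^{s_1})\ge \sum_i \alpha_n (s_1/10)^n$. Dividing the two estimates, the number of covering balls cancels and one obtains $\mathcal{L}^n(A^{s_2})/\mathcal{L}^n(A^{s_1})\le (3/2)^n 10^n = 15^n =: C_n$, which is of the required form $C_n(s_2/s_1)^n$ (in fact with the sharper dependence, but this crude bound suffices; one could also retain the $(s_2/s_1)^n$ factor by being slightly more careful with the radii when $s_2/s_1$ is large — covering $A$ at scale $s_1$ and noting each ball $B(x_i,s_1)$ meeting $A^{s_2}$ forces $x_i$ within $O(s_2)$ of $A$).

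The main (minor) obstacle is bookkeeping the ratio correctly so that the final bound genuinely scales like $(s_2/s_1)^n$ rather than like $(s_2/s_1)^n$ only up to an additive constant: one must choose the covering scale proportional to $s_1$ (not a fixed scale), run the packing argument at that scale, and track that each $x_i$ whose enlarged ball meets $A^{s_2}$ is within distance $\lesssim s_2$ of $A$, so that the corresponding disjoint small balls of radius $\sim s_1$ contribute $\gtrsim (s_1)^n$ each to $\mathcal{L}^n(A^{s_1})$ while the covering balls of radius $\sim s_2$ contribute $\lesssim (s_2)^n$ each to $\mathcal{L}^n(A^{s_2})$; the ratio of these per-index contributions is exactly $(s_2/s_1)^n$ up to a dimensional constant. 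Everything else — measurability of $A^s$ (it is in fact open or closed depending on the inequality used in the definition), finiteness issues (reduce to $A^{s_1}$ of finite measure, else there is nothing to prove, or exhaust by bounded pieces) — is routine. Since this is a direct citation from \cite{SternbergZumbrunBarrier}, I would present this covering argument concisely and refer the reader there for the original.
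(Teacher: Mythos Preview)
The paper does not prove this lemma at all; it merely states it and cites \cite{SternbergZumbrunBarrier}. Your covering/packing argument is the standard one and is correct in substance.

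There is, however, an arithmetic slip in your exposition that you should fix before writing it up. When you divide the two estimates
\[
\mathcal{L}^n(A^{s_2})\le \sum_i \alpha_n\Bigl(\tfrac{3}{2}s_2\Bigr)^n,
\qquad
\mathcal{L}^n(A^{s_1})\ge \sum_i \alpha_n\Bigl(\tfrac{s_1}{10}\Bigr)^n,
\]
the ratio is $(3s_2/2)^n/(s_1/10)^n = 15^n(s_2/s_1)^n$, not $15^n$ as you wrote. So the $(s_2/s_1)^n$ factor is already there --- your subsequent paragraph about ``retaining the $(s_2/s_1)^n$ factor by being slightly more careful'' is unnecessary and should be deleted, since the straightforward division already gives exactly the form $C_n(s_2/s_1)^n$ with $C_n=15^n$. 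The only genuine technical point to record is the reduction to $\mathcal{L}^n(A^{s_1})<\infty$ (otherwise there is nothing to prove), which guarantees the packing is countable and the sums make sense.
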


Next we establish an estimate on the derivative of $v_\e$.

\begin{lemma}\label{DerivativeBounds}
There exists a constant $C>0$ such that
	\[
	|v_{\e}^{\prime}(t)|\leq C\e^{-1}%
	\]
	for all $t\in I$.
\end{lemma}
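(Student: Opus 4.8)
The plan is to derive the gradient bound from the Euler–Lagrange equation \eqref{1DEulerLagrange} together with the uniform energy bound and the $L^\infty$ bounds \eqref{1DTightBounds}, via a blow-up/rescaling argument. Since $v_\e \in C^2(I)$ (see Theorem \ref{1DBVP}) and satisfies
\[
2\e^2(v_\e' \eta)' = (W'(v_\e) + \e \lambda_\e)\eta,
\]
expanding the left side gives $2\e^2 v_\e'' = (W'(v_\e) + \e\lambda_\e) - 2\e^2 v_\e' \eta'/\eta$ wherever $\eta > 0$. The natural scaling is $w_\e(s) := v_\e(t_* + \e s)$ for a point $t_*$ realizing (or nearly realizing) $\sup_I |v_\e'|$; then $w_\e$ satisfies an ODE with $O(1)$ right-hand side on compact sets away from the endpoints, and $w_\e$ is uniformly bounded by \eqref{1DTightBounds}, so elliptic (ODE) estimates bound $w_\e'(0) = \e v_\e'(t_*)$, giving $|v_\e'(t_*)| \le C\e^{-1}$.

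**Step by step**, I would first reduce to an interior estimate: on the region where $t$ is bounded away from $\pm T$ (say $|t \pm T| \ge t^*/2$), we have $\eta \ge \eta_0 > 0$ and $|\eta'/\eta| \le C$ by \eqref{etaSmooth}, \eqref{etaPrimeRatio}, so the rescaled function $w_\e(s) = v_\e(t_0 + \e s)$ — shifting the scaling center to where we want the bound — satisfies $2 w_\e'' = W'(w_\e) + \e\lambda_\e - 2\e w_\e' \eta'(\cdot)/\eta(\cdot)$, a second-order ODE. Using the uniform $L^2$ bound $\e \int_I (v_\e')^2 \eta \le C$ (from $\sup_\e G_\e^{(1)}(v_\e) < \infty$ via Proposition \ref{JLocalMinimizers}), we get $\int |w_\e'|^2 \le C\e^{-1}\cdot\e = C$ on unit-length rescaled intervals, so $w_\e$ is bounded in $H^1$ on compact sets; combined with the $L^\infty$ bound this controls $w_\e$ in $W^{2,2}$ hence in $C^1$ on compact sets, and interpolating $|w_\e'(0)| \le C$, i.e. $|v_\e'(t_0)| \le C\e^{-1}$ — but this must be upgraded to a bound uniform over all points $t \in I$, not just $t_0$. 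So more robustly: for each $t \in I$ with $d(t, \{-T,T\}) \ge t^*/2$, recenter the rescaling at $t$ itself (the coefficients $\eta'/\eta$ stay uniformly bounded there), run the same argument, and conclude $|v_\e'(t)| \le C\e^{-1}$.

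**The endpoint region** $d(t,\{-T,T\}) < t^*/2$ requires separate care because $\eta$ may vanish or $\eta'/\eta$ blow up at $\pm T$ (see \eqref{etaTail1}–\eqref{etaPrimeRatio}). Near $-T$, I would use the integrated form of the Euler–Lagrange equation — already derived in the proof of Theorem \ref{Thm:1DBounds}:
\[
2\e^2 v_\e'(t)\eta(t) = \int_{-T}^t (W'(v_\e) + \lambda_\e\e)\eta(s)\,ds,
\]
valid since $M_\e = 0$ in \eqref{Eqn:LimitVPrime}. Since $v_\e$ is uniformly bounded, $|W'(v_\e) + \lambda_\e\e| \le C$, so using \eqref{etaTail1} ($\eta(s) \le d_2(T+s)^{n_1-1}$ and $\eta(t) \ge d_1(T+t)^{n_1-1}$),
\[
2\e^2|v_\e'(t)|\eta(t) \le C\int_{-T}^t d_2(T+s)^{n_1-1}\,ds = \frac{Cd_2}{n_1}(T+t)^{n_1},
\]
whence $|v_\e'(t)| \le \frac{Cd_2}{2d_1 n_1}\e^{-2}(T+t)$. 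This is $O(\e^{-2})$, which is weaker than claimed, but it is only useful in the tiny region $T+t \le t^*/2$; combining it with the interior bound does not immediately close the gap. To get the sharp $O(\e^{-1})$ near the endpoint I would instead rescale at distance $\sim\e$ from $-T$: note the relevant comparison is really on the set $O_\e$ and the tail sets $A_\e,B_\e$ where by Corollary \ref{corollary diameter} and Lemma \ref{BarrierLemma} the function is exponentially close to a well, so $v_\e'$ is exponentially small there, while on $O_\e \cap I_0$ (the transition region, which by Theorem \ref{thm:1DMinProperties} sits in $B(t_0,\r_1)$, well inside $I$) the interior estimate applies.

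**The main obstacle** is handling the behavior near $\pm T$ when $n_i \ge 2$: there $\eta'/\eta \sim (T+t)^{-1}$ is unbounded, so the naive rescaled-ODE argument loses a power of $\e$. The cleanest fix is to observe that one only needs the bound where $v_\e'$ is not already known to be exponentially small — and the barrier estimates (Lemma \ref{BarrierLemma}, Corollary \ref{corollary diameter}) confine all ``action'' of $v_\e$ to a ball $B(t_0,\r_1)$ strictly interior to $I$, on which the interior rescaling gives $|v_\e'| \le C\e^{-1}$; outside this ball $v_\e$ lies within $O(\e^k)$ (or exponentially close) to $a_\e$ or $b_\e$, and differentiating the integrated Euler–Lagrange identity with this sharper pointwise bound on $W'(v_\e) + \e\lambda_\e = O(\e^{k-1}$ or smaller by \eqref{aEpsilonForm}–\eqref{bEpsilonForm} and $|v_\e - a_\e| \le \e^k$) yields $|v_\e'(t)|$ far smaller than $\e^{-1}$ there. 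Assembling the interior estimate on $B(t_0,\r_1)$ with the tail estimate on its complement gives $|v_\e'(t)| \le C\e^{-1}$ for all $t \in I$, completing the proof.
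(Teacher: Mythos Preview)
Your interior rescaling argument is fine, but the endpoint treatment has a genuine circularity: you invoke the fact that ``all action of $v_\e$ is confined to $B(t_0,\r_1)$'' and that outside this ball $v_\e$ lies within $O(\e^k)$ of a well. That statement is precisely Theorem~\ref{thm:1DMinProperties}, whose proof (Step~1) uses Lemma~\ref{DerivativeBounds} to control the size of the thickened set $(D_\e)^{l\e}$. Lemmas~\ref{BarrierLemma} and Corollary~\ref{corollary diameter}, which are indeed proved earlier and independently, only give decay on intervals where $v_\e$ is already known to lie on one side of $c\pm\rho$; they do not by themselves locate those intervals relative to $t_0$ or rule out large excursions near $\pm T$. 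So as written your argument for the endpoints is circular.

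The paper avoids this entirely by a short, uniform argument. The integrated Euler--Lagrange identity (which you also wrote down) together with \eqref{etaTail1}--\eqref{etaTail2} gives the crude bound $\e^2|v_\e'(t)| \le C\min\{T+t,\,T-t\}$ on all of $I$. The key observation is that this is exactly what is needed to kill the endpoint singularity in the lower-order term: by \eqref{etaPrimeRatio} one has $|\eta'(t)/\eta(t)| \le d_5/\min\{T+t,\,T-t\}$, so
\[
2\e^2\left|\frac{\eta'(t)}{\eta(t)}\,v_\e'(t)\right| \le C
\]
uniformly on $I$. Since $|W'(v_\e)+\e\lambda_\e|\le C$ by \eqref{1DTightBounds}, the equation $2\e^2 v_\e'' = (W'(v_\e)+\e\lambda_\e) - 2\e^2 (\eta'/\eta)v_\e'$ yields $\e^2|v_\e''| \le C$ on all of $I$. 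A one-line interpolation between the $L^\infty$ bound on $v_\e$ and this $\e^{-2}$ bound on $v_\e''$ (mean value theorem over an interval of length $\e$) then gives $|v_\e'| \le C\e^{-1}$ everywhere. Note that your crude endpoint bound was not a dead end but the essential first step; you just need to feed it back into the equation via \eqref{etaPrimeRatio} rather than reach for the structural Theorem~\ref{thm:1DMinProperties}.
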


\begin{proof}
	By \eqref{1DEulerLagrange} and the fact that $v_{\e}^{\prime}(-T)=0$,
	\begin{equation}
	2\e^{2}v_{\e}^{\prime}(t)\eta(t)=\int_{-T}^{t}(W^{\prime
	}(v_{\e}(s))+\e\lambda_{\e})\eta(s
	)~ds\label{integral identity 1}%
	\end{equation}
	for every $t\in\overline{I}$. In light of \eqref{etaSmooth}-\eqref{etaTail1} we know that that there exist constants $c_1, c_2 > 0$ so that $c_1(T+t)^{n_1-1} \leq \eta(t) \leq c_2(T+t)^{n_1-1}$ for all $t \in [-T,T-t^*]$.  Since $v_{\e}$ is bounded by \eqref{1DTightBounds}, this implies that
	\begin{align*}
	2\e^{2}|v_{\e}^{\prime}(t)|  &  \leq\frac{C}{\eta(t)}%
	\int_{-T}^{t}\eta(s)~ds\leq\frac{C}{c_{1}(T+t)^{n_1-1}}\int_{-T}^{t}%
	c_{2}(T+s)^{n_1-1}~ds\\
	&  =\frac{Cc_{2}}{c_{1}n_1}(T+t)
	\end{align*}
	for all $t\in(-T,T-t^*)$. Using a similar argument in $(-T+t^*,T)$,
	we conclude that
	\[
	\e^{2}|v_{\e}^{\prime}(t)|\leq C\min\{T+t,T-t\}
	\]
	for all $t\in I$. By \eqref{1DEulerLagrange}, $v_{\e}$ satisfies%
	\[
	2\e^{2}v_{\e}^{\prime\prime}(t)+2\e^{2}\frac
	{\eta^{\prime}(t)}{\eta(t)}v_{\e}^{\prime}(t)=W^{\prime
	}(v_{\e}(t))+\e\lambda_{\e}.
	\]
	Using \eqref{etaPrimeRatio}, \eqref{1DTightBounds} and the previous inequality we get
	\[
	2\e^{2}|v_{\e}^{\prime\prime}(t)|\leq\left\vert \frac
	{\eta^{\prime}(t)}{\eta(t)}\right\vert 2\e^{2}|v_{\e
	}^{\prime}(t)|+C\leq C.
	\]
	Next we use a classical interpolation result. Let $t\in I$ and consider
	$t_{1}\in I$ with $|t-t_{1}|=\e$. By the mean value theorem
	$v_{\e}(t)-v_{\e}(t_{1})=v_{\e}^{\prime}%
	(\theta)(t-t_{1})$ and so by the fundamental theorem of calculus
	\[
	v_{\e}^{\prime}(t)=v_{\e}^{\prime}(\theta)+\int_{\theta}%
	^{t}v_{\e}^{\prime\prime}(s)~ds=\frac{v_{\e
		}(t)-v_{\e}(t_{1})}{t-t_{1}}+\int_{\theta}^{t}v_{\e}%
	^{\prime\prime}(s)~ds.
	\]
	Again by \eqref{1DTightBounds} it follows that
	\[
	|v_{\e}^{\prime}(t)|\leq\frac{C}{\e}+\sup|v_{\e
	}^{\prime\prime}||t-\theta|\leq\frac{C}{\e}+\frac{C}{\e^{2}%
}\e.
\]
This concludes the proof.
\end{proof}

%Finally we prove the main result in this section, namely Theorem \ref{thm:1DMinProperties}. The proof generally follows the framework of the proof of Theorem \ref{theorem local minimizer G0}. However, the proof is much more technical as we could not generally rule out the possibility of oscillations in our minimizers, particularly near points where $\eta$ takes the value zero (namely $t = \pm T$). Indeed in certain settings oscillations appear naturally for solutions to \eqref{1DEulerLagrange} \cite{TODO}.

We are now prepared to prove Theorem \ref{thm:1DMinProperties}. By way of notation, for every measurable subset $E\subset I$ and for every $v\in H_{\eta}^{1}$
satisfying $\|v-v_0\|_{L_\eta^1} \leq \delta$ and \eqref{1DMassConstraint} we define the
localized energy
\begin{equation}
J_{\e}^{(1)}(v;E):=\int_{E}\left(  \frac{1}{\e
}W(v)+\e (v^{\prime})^{2}\right)  \eta~dt.
\label{G1 epsilon local}%
\end{equation}
Figure \ref{Fig:ProofExplanation} gives a visual representation of the notation used in the following proof.

\begin{figure}
\centering
  \includegraphics[width = .8\linewidth]{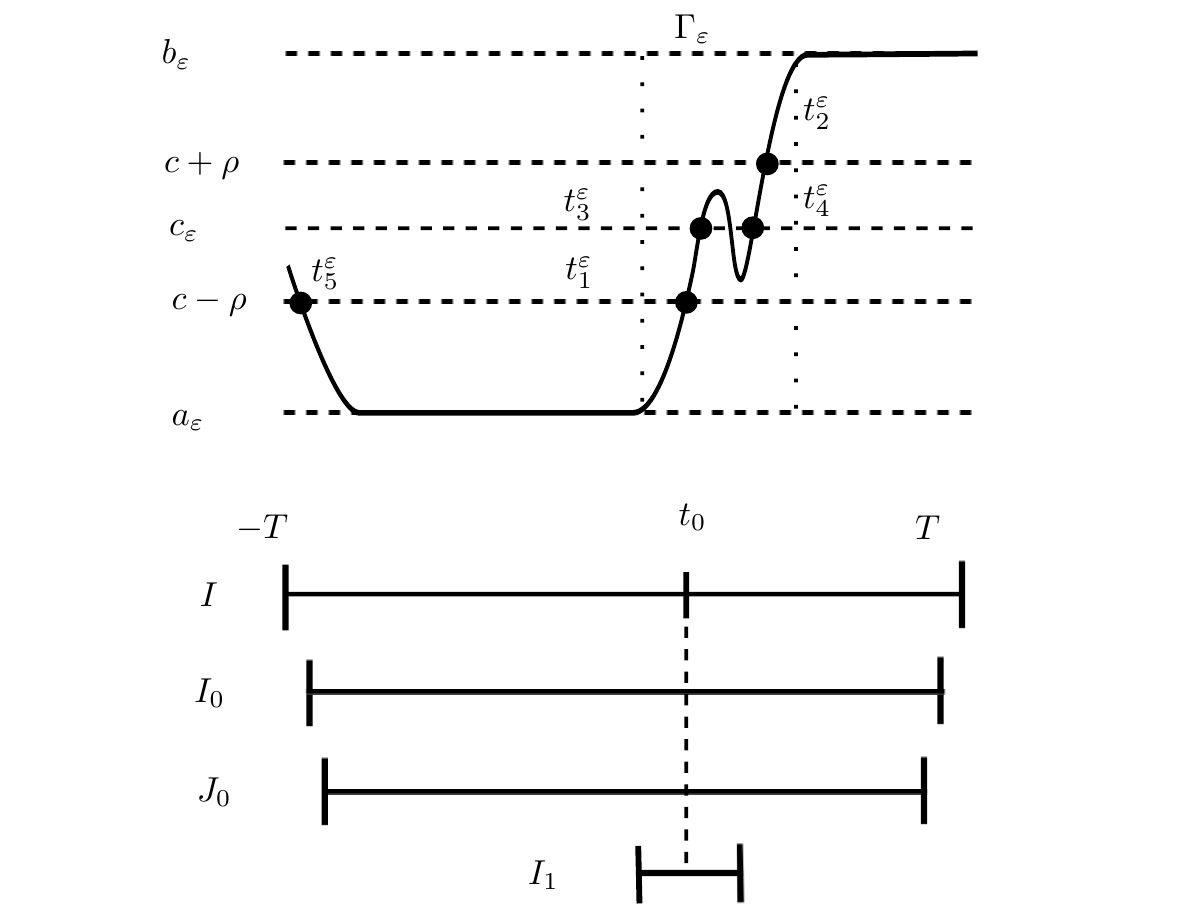}
  \caption{Important intervals and points for the proof of Theorem \ref{thm:1DMinProperties}}
  \label{Fig:ProofExplanation}
\end{figure}

\begin{figure}
\centering
\begin{tabular}{|l| p{5.5cm}| p{7cm} |}
\hline \textbf{Symbol} & \textbf{Definition} & \textbf{Characteristics} \\
\hline $O_\e$ &  (\ref{set O epsilon}) & Step 1 proves that $\mathcal{L}^1(O_\e) = o(1)$. \\
\hline $I_0$ & $[-T + \r_0, T-\r_0]$ (see statement of Theorem \ref{thm:1DMinProperties}) & \\
\hline $J_0$ & $[-T + 2\r_0, T-2\r_0]$ (see Step 2) & \\
\hline $I_1$ & $[t_0 - \hat \r, t_0 + \hat \r]$ (see \eqref{Def:I1})& \\
\hline $\Gamma_\e$ & A maximal subinterval of $O_\e$ which intersects $\overline{B(t_0,\r_1/2)}$ & Existence proved in Step 3, uniqueness, endpoint values and width estimate in Step 4. \\
\hline $t_1^\e, t_2^\e$ & (\ref{times c0}) & \\
\hline $t_3^\e, t_4^\e$ & The first and last time in $\Gamma_\e$ where $v_\e = c_\e$ (see Step 3) & Step 3 proves that these are $O(\e)$ distance apart. \\
\hline $t_5^\e$ & The last point to the left of $\Gamma_\e$ where $v_\e(t_5^\e) = c-\rho$ & Step 5 proves that $t_5^\e$, if it exists, must be in $[-T, -T + c(n_1)\e^{\theta_1}]$. \\ \hline
\end{tabular}
\caption{Explanations of some of the notation in the proof of Theorem \ref{thm:1DMinProperties}.}
\end{figure}

\begin{proof}[ Proof of Theorem \ref{thm:1DMinProperties}]
	
	By Theorem \ref{1DLimsupOrder2} there exists $\tilde{v}_{\e}$ converging to $v_0$ in $L_\eta^1$ such
	that
	\begin{equation}
	G_{\e}^{(1)}(v_{\e}) = J_\e^{(1)}(v_\e) \leq J_{\e}^{(1)}(\tilde
	{v}_{\e})  \leq G_{\e}^{(1)}(\tilde
	{v}_{\e})\leq G^{(1)}(v_{0})+C\e=2c_W%
	\eta(t_{0})+C\e, \label{energies bounded}%
	\end{equation}
	where we have used the fact that $v_{\e}$ is a
	minimizer of $J_\e$. We fix
	\begin{equation}\label{Eqn:Eps1Bound}
	0<\epsilon_1 < \min \left\{\frac{\eta(t_0)}{2},\frac{\eta(t_0)}{2c_W}\int_c^{c+\rho}W^{1/2}(s)\ds,\frac{\min \{c_-,c_+\}}{2c_W}\min_{I_0}\eta \right\},
	\end{equation}
	where
		\begin{equation}
		c_{-}:=\int_{a}^{c}W^{1/2}(s)~ds,\quad c_{+}:=\int_{c}^{b}W^{1/2}(s)~ds.
		\label{c- and c+}%
		\end{equation}
	By the continuity of $\eta$ there exists $\r_{\epsilon_1}>0$ so that 
	\begin{equation}\label{Def:Oldr1}
	|\eta(t)-\eta(t_0)| \leq \epsilon_1
	\end{equation}
	for all $t \in [t_0 - \r_{\epsilon_1},t_0+\r_{\epsilon_1}]$. Pick $\hat \r>0$ so that
	\begin{equation} \label{Def:I1}
	I_1 := [t_0-\hat{\r},t_0+\hat{\r}] \subset I,
	\end{equation}
	and let
	\begin{equation}\label{Eqn:EtaMinBound}
	\eta_1 := \min_{I_1} \eta > 0.
	\end{equation}
	Choose $\r_1$ so that
	\begin{equation} \label{Def:r1}
	0<\r_1 < \min \{\r_{\epsilon_1},\hat{\r}\}.
	\end{equation}	
	Fix $\delta$ so that
	\begin{equation}\label{Def:delta}
	0<\delta < (c-a-\rho)\frac{\eta(t_0)}{2}\r_1.
	\end{equation}

	\noindent\textbf{Step 1: } We claim that $\mathcal{L}^1(O_\e) = o(1)$ (see \eqref{set O epsilon}). Define the set 
	\begin{equation}
	D_\e := 	O_{\e} \cap v_\e^{-1}([c-\rho,c+\rho]\}).
	\end{equation}
	By Lemma \ref{DerivativeBounds}, $|v_\e'| \leq C_0 \e^{-1}$, and so, using the notation in \eqref{defSetWidening}, $(D_\e)^{l \e} \subset v_\e^{-1}([c-2\rho,c+2\rho])$, provided $0<l \leq \rho C_0^{-1}$. In turn 
	\begin{align}
	\mathcal{L}^1((D_\e)^{l \e}) &\leq  \int_{\{c-2\rho \leq v_\e \leq c+2\rho\}} 1 \dt \nonumber\\
	&\leq \e^{\theta_1} +\e^{\theta_2}+ \left(\min_{[c-2\rho,c+2\rho]} W\right)^{-1} \int_{-T+\e^{\theta_1}}^{T-\e^{\theta_2}} W(v_\e) \dt \label{1003}\\
	&\leq \e^{\theta_1} +\e^{\theta_2} + C\left(\e^{-\theta_1(n_1-1)}+\e^{-\theta_2(n_2-1)}\right)\int_{-T+\e^{\theta_1}}^{T-\e^{\theta_2}} W(v_\e) \eta \dt\nonumber\\& \leq \e^{\theta_1} +\e^{\theta_2} + C\left(\e^{1-\theta_1(n_1-1)}+\e^{1-\theta_2(n_2-1)}\right),\nonumber
	\end{align}
	where we have used \eqref{W_Smooth}, \eqref{etaSmooth}-\eqref{etaTail2}, \eqref{rho-chosen} and \eqref{energies bounded}.
	
	Next we claim that
	\begin{equation}\label{1004}
	O_\e\subset (D_\e)^{C\e |\log \e|} \cup [-T,-T+c(n_1)\e^{\theta_1}+C\e |\log \e|] \cup [T-c(n_2)\e^{\theta_2}-C\e |\log \e|, T].
	\end{equation}
	Indeed, as $O_\e = A_\e \cup B_\e \cup D_\e$, it suffices to consider $\tilde t \in A_\e$, as the case $\tilde t \in B_\e$ is analogous. Let $I_\e$ be the maximal subinterval of $A_\e$ containing $\tilde t$. By Corollary \ref{corollary diameter}, $\operatorname*{diam} I_\e \leq C \e |\log \e|$. If $I_\e$ intersects $D_\e$,  then $d(\tilde t,D_\e) \leq \operatorname*{diam} I_\e \leq C\e |\log \e|$. Otherwise, since reasoning as in the proof of \eqref{1DTightBounds} and Lemma \ref{lemma c epsilon} it cannot happen that $v_\e$ takes the value $b_\e-\e^k$ at both endpoints of $I_\e$, it follows that one of the endpoints  of $I_\e$ is $-T+c(n_1)\e^{\theta_1}$ or $T-c(n_2)\e^{\theta_2}$, say, $-T+c(n_1)\e^{\theta_1}$.  Thus
	\[
	d(\tilde t, [-T,-T+c(n_1)\e^{\theta_1}] ) \leq C \e|\log \e|.
	\]
 This proves \eqref{1004}.

	By Lemma \ref{SetSizeLemma} and \eqref{1003} we have that
	\[
	\mathcal{L}^1((D_\e)^{C\e|\log \e|}) \leq C|\log \e| \mathcal{L}^1((D_\e)^{l\e}) \leq C|\log \e| \left(\e^{\theta_1} +\e^{\theta_2} + \e^{1-\theta_1(n_1-1)}+\e^{1-\theta_2(n_2-1)}\right).
	\]
	Hence by \eqref{1004} we have that
	\begin{align}
	\mathcal{L}^1(O_\e) &\leq \e^{\theta_1} +\e^{\theta_2}+ C \e|\log \e|+ \mathcal{L}^1((D_\e)^{C \e|\log \e|})\\
	&\leq C_1 |\log \e| \left(\e^{\theta_1} +\e^{\theta_2} + \e^{1-\theta_1(n_1-1)}+\e^{1-\theta_2(n_2-1)}\right),
	\end{align}
	where $C_1>0$ is \emph{independent of $\r_0$}.

	\noindent\textbf{Step 2: }We claim if $I_{\e}$ is a maximal
	subinterval of the set $O_{\e}$ (see \eqref{set O epsilon}) that
	intersects the interval $J_{0}:=[-T+2\r_0,T-2\r_0]$, then $I_{\e}$
	is contained in $I_{0}$ for all $\e>0$
	sufficiently small, with
	\begin{equation}
	\mathcal{L}^{1}(I_{\e})\leq C\e|\log\e|.
	\label{diam I epsilon}%
	\end{equation}
	The first part of the claim, namely, that $I_\e \subset I_0$, follows immediately from Step 1.  Lemma \ref{lemma c epsilon} then implies that $I_\e \cap D_\e \neq \emptyset$.  Reasoning as in the proof of \eqref{1003} but using the fact that $\eta\ge \eta_0>0$ in $I_0$ we find that $\mathcal{L}^1((I_\e\cap D_\e)^{C\e}) < C\e$. 
	Again due to the fact that $I_{\e}\subset I_0$, reasoning as in the proof of \eqref{1004} 
	we can show that $I_\e \subset (I_\e\cap D_\e)^{C\e|\log \e|}$. Using Lemma \ref{SetSizeLemma} once more gives \eqref{diam I epsilon}.

	\noindent\textbf{Step 3: }We claim that there exist $t_{1}^{\e}$,
	$t_{2}^{\e}\in \overline{B(t_0,\r_1/2)}$ such that
	\begin{equation}
	v_{\e}(t_{1}^{\e})\leq c-\rho,\quad v_{\e}%
	(t_{2}^{\e})\geq c+\rho\label{times c0}%
	\end{equation}
	provided $\e>0$ is sufficiently small. Indeed, if $t_{1}^{\e
	}$ does not exist, then $c-\rho<v_{\e}$ in $\overline{B(t_0,\r_1/2)}$, and so by
	\eqref{continuity eta},
	\[
	\delta\geq\int_{\overline{B(t_0,\r_1/2)}}|v_{\e}-v_{0}|\eta~dt\geq(c-a-\rho)\frac
	{\eta(t_{0})}{2}\r_1,
	\]
	where we used \eqref{Eqn:Eps1Bound}. This contradicts \eqref{Def:delta}. Hence the $t_{1}^{\e}$ in \eqref{times c0}  exists, and with a similar argument we can prove
	the existence of $t_{2}^{\e}$.
	
	Since $v_{\e}$ is continuous, by the intermediate value theorem it
	will take all values between $c-\rho$ and $c+\rho$ in $\overline{B(t_0,\r_1/2)}$. Let
	$\Gamma_{\e}^{-}$ be a maximal subinterval of $O_{\e}$
	intersecting $\overline{B(t_0,\r_1/2)}$ such that $v_{\e}(\Gamma_{\e}^{-}%
	)\supset\lbrack c-\rho,c]$ and let $\Gamma_{\e}^{+}$ be a maximal
	subinterval of $O_{\e}$ intersecting $\overline{B(t_0,\r_1/2)}$ such that
	$v_{\e}(\Gamma_{\e}^{+})\supset\lbrack c,c+\rho]$. By
	Step 1, for $\e$ small enough, both intervals are contained in the interval $I_{1}$ given by \eqref{Def:I1}.
	
	We claim that either $v_{\e}(\Gamma_{\e}^{-})=[a_{\e
	}+\e^{k},b_{\e}-\e^{k}]$ or $v_{\e
}(\Gamma_{\e}^{+})=[a_{\e}+\e^{k},b_{\e
}-\e^{k}]$. Indeed, if this is not the case, then by the maximality
of $\Gamma_{\e}^{-}$ and $\Gamma_{\e}^{+}$, Lemma \ref{lemma c epsilon} and the definition of
$O_{\e}$ (see \eqref{set O epsilon}) $v_{\e}=a_{\e
}+\e^{k}$ at both endpoints of $\Gamma_{\e}^{-}$ and
$v_{\e}=b_{\e}-\e^{k}$ at both endpoints of
$\Gamma_{\e}^{+}$. Let $t_\e \in \Gamma_\e^-$ be such that $v_\e(t_\e) = c$. Hence, by \eqref{G1 epsilon local}, \eqref{Eqn:EtaMinBound}, Young's inequality and a change of
variables,%
\begin{align}
J_{\e}^{(1)}(v_{\e};\Gamma_{\e}^{-}) &  \geq
2 \eta_1 \int_{\Gamma_{\e}^{-}}W^{1/2}%
(v_{\e})|v_{\e}^{\prime}|~dt \nonumber\\
&= 2 \eta_1\int_{\Gamma_\e^- \cap (-T,t_\e]}W^{1/2}%
(v_{\e})|v_{\e}^{\prime}|~dt + 2 \eta_1\int_{\Gamma_\e^- \cap (t_\e,T)}W^{1/2}%
(v_{\e})|v_{\e}^{\prime}|~dt\nonumber\\
&  \geq 4 \eta_1 \int_{a_{\e
	}+\e^{k}}^{c}W^{1/2}(s)~ds \geq 4c_{-}\eta_1-C\e
^{(q+3)/{2q}},\label{energy G1 epsilon I epsilon -}
\end{align}
where we have used \eqref{c- and c+} and the fact that%
\[
\int_{a}^{a_{\e}+\e^{k}}W^{1/2}(s)~ds\leq C|a-a_{\e
}-\e^{k}|^{(q+3)/{2}}\leq C\e^{(q+3)/{2q}}%
\]
by \eqref{W_Limits} and \eqref{1DTightBounds} where here $C$ is independent of $\r_0$. A similar inequality
holds for $J_{\e}^{(1)}(v_{\e};\Gamma_{\e}^{+})$ with
the only difference that $c_{-}$ should be replaced by $c_{+}$. Hence, also by
\eqref{continuity eta} and \eqref{energies bounded},
\[
2c_W\eta(t_{0})+C\e  \geq J_{\e}%
^{(1)}(v_{\e};\Gamma_{\e}^{-})+J_{\e}^{(1)}%
(v_{\e};\Gamma_{\e}^{+}) \geq 4c_W(\eta(t_{0})-\epsilon_{1})-C\e
^{(q+3)/{2q}},
\]
which gives
\[
C\e \geq2(\eta(t_{0})-2\epsilon_{1})c_W.
\]
This contradicts \eqref{Eqn:Eps1Bound} provided $\e$ is sufficiently small. This proves the claim. We denote by
$\Gamma_{\e}$ a maximal subinterval of $O_{\e}$ intersecting
$\overline{B(t_0,\r_1/2)}$ such that $v_{\e}(\Gamma_{\e})=[a_{\e
}+\e^{k},b_{\e}-\e^{k}]$.

First we claim that $v_\e$ takes the values $a_\e + \e^k$ and $b_\e - \e^k$ on the endpoints of $\Gamma_\e$. If not then reasoning as in \eqref{energy G1 epsilon I epsilon -}  we would have
\[
J_{\e}^{(1)}(v_\e;\Gamma_\e) \geq 4c_W \eta_1 -C\e^{(q + 3)/2}
\]
which is a contradiction. Next let $t_{3}^{\e}$ and $t_{4}^{\e}$ be the first time and
last time in $\Gamma_{\e}$ that $v_{\e}$ equals $c_{\e}%
$. We claim that
\begin{equation}
t_{4}^{\e}-t_{3}^{\e}\leq C_2\e
,\label{estimate oscillation interval}%
\end{equation}
for some constant $C_2>0$ \emph{independent of } $\r_0$, for all $\e$ sufficiently small.
Indeed, if $v_{\e}(t)\in\lbrack c-\rho,c+\rho]$ for all $t\in\lbrack
t_{3}^{\e},t_{4}^{\e}]$, then by \eqref{continuity eta},
\[
J_{\e}^{(1)}(v_{\e};[t_{3}^{\e},t_{4}^{\e
}])\geq\e^{-1}\frac{\eta(t_{0})}{2}(t_{4}^{\e}-t_{3}^{\e})\min_{[c-\rho,c+\rho]}%
W,
\]
and so \eqref{estimate oscillation interval} follows from
\eqref{energies bounded}, where all the constants appearing are independent of $\r_0$. On the other hand if there exists $\tilde t^{\e}\in\lbrack
t_{3}^{\e},t_{4}^{\e}]$ such that $|v_{\e}%
(\tilde t^{\e})-c|\geq\rho$, say, $v_{\e}(\tilde t^{\e
})\geq c+\rho$, then by Young's inequality, Step 1, \eqref{Eqn:Eps1Bound}, \eqref{Def:Oldr1} and a change of variables we
get%
\[
J_{\e}^{(1)}(v_{\e};[t_{3}^{\e},t_{4}^{\e
}])\geq 2\frac{\eta(t_{0})}{2}\int_{c}^{c+\rho}W^{1/2}%
(s)~ds-C\e^{(q+3)/{2q}}.
\]
Furthermore, by again reasoning as in \eqref{energy G1 epsilon I epsilon -}, and using the fact that $v_\e$ takes the values $a_\e + \e^k$ and $b_\e - \e^k$ on the endpoints of $\Gamma_\e$ we have that
\begin{equation}
J_{\e}^{(1)}(v_{\e};\Gamma_{\e}\backslash [t_3^\e,t_4^\e])\geq 2%
\eta_1 \int_{a_{\e}+\e^{k}}%
^{b_{\e}-\e^{k}}W^{1/2}(s)~ds\geq 2c_W%
\eta_1-C\e^{(q+3)/{2q}}%
,\label{energy G1 epsilon I epsilon}%
\end{equation}
with $C$ independent of $\r_0$.

Hence, by \eqref{continuity eta}, \eqref{energies bounded}, and \eqref{energy G1 epsilon I epsilon},
\begin{align*}
2c_W\eta(t_{0})+C\e &  \geq J_{\e}%
^{(1)}(v_{\e};\Gamma_{\e}\backslash[t_3^\e,t_4^\e])+J_{\e}^{(1)}(v_{\e
};[t_{3}^{\e},t_{4}^{\e}])\\
&  \geq 2c_W(\eta(t_{0})-\epsilon_{1})+\eta(t_{0})\int%
_{c}^{c+\rho}W^{1/2}(s)~ds-C\e^{(q+3)/{2q}},
\end{align*}
which gives%
\[
C\e\geq\eta(t_{0})\int_{c}^{c+\rho}W^{1/2}(s)~ds-2c_W\epsilon
_{1},
\]
which contradicts \eqref{Eqn:Eps1Bound}, provided $\e$ is
sufficiently small. The case where $v_\e(\tilde t^\e)\leq c-\rho$ is analogous.

\noindent\textbf{Step 4: }We claim that for all $\e>0$ sufficiently
small, $\Gamma_{\e}$ is the only maximal subinterval of the set
$O_{\e}$ that intersects the interval $J_{0}$ defined in Step $2$. Indeed, assume that
there exists another maximal subinterval $I_{\e}$ of $O_{\e
}$ that intersects $J_{0}$. By Step 1, $I_{\e}\subset I_{0}$ and
\eqref{diam I epsilon} holds. In view of Lemma \ref{lemma c epsilon} there
exists $t_{\e}\in I_{\e}$ such that $v_{\e
}(t_{\e})=c_{\e}$. Since $I_{\e}$ is a maximal
interval of $O_{\e}$ at one of the endpoints it attains either the
value $a_{\e}+\e^{k}$ or $b_{\e}-\e^{k}$.
In the first case, reasoning as in \eqref{energy G1 epsilon I epsilon -}, we
get
\begin{align*}
J_{\e}^{(1)}(v_{\e};I_{\e})  &  \geq2\min
_{I_{\e}}\eta\int_{I_{\e}}W^{1/2}(v_{\e
})|v_{\e}^{\prime}|~dt\geq2\min_{I_{\e}}\eta
\int_{a_{\e}+\e^{k}}^{c_{\e}}W^{1/2}(s)~ds\\
&  \geq 2c_{-}\min_{I_{\e}}\eta-C|c-c_{\e
}|-C\e^{(q+3)/{2q}}.
\end{align*}
A similar inequality holds in the second case, with $c_{+}$ in place of
$c_{-}$. Hence, by \eqref{continuity eta}, \eqref{energies bounded},
 and by \eqref{energy G1 epsilon I epsilon},%
\begin{align*}
2c_W\eta(t_{0})+C\e &  \geq J_{\e}%
^{(1)}(v_{\e};\Gamma_{\e})+J_{\e}^{(1)}(v_{\e
};I_{\e})\\
&  \geq2c_W\min_{\Gamma_{\e}}\eta+2\min\{c_{-}%
	,c_{+}\}\min_{I_{\e}}\eta-C\e\\
&  \geq2c_W(\eta(t_{0})-\epsilon_{1})+2\min\{c_{-}%
	,c_{+}\}\min_{I_{0}}\eta-C\e,
\end{align*}
which gives%
\[
C\e\geq2\min\{c_{-},c_{+}\}\min_{I_{0}}\eta-2c_W\epsilon
_{1},
\]
which contradicts \eqref{Eqn:Eps1Bound} provided $\e$ is sufficiently small.

This proves that $\Gamma_\e$ is the only maximal subinterval of $O_\e$ that intersects $J_0$. In view of \eqref{limits} it follows that $v_\e$ takes the value $a_\e +   \e^k$ on its left endpoint of $\Gamma_\e$ and $b_\e-\e^k$ on the right endpoint. Indeed, if $v_\e$ takes the value $b_\e - \e^k$ at the left endpoint of $\Gamma_\e$ then since $v_\e(T_2)<a+\rho$ by \eqref{limits}, then $\Gamma_\e$ could not be the only maximal subinterval of $O_\e$ intersecting $J_0$. At this point we have established parts (i) and (ii) of our theorem.

Next we show that
\begin{equation}\label{Eqn:GammaIndependent}
\mathcal{L}^1(\Gamma_\e) \leq C_3 \e |\log \e|,
\end{equation}
for some constant $C_3>0$ \emph{independent of } $\r_0$. By Step 1, and the fact that $\Gamma_\e$ intersects $\overline{B(t_0,\r_1/2)}$, we have that $\Gamma_\e \subset B(t_0,\r_1)$ for $\e$ sufficiently small, where $\r_1$ is given in \eqref{Def:r1}. By \eqref{Eqn:EtaMinBound} and \eqref{Def:r1}, we have that $\eta \geq \eta_1 > 0$ on $\Gamma_\e$, with $\eta_1$ independent of $\r_0$. The argument in Step 2 then implies \eqref{Eqn:GammaIndependent}.

\noindent\textbf{Step 5: }We claim that $v_{\e}< c-\rho$ in
$[-T+c(n_1)\e^{\theta_1},-T+2\r_0]$. We first consider the case where $n_1 > 1$ in \eqref{etaTail1}. Suppose the claim does not hold. By \eqref{limits}, $v_\e(T_1)<a+\rho$ for $\e$ sufficiently small and where $T_1 \in (-T,-T+\r_0)$. By the intermediate value theorem there exists a point in $(T_1,-T+2\r_0)$ where $v_\e$ takes the value $c-\rho$. Since $-T+\e^{\theta_1}<T_1$ for $\e$ sufficiently small, we have that $v_\e$ takes the value $c-\rho$ in  $[-T+\e^{\theta_1},-T+2\r_0]$. Let $t_{5}^{\e}$
be the last time in $[-T+\e^{\theta_1},-T+2\r_0]$ such that
$v_{\e}(t_{5}^{\e})=c-\rho$. We claim that
\begin{equation}
|t_{3}^{\e}-t_{0}|\leq C_4(\e|\log\e|+%
(T+t_{5}^{\e})^{n_1}),\label{t3 -t0}%
\end{equation}
for some $C_4>0$ independent of $\r_0$, where we recall that $t_{3}^{\e}$ and $t_{4}^{\e}$ are the
first time and last time in $\Gamma_{\e}$ that $v_{\e}$ equals
$c_{\e}$. If $t_{3}^{\e}\leq t_{0}\leq t_{4}^{\e}$,
then this follows from \eqref{estimate oscillation interval}. Assume next that
$t_{0}<t_{3}^{\e}$. Then from \eqref{1DMassConstraint},
\begin{equation}
0=\int_I (v_{\e}-v_{0})\eta~dt=\int_{-T}^{t_{0}}(v_{\e
}-a)\eta~dt+\int_{t_{0}}^{t_{3}^{\e}}(v_{\e}-b)\eta
~dt+\int_{t_{3}^{\e}}^{T}(v_{\e}-b)\eta~dt.\label{601}%
\end{equation}
By \eqref{continuity eta},
\begin{align}
0 &  <\frac{\eta(t_{0})}{2}(b-c_{\e})(t_{3}^{\e}-t_{0}%
)\leq\int_{t_{0}}^{t_{3}^{\e}}(b-v_{\e})\eta~dt\label{602}\\
&  = \int_{-T}^{t_{0}}(v_{\e}-a)\eta~dt+\int_{t_{3}^{\e}%
}^{T}(v_{\e}-b)\eta~dt.\nonumber
\end{align}
We now estimate the two terms on the right-hand side of \eqref{602}. By \eqref{1DTightBounds} and \eqref{bEpsilonForm},%
\begin{equation}
\int_{t_{3}^{\e}}^{T}(v_{\e}-b)\eta~dt\leq|b_{\e
}-b|2T\max\eta\leq C\e^{1/q},\label{607}%
\end{equation}
where $C$ is independent of $\r_0$. We decompose the interval $[-T,t_{0}]$ as follows
\begin{equation}
\lbrack-T,t_{0}]=[-T,t_{5}^{\e}]\cup\lbrack t_{5}^{\e
},-T+2\r_0]\cup(\lbrack-T+2\r_0,t_{0}]\setminus \Gamma_{\e})\cup
([-T+2\r_0,t_{0}]\cap \Gamma_{\e}),\label{intervals subdivisions}%
\end{equation}
and estimate the integrals over each of these subintervals. By
\eqref{etaTail1}, \eqref{1DTightBounds}, and \eqref{bEpsilonForm},%
\begin{equation}
\int_{-T}^{t_{5}^{\e}}(v_{\e}-a)\eta~dt\leq(b_{\e
}-a)d_{2}\int_{-T}^{t_{5}^{\e}}(T+t)^{n_1-1}~dt\leq2(b-a)d_{2}%
(T+t_{5}^{\e})^{n_1}.\label{603}%
\end{equation}
Let $Q_{\e} :=[t_{5}^{\e},-T+2\r_0]\cap O_{\e}$.
Since $v_{\e}(t_{5}^{\e})=c-\rho$, we have that
$t_{5}^{\e}\in Q_{\e}$. Since $t_5^\e$ is the last time in $[-T+\e^{\theta_1},-T+2\r_0]$ such that $v_\e$ takes the value $c-\rho$, and since, by Step 4, $v_\e(-T+2\r_0)\leq a_\e+\e^k$ for $\e$ small, it must be that $v_{\e}<
c-\rho$ in $(t_{5}^{\e},-T+2\r_0]$. By
Corollary \ref{corollary diameter}, we get that
\begin{equation}\label{Q epsilon}
\mathcal{L}^{1}(Q_{\e})\leq
C\e|\log\e|,
\end{equation}
with $C$ independent of $\r_0$. Thus by \eqref{etaSmooth} and \eqref{1DTightBounds},
\begin{equation}
\int_{Q_{\e}}(v_{\e
}-a)\eta~dt\leq C\e|\log\e|\label{604}%
\end{equation}
with $C$ independent of $\r_0$. On the other hand, since $v_{\e}\leq a_{\e}+\e^{k}$
in $[t_{5}^{\e},-T+2\r_0]\setminus Q_{\e}$, by \eqref{1DTightBounds} and \eqref{aEpsilonForm},
\begin{equation}\label{6042}
\int_{\lbrack t_{5}^{\e},-T+2\r_0]\setminus Q_{\e}%
}(v_{\e}-a)\eta~dt\leq|a_{\e}+\e^{k}-a|d_{2}%
\int_{-T}^{-T+2\r_0}(T+t)^{n_1-1}~dt\leq C \r_0^{n_1} \e^{1/q},
\end{equation}
with $C$ independent of $\r_0$. Since the set $O_{\e}$ intersects the interval $J_{0}$ only in
$\Gamma_{\e}$ by Step 3, and as $t_0 < t_3^\e$, we have that $v_{\e}\leq
a_{\e}+\e^{k}$ in $[-T+2\r_0,t_{0}]\setminus \Gamma_{\e
}$. Hence, by \eqref{1DTightBounds} and \eqref{aEpsilonForm},
\begin{equation}
\int_{\lbrack-T+2\r_0,t_{0}]\setminus \Gamma_{\e}}(v_{\e}%
-a)\eta~dt\leq|a_{\e}+\e^{k}-a|2T\max\eta\leq C\e^{1/q}
,\label{605}%
\end{equation}
with $C$ again independent of $\r_0$. Again by Step 3, $[-T+2\r_0,t_{0}]\cap \Gamma_{\e}=[t_{0}-\r_1,t_{0}]\cap
\Gamma_{\e}$. Hence, by \eqref{1DTightBounds}  and \eqref{Eqn:GammaIndependent},
\begin{equation}
\int_{\lbrack t_{0}-\r_1,t_{0}]\cap \Gamma_{\e}}(v_{\e}%
-a)\eta~dt\leq C\e|\log\e|,\label{606}%
\end{equation}
for $C$ independent of $\r_0$. Combining the inequalities \eqref{602}, \eqref{607}, \eqref{intervals subdivisions}, \eqref{603}, \eqref{Q epsilon}, \eqref{604}, \eqref{6042},  \eqref{605} and \eqref{606} gives%
\[
\frac{\eta(t_{0})}{2}(b-c_{\e})(t_{3}^{\e}-t_{0})\leq
C\e|\log\e|+2(b-a)d_{2}(T+t_{5}^{\e})^{n_1},
\]
with $C$ independent of $\r_0$, which implies \eqref{t3 -t0} in the case $t_{0}<t_{3}^{\e}$.

It remains to prove \eqref{t3 -t0} in the case $t_{4}^{\e}<t_{0}$.
Then \eqref{601} should be replaced by
\begin{equation}
0=\int_{-T}^{T}(v_{\e}-v_{0})\eta~dt=\int_{-T}^{t_{4}^{\e}%
}(v_{\e}-a)\eta~dt+\int_{t_{4}^{\e}}^{t_{0}}(v_{\e
}-a)\eta~dt+\int_{t_{0}}^{T}(v_{\e}-b)\eta~dt\label{701}%
\end{equation}
and \eqref{602} by
\begin{equation}
0 <\frac{\eta(t_{0})}{2}(c_{\e}-a)(t_{0}-t_{4}^{\e}%
)\leq\int_{t_{4}^{\e}}^{t_{0}}(v_{\e}-a)\eta~dt \leq\int_{t_{0}}^{T}(b-v_{\e})\eta~dt+\int_{-T}^{t_{4}%
	^{\e}}(a-v_{\e})\eta~dt.\label{702}
\end{equation}
By \eqref{1DTightBounds} and \eqref{aEpsilonForm},
\begin{equation}
\int_{-T}^{t_{4}^{\e}}(a-v_{\e})\eta~dt\leq|a-a_{\e
}|2T\leq C\e^{1/q},\label{707}%
\end{equation}
with $C$ independent of $\r_0$. The integral $\int_{t_{0}}^{T}(b-v_{\e})\eta~dt$ can be estimated as
in the case $t_{0}<t_{3}^{\e}$. We omit the details.
Hence, we have shown that \eqref{t3 -t0} holds in all cases. 

Since $t_{3}^{\e}\in \Gamma_{\e}$, by \eqref{Eqn:GammaIndependent} and \eqref{t3 -t0}, it
follows that for any $t\in \Gamma_{\e}$,%
\[
\left\vert t-t_{0}\right\vert \leq\left\vert t-t_{3}^{\e}\right\vert
+ |t_{3}^{\e}-t_{0}| \leq C_5(\e|\log\e|+%
(T+t_{5}^{\e})^{n_1}),
\]
where $C_5>0$ is independent of $\r_0$.
In turn, by the mean value theorem
\begin{align*}
\eta(t) &  =\eta(t_{0})+\eta^{\prime}(\theta)(t-t_{0})\geq\eta(t_{0}%
)-M_{0}|t-t_{0}|\\
&  \geq\eta(t_{0})-C_5M_0(\e|\log\e|+(T+t_{5}%
^{\e})^{n_1}),
\end{align*}
where we recall that $M_{0}=\max|\eta^\prime|+1$. 
Hence, also by \eqref{energy G1 epsilon I epsilon} we get
\[
J_{\e}^{(1)}(v_{\e};\Gamma_{\e})   \geq2c_W\min_{\Gamma_{\e}}\eta-C\e^{(q+3)/{2q}} \geq2c_W\eta(t_{0})-C_6(\e|\log\e
|+(T+t_{5}^{\e})^{n_1})
\]
with $C_6>0$ independent of $\r_0$. On the other hand, since $v_{\e}(t_{5}^{\e})=c-\rho$, there
exists a maximal subinterval $S_{\e}$ of $Q_{\e}$ that
contains $t_{5}^{\e}$. As argued just before \eqref{Q epsilon}, it must be that $v_{\e}(S_{\e
})\supset\lbrack a_{\e}+\e^{k},c-\rho]$, and so reasoning as in
\eqref{energy G1 epsilon I epsilon -}, by \eqref{etaTail1}, which can be applied since $2\r_0<t^*$ by \eqref{choice h0} and \eqref{Q epsilon} holds,%
\begin{align*}
J_{\e}^{(1)}(v_{\e};S_{\e}) &  \geq2\min_{S_{\e}}\eta\int_{a_{\e}+\e^{k}}^{c-\rho
}W^{1/2}(s)~ds\\
&  \geq2d_{1}(T+t_{5}^{\e})^{n_1-1}\int_{a+\rho}^{c-\rho}%
W^{1/2}(s)~ds,
\end{align*}
for $\e>0$ small enough. Combining these last two estimates, it follows from \eqref{energies bounded}
that%
\begin{align*}
2c_W\eta(t_{0})+C\e &  \geq J_{\e}^{(1)}(v_{\e};\Gamma_{\e})+J_{\e}^{(1)}(v_{\e};S_{\e})\geq2c_W\eta
(t_{0})-C_6(\e|\log\e|+(T+t_{5}^{\e})^{n_1})\\
&  \quad+2d_{1}(T+t_{5}^{\e})^{n_1-1}\int_{a+\rho}^{c-\rho
}W^{1/2}(s)~ds,
\end{align*}
which gives
\begin{equation}
C\e|\log\e|\geq (T+t_{5}^{\e}%
)^{n_1-1}\left(  2d_{1}\int_{a+\rho}^{c-\rho}W^{1/2}(s)~ds-C_{6}%
(T+t_{5}^{\e})\right)  .\label{600}%
\end{equation}
Since $-T+\e^{\theta_1}\leq t_{5}^{\e}\leq-T+2\r_0$, by
taking
\[
0<\r_0<\frac{d_{1}}{C_{6}}\int_{a+\rho}^{c-\rho}W^{1/2}(s)~ds,
\]
we get a contradiction, since $\theta_1(n_1-1) < 1$ by \eqref{def: theta i}.

Finally we consider the case where $n_1 = 1$. In this case we can use energy estimates, as in Step 4, the fact that $\eta \geq C > 0$ on $[-T,-T+2\r_0]$, and Lemma \ref{lemma c epsilon} to show that $v_\e(t) < a_\e + \e^k$ on the interval $[-T,-T+2\r_0]$. We omit the details.

\noindent \textbf{Step 6: } Finally, we prove the last claim in our theorem. We write $\Gamma_\e = [T_1^\e,T_2^\e]$. By the remark at the end of Step 5, in the case $n_1 = 1$ we are already done, so we only need to consider the case $n_1 > 1$. In view of Step 5 we can  use the barrier method in Lemma \ref{BarrierLemma} to show that for $t \in [-T + \e^{\theta_1},T_1^\e]$
\[
|v_\e(t) - a_\e| \leq Ce^{-\mu \e^{-1}d(t,\{-T+\e^{\theta_1},T_1^\e\})}
\]
This clearly implies that $v_\e (t)\in [a_\e, a_\e + \e^k)$ for all $t \in (-T + \e^{\theta_1} + 2k\mu^{-1}\e|\log \e|,T_1^\e)$. Using \eqref{etaTail1} we then estimate the $\eta$ measure of the remaining set as follows:
\[
\int_{-T}^{-T + \e^{\theta_1} + 2k\mu^{-1}\e|\log \e|} \eta\, dt \leq \frac{d_2}{n_1} (\e^{\theta_1} + C\e|\log\e|)^{n_1} \leq C\e^{n_1\theta_1}
\]
Since $n_1 \theta_1 > 1$ by \eqref{def: theta i}, then we have the desired estimate. Thus the result holds to the left of $T_1^\e$. We can use the same argument to the right of $T_2^\e$ to obtain the desired result.
\end{proof}

\subsection{Second-Order $\Gamma$-limit}

In this subsection we prove the $\liminf$ counterpart of Theorem \ref{1DLimsupOrder2}.
\begin{theorem} \label{1DLiminfOrder2}
Assume that $W$ satisfies \eqref{W_Smooth}-\eqref{WGurtin_Assumption} and that $\eta$ satisfies \eqref{etaSmooth}-\eqref{etaPrimeRatio} and let $v_0$ and $v_\e$ be given in Theorems \ref{theorem local minimizer G0} and \ref{JLocalMinimizers} respectively. Then
\begin{equation}\label{eqn:liminfOrder2}
\begin{aligned}
\liminf_{\e \to 0^+} \frac{G_\e^{(1)}(v_\e)-2c_W\eta(t_0)}{\e} &\geq 2\eta'(t_0)(  \tau_0c_W + c_{\operatorname*{sym}}) \\
	&\quad + \begin{cases}
	 \frac{\lambda_0^2}{2W''(a)} \int_I \eta  \ds \quad &\text{ if } q = 1, \\
	0 &\text{ if } q<1.
	\end{cases}
\end{aligned}
\end{equation}
\end{theorem}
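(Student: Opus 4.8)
The plan is to exploit the detailed structure of the minimizers $v_\e$ of $J_\e$ established in Theorems \ref{1DBVP}, \ref{Thm:1DBounds} and \ref{thm:1DMinProperties}, and to carry out a lower bound that mirrors, term by term, the upper bound computed in Theorem \ref{1DLimsupOrder2}. By Theorem \ref{thm:1DMinProperties}, for $\e$ small the ``transition set'' $\Gamma_\e = [T_1^\e,T_2^\e]$ where $a_\e+\e^k \le v_\e \le b_\e-\e^k$ is a single interval contained in $B(t_0,\r_1)$, of length $O(\e|\log\e|)$, with $v_\e(T_1^\e)=a_\e+\e^k$, $v_\e(T_2^\e)=b_\e-\e^k$, while outside $\Gamma_\e$ the function $v_\e$ is within $\e^k$ of $a_\e$ (resp. $b_\e$) except on a set of $\eta\mathcal L^1$-measure $o(\e)$. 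First I would split $G_\e^{(1)}(v_\e) = J_\e^{(1)}(v_\e;\Gamma_\e) + J_\e^{(1)}(v_\e;I\setminus\Gamma_\e)$ and estimate the two pieces separately.

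For the transition piece, I would use the Young-type identity as in \eqref{116}: writing $c_W = \int_{s_1^\e}^{s_2^\e} W^{1/2}\,ds + O(\e^{(q+3)/(2q)})$ with $s_1^\e = v_\e(T_1^\e)$, $s_2^\e = v_\e(T_2^\e)$ (using \eqref{aEpsilonForm}, \eqref{bEpsilonForm} and \eqref{W_Limits}), and changing variables $s=v_\e(t)$ on each monotone branch, one gets
\[
J_\e^{(1)}(v_\e;\Gamma_\e) - 2c_W\eta(t_0) \ge \int_{\Gamma_\e} W^{1/2}(v_\e)\,|v_\e'|\,(2\eta-2\eta(t_0))\,dt + o(\e).
\]
Then I would use the differentiability expansion \eqref{eqn:limsupEstimate6} of $\eta$ at $t_0$ and the width bound $|\Gamma_\e|\le C\e|\log\e|$ to replace $\eta(t)-\eta(t_0)$ by $\eta'(t_0)(t-t_0)$ up to $o(\e)$. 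The key remaining point is to show that $v_\e$ rescaled converges to the profile $z$ with the correct shift: set $\tau_\e$ so that $t_0+\e\tau_\e$ is (say) the point where $v_\e=c_\e$, and show $z_\e(t-\e\tau_\e) - v_\e(t) = O(\e)$ uniformly, using the Euler--Lagrange equation \eqref{1DEulerLagrange}, the multiplier limit \eqref{1DMultiplierLimit}, and an ODE comparison/barrier argument near $\Gamma_\e$ (Lemma \ref{BarrierLemma} controls the tails). After the change of variables $s=(t-t_0-\e\tau_\e)/\e$ this yields $\int_{\Gamma_\e} W^{1/2}(v_\e)|v_\e'|(t-t_0)\,dt \to c_W\tau_0 + c_{\operatorname*{sym}}$ provided $\tau_\e\to\tau_0$; the limit $\tau_\e\to\tau_0$ I would extract from the mass constraint \eqref{1DMassConstraint} exactly as in Step 1 of the proof of Theorem \ref{1DLimsupOrder2}, now using the lower/upper bounds on $v_\e$ instead of an explicit formula.

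For the bulk piece $I\setminus\Gamma_\e$, when $q=1$ I would bound $W(v_\e)$ from below using a quadratic lower bound $W(s)\ge(\tfrac{W''(a)}2-\gamma)(s-a)^2$ near $a$ (and near $b$), so that $\int_{I\setminus\Gamma_\e}\e^{-2}W(v_\e)\eta\,dt \ge (\tfrac{W''(a)}2-\gamma)\e^{-2}\int(v_\e-a)^2\eta + \cdots$; the crucial input is that $v_\e - a$ (resp. $v_\e-b$) is, to leading order, the constant vertical shift $-\lambda_\e\e/W''(a)$ forced by the Euler--Lagrange equation — this should follow from a linearization of \eqref{1DEulerLagrange} about $a$ together with the Neumann condition \eqref{1DNeumannCondition} and the bounds of Theorem \ref{thm:1DMinProperties}(iii), giving $\e^{-2}\int_{I\setminus\Gamma_\e}(v_\e-a)^2\eta \to (\lambda_0/W''(a))^2\int_I\eta$ (minus the mass already used, which is $o(1)$). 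Letting $\gamma\to0$ produces $\tfrac{\lambda_0^2}{2W''(a)}\int_I\eta$. When $q<1$, \eqref{CPFiniteWidth} says $z$ reaches the wells in finite time and the bulk shift term vanishes, so the bulk piece contributes only $o(\e)$ and one keeps just the $2\eta'(t_0)(c_W\tau_0+c_{\operatorname*{sym}})$ term. Summing the two pieces and discarding nonnegative error terms gives \eqref{eqn:liminfOrder2}. \emph{The main obstacle} I anticipate is the rigorous justification that $v_\e$ — which need not be monotone and may oscillate near the endpoints of $I$ (cf. Ni's example) — is $C\e$-close to the shifted optimal profile $z_\e(\cdot-\e\tau_\e)$ on $\Gamma_\e$ and that its bulk values are $C\e$-close to the linearized shift; this requires carefully combining the Euler--Lagrange ODE, the barrier estimates of Lemma \ref{BarrierLemma} and Corollary \ref{corollary diameter}, the oscillation bound \eqref{estimate oscillation interval}, and the sharp multiplier asymptotics, and it is here that all the preparatory lemmas of this section are really used.
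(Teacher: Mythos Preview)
Your decomposition into a transition piece and a bulk piece, and the use of the completed-square/Young identity on the transition piece, is exactly what the paper does (after first rescaling $w_\e(s)=v_\e(t_0+\e s)$ and working with $H_\e$). The paper also handles the bulk term by recognising that $v_\e$ is within $\e^k$ of $a_\e$ (resp.\ $b_\e$) on most of the bulk and that the leading contribution is simply $\e^{-2}W(a_\e)\int\eta$ (resp.\ $W(b_\e)$), which via \eqref{aEpsilonForm}--\eqref{bEpsilonForm} gives $\tfrac{\lambda_0^2}{2W''(a)}\int_I\eta$ when $q=1$ and $0$ when $q<1$; your quadratic-lower-bound-plus-$\gamma\to0$ argument reaches the same conclusion but is a slightly longer route to the same place.

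The real difference is in how you plan to identify the shift $\tau_0$ and pass to the limit in the transition term. You propose to show that $v_\e$ is \emph{uniformly $O(\e)$-close} to the shifted profile $z_\e(\cdot-\e\tau_\e)$ on $\Gamma_\e$ via an ODE comparison, and you correctly flag this as the main obstacle. The paper sidesteps this entirely: after rescaling, it shows only that $w_\e\rightharpoonup z(\cdot-\tau_0)$ in $H^1_{\operatorname*{loc}}$ (Lemma~\ref{profilesConverge}), by passing to the limit in the Euler--Lagrange equation \eqref{rescaledELEquation} and then pinning down $\tau_0$ from the mass constraint. Weak $H^1_{\operatorname*{loc}}$ convergence, together with the uniform $L^\infty$ bounds from Lemma~\ref{DerivativeBounds} and the exponential tail bounds \eqref{rescaledExpBound2}--\eqref{rescaledExpBound1}, is already enough to compute $\lim_{\e\to0}\int_{-l}^{l} W^{1/2}(w_\e)\,w_\e'\,s\,ds$ for each fixed $l$ and then send $l\to\infty$. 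So the ``hard'' pointwise closeness estimate you anticipate is never needed: a soft compactness argument does the job. (A small related point: on the transition piece you should use the \emph{signed} identity $\e^{-1}W+\e(v')^2=(\e^{-1/2}W^{1/2}-\e^{1/2}v')^2+2W^{1/2}v'$ and drop the square, not the $|v'|$ version --- since $v_\e$ need not be monotone, the weighted integral $\int W^{1/2}(v_\e)|v_\e'|(\eta-\eta(t_0))$ does not directly convert by change of variables, whereas $\int W^{1/2}(w_\e)w_\e'\,s\,ds$ passes to the limit by weak convergence.)
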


Note that Theorems \ref{1DLimsupOrder2}  and \ref{1DLiminfOrder2} together provide a second-order asymptotic development by $\Gamma$-convergence for the functionals $J_\e$ defined in \eqref{def:JEps}. To prove Theorem \ref{1DLiminfOrder2} it is convenient to rescale the functionals $G_\e$. We define
\begin{equation} \label{rescaledProblemFormulation}
H_\e (w) := \int_{A\e^{-1}}^{B\e^{-1}} (W(w(s)) + (w'(s))^2) \eta_\e(s)  \ds
\end{equation}
for all $w\in H^1_{\eta_\e}((A\e^{-1},B\e^{-1}))$ 
such that
\begin{equation}
\int_{A\e^{-1}}^{B\e^{-1}} |w(s)-\operatorname*{sgn}\nolimits_{a,b}(s)| \eta_\e(s) \ds \leq \frac{\delta}{\e},\quad \int_{A\e^{-1}}^{B\e^{-1}} (w(s)-\operatorname*{sgn}\nolimits_{a,b}(s)) \eta_\e(s)  \ds = 0, \label{eqn:RescaledMassConstraint}
\end{equation}
where $A = -T-t_0$, $B = T - t_0$ and 
\begin{equation} \label{rescaledEtaDefinition}
\eta_\e(s) := \eta(t_0+\e s).
\end{equation}

Observe that we have shifted our variables so that $t_0$ moves to zero and then scaled by $\e^{-1}$, which in view of \eqref{eqn:RescaledMassConstraint} implies that minimizers of $H_\e$ are precisely rescaled versions of minimizers of $J_\e$. Here we study the behavior of minimizers $w_\e$ of $H_\e$.  First we prove a bound on the locations where $w_\e = c_\e$, in the region close to $t=0$.

\begin{lemma} \label{rescaledZerosBounded}
Let $w_\e$ be a minimizer of $H_\e$, and let $\tau_\e \in B(0,\r_1 \e^{-1})$ satisfy $w_\e(\tau_\e) = c_\e$, with $\r_1$ as in Theorem \ref{thm:1DMinProperties} (i). Then we have that
	\[
	|\tau_\e| \leq C
	\]
	for all $\e>0$ sufficiently small and for some constant $C>0$ independent of $\e$.
\end{lemma}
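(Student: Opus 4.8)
The plan is to transfer the statement back to the unscaled functional $J_\e$ and then combine Theorem \ref{thm:1DMinProperties} with a mass-balance estimate. The substitution $v_\e(t):=w_\e((t-t_0)/\e)$ turns a minimizer $w_\e$ of $H_\e$ into a minimizer $v_\e$ of $J_\e$ (compare \eqref{rescaledProblemFormulation}--\eqref{rescaledEtaDefinition} with \eqref{Def:G1}, \eqref{def:JEps}), so the hypothesis $w_\e(\tau_\e)=c_\e$, $\tau_\e\in B(0,\r_1\e^{-1})$, becomes: the point $t^\e:=t_0+\e\tau_\e$ lies in $B(t_0,\r_1)$ and $v_\e(t^\e)=c_\e$. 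By \eqref{aEpsilonForm}--\eqref{bEpsilonForm} we have $c_\e\in(a_\e+\e^k,b_\e-\e^k)$ for $\e$ small, hence $t^\e\in O_\e$ (see \eqref{set O epsilon}); since $\r_1$ may be chosen so small that $B(t_0,\r_1)$ is contained in the interval $J_0$ of Theorem \ref{thm:1DMinProperties}, the maximal subinterval of $O_\e$ through $t^\e$ meets $J_0$ and thus coincides with $\Gamma_\e=[T_1^\e,T_2^\e]$ by part (i). In particular $t_3^\e\le t^\e\le t_4^\e$, so by part (ii) $|t^\e-t_3^\e|\le t_4^\e-t_3^\e\le C\e$. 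Therefore it suffices to prove $|t_3^\e-t_0|\le C\e$, because then $|\tau_\e|=\e^{-1}|t^\e-t_0|\le\e^{-1}(|t^\e-t_3^\e|+|t_3^\e-t_0|)\le C$.

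To control $|t_3^\e-t_0|$ I would compare $v_\e$ with the sharp profile $v_0^\e:=\operatorname*{sgn}_{a,b}(\cdot-t_3^\e)$. Subtracting the mass constraint for $v_\e$ from that for $v_0=\operatorname*{sgn}_{a,b}(\cdot-t_0)$ gives $\int_I(v_\e-v_0^\e)\eta\,\dt+\int_I(v_0^\e-v_0)\eta\,\dt=0$. Since $\big|\int_I(v_0^\e-v_0)\eta\,\dt\big|=(b-a)\int_{t_0\wedge t_3^\e}^{t_0\vee t_3^\e}\eta\,\dt\ge\tfrac12(b-a)\eta(t_0)\,|t_3^\e-t_0|$ (using $t_3^\e\in\Gamma_\e\subset B(t_0,\r_1)$ and continuity of $\eta$), it remains to show $\int_I|v_\e-v_0^\e|\eta\,\dt=O(\e)$. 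On $(-T,T_1^\e)$ and $(T_2^\e,T)$ this follows from Theorem \ref{thm:1DMinProperties}(iii) together with \eqref{1DTightBounds} and the expansions \eqref{aEpsilonForm}, \eqref{bEpsilonForm} (note $1/q\ge1$, so $|a_\e-a|,|b_\e-b|=O(\e)$): off a set of $\eta\mathcal L^1$-measure $o(\e)$ the function $v_\e$ differs from $v_0^\e$ by $O(\e)$, while on the exceptional set $|v_\e-v_0^\e|\le C$.

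The crux is the contribution of $\Gamma_\e=[T_1^\e,t_3^\e]\cup[t_3^\e,t_4^\e]\cup[t_4^\e,T_2^\e]$. The middle interval has length $\le C\e$ by part (ii) and contributes $O(\e)$. On $[T_1^\e,t_3^\e]$ one has $v_\e\in(a_\e,c_\e)$; since $a_\e<c_\e<b_\e$ are the zeros of $W'+\e\lambda_\e$ with $W'+\e\lambda_\e>0$ on $(a_\e,c_\e)$, the Euler--Lagrange equation \eqref{1DEulerLagrange} gives $(v_\e'\eta)'>0$ there, and combined with $v_\e'(T_1^\e)\ge0$ (as $v_\e$ crosses the level $a_\e+\e^k$ upward at the left endpoint of $\Gamma_\e$) this forces $v_\e$ to be strictly increasing on $[T_1^\e,t_3^\e]$; an analogous argument, using $W'+\e\lambda_\e<0$ on $(c_\e,b_\e)$, shows $v_\e$ is strictly increasing on $[t_4^\e,T_2^\e]$. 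Hence if $\sigma_\e\in[T_1^\e,t_3^\e]$ is the unique point with $v_\e(\sigma_\e)=c-\rho$, then $[\sigma_\e,t_3^\e]\subset\{c-\rho\le v_\e<c_\e\}$ has length $\le C\e$ by the energy bound $J_\e^{(1)}(v_\e)\le 2c_W\eta(t_0)+C\e$, while on $[T_1^\e,\sigma_\e]$ Lemma \ref{BarrierLemma} yields $v_\e(t)-a_\e\le 2(c-\rho-a_\e)e^{-\mu(\sigma_\e-t)\e^{-1}}$, so $\int_{T_1^\e}^{t_3^\e}(v_\e-a_\e)\,\dt\le C\e$; after adding the correction $|a_\e-a|\,|\Gamma_\e|=O(\e^{1+1/q}|\log\e|)=o(\e)$ and the symmetric estimate on $[t_4^\e,T_2^\e]$ we get $\int_{\Gamma_\e}|v_\e-v_0^\e|\eta\,\dt=O(\e)$. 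Summing the three regions gives $|t_3^\e-t_0|\le C\e$, which completes the argument.

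I expect the main obstacle to be exactly the last paragraph: ruling out oscillations of $v_\e$ inside $\Gamma_\e$, which a naive barrier-only bound would control only with a spurious $|\log\e|$ factor, insufficient for the lemma. The monotonicity statements extracted from \eqref{1DEulerLagrange} (via the sign of $W'+\e\lambda_\e$ relative to the three zeros $a_\e<c_\e<b_\e$) are what break this obstacle, and the remaining care is in marrying them with the exponential decay of Lemma \ref{BarrierLemma} and the energy bound to obtain the sharp $O(\e)$ rather than $O(\e|\log\e|)$.
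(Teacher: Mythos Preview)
Your argument is correct, and it follows a genuinely different route from the paper's proof. The paper works directly in the rescaled variables and argues by contradiction: it introduces $s_1^\e<s_2^\e\le s_3^\e<s_4^\e$ (first/last occurrences of the values $c-\rho,\,c_\e,\,c_\e,\,c+\rho$ in $[-\r_1\e^{-1},\r_1\e^{-1}]$), shows $s_4^\e-s_1^\e\le C$ by an energy estimate, and then supposes (say) $s_1^\e\to\infty$; the exponential barrier bounds \eqref{rescaledExpBound2}--\eqref{rescaledExpBound1} together with Theorem \ref{thm:1DMinProperties}(iii) force the mass integral $\int(w_\e-\operatorname*{sgn}_{a,b})\eta_\e\,ds$ to diverge, contradicting \eqref{eqn:RescaledMassConstraint}. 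You instead transfer the problem back to $v_\e$, extract monotonicity of $v_\e$ on $[T_1^\e,t_3^\e]$ and $[t_4^\e,T_2^\e]$ from the sign of $W'+\e\lambda_\e$ in \eqref{1DEulerLagrange}, and run a \emph{direct} mass balance against $\operatorname*{sgn}_{a,b}(\cdot-t_3^\e)$ to obtain the quantitative estimate $|t_3^\e-t_0|=O(\e)$, which is sharper than what the lemma asks for. The monotonicity step is the genuine new ingredient in your approach; it cleanly replaces the paper's somewhat elliptic appeal ``using the same argument from the proof of \eqref{1DTightBounds}'' to pin down $w_\e([s_1^\e,s_2^\e])$, and it is exactly what lets you upgrade the naive $O(\e|\log\e|)$ transition-width bound to the needed $O(\e)$ contribution. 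The paper's contradiction argument, in exchange, is shorter for the bare goal of boundedness and does not require establishing monotonicity.

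Two minor points to tidy up. First, Lemma \ref{BarrierLemma} is stated for \emph{maximal} subintervals, while you invoke it on $[T_1^\e,\sigma_\e]$. This is harmless: the maximum-principle proof with the explicit barrier \eqref{phiBarrierDefinition} applies verbatim on any subinterval of $\{v_\e\le c-\rho\}$, and with the boundary value $v_\e(T_1^\e)=a_\e+\e^k$ the left-endpoint exponential contributes only $O(\e^k)$ (alternatively, apply the lemma on the maximal interval, whose left endpoint is $-T+c(n_1)\e^{\theta_1}$ by Step~5 of Theorem \ref{thm:1DMinProperties}, so that term is $O(e^{-c/\e})$). Second, your remark that ``$\r_1$ may be chosen so small that $B(t_0,\r_1)\subset J_0$'' is fine: the constraints on $\r_1$ in \eqref{Def:r1} are only upper bounds and $t_0$ lies in the interior of $J_0$, so one may indeed take $\r_1$ small enough that $t^\e\in O_\e\cap B(t_0,\r_1)$ forces $t^\e\in\Gamma_\e$ via Step~4 of Theorem \ref{thm:1DMinProperties}.
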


\begin{proof}
	This proof essentially combines the mass constraint with the exponential decay to obtain the desired bounds.
	
Let $s_1^\e$ be the first time in $[-\r_1\e^{-1},\r_1\e^{-1}]$ so that $w_\e(s_1^\e) = c-\rho$, and $s_4^\e$ be the last time in $[-\r_1\e^{-1},\r_1\e^{-1}]$ so that $w_\e(s_4^\e) = c+\rho$. Then let $s_2^\e$ and $s_3^\e$ be the first and last times in $[-\r_1\e^{-1},\r_1\e^{-1}]$ where $w_\e$ takes the value $c_\e$. We note that such points exist by Theorem \ref{thm:1DMinProperties} (i).  Furthermore, by Theorem \ref{thm:1DMinProperties} (ii) we know that $s_3^\e-s_2^\e \leq C$ and that $-\r_1\e^{-1} < s_1^\e<s_2^\e\leq s_3^\e<s_4^\e < \r_1 \e^{-1}$. Furthermore, using the same argument from the proof of \eqref{1DTightBounds} we know that $w_\e([s_1^\e,s_2^\e]) = [c-\rho,c_\e]$, and that $w_\e([s_3^\e,s_4^\e]) = [c_\e,c+\rho]$. We can then estimate the following:
\[
(s_2^\e - s_1^\e)\inf_{B(t_0,\r_1)} \eta \inf_{(c-\rho,c+\rho)} W  \leq \int_{s_1^\e}^{s_2^\e} W(w_\e)\eta_\e \ds \leq C.
\]
This, along with a similar estimate for $s_4^\e - s_3^\e$, then implies that $s_4^\e-s_1^\e \leq C$. Thus if we can prove that the $s_1^\e$ are bounded above and that the $s_4^\e$ are bounded below then we are done.

Suppose, for the sake of contradiction that the $s_1^\e$ are not bounded above. By taking a subsequence as necessary we may assume that $s_1^\e \to \infty$.
	
	By \eqref{1DTightBounds} and Lemma \ref{BarrierLemma} we have the following bounds
	\begin{align}
	0<w_\e(s)- a_\e &\leq 2(c-\rho - a_\e) e^{-\mu|s-s_1^\e|}\quad \text{for } s \in [-\r_1\e^{-1},s_1^\e], \label{rescaledExpBound2} \\
	 0<b_\e - w_\e(s) &\leq 2( b_\e - c - \rho) e^{-\mu(s-s_4^\e)}\quad \text{for } s \in [s_4^\e,\r_1\e^{-1}]. \label{rescaledExpBound1}	
	\end{align}
	By our mass constraint \eqref{eqn:RescaledMassConstraint} we can write:
	\begin{align}\label{w1}
0 &= \int_{A\e^{-1}}^{B\e^{-1}} (w_\e - \operatorname*{sgn}\nolimits_{a,b}) \eta_\e \ds = \int_{A\e^{-1}}^{s_1^\e} (w_\e - \operatorname*{sgn}\nolimits_{a,b}) \eta_\e  \ds \\
	&\quad+ \int_{s_1^\e}^{s_4^\e}(w_\e - \operatorname*{sgn}\nolimits_{a,b}) \eta_\e \ds + \int_{s_4^\e}^{B\e^{-1}}(w_\e - \operatorname*{sgn}\nolimits_{a,b}) \eta_\e  \ds .\nonumber
	\end{align}	
	We will estimate these terms to obtain a contradiction. By \eqref{1DTightBounds} and the fact that $0<s_4^\e-s_1^\e\leq C$ we have that
	\begin{equation}
	\left|\int_{s_1^\e}^{s_4^\e} (w_\e-\operatorname*{sgn}\nolimits_{a,b}) \eta_\e \ds\right| \leq C.\label{w2}
	\end{equation}
	We can also calculate
	\begin{align}
	&\int_{A\e^{-1}}^{s_1^\e} (w_\e - \operatorname*{sgn}\nolimits_{a,b}) \eta_\e  \ds \label{w3}\\
	&= \int_{A\e^{-1}}^{s_1^\e} (w_\e - a_\e) \eta_\e \ds + \int_{A\e^{-1}}^{s_1^\e} (a_\e - \operatorname*{sgn}\nolimits_{a,b}) \eta_\e  \ds.\nonumber
	\end{align}
	By \eqref{rescaledExpBound2} we have that
	\begin{equation}
	0 \leq \int_{-\r_1\e^{-1}}^{s_1^\e} (w_\e -   a_\e) \eta_\e \ds \leq 2(c-\rho -   a_\e)\max \eta  \int_{-\r_1\e^{-1}}^{s_1^\e} e^{-\mu |s-s_1^\e|}  \ds \leq C,\label{w4}
	\end{equation}
	whereas by Theorem \ref{thm:1DMinProperties} (iii) and \eqref{1DTightBounds} we know that
	\begin{equation}
		\left|\int_{A\e^{-1}}^{-\r_1\e^{-1}} (w_\e - a_\e) \eta_\e \ds \right| \leq C\e^{k-1} + o(1).
	\label{w5}
	\end{equation}
	Furthermore as $  a_\e = a + O(\e^{1/q})$ by Theorem \ref{Thm:1DBounds}, we may estimate that
	\begin{equation}
	\left|\int_{A\e^{-1}}^{0} (  a_\e - \operatorname*{sgn}\nolimits_{a,b}) \eta_\e  \ds\right| \leq C\e^{\frac{1-q}{q}}.
		\label{w6}
		\end{equation}
  A similar argument, and the fact that $0<s_1^\e<s_4^\e$ shows that
	\begin{equation}
	\left|\int_{s_4^\e}^{B\e^{-1}} (w_\e- \operatorname*{sgn}\nolimits_{a,b}) \eta_\e  \ds\right| \leq C.
		\label{w7}
		\end{equation}
			Now as $s_1^\e \to \infty$ we then have that
	\begin{equation}
	\lim_{\e \to 0^+}\left|\int_{0}^{s_1^\e} (  a_\e - \operatorname*{sgn}\nolimits_{a,b}) \eta_\e  \ds\right| \geq \lim_{\e \to 0^+}\inf_{B(t_0,\r_1)} \eta \left|\int_{0}^{s_1^\e} (  a_\e - b)  \ds\right| = \infty.
		\label{w8}
		\end{equation}
Combining \eqref{w1}--\eqref{w8} gives
	\[
	\lim_{\e \to 0^+}\left|\int_{A\e^{-1}}^{B\e^{-1}} (w_\e - \operatorname*{sgn}\nolimits_{a,b}) \eta_\e  \ds\right| = \infty .
	\]
	This violates the mass constraint. Thus we must have that the $s_1^{\e}$ are bounded above.
	
	A similar argument shows that $s_4^\e$ is bounded below. As $\tau_\e \in (s_1^\e,s_4^\e)$ and $s_4^\e-s_1^\e \leq C$, we then have that $|\tau_\e|\leq C$, which is the desired conclusion.
	
\end{proof}

We then prove that the functions $w_\e$ necessarily converge.
\begin{lemma}\label{profilesConverge}
	Let $w_\e$ be as in Lemma \ref{rescaledZerosBounded}. Then (up to a subsequence, not relabeled) $\{w_\e\}$ converges weakly in $H^1((-l,l))$ for every $l\in\mathbb{N}$ to the profile $w_0(s): = z(s - \tau_0)$, where $\tau_0$ is determined by \eqref{limSupDelta0Definition}. Moreover, the family $\{w_\e'\}$ is bounded in $L^\infty((A\e^{-1},B\e^{-1}))$.
\end{lemma}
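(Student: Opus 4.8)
The plan is to combine the uniform bounds already established for the minimizers $v_\e$ of $J_\e$ with a compactness argument, pass to the limit in the rescaled Euler--Lagrange equation to identify the limit as an orbit of the profile equation, and finally pin down the translation parameter using the mass constraint. First, rescaling: since $v_\e(t)=w_\e((t-t_0)/\e)$ we have $w_\e(s)=v_\e(t_0+\e s)$, so by Theorem \ref{Thm:1DBounds} and Lemma \ref{DerivativeBounds}, $a_\e\le w_\e\le b_\e$ on $(A\e^{-1},B\e^{-1})$ and $|w_\e'(s)|=\e|v_\e'(t_0+\e s)|\le C$; the latter is exactly the last claim of the lemma. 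By \eqref{aEpsilonForm}--\eqref{bEpsilonForm} we have $a_\e\to a$ and $b_\e\to b$, so $\{w_\e\}$ is bounded in $W^{1,\infty}_{\operatorname{loc}}(\mathbb{R})$ uniformly in $\e$. Hence, for each $l\in\mathbb{N}$, $\{w_\e\}$ is bounded in $H^1((-l,l))$, and a diagonal argument together with the Arzel\`a--Ascoli theorem yields a subsequence (not relabeled) and a function $w_0\in W^{1,\infty}_{\operatorname{loc}}(\mathbb{R})$ with $a\le w_0\le b$, such that $w_\e\rightharpoonup w_0$ weakly in $H^1((-l,l))$ for every $l$ and $w_\e\to w_0$ locally uniformly on $\mathbb{R}$.

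Rescaling \eqref{1DEulerLagrange} shows that $w_\e$ solves $2(w_\e'\eta_\e)'-W'(w_\e)\eta_\e=\e\lambda_\e\eta_\e$, where $\eta_\e$ is defined in \eqref{rescaledEtaDefinition}. Testing against $\phi\in C_c^\infty(\mathbb{R})$, and using that $\eta_\e\to\eta(t_0)$ locally uniformly, $w_\e'\rightharpoonup w_0'$ in $L^2_{\operatorname{loc}}$, the continuity of $W'$ on $[a-1,b+1]$ (which follows from \eqref{WPrimeLimits}), and $\e\lambda_\e\to 0$ by Theorem \ref{1DBVP}, we may pass to the limit to obtain $2w_0''=W'(w_0)$. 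Multiplying by $w_0'$ gives $(w_0')^2-W(w_0)\equiv E$ for some constant $E$. To identify $E$, recall $H_\e(w_\e)=\e^{-1}G_\e(v_\e)=J_\e^{(1)}(v_\e)\to 2c_W\eta(t_0)$ by Proposition \ref{JLocalMinimizers}; restricting the nonnegative integrand to $(-l,l)$, dividing by $\min_{[-l,l]}\eta_\e\to\eta(t_0)$, and applying weak lower semicontinuity of $\int(w')^2$ and Fatou's lemma to $\int W(w_\e)$, we get $\int_{-l}^l(W(w_0)+(w_0')^2)\,ds\le 2c_W$, hence $\int_{\mathbb{R}}(W(w_0)+(w_0')^2)\,ds\le 2c_W<\infty$ after $l\to\infty$. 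If $E>0$ then $|w_0'|\ge\sqrt E$, contradicting boundedness; if $E<0$ then $W(w_0)\ge-E>0$ everywhere and $\int_{\mathbb{R}}W(w_0)\,ds=\infty$, again impossible. Thus $E=0$, i.e. $(w_0')^2=W(w_0)$.

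By Lemma \ref{rescaledZerosBounded} the points $\tau_\e$ with $w_\e(\tau_\e)=c_\e$ satisfy $|\tau_\e|\le C$; along a further subsequence $\tau_\e\to\tau^*$, and since $w_\e\to w_0$ uniformly on $B(0,C)$ and $c_\e\to c$ we get $w_0(\tau^*)=c$. As $W(c)>0$, $w_0'(\tau^*)\ne 0$, so $w_0$ is a nonconstant bounded orbit of $(w_0')^2=W(w_0)$ through $c$, hence a translate of $z$ (see \eqref{profileCauchyProblem}) or of $s\mapsto z(-s)$. The decreasing branch is excluded using Theorem \ref{thm:1DMinProperties}(i),(iii): the (rescaled) transition set $\Gamma_\e$ is a single interval whose left endpoint tends to $-\infty$ and on which the only points where $w_\e=c_\e$ lie within distance $C$ of $\tau_\e$, so for fixed $s<\tau^*-2C$ and $\e$ small one has $w_\e(s)<c_\e$, whence $w_0(s)\le c$, which is incompatible with a decreasing profile. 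Therefore $w_0(s)=z(s-\tau^*)$.

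Finally, I would determine $\tau^*$ from the mass constraint \eqref{eqn:RescaledMassConstraint}, splitting $(A\e^{-1},B\e^{-1})$ into $(-L,L)$, the two ``saturated'' tails, and the remaining part of $\Gamma_\e$. On $(-L,L)$ one gets $\int_{-L}^L(w_\e-\operatorname*{sgn}\nolimits_{a,b})\eta_\e\to\eta(t_0)\int_{-L}^L(z(\cdot-\tau^*)-\operatorname*{sgn}\nolimits_{a,b})\,ds$; the part of $\Gamma_\e$ outside $(-L,L)$ has $w_\e$ exponentially close to $a_\e$ (resp.\ $b_\e$) by Lemma \ref{BarrierLemma}, so it and the $|s|>L$ tail of $z(\cdot-\tau^*)$ are made arbitrarily small by taking $L$ large; on the left tail, using Theorem \ref{thm:1DMinProperties}(iii) (so $w_\e\approx a_\e$ off a set of $\eta\mathcal{L}^1$-measure $o(\e)$), $\int(w_\e-a)\eta_\e\approx(a_\e-a)\e^{-1}\int_{-T}^{t_0}\eta\,dt$, which by \eqref{aEpsilonForm} and Theorem \ref{1DBVP} tends to $-\tfrac{\lambda_0}{W''(a)}\int_{-T}^{t_0}\eta\,dt$ if $q=1$ and to $0$ if $q<1$; symmetrically for the right tail with $\int_{t_0}^T\eta\,dt$ via \eqref{bEpsilonForm}. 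Adding, $\eta(t_0)\int_{\mathbb{R}}(z(\cdot-\tau^*)-\operatorname*{sgn}\nolimits_{a,b})\,ds$ equals $\tfrac{\lambda_0}{W''(a)}\int_I\eta\,dt$ if $q=1$ and $0$ if $q<1$; since $\tau\mapsto\int_{\mathbb{R}}(z(\cdot-\tau)-\operatorname*{sgn}\nolimits_{a,b})\,ds$ is strictly decreasing (it changes by $-(b-a)(\tau-\tau')$ under a shift), comparison with \eqref{limSupDelta0Definition} forces $\tau^*=\tau_0$. As the limit is independent of the chosen subsequence, the full family converges. The main obstacle is precisely this last step: the transition core of $w_\e$ stays in a bounded window (Lemma \ref{rescaledZerosBounded}) while the saturated regions extend to $\pm\infty$ at rate $|\log\e|$, so the $O(\e)$ (or $O(\e^{1/q})$) gap between $a_\e,b_\e$ and $a,b$ integrated over an interval of length $O(\e^{-1})$ is exactly what produces the correction term in \eqref{limSupDelta0Definition}, and controlling it rigorously requires the precise bulk description of Theorem \ref{thm:1DMinProperties}(iii) together with the rates \eqref{aEpsilonForm}--\eqref{bEpsilonForm}.
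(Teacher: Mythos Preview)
Your proposal is correct and follows the paper's overall strategy: uniform $W^{1,\infty}$ bounds from Theorem \ref{Thm:1DBounds} and Lemma \ref{DerivativeBounds}, a diagonal compactness argument, passage to the limit in the rescaled Euler--Lagrange equation to obtain $2w_0''=W'(w_0)$, and finally the mass-constraint computation to pin down the shift.

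There are two places where you argue differently from the paper, both legitimate. First, to get $(w_0')^2=W(w_0)$ you use the first integral $E=(w_0')^2-W(w_0)$ and rule out $E\neq 0$ via the finiteness of $\int_{\mathbb{R}}(W(w_0)+(w_0')^2)\,ds\le 2c_W$. The paper instead reads off the limits $w_0(\pm\infty)\in\{a,b\}$ directly from the rescaled barrier estimates \eqref{rescaledExpBound2}--\eqref{rescaledExpBound1}, combines this with $|w_0''|\le C$ to force $w_0'(\pm\infty)=0$, and then integrates the ODE. Your route is slightly more elementary; the paper's route has the advantage that the limits at $\pm\infty$ are already in hand for the next step. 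Second, to select the increasing branch you use the qualitative structure of Theorem \ref{thm:1DMinProperties} (i)--(ii) (all $c_\e$-crossings of $w_\e$ lie in a bounded window, and $w_\e<c_\e$ to its left), whereas the paper uses an energy-counting argument: a nonmonotone profile with the correct limits at $\pm\infty$ would make at least three transitions and hence cost $\ge 6c_W\eta(t_0)$, contradicting the bound. For $q<1$ your phrase ``hence a translate of $z$ or of $s\mapsto z(-s)$'' is not literally forced by $(w_0')^2=W(w_0)$ alone (bouncing solutions off the wells are compatible with the first integral), but your subsequent observation $w_0(s)\le c$ for $s\ll 0$ and $w_0(s)\ge c$ for $s\gg 0$, together with the energy bound, does exclude these; you may want to make that linkage explicit. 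The mass-constraint step matches the paper's computation.
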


\begin{proof}
Throughout this proof we let $w_\e$ be associated with its extension by constants outside of $[A\e^{-1},B\e^{-1}]$. The fact that the family $\{w_\e'\}$ is uniformly bounded in $L^\infty(\mathbb{R})$ follows immediately from Lemma \ref{DerivativeBounds}. Furthermore, we have that the $w_\e$ are bounded in $L^\infty(\mathbb{R})$ by \eqref{1DTightBounds}. After a diagonalization argument, this implies that for some $w_0 \in H_{\operatorname*{loc}}^1(\mathbb{R})$,
	\begin{equation} \label{rescaledWeakConvergence}
	w_\e \rightharpoonup w_0 \text{ in } H_{\operatorname*{loc}}^1(\mathbb{R}).
	\end{equation}
	 By \eqref{1DEulerLagrange}  and \eqref{1DNeumannCondition} we have that
	\begin{equation} \label{rescaledELEquation}
	\begin{cases} 2(w_\e' \eta_\e)' - W'(w_\e)\eta_\e = \e \lambda_\e \eta_\e \quad\text{on } (A\e^{-1},B\e^{-1}), \\
	w_\e'(A\e^{-1}) = w_\e'(B\e^{-1}) = 0.\end{cases}
	\end{equation}
	Hence for every $\phi \in C_c^\infty(\mathbb{R})$ for $\e$ small enough we find that
	\[
	\int_{A\e^{-1}}^{B\e^{-1}} 2w'_\e \eta_\e \phi' + W'(w_\e)\eta_\e \phi \ds = -\int_{A\e^{-1}}^{B\e^{-1}} \e \lambda_\e \eta_\e \phi  \ds.
	\]
	Letting $\e \to 0$ and using \eqref{rescaledEtaDefinition} and \eqref{rescaledWeakConvergence} gives
	\[
	\int_{\mathbb{R}} 2 w_0' \eta(t_0) \phi'+ W'(w_0) \eta(t_0) \phi  \ds = 0,
	\]
	which then shows that $w_0$ satisfies the differential equation
	\begin{equation} \label{1DLimitODE}
	2w_0'' = W'(w_0).
	\end{equation}
	Furthermore, by \eqref{1DTightBounds}  we know that $a\le w_0 \le b$, which by \eqref{1DLimitODE} implies that $|w_0''|\leq  C$. Also, by \eqref{eqn:JepsLimitEnergy} and the fact that $H_\e(w_\e) = J_\e(v_\e)$, where $v_\e$ is a minimizer of $J_\e$,
	\[
	\eta(t_0) \int_{-l}^l (w_0')^2 + W(w_0) \ds \leq \lim_{\e \to 0 } \int_{-l}^l ((w_\e')^2 + W(w_\e))\eta_\e  \ds \leq  \lim_{\e \to 0^+} H_\e(w_\e) = 2c_W\eta(t_0)
	\]
	for every $l \in \mathbb{N}$, and thus
	\begin{equation} \label{rescaledLimitEnergyBound}
	\eta(t_0) \int_\mathbb{R} (w_0')^2 + W(w_0)  \ds \leq 2c_W \eta(t_0).
	\end{equation}
	This combined with the fact that $|w_0''|\leq C$ (by \eqref{1DLimitODE}) implies that $\lim_{s \to \pm \infty} w_0'(s) = 0$. By then using \eqref{rescaledExpBound2} and \eqref{rescaledExpBound1} along with Lemma \ref{rescaledZerosBounded} we have that $\lim_{s \to -\infty} w_0(s) = a$, and that $\lim_{s \to \infty} w_0(s) = b$. Thus by integrating \eqref{1DLimitODE} we find that
	\begin{equation}\label{rescaledODE2}
	(w_0')^2 = W(w_0).
	\end{equation}
	We next claim that $w_0$ is increasing. Suppose not. Then by \eqref{rescaledODE2} there exists critical points $t_1<t_2$ of $w_0$, with $w_0(t_1) = b$ and $w_0(t_2) = a$. This then implies, by Young's inequality, \eqref{rescaledLimitEnergyBound} and a change of variables that
	\[
	6c_W\eta(t_0) \leq  2c_W\eta(t_0).
	\]
	This is impossible and thus $w_0$ is increasing. Moreover, by \eqref{cEpsilonForm}, \eqref{rescaledWeakConvergence}, and Lemma \ref{rescaledZerosBounded}, up to a subsequence, $\tau_\e\to \tau_0$ with  $w_0(\tau_0) = c$. 
	This then implies that $w_0(s) = z(s-\tau_0)$, where $z$ is the solution of the Cauchy problem \eqref{profileCauchyProblem}.
	
	The only thing left to prove is that $\tau_0$ is determined by equation \eqref{limSupDelta0Definition}. To this end, fix $l$ large enough that $(s_1^\e,s_4^\e) \subset (-l,l)$ for all $\e$, where $s_1^\e$ and $s_4^\e$ are as in the proof of  Lemma \ref{rescaledZerosBounded}. Then by the mass constraint \eqref{eqn:RescaledMassConstraint} we have that
	\begin{align*}
	0 &=  \int_{A\e^{-1}}^{B\e^{-1}} (w_\e-\operatorname*{sgn}\nolimits_{a,b})\eta_\e  \ds =  \int_{-l}^l (w_\e - \operatorname*{sgn}\nolimits_{a,b})\eta_\e  \ds \\
	&\quad+  \int_{-\r_1\e^{-1}}^{-l} (w_\e -  a_\e + a_\e -\operatorname*{sgn}\nolimits_{a,b})\eta_\e  \ds +  \int_l^{\r_1\e^{-1}} (w_\e -  b_\e + b_\e - \operatorname*{sgn}\nolimits_{a,b}) \eta_\e  \ds \\
	&\quad+ \int_{A\e^{-1}}^{-\r_1\e^{-1}} (w_\e -  a_\e + a_\e -\operatorname*{sgn}\nolimits_{a,b})\eta_\e  \ds +  \int_{\r_1\e^{-1}} ^{B\e^{-1}} (w_\e -  b_\e + b_\e - \operatorname*{sgn}\nolimits_{a,b}) \eta_\e  \ds.
	\end{align*}
	By the definitions of $s_1^\e$ and $s_4^\e$ it must be that $v_\e \leq c-\rho$ in the interval $[-\r_1\e^{-1},-l]$ and $v_\e \geq c+\rho$ in the interval $[l,\r_1\e^{-1}]$. Hence by \eqref{1DTightBounds} and \eqref{Eqn:preciseDecayBarrier} we have that
	\begin{align*}
	0 \leq  \int_l^{\r_1\e^{-1}} (b_\e - w_\e)\eta_\e \ds &\leq 2\left((b_\e- w_\e(l)) + (b_\e - w_\e(\r_1\e^{-1}) \right) \max \eta \int_{0}^\infty e^{-\mu s} \ds \\
	&\leq C(b_\e- w_\e(l) + \e^k),
	\end{align*}
	where in the last inequality we have used \eqref{set O epsilon} and Theorem \ref{thm:1DMinProperties}. Similarly, we have
	\begin{equation}
	0 \leq \int_{-\r_1\e^{-1}}^{-l} (w_\e -  a_\e)\eta_\e \ds \leq C(w_\e(-l) - a_\e + \e^k).
	\end{equation}

	By \eqref{1DTightBounds} we can write:
	\begin{align*}
	\int_{A\e^{-1}}^{-l} ( a_\e - \operatorname*{sgn}\nolimits_{a,b})\eta_\e \ds&= -\lambda_\e |\lambda_\e|^{1/q-1}  (q/\ell)^{1/q}\e^{1/q-1}\int_{-T}^{t_0} \eta  \dt\, 
	 +o(\e^{1/q-1}), \\
	\int_l^{B\e^{-1}}( b_\e - \operatorname*{sgn}\nolimits_{a,b})\eta_\e \ds &= -\lambda_\e |\lambda_\e|^{1/q-1}  (q/\ell)^{1/q}\e^{1/q-1}\int_{t_0}^T \eta  \dt + o(\e^{1/q-1}) .
	\end{align*}
	Furthermore by Theorem \ref{thm:1DMinProperties} along with \eqref{1DTightBounds} we have that
	\begin{align*}
	\int_{A\e^{-1}}^{-\r_1\e^{-1}} (w_\e-a_\e) \eta_\e\ds &= o(1), \\
	\int_{\r_1\e^{-1}}^{B\e^{-1}} (b_\e-w_\e) \eta_\e \ds &= o(1).
	\end{align*}
	Utilizing these estimates, and taking $\e \to 0$ we find that
	\begin{align*}
	0 = \eta(t_0)\int_{-l}^l w_0 - \operatorname*{sgn}\nolimits_{a,b} \ds 
	&- \lambda_0 |\lambda_0|^{1/q-1}  (q/\ell)^{1/q} \lim_{\e \to 0^+}\e^{1/q-1}\int_I \eta  \dt\\
	&+ O(|a - w_0(-l)|)+ O(|b - w_0(l)|) .
	\end{align*}
		Taking $l$ to infinity, and using \eqref{WPrime_At_Wells} then implies that
	\[
	\eta(t_0) \int_\mathbb{R} w_0 - \operatorname*{sgn}\nolimits_{a,b} \ds =\begin{cases} \frac{\lambda_0}{W''(a)} \int_I \eta  \ds &\text { if } q = 1, \\ 0 &\text{ if } q < 1, \end{cases}
	\]
	which then implies that $\tau_0$ has the desired form. This completes the proof.

\end{proof}

Next we will use the previous lemmas to derive a second-order liminf inequality, which immediately implies Theorem \ref{1DLiminfOrder2}.

\begin{lemma} \label{1DLiminf}
	Let $\{w_\e\}$ be minimizers of the functionals $\{H_\e\}$. Then we have the following:
	\begin{align}\label{liminf H epsilon}
	\liminf_{\e \to 0^+} \frac{H_\e(w_\e) - 2c_W \eta(t_0)}{\e} &\geq 2\eta'(t_0)( \tau_0c_W + c_{\operatorname*{sym}}) \\
	&\quad+ \begin{cases}
	 \frac{\lambda_0^2}{2W''(a)} \int_I\eta(s)  \ds \hspace{2mm} &\text{ if } q = 1, \\
	0 &\text{ if } q < 1,
	\end{cases} \nonumber
	\end{align}
	where $c_W$, $c_{\operatorname*{sym}}$, $\tau_0$,  $\lambda_0$ are given by \eqref{c0Definition},  \eqref{c1Definition}, \eqref{limSupDelta0Definition} and \eqref{1DMultiplierLimit} respectively.
\end{lemma}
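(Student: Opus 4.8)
The plan is to localize the rescaled energy $H_\e$ about the transition layer, mirroring the construction in the proof of Theorem~\ref{1DLimsupOrder2} but now extracting a matching \emph{lower} bound, and to feed in the fine structure of the minimizers $w_\e$ from Theorems~\ref{Thm:1DBounds} and \ref{thm:1DMinProperties} together with the convergence $w_\e\rightharpoonup w_0=z(\cdot-\tau_0)$ from Lemmas~\ref{rescaledZerosBounded} and \ref{profilesConverge}. Fix $R_\e:=C|\log\e|$ with $C$ so large that the rescaled interval $\Gamma_\e$ of Theorem~\ref{thm:1DMinProperties}~(i), which has length $O(|\log\e|)$ (Theorem~\ref{thm:1DMinProperties}) and contains the point $\tau_\e$ with $|\tau_\e|\le C'$ (Lemma~\ref{rescaledZerosBounded}), lies inside $\mathcal{T}_\e:=[-R_\e,R_\e]$. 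Decompose $[A\e^{-1},B\e^{-1}]=\mathcal{T}_\e\cup\mathcal{G}_\e^-\cup\mathcal{G}_\e^+\cup\mathcal{B}_\e$, where $\mathcal{G}_\e^-:=[A\e^{-1},-R_\e]\setminus\mathcal{B}_\e$, $\mathcal{G}_\e^+:=[R_\e,B\e^{-1}]\setminus\mathcal{B}_\e$, and $\mathcal{B}_\e$ is the exceptional set of Theorem~\ref{thm:1DMinProperties}~(iii), whose $\eta_\e\mathcal{L}^1$-measure is $o(1)$ because it is $o(\e)$ in the unrescaled variable. On $\mathcal{G}_\e^-$ one has $w_\e\in[a_\e,a_\e+\e^k)$ and on $\mathcal{G}_\e^+$ one has $w_\e\in(b_\e-\e^k,b_\e]$. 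Since the integrand of $H_\e$ is nonnegative, $H_\e(w_\e)\ge\int_{\mathcal{T}_\e}(W(w_\e)+(w_\e')^2)\eta_\e\ds+\int_{\mathcal{G}_\e^-\cup\mathcal{G}_\e^+}(W(w_\e)+(w_\e')^2)\eta_\e\ds$, so it suffices to bound these two pieces from below. Throughout one may use $H_\e(w_\e)\le 2c_W\eta(t_0)+C\e$, obtained by comparing $w_\e$ with the rescaling of the recovery sequence of Theorem~\ref{1DLimsupOrder2}; combined with $\min_{\mathcal{T}_\e}\eta_\e\to\eta(t_0)>0$ this gives $\int_{\mathcal{T}_\e}(W(w_\e)+(w_\e')^2)\ds\le C$.

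For the transition piece I would use $W(w_\e)+(w_\e')^2\ge 2W^{1/2}(w_\e)w_\e'$ and split the weight as $\eta_\e(s)=\eta(t_0)+(\eta_\e(s)-\eta(t_0))$. Writing $\Phi(r):=\int_a^r W^{1/2}(\sigma)\,d\sigma$, so that $\Phi(b)=c_W$ and $\Phi'(a)=\Phi'(b)=0$, the fundamental theorem of calculus gives $2\eta(t_0)\int_{\mathcal{T}_\e}W^{1/2}(w_\e)w_\e'\ds=2\eta(t_0)[\Phi(w_\e(R_\e))-\Phi(w_\e(-R_\e))]=2c_W\eta(t_0)+o(\e)$, using $|w_\e(\pm R_\e)-b|,\,|w_\e(\pm R_\e)-a|=O(\e^{1/q})$ (Theorems~\ref{Thm:1DBounds} and \ref{thm:1DMinProperties}~(iii)). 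By the $C^{1,\beta}$ regularity \eqref{etaSmooth} of $\eta$, on $\mathcal{T}_\e$ (where $|\e s|\le\e R_\e\to0$) one has $\eta_\e(s)-\eta(t_0)=\eta'(t_0)\e s+O(\e^{1+\beta}|s|^{1+\beta})$, and since $|W^{1/2}(w_\e)w_\e'|\le\tfrac12(W(w_\e)+(w_\e')^2)$ the remainder contributes $O(\e^{1+\beta}|\log\e|^{1+\beta})=o(\e)$; thus the transition piece equals $2c_W\eta(t_0)+2\eta'(t_0)\e\int_{\mathcal{T}_\e}W^{1/2}(w_\e)w_\e' s\ds+o(\e)$. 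It remains to show $\int_{\mathcal{T}_\e}W^{1/2}(w_\e)w_\e' s\ds\to\tau_0c_W+c_{\operatorname*{sym}}$. Integrating by parts and using $\Phi(w_\e(R_\e))=c_W+o(\e)$, $\Phi(w_\e(-R_\e))=o(\e)$ and $\int_{\mathcal{T}_\e}c_W\chi_{\{s>0\}}\ds=c_WR_\e$, this integral equals $-\int_{\mathcal{T}_\e}(\Phi(w_\e)-c_W\chi_{\{s>0\}})\ds+o(1)$. On a fixed $[-l,l]$, $w_\e\to w_0$ uniformly (weak $H^1_{\operatorname*{loc}}$ convergence plus the $L^\infty$-bound on $w_\e'$ of Lemma~\ref{profilesConverge}), while on $\mathcal{T}_\e\setminus[-l,l]$ the rescaled barrier bounds \eqref{rescaledExpBound2}--\eqref{rescaledExpBound1} (applicable since $s_1^\e,s_4^\e$ are bounded by Lemma~\ref{rescaledZerosBounded}) give $|\Phi(w_\e)-c_W\chi_{\{s>0\}}|\le Ce^{-\mu(|s|-C)}+C\e^{1/q}$, so the limit may be taken under the integral to get $\int_\mathbb{R}(\Phi(w_0)-c_W\chi_{\{s>0\}})\ds$. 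Reversing the integration by parts and using $W(z(\sigma))=(z'(\sigma))^2$, $\int_\mathbb{R}(z')^2=c_W$, $c_{\operatorname*{sym}}=\int_\mathbb{R}(z'(\sigma))^2\sigma\,d\sigma$, together with the change of variables $\sigma=s-\tau_0$, yields $-\int_\mathbb{R}(\Phi(w_0)-c_W\chi_{\{s>0\}})\ds=\int_\mathbb{R}W^{1/2}(w_0(s))w_0'(s) s\ds=\tau_0c_W+c_{\operatorname*{sym}}$.

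For the bulk piece, take first $q=1$ and fix $\gamma>0$; by \eqref{W_Limits} there is $s_\gamma>0$ with $W(r)\ge(\tfrac{W''(a)}2-\gamma)(r-a)^2$ for $|r-a|\le s_\gamma$ and the symmetric bound near $b$. On $\mathcal{G}_\e^-$, Theorem~\ref{Thm:1DBounds} gives $w_\e-a=a_\e-a+O(\e^k)=-\lambda_\e\e/W''(a)+o(\e)$ uniformly, hence $(w_\e-a)^2=\lambda_\e^2\e^2/W''(a)^2+o(\e^2)$; moreover $\int_{\mathcal{G}_\e^-}\eta_\e\ds=\e^{-1}\int_{-T}^{t_0}\eta\dt+O(|\log\e|)$, the defect coming from removing $\mathcal{T}_\e$ (of unrescaled measure $O(\e|\log\e|)$) and $\mathcal{B}_\e$ (of measure $o(\e)$). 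Multiplying gives $\int_{\mathcal{G}_\e^-}W(w_\e)\eta_\e\ds\ge(\tfrac{W''(a)}2-\gamma)\tfrac{\lambda_\e^2\e}{W''(a)^2}\int_{-T}^{t_0}\eta\dt+o(\e)$; adding the analogous bound on $\mathcal{G}_\e^+$ (with $W''(b)=W''(a)=\ell$ when $q=1$) gives $\int_{\mathcal{G}_\e^-\cup\mathcal{G}_\e^+}(W(w_\e)+(w_\e')^2)\eta_\e\ds\ge(\tfrac12-\tfrac{\gamma}{W''(a)})\tfrac{\lambda_\e^2\e}{W''(a)}\int_I\eta\dt+o(\e)$. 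Combining with the transition estimate, dividing by $\e$, taking $\liminf_{\e\to0^+}$ along a subsequence realizing the lower limit (using $\lambda_\e\to\lambda_0$, see \eqref{1DMultiplierLimit}), and finally letting $\gamma\to0^+$, yields \eqref{liminf H epsilon} for $q=1$. When $q\in(0,1)$ one has $|a_\e-a|=O(\e^{1/q})$ with $1/q>1$, so the bulk piece is merely nonnegative of size $o(\e)$ and the same argument gives \eqref{liminf H epsilon} with vanishing bracket.

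The delicate point is the transition step. Because the minimizers $w_\e$ need not be monotone — they may oscillate near the wells (see the discussion preceding Theorem~\ref{thm:1DMinProperties}) — the $O(\e)$ correction cannot be read off directly from weak convergence, and because the natural transition window $\mathcal{T}_\e$ has width $R_\e\to\infty$ one must combine the description of $\Gamma_\e$ in Theorem~\ref{thm:1DMinProperties} with the exponential barrier estimates of Lemma~\ref{BarrierLemma} both to give a meaning to $\int_{\mathcal{T}_\e}W^{1/2}(w_\e)w_\e' s\ds$ (keeping the growing tails under control) and to cleanly decouple it from the bulk contribution $\tfrac{\lambda_0^2}{2W''(a)}\int_I\eta$, which is produced by the vertical shifts $a_\e-a,\ b_\e-b\sim-\lambda_0\e/W''(a)$ of $w_\e$ away from the wells.
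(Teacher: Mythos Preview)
Your proposal is correct and follows essentially the same strategy as the paper: split the rescaled energy at a window of width $O(|\log\e|)$, drop the nonnegative square $(W^{1/2}(w_\e)-w_\e')^2$, Taylor expand $\eta_\e$ about $\eta(t_0)$, evaluate the resulting $\int W^{1/2}(w_\e)w_\e'\,s\,ds$ using the local $H^1$ convergence to $z(\cdot-\tau_0)$ together with the rescaled barrier bounds on the tails, and extract the bulk contribution from the shift $a_\e-a\sim -\lambda_\e\e/W''(a)$. The only cosmetic differences are that you pass through $\Phi$ via integration by parts to identify the limit of $\int W^{1/2}(w_\e)w_\e'\,s\,ds$ (the paper computes this integral directly by splitting at a fixed $l$ and then sending $l\to\infty$), and you bound $W$ near the wells by $(\tfrac{W''(a)}2-\gamma)(\cdot)^2$ while the paper Taylor expands $W$ about $b_\e$; both routes give the same answer.
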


\begin{proof}
	Fix $k$ to be a large integer. By \eqref{rescaledExpBound2} and \eqref{rescaledExpBound1} and the fact that $s_1^\e$ and $s_4^\e$ are bounded we can find $l_\e \in (s_2^\e, \r_1\e^{-1})$ such that $b_\e - w_\e(l_\e) < \e^k$ and $w_\e(-l_\e) -  a_\e < \e^k$ for $\e>0$ sufficiently small. Recall that by Corollary \ref{corollary diameter} we can take 
	\begin{equation} \label{KepsilonBound}
	l_\e< C|\log \e|. 
	\end{equation}
	By \eqref{rescaledProblemFormulation} we can compute
	\begin{align*}
	&\frac{H_\e(w_\e) - 2c_W \eta(t_0)}{\e}  \\
	&= \e^{-1}\int_{-l_\e}^{l_\e} (W^{1/2}(w_\e) - w_\e')^2 \eta_\e \ds +2\e^{-1}\int_{-l_\e}^{l_\e} W^{1/2}(w_\e) w_\e' (\eta_\e - \eta(t_0))  \ds \\
	&\quad+ \e^{-1}\int_{[A \e^{-1},B \e^{-1}] \backslash (-l_\e,l_\e)} \left(W(w_\e) + (w_\e')^2\right) \eta_\e \ds + \e^{-1}2\eta(t_0)\left(\int_{-l_\e}^{l_\e} W^{1/2}(w_\e) w_\e'  \ds - c_W\right) \\
	&\geq 2\e^{-1}\int_{-l_\e}^{l_\e} W^{1/2}(w_\e) w_\e' (\eta_\e - \eta(t_0))  \ds \\
	&\quad+ \e^{-1}\int_{[A \e^{-1},B \e^{-1}] \backslash (-l_\e,l_\e)} W(w_\e) \eta_\e \ds + \e^{-1}2\eta(t_0)\left(\int_{-l_\e}^{l_\e} W^{1/2}(w_\e) w_\e'  \ds - c_W\right) .
	\end{align*}
	We will examine the individual terms. The last term goes to zero as
	\begin{align}
	\e^{-1}\left| \int_{-l_\e}^{l_\e} W^{1/2}(w_\e) w_\e' \, \ds - c_W\right| &\leq \e^{-1} \left|\int_{w_\e(-l_\e)}^{w_\e(l_\e)} W^{1/2}(r) \,dr - \int_a^b W^{1/2}(r)\, dr \right| \nonumber \\
	&\leq  \e^{-1}\left|\int_{a_\e}^{b_\e} W^{1/2}(r)\, dr - \int_a^b W^{1/2}(r)\, dr \right|+C\e^{k-1}  \nonumber\\
	&\leq C \e^{-1}\int_0^{\e^{1/q}} t^{\frac{1+q}{2}} \dt+C\e^{k-1}= o(1)  \label{1DGeodesicError},
	\end{align}
	where we have used \eqref{c0Definition}, \eqref{W_Limits} and \eqref{1DTightBounds}.
	
	For $s \in [l_\e,B \e^{-1}] \cap \{w_\e \geq b_\e - \e^k\}$ by the mean value theorem we can write
	\[
	W(w_\e(s)) = W(b_\e) + W'(\zeta_\e)(w_\e(s) - b_\e),
	\]
	where $\zeta_\e \in [w_\e(s),b_\e]$. By \eqref{WPrimeLimits} and \eqref{bEpsilonForm} for such $s$ we then have that
	\begin{align*}
	|W'(\zeta_\e)|(b_\e - w_\e(s)) &\leq C|\zeta_\e - b|^q (b_\e - w_\e(s)) \\
	&\leq C(|\zeta_\e -b_\e|^q + |b_\e - b|^q) (b_\e - w_\e(s)) \\
	&\leq C(\e^{q k}  + \e) \e^k \leq C\e^{k+1} .
	\end{align*}
	Thus we can write, after applying \eqref{W_Limits},  part (iii) of Theorem \ref{thm:1DMinProperties},  \eqref{bEpsilonForm}, and \eqref{KepsilonBound},
	\begin{align*}
	&\e^{-1} \int_{l_\e}^{B\e^{-1}} W(w_\e) \eta_\e \ds \geq \e^{-1} W(b_\e) \int_{l_\e}^{B\e^{-1}} \eta_\e  \ds + O(\e^{k -1}) \\
	&= \e^{-1} \left( \frac{\ell}{q(1+q)}|b_\e-b|^{1+q} + o(|b_\e - b|^{1+q}) \right)\left( \e^{-1} \int_{t_0}^T \eta  \dt + O(|\log \e|)\right) + O(\e^{k-1}) \\
	&=  \left( \frac{q^{1/q}|\lambda_\e |^{1+1/q}}{(1+q)\ell^{1/q}} + o(1) \right)\left( \e^{1/q-1} \int_{t_0}^T \eta  \dt + O(\e^{1/q}|\log \e|)\right) + O(\e^{k-1}) .
	\end{align*}
	An analogous bound will hold on the interval $[A\e^{-1},-l_\e]$. Hence
	\begin{align} 
	\lim_{\e \to 0^+} \e^{-1}\int_{[A \e^{-1},B \e^{-1}] \backslash (-l_\e,l_\e)} W(w_\e)\eta_\e  \ds = \begin{cases}
	\frac{\lambda_0^2}{2W''(a)}  \int_I \eta  \dt \hspace{2mm} &\text{ if } q = 1, \\
	0 &\text{ if } q<1.
	\end{cases} \label{1DTailCost}
	\end{align}
	
	For the first term we use assumption \eqref{etaSmooth} to estimate $\eta_\e(s) -\eta(t_0) = \e s \eta'(t_0) + O(\e^{1+\beta} |s|^{1+\beta})$. Using \eqref{1DTightBounds}, Lemma \ref{profilesConverge} and \eqref{KepsilonBound} we have that
	\[
	\left|\e^{-1} \int_{-l_\e}^{l_\e} W^{1/2}(w_\e)w'_\e O(\e^{1+\beta} |s|^{1+\beta})  \ds\right| \leq C\e^{\beta}|\log \e|^{2+\beta} \to 0 .
	\]
	Thus we  find that:
	\[
	\lim_{\e \to 0^+} 2\e^{-1}\int_{-l_\e}^{l_\e} W^{1/2}(w_\e) w_\e' (\eta_\e - \eta(t_0)) \ds = 2\eta'(t_0)\lim_{\e \to 0^+}  \int_{-l_\e}^{l_\e} W^{1/2}(w_\e) w_\e' s  \ds .
	\]
	Now for any fixed $l$ by \eqref{rescaledWeakConvergence} and the fact that $w_0(s) = z(s-\tau_0)$, we can write
	\begin{align*}
	\lim_{\e \to 0^+} \int_{-l}^l W^{1/2}(w_\e) w_\e ' s \ds &= \int_{-l}^l W^{1/2}(w_0) w_0' s  \ds \\
	&= \int_{-l-\tau_0}^{l-\tau_0} W^{1/2}(z(t)) z'(t) (t+\tau_0) \,dt \\
	&= \tau_0\Phi(z(l-\tau_0)) - \tau_0\Phi(z(-l-\tau_0)) + \int_{-l-\tau_0}^{l-\tau_0} W^{1/2}(z(t)) z'(t) t  \dt,
	\end{align*}
	where we recall that $\Phi(s) = \int_a^s W^{1/2}(r)\, dr$. Furthermore we can establish the following bound using \eqref{W_Limits}, \eqref{rescaledExpBound1} and Lemma \ref{profilesConverge}:
	\begin{align*}
	&\left|\int_l^{l_\e} W^{1/2}(w_\e) w_\e ' s \ds\right| \leq C \int_l^{l_\e} |b- w_\e|^{\frac{1+q}{2}} s  \ds \\
	&\leq C(|b_\e-c - \rho|^\frac{1+q}{2} + |b_\e - b|^\frac{1+q}{2}) \int_l^\infty e^{-\frac{1+q}{2}\mu (s-s_4^\e)} s  \ds ,
	\end{align*}
	provided $l > s_4^\e$. Thus we can write
	\begin{align*}
	\lim_{\e \to 0^+} \int_{-l_\e}^{l_\e} W^{1/2}(w_\e) w_\e' s  \ds &=  \tau_0\Phi(z(l-\tau_0)) - \tau_0\Phi(z(-l-\tau_0)) \\
	&\quad+  \int_{-l-\tau_0}^{l-\tau_0} W^{1/2}(z(s)) z'(s) s  \ds + O(le^{-\frac{1+q}{2}\mu l}) .
	\end{align*}
	Taking $l$ to $\infty$, combined with \eqref{1DGeodesicError} and \eqref{1DTailCost} gives the desired claim, namely, \eqref{liminf H epsilon}.
\end{proof}

We now give the proof of Theorem \ref{1DLiminfOrder2}.

\begin{proof}[Proof of Theorem \ref{1DLiminfOrder2}]
By changing variables it is immediate that $H(w_\e) = G^{(1)}_\e(v_\e)$. Lemma \ref{1DLiminf} then immediately implies \eqref{eqn:liminfOrder2}. This concludes the proof.
\end{proof}

%It is possible to 
%
%\begin{remark}
%	For $(a,b) =(-1,1)$ we have that the quantity $\tau_0$ can be written as follows:
%	\[
%	\tau_0 = \lim_{R \to \infty} \int_{(-R,R)} z  \ds + \begin{cases}\frac{\Lambda \xi^{-1}}{P(E)2} &\text{ if } q =1,\\
%	0 &\text{ if } q<1 \end{cases}
%	\]
%\end{remark}

\section{Proofs of Main Theorems} \label{mainTheorem}
With our tools in hand, we now can approach the problem of proving Theorems \ref{mainThm1} and \ref{mainThm2}. We begin by proving the $\Gamma$-$\liminf$ inequalities from Theorems \ref{mainThm1} and \ref{mainThm2}. Precisely, we prove the following theorem:

\begin{theorem} \label{thm:nDLiminf}
Assume that $\Omega$ satisfies \eqref{domainAssumptions}, $m$ satisfies \eqref{originalMassRange}, $\I$ satisfies \eqref{isoFunctionSmooth} and that $W$ satisfies \eqref{W_Smooth}-\eqref{WGurtin_Assumption}. Let $\{u_n\} \subset L^1(\Omega)$ converge to $u$ in $L^1(\Omega)$. Then
\[
\liminf_{\e \to 0^+} \mathcal{F}_\e^{(2)}(u_\e) \geq \mathcal{F}^{(2)}(u),
\]
where $\mathcal{F}_\e^{(2)}$ is defined by \eqref{higherOrderFunctionalDefinition} and $\mathcal{F}^{(2)}$is defined in Theorems \ref{mainThm1} and \ref{mainThm2}.
\end{theorem}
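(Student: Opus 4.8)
## Proof Strategy for Theorem \ref{thm:nDLiminf}

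The plan is to reduce the $n$-dimensional $\liminf$ inequality to the weighted one-dimensional result of Lemma \ref{1DLiminf} (equivalently Theorem \ref{1DLiminfOrder2}) via the P\'olya--Szeg\H o type rearrangement developed in Section \ref{PolyaSzegoSection}. First I would dispose of the trivial cases: if $u \notin \mathcal{U}_1$ then $\mathcal{F}^{(2)}(u) = \infty$, and one must check that $\liminf \mathcal{F}_\e^{(2)}(u_\e) = \infty$ as well — this follows from the first-order $\Gamma$-convergence (\eqref{firstOrderFormalDefinition}) together with the characterization of $\mathcal{U}_1$ as minimizers of the partition problem \eqref{partitionProblemDefinition}, since if $u_\e \to u$ with $\sup_\e \mathcal{F}_\e^{(2)}(u_\e) < \infty$ then necessarily $\mathcal{F}_\e^{(1)}(u_\e) \to \min \mathcal{F}^{(1)}$, forcing $u \in \mathcal{U}_1$. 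We may therefore assume $u \in \mathcal{U}_1$, so $E := \{u = a\}$ is a perimeter minimizer at volume $\V_m$ with constant mean curvature $\kappa_u$, and by Gr\"uter's regularity (valid since $n \leq 7$) it is a smooth hypersurface meeting $\partial\Omega$ orthogonally. We may also assume $\liminf \mathcal{F}_\e^{(2)}(u_\e)$ is finite and, passing to a subsequence, is an actual limit with $\sup_\e \mathcal{F}_\e^{(1)}(u_\e) < \infty$.

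The heart of the argument is a \emph{localization} near $\partial E$. Using the signed distance function $d_E$ (see \eqref{def:SignedDistance}) and a tubular-neighborhood / coarea decomposition of $\Omega$ along level sets of $d_E$, I would split $\mathcal{F}_\e(u_\e)$ into the contribution from a thin tube $\{|d_E| < h\}$ around $\partial E$ and a far contribution. On the far part the energy must already be at least $\min \mathcal{F}^{(1)}$ up to $o(\e)$ errors, because $u_\e$ is close to $a$ or $b$ there and $W$ is quadratic (or subquadratic) near the wells; the subtlety is that the far contribution can still carry an $O(\e^2)$ bulk-shift energy, which is exactly the $\frac{\Lambda_u^2}{2W''(a)}$ term, so one cannot simply discard it. Inside the tube, one applies the rearrangement of Corollary \ref{Cor:EnergyRearrangement}, but \emph{localized to the tube}: the correct weight is $\eta(t) = \I^*(V_\Omega(t))$ rescaled so that $\eta(t_0) = \operatorname*{P}(\{u=a\};\Omega)$ and $\eta'(t_0) = \I'(\V_m) = (n-1)\kappa_u$ (using \eqref{isoPerDerivative}), with $t_0$ chosen so that $v_0 = \operatorname*{sgn}_{a,b}(\cdot - t_0)$ satisfies the mass constraint. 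This is the step that requires the Taylor expansion \eqref{isoFunctionSmooth}, since Theorem \ref{1DLiminfOrder2} needs \eqref{eqn:limsupEstimate6} at $t_0$. The $L^1$ contraction property (Proposition \ref{rearrangementContraction}) guarantees $f_{u_\e} \to v_0$ in $L^1_\eta$, and the energy inequality of Corollary \ref{Cor:EnergyRearrangement} gives $\mathcal{F}_\e(u_\e) \geq G_\e(f_{u_\e})$; dividing by $\e$ and invoking Theorem \ref{1DLiminfOrder2} yields precisely the right-hand side of \eqref{mainThmEqn1} (resp. \eqref{mainThmEqn2}), once one identifies $\lambda_0 = \Lambda_u = \frac{2c_W(n-1)}{b-a}\kappa_u$ via \eqref{def:rhoNot} and $c_W = \int_a^b W^{1/2}$, and recognizes $\int_I \eta\,\dt = \mathcal{L}^n(\Omega) = 1$.

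The main obstacle, I expect, is the bookkeeping that connects the localized rearrangement in the tube to the $\liminf$ over the \emph{whole} domain — in particular, controlling the interaction between the horizontal shift $\tau_u$ (inside the profile $z$) and the vertical bulk shift $-\Lambda_u\e/W''(a)$ so that the mass constraint \eqref{massConstraintEquation} is respected asymptotically, which is what forces the relation \eqref{deltaUDefinition}. A clean way around doing this twice is to observe that the minimizers $v_\e$ of the localized functional $J_\e$ from \eqref{def:JEps} already encode exactly this balance (Theorem \ref{1DBVP}, Theorem \ref{thm:1DMinProperties}), so that the one-dimensional $\liminf$ of Lemma \ref{1DLiminf} \emph{is} the $\liminf$ for $G_\e^{(1)}$ along its local minimizers, and the rearrangement inequality $\mathcal{F}_\e(u_\e) \geq G_\e(f_{u_\e}) \geq J_\e(f_{u_\e})$ (valid once $\|f_{u_\e} - v_0\|_{L^1_\eta} \leq \delta$, which holds for $\e$ small by compactness) transfers it upward without any further case analysis. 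A secondary technical point is handling the orthogonal intersection of $\partial E$ with $\partial\Omega$ so that the coarea slicing of $\Omega$ by $d_E$-level sets is compatible with the definition of $\I^*(V_\Omega)$; this is where the $C^{2,\alpha}$ regularity of $\partial\Omega$ in \eqref{domainAssumptions} and the regularity of $\partial E$ enter, and it is essentially the same geometric input that makes \eqref{isoFunctionSmooth} meaningful.
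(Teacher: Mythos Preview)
Your proposal lands on the right argument in its final paragraph --- the chain
\[
\mathcal{F}_\e(u_\e) \geq G_\e(f_{u_\e}) = J_\e(f_{u_\e}) \geq J_\e(v_\e)
\]
followed by Theorem \ref{1DLiminfOrder2} (Lemma \ref{1DLiminf}), with the identification of $\eta(t_0)$, $\eta'(t_0)$, $\lambda_0$, $\tau_0$ via \eqref{isoPerDerivative} and \eqref{eqn:IStarTouches}, is exactly the paper's proof. The handling of the trivial case $u \notin \mathcal{U}_1$ and the use of Proposition \ref{rearrangementContraction} to guarantee $f_{u_\e} \to f_u = v_0$ in $L^1_\eta$ are also correct.

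However, the first half of your proposal is a detour that the paper does not take and that you should discard. You describe a \emph{spatial} localization: slicing $\Omega$ by level sets of the signed distance $d_E$, splitting into a tube $\{|d_E| < h\}$ and a far region, and applying a ``localized'' rearrangement inside the tube. None of this appears in the paper's $\liminf$ argument. The rearrangement of Corollary \ref{Cor:EnergyRearrangement} is applied \emph{globally} on all of $\Omega$, and the signed distance function $d_E$ plays no role whatsoever in the $\liminf$ proof (it is used only for the $\limsup$, Theorem \ref{thm:nDLimsup}). The ``localization'' that matters is the one you name at the end: it is a localization in \emph{function space}, namely restricting to the $L^1_\eta$-ball $\{\|v - v_0\|_{L^1_\eta} \leq \delta\}$ via the functional $J_\e$ of \eqref{def:JEps}, so that one can compare against the local minimizers $v_\e$ of Proposition \ref{JLocalMinimizers} rather than global minimizers of $G_\e$ (which need not sit near $v_0$). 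Your tubular splitting would create exactly the bookkeeping headache you anticipate --- matching the tube energy, the far bulk-shift energy, and the mass constraint --- whereas the global rearrangement sidesteps all of it in one stroke.

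Two smaller points. First, you suggest Theorem \ref{1DLiminfOrder2} only needs the Taylor condition \eqref{eqn:limsupEstimate6} at $t_0$; in fact it requires the full set \eqref{etaSmooth}--\eqref{etaPrimeRatio}, and the paper spends a paragraph verifying these for $\eta = \I^*(V_\Omega)$ using Proposition \ref{prop:IStar} and \eqref{DomainRearrangeODE}. Second, the orthogonal-intersection regularity of $\partial E$ with $\partial\Omega$ is irrelevant for the $\liminf$: the only geometric input is the value $\I(\V_m) = \operatorname*{P}(\{u=a\};\Omega)$ and the derivative $\I'(\V_m) = (n-1)\kappa_u$, both of which enter abstractly through $\I^*$.
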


\begin{proof}
If $\liminf_{\e\to0^+}\mathcal{F}_\e^{(2)}(u_\e) = \infty$ then there is nothing to prove. Thus, passing to a subsequence, if necessary, we can assume that
\begin{equation}
\label{88} \sup_{\e} \mathcal{F}_\e^{(2)}(u_\e) < \infty.
\end{equation}
By standard results on compactness and lower semicontinuity for the Cahn--Hilliard functional $\mathcal{F}_\e^{(1)}$ (see, e.g.,  \cite{LeoniCompactness} and the references therein), it follows from \eqref{higherOrderFunctionalDefinition}, \eqref{higherOrderGammaConvergenceDefinition} and \eqref{88} that $u$ must be a minimizer of $\mathcal{F}^{(1)}$. This implies that the set $E:= \{u=a\}$ is a minimizer of \eqref{partitionProblemDefinition}, and its mean curvature is given by \eqref{isoPerDerivative}. Using $\I^*$ from Proposition \ref{prop:IStar} as in Section 3, we then have that
\[
\mathcal{F}_\e(u_\e) \geq \int_I (W(f_{u_\e}) + \e^2( f_{u_\e}')^2)\I^*(V_\Omega) \dt, \quad m%%
 = \int_\Omega u_\e \dx = \int_I f_{u_\e} \I^*(V_\Omega) \dt,
\]
where $V_\Omega$ and $f_u$ are defined in Section 3 (see \eqref{DomainRearrangeODE}, \eqref{uStarDefinition} and Remark \ref{increasingRearrangementRemark}). We then set $\eta := \I^*(V_\Omega)$. This $\eta$ will satisfy all of the assumptions in Section 4. Indeed, since $V_\Omega>0$ in $(-T,\infty)$ and $V_\Omega(-T) = 0$, by \eqref{eqn:IStarTail} and \eqref{DomainRearrangeODE}, $V_\Omega(t) = [C_0/n (t+T)]^n$ near $-T$, and so $\eta = C_0^n[\frac{1}{n}(t+T)]^{n-1}$, which shows that \eqref{etaTail1} and \eqref{etaPrimeRatio} hold for $t$ close to $-T$. On the other hand, since $V_\Omega(t) = 1-V_\Omega(-t)$ (by \eqref{eqn:IStarSymmetric} and \eqref{DomainRearrangeODE}), for $t$ close to $T$ we have that $\eta(t) = C_0^n[\frac{1}{n}(T-t)]^{n-1}$ and thus  \eqref{etaTail2} and \eqref{etaPrimeRatio} hold close to $T$. Since $\I^* \in C_{\operatorname*{loc}}^{1,\beta}(0,1)$, by \eqref{DomainRearrangeODE} we have that $V_\Omega \in C_{\operatorname*{loc}}^{2,\beta}(I)$, and in turn $\eta \in C_{\operatorname*{loc}}^{1,\beta}(I)$. Thus \eqref{etaSmooth} is satisfied. Finally, since $\I^*>0$ in $(0,1)$ we have by \eqref{eqn:IStarPositive} that $\eta > 0$ in $I$, and thus \eqref{etaPrimeRatio} holds on any compact subset of $I$ by uniform continuity.

Next observe that since $u \in BV(\Omega,\{a,b\})$ and \eqref{massConstraintEquation} holds, by Lemma \ref{Equimeasurable} we have that $f_u$ only takes the values $a$ and $b$ and $\int_I f_u\eta \dt = \int_\Omega u \dx = m$. Since $f_u$ is increasing, this implies that $f_u(t) = \operatorname*{sgn}\nolimits_{a,b}(t-t_0)$ for some $t_0 \in I$ and all $t\in I$. It follows from Theorem \ref{theorem local minimizer G0} that $f_u$ is a local minimizer of the functional $G^{(1)}$ defined in \eqref{G0Definition}. Moreover, by Lemma \ref{rearrangementContraction} we have that $u_\e \to u$ in $L^1(\Omega)$ implies that $f_{u_\e} \to f_u$ in $L_\eta^1(I)$. Hence, $\|f_{u_\e} - f_u\|_{L_\eta^1} \leq\delta$ for all $\e$ sufficiently small, where $\delta>0$ is the number given in Theorem \ref{theorem local minimizer G0} (with $v_0 = f_u$). In turn choosing $v_\e$ to be minimizers of the function $J_\e$ defined in \eqref{def:JEps}, by Corollary \ref{Cor:EnergyRearrangement} we have that
\begin{equation} \label{eqn:RearrangeEnergy}
\mathcal{F}_\e(u_\e) \geq G_\e(f_{u_\e}) = J_\e(f_{u_\e}) \geq J_\e(v_\e).
\end{equation}

Since $\int_I f_u \eta \dt = m$, it follows from the fact that (see \eqref{domainAssumptions} and Lemma \ref{Equimeasurable})
\begin{equation} \label{101}
1 = \mathcal{L}^n(\Omega) = \int_I \eta \dt
\end{equation}
and \eqref{DomainRearrangeODE} that
\begin{equation} \label{102}
\V_m = \frac{b-m}{b-a} = \mathcal{L}^n(\{u = a\}) = \int_{-T}^{t_0} \eta \dt = \int_{-T}^{t_0} \frac{d}{dt	} V_\Omega \dt = V_\Omega(t_0).
\end{equation}
In turn, by \eqref{eqn:IStarTouches},
\[
\eta(t_0) = \I^*(\V_m) = \I(\V_m) = \operatorname*{P}(\{u=a\};\Omega),
\]
which shows that $\mathcal{F}^{(1)}(u) = G^{(1)}(f_u)$. Hence by \eqref{eqn:RearrangeEnergy} we have that
\[
\mathcal{F}_\e^{(2)}(u_\e) = \frac{\mathcal{F}_\e^{(1)}(u_\e)-\mathcal{F}^{(1)}(u)}{\e} \geq \frac{J_\e^{(1)}(v_\e)-J^{(1)}(f_u)}{\e} = J_\e^{(2)}(v_\e).
\]
By applying Lemma \ref{1DLiminf} we thus have that
\begin{equation} \label{102B}
\liminf_{\e \to 0^+} \mathcal{F}_\e^{(2)}(u_\e) \geq 2\eta'(t_0) (\tau_0 c_W + c_{\operatorname*{sym}}) + \begin{cases} \frac{\lambda_0^2}{2W''(a)} &\text{ if } q =1, \\
0 &\text{ if } q<1 ,\end{cases}
\end{equation}
where we have used \eqref{101}. By \eqref{DomainRearrangeODE} we have that $\eta'(t) = (\I^* )'(V_\Omega(t)) \eta(t)$, and so by \eqref{isoPerDerivative}, \eqref{eqn:IStarTouches} and \eqref{102},
\begin{equation}
\eta'(t_0) = \I'(\V_m) \I(\V_m) = (n-1)\kappa_u \operatorname*{P}(\{u=a\};\Omega).
\end{equation}
In turn by \eqref{def:lambdaNot} and \eqref{def:rhoNot},
\begin{equation} \label{104}
\lambda_0 = \frac{2(n-1)c_W }{(b-a)}\kappa_u = \Lambda_u,
\end{equation}
and so by \eqref{limSupDelta0Definition} the number $\tau_0$ coincides with the number $\tau_u$ in \eqref{deltaUDefinition}. Combining \eqref{102B}-\eqref{104} gives
\[
\liminf_{\e \to 0^+} \mathcal{F}_\e^{(2)}(u_\e) \geq 2(\tau_u c_W + c_{\operatorname*{sym}})(n-1)\kappa_u\operatorname*{P}(\{u=a\};\Omega) + \begin{cases}
 \frac{\Lambda_u^2}{2W''(a)} &\text{ if } q =1,\\
0 &\text{ if } q<1.
\end{cases}
\]
This completes the proof.

\end{proof}

\begin{remark}
We note that the assumption that $\Omega$ is $C^2$ is not truly necessary to prove the previous theorem. A Lipschitz domain satisfying \eqref{isoFunctionSmooth} would actually be sufficient. However, for a Lipschitz domain it is not clear that \eqref{isoFunctionSmooth} need hold, and furthermore for the $\Gamma$-$\limsup$ inequality we directly utilize the fact that $\Omega \in C^2$.
\end{remark}

Now we prove the corresponding $\Gamma$-$\limsup$ inequality. 
\begin{theorem} \label{thm:nDLimsup}
Assume that $\Omega$ satisfies \eqref{domainAssumptions}, $m$ satisfies \eqref{originalMassRange} and that $W$ satisfies \eqref{W_Smooth}-\eqref{WGurtin_Assumption}. Let $u \in L^1(\Omega)$. Then there exists a sequence $\{u_n\} \subset L^1(\Omega)$ that converges to $u$ in $L^1(\Omega)$ such that
\[
\limsup_{\e \to 0^+} \mathcal{F}_\e^{(2)}(u_\e) \leq \mathcal{F}^{(2)}(u),
\]
where $\mathcal{F}_\e^{(2)}$ is defined by \eqref{higherOrderFunctionalDefinition} and $\mathcal{F}^{(2)}$is defined in Theorems \ref{mainThm1} and \ref{mainThm2}.
\end{theorem}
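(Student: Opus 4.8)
The plan is to reduce the $\Gamma$-$\limsup$ to the one-dimensional result of Theorem \ref{1DLimsupOrder2} by pushing the optimal one-dimensional profile through the signed distance function, exactly as predicted by the heuristic \eqref{Eqn:MinimizerHeuristic}. If $u\notin\mathcal{U}_1$ then $\mathcal{F}^{(2)}(u)=\infty$ and there is nothing to prove, so I assume $u\in\mathcal{U}_1$. Then $E:=\{u=a\}$ is a minimizer of the partition problem \eqref{partitionProblemDefinition}, and since $n\le 7$ and $\partial\Omega\in C^{2,\alpha}$, Gr\"uter's regularity theory gives that $\partial E\cap\Omega$ is a compact $C^{2,\alpha}$ hypersurface of constant mean curvature $\kappa_u$ meeting $\partial\Omega$ orthogonally, with $\mathcal{H}^{n-1}(\partial E\cap\Omega)=\operatorname*{P}(\{u=a\};\Omega)=\I(\V_m)$. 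I would then set $\mathcal{A}(s):=\mathcal{H}^{n-1}(d_E^{-1}(s))$ on the bounded interval $I':=(\inf_\Omega d_E,\sup_\Omega d_E)$, which has $0$ in its interior, extended by $0$ elsewhere. Because $d_E$ is $1$-Lipschitz with $|\nabla d_E|=1$ a.e. in $\Omega$, the coarea formula gives $\int_\Omega\psi(d_E)\dx=\int_{I'}\psi(s)\mathcal{A}(s)\ds$ for every Borel $\psi$; in particular $\int_{I'}\mathcal{A}=\mathcal{L}^n(\Omega)=1$, $\int_{-\infty}^0\mathcal{A}=\mathcal{L}^n(E)=\V_m$, and $\int_{I'}\operatorname*{sgn}\nolimits_{a,b}(s)\mathcal{A}(s)\ds=a\V_m+b(1-\V_m)=m$.

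The key geometric input is that $\mathcal{A}$ satisfies the (weak) hypotheses of Theorem \ref{1DLimsupOrder2} at $t_0:=0$. I would use the tubular neighbourhood of $\partial E\cap\Omega$: since $\partial E$ meets $\partial\Omega$ orthogonally, $\nabla d_E$ is tangent to $\partial\Omega$ along $\partial E\cap\partial\Omega$, so for small $|s|$ the level sets $d_E^{-1}(s)$ are the normal translates of $\partial E\cap\Omega$ and remain inside $\Omega$. This gives that $\mathcal{A}$ is bounded, is $C^1$ near $0$ with $\mathcal{A}(0)=\operatorname*{P}(\{u=a\};\Omega)$ and, by the first variation of area, $\mathcal{A}'(0)=\int_{\partial E\cap\Omega}(n-1)\kappa_u\,d\mathcal{H}^{n-1}=(n-1)\kappa_u\operatorname*{P}(\{u=a\};\Omega)$; moreover $|\mathcal{A}(s)-\mathcal{A}(0)-\mathcal{A}'(0)s|\le C|s|^{1+\alpha}$ near $0$ by the $C^{2,\alpha}$ regularity of $\partial E$ and $\partial\Omega$, which is precisely \eqref{eqn:limsupEstimate6} with $\beta=\alpha$.

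I would then invoke Theorem \ref{1DLimsupOrder2} with $\eta:=\mathcal{A}$ and $t_0=0$ (the constraint $\int_{I'}v_0\mathcal{A}=m$ for $v_0=\operatorname*{sgn}\nolimits_{a,b}$ having been checked above), producing $v_\e\to v_0$ in $L^1_{\mathcal{A}}$ with the stated $\limsup$ bound. Since the $v_\e$ there are explicit and Lipschitz, $u_\e:=v_\e\circ d_E$ is Lipschitz, hence $u_\e\in H^1(\Omega)$; the chain rule gives $|\nabla u_\e|=|v_\e'(d_E)|$ a.e., the coarea identity gives $\int_\Omega u_\e\dx=\int_{I'}v_\e\mathcal{A}=m$ and $\mathcal{F}_\e(u_\e)=\int_{I'}(W(v_\e)+\e^2(v_\e')^2)\mathcal{A}\ds=G_\e(v_\e)$, and $\|u_\e-u\|_{L^1(\Omega)}=\|v_\e-v_0\|_{L^1_{\mathcal{A}}}\to0$. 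As $\inf\mathcal{F}^{(0)}=0$ and $\inf\mathcal{F}^{(1)}=2c_W\I(\V_m)=2c_W\mathcal{A}(0)=\mathcal{F}^{(1)}(u)$, one gets $\mathcal{F}^{(2)}_\e(u_\e)=\e^{-1}\big(G^{(1)}_\e(v_\e)-2c_W\mathcal{A}(0)\big)$. Passing to the $\limsup$ and identifying $\mathcal{A}(0)=\operatorname*{P}(\{u=a\};\Omega)$, $\mathcal{A}'(0)=(n-1)\kappa_u\operatorname*{P}(\{u=a\};\Omega)$, $\lambda_0=\frac{2\mathcal{A}'(0)c_W}{(b-a)\mathcal{A}(0)}=\frac{2(n-1)c_W\kappa_u}{b-a}=\Lambda_u$, $\int_{I'}\mathcal{A}=1$, and observing that \eqref{limSupDelta0Definition} written for $\mathcal{A}$ reduces exactly to \eqref{deltaUDefinition} when $q=1$ and to \eqref{deltaUDefinitionCase2} when $q<1$ (so $\tau_0=\tau_u$), substituting into Theorem \ref{1DLimsupOrder2}'s estimate reproduces \eqref{mainThmEqn1} (for $q=1$) and \eqref{mainThmEqn2} (for $q<1$), which is the claimed inequality.

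I expect the main obstacle to be the geometric step: showing that the level-set area function $\mathcal{A}$ is globally bounded and admits the first-order Taylor expansion with remainder $O(|s|^{1+\alpha})$. This hinges on the $C^{2,\alpha}$ regularity of $\partial E$ (via $n\le 7$ and Gr\"uter's theorem) and on a careful analysis of the tubular neighbourhood near the corner $\partial E\cap\partial\Omega$, where the orthogonality of the intersection is \emph{essential} to keep the translated level sets inside $\Omega$ and to make the boundary contribution to $\mathcal{A}'(0)$ vanish. Everything downstream is a bookkeeping identification with the one-dimensional result.
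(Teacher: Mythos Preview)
Your proposal is correct and follows essentially the same route as the paper: the paper defines $\eta(t):=\mathcal{H}^{n-1}(\{d_E=t\})$, isolates the geometric step (boundedness of $\eta$ and the second-order expansion at $0$ with $\eta(0)=\operatorname*{P}(E;\Omega)$, $\eta'(0)=(n-1)\kappa_E\operatorname*{P}(E;\Omega)$) into a separate Lemma~\ref{levelSetLemma}, and then sets $u_\e:=v_\e\circ d_E$ with $v_\e$ from Theorem~\ref{1DLimsupOrder2} and reads off the energy via the coarea formula, exactly as you outline. Your identification of the constants $\mathcal{A}(0)$, $\mathcal{A}'(0)$, $\lambda_0=\Lambda_u$, $\tau_0=\tau_u$ matches the paper, and you correctly flag the corner analysis at $\partial E\cap\partial\Omega$ (orthogonality forcing the boundary term in $\mathcal{A}'(0)$ to vanish) as the only substantive work; the paper carries this out via a local chart and implicit-function computation showing $\partial h/\partial s=0$ at $s=0$.
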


We note that Theorems \ref{thm:nDLiminf} and \ref{thm:nDLimsup} together establish Theorems \ref{mainThm1} and \ref{mainThm2}. To prove this theorem, we primarily utilize the approach from previous works (see \cite{Modica87,Sternberg88}), while leveraging the insight regarding skew in transition layers that we have developed over the course of this work. We begin with the following lemma:
\begin{lemma} \label{levelSetLemma}
Suppose that $E\subset \Omega$ is a volume-constrained perimeter minimizer in $\Omega$. Define the function $\eta(s) := \mathcal{H}^{n-1}(\{d_E(x) = s\})$, where $d_E$ is the signed distance function (see \eqref{def:SignedDistance}). Then $\eta$ is twice differentiable at zero and satisfies
\begin{align}
\eta(0) &= \operatorname*{P}(E;\Omega),\label{levelSetAtZero}\\
\eta'(0) &= (n-1)\kappa_E\operatorname*{P}(E;\Omega),  \label{levelSetDerivative}
\end{align}
where $\kappa_E$ is the mean curvature of $E$. Furthermore, the function $\eta$ is bounded.
\end{lemma}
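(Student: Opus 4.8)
The plan is to reduce everything to the structure of volume-constrained perimeter minimizers together with the second variation of the area (equivalently the Jacobi / stability operator on $\partial E \cap \Omega$). Recall that, by the hypotheses collected in \eqref{domainAssumptions} and the regularity theory of Gr\"uter cited in the introduction, a volume-constrained perimeter minimizer $E$ in a $C^{2,\alpha}$ domain $\Omega$ with $n\le 7$ has $\partial E \cap \Omega$ a smooth hypersurface of class $C^{2,\alpha}$ meeting $\partial\Omega$ orthogonally, with constant generalized mean curvature $\kappa_E$, and empty singular set. Consequently the signed distance function $d_E$ is of class $C^2$ in a tubular neighborhood $\{|d_E|<\rho_0\}$ of $\partial E$ inside $\Omega$ for some $\rho_0>0$, and the level sets $\Sigma_s := \{d_E = s\}$ are $C^1$ hypersurfaces-with-boundary (with boundary on $\partial\Omega$) for $|s|<\rho_0$. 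This is the only place where $C^2$ regularity of $\Omega$ genuinely enters.

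First I would fix notation: let $N = \nabla d_E$ be the unit normal field, extended by the distance function, and for $|s|<\rho_0$ let $\Phi_s : \Sigma_0 \to \Sigma_s$, $\Phi_s(x) = x + sN(x)$, be the normal exponential map, which is a $C^1$ diffeomorphism of manifolds-with-boundary (it maps $\partial E \cap \partial\Omega$ into $\partial\Omega$ precisely because of the orthogonal-intersection condition, and this is what keeps the level sets inside $\Omega$). The area formula gives
\[
\eta(s) = \mathcal{H}^{n-1}(\Sigma_s) = \int_{\Sigma_0} J\Phi_s \, d\mathcal{H}^{n-1},
\]
where $J\Phi_s(x) = \prod_{i=1}^{n-1}(1 + s\,\kappa_i(x))$ and $\kappa_1,\dots,\kappa_{n-1}$ are the principal curvatures of $\Sigma_0 = \partial E\cap\Omega$ at $x$ (with respect to the outward normal, consistent with the convention $\kappa_E = \tfrac{1}{n-1}\sum_i\kappa_i$ stated in the introduction). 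Since the $\kappa_i$ are bounded continuous functions on $\Sigma_0$ and $\mathcal{H}^{n-1}(\Sigma_0)<\infty$, the integrand $J\Phi_s(x)$ is, for each $x$, a polynomial in $s$ with uniformly bounded coefficients; one may differentiate twice under the integral sign (dominated convergence, with the polynomial and its $s$-derivatives bounded uniformly for $|s|\le\rho_0/2$). Evaluating at $s=0$: $J\Phi_0 \equiv 1$ gives \eqref{levelSetAtZero}, namely $\eta(0)=\mathcal{H}^{n-1}(\Sigma_0)=\operatorname{P}(E;\Omega)$; and $\tfrac{d}{ds}\big|_{s=0}J\Phi_s(x) = \sum_{i=1}^{n-1}\kappa_i(x) = (n-1)\kappa_E$, which is a constant since $\kappa_E$ is constant, so $\eta'(0) = \int_{\Sigma_0}(n-1)\kappa_E\,d\mathcal{H}^{n-1} = (n-1)\kappa_E\operatorname{P}(E;\Omega)$, which is \eqref{levelSetDerivative}. (The second derivative $\eta''(0) = \int_{\Sigma_0}\sum_{i<j}\kappa_i\kappa_j\,d\mathcal{H}^{n-1}$ exists and is finite, giving ``twice differentiable at zero''; this is all the statement asks, so I would not try to identify $\eta''(0)$ with anything further.)

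For boundedness of $\eta$ globally on its domain of definition (not just near $0$): the coarea formula applied to $d_E$ gives $\int_{\mathbb{R}}\eta(s)\,ds = \mathcal{L}^n(\Omega) = 1 < \infty$, so $\eta \in L^1$; but this alone does not give $L^\infty$. To get a genuine pointwise bound I would instead argue that for every $s$, $\eta(s) = \operatorname{P}(\{d_E < s\};\Omega) \le \I(\mathcal{L}^n(\{d_E<s\}))$ is false in general (the level set need not be a perimeter minimizer), so instead use: $\{d_E > s\}$ and $\{d_E < s\}$ are sets of finite perimeter for a.e.\ $s$ with $\mathcal{H}^{n-1}(\Sigma_s) \ge \operatorname{P}(\{d_E<s\};\Omega)$, and more simply, since $|\nabla d_E| = 1$ a.e., the coarea formula on any fixed small slab combined with the fact that $\Omega$ is bounded with $C^{2}$ (hence Lipschitz) boundary yields a uniform bound $\mathcal{H}^{n-1}(\{d_E=s\}) \le \mathcal{H}^{n-1}(\partial\Omega) + \operatorname{P}(\{d_E<s\};\Omega)$... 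Actually the cleanest route: for a bounded open set with Lipschitz boundary the function $s\mapsto \operatorname{P}(\{d_E<s\};\Omega)$ is bounded because the reduced boundary of $\{d_E<s\}$ inside $\Omega$ has measure controlled by an isoperimetric-type inequality \eqref{muBoundsNearZero}-style bound valid for all such domains, together with $\mathcal{L}^n(\Omega)=1$; and $\eta(s)$ differs from this perimeter only by the $\mathcal{H}^{n-1}$-measure of a piece of $\partial\Omega$, which is $\le \mathcal{H}^{n-1}(\partial\Omega)<\infty$. I would state this as: $\eta(s)\le \I(\mathcal{L}^n\{d_E<s\}) + \mathcal{H}^{n-1}(\partial\Omega)$ is not quite right, so I would phrase it via the relative perimeter and $\sup_{\mathcal{V}}\I(\mathcal{V})<\infty$.

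The main obstacle is precisely this last point --- making the global boundedness of $\eta$ rigorous and clean. Near $s=0$ everything is controlled by the tubular-neighborhood structure and the area formula, but for $|s|$ of order $1$ the level sets $\Sigma_s$ can be geometrically wild and are not perimeter minimizers, so one cannot invoke $\I$ directly on them. The fix I expect to use: apply the coarea formula to $d_E$ to write, for any $h>0$, $\int_{s}^{s+h}\eta(\sigma)\,d\sigma = \mathcal{L}^n(\{s<d_E<s+h\}) \le 1$, and separately bound the BV norm of $s\mapsto \eta(s)$ or use the semiconcavity/regularity of $d_E$ away from the cut locus --- but since the statement only needs boundedness, the safest and shortest argument is: $\eta(s) = \operatorname{P}(\{d_E<s\};\Omega) + \mathcal{H}^{n-1}(\partial\Omega\cap \overline{\{d_E<s\}}\cap\{d_E=s\})$ combined with the uniform perimeter bound $\operatorname{P}(\{d_E<s\};\Omega)\le C(\Omega)$ that holds for all superlevel sets of a $1$-Lipschitz function on a fixed bounded Lipschitz domain (this last is the one genuinely nontrivial ingredient, and follows from the relative isoperimetric inequality \eqref{muBoundsNearZero} applied in reverse: no --- it follows instead from the fact that the level sets of a Lipschitz function realize distances, hence $\operatorname{P}(\{d_E<s\};\Omega)$ is bounded by the perimeter of $\Omega$ plus a dimensional constant). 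I would present this step carefully, flagging that it is the only non-routine part.
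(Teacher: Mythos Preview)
Your argument has a genuine gap at the boundary. The claim that the normal exponential map $\Phi_s(x) = x + sN(x)$ ``maps $\partial E \cap \partial\Omega$ into $\partial\Omega$ precisely because of the orthogonal-intersection condition'' is false. Orthogonality only says that $N(x)$ is \emph{tangent} to $\partial\Omega$ at points $x \in \partial E \cap \partial\Omega$; it does not say that the straight line $s \mapsto x + sN(x)$ stays on the curved hypersurface $\partial\Omega$. For a concrete example, take $\Omega$ a disk in $\mathbb{R}^2$ and $E$ a half-disk bounded by a diameter: the level sets $\Sigma_s$ are chords parallel to the diameter, whose endpoints on the circle are \emph{not} the images under $\Phi_s$ of the endpoints of the diameter. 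Consequently your identity $\eta(s) = \int_{\Sigma_0} J\Phi_s \, d\mathcal{H}^{n-1}$ is wrong for $s \neq 0$: the right-hand side computes the area of $\Phi_s(\Sigma_0)$, not of $\Sigma_s \cap \Omega$, and the discrepancy is supported near $\partial\Omega$.

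The paper handles exactly this issue by working in local coordinates near a point of $\partial E \cap \partial\Omega$, writing the portion of the parameter domain corresponding to points inside $\Omega$ as $\{y_1 > h(y'', s)\}$ for a function $h$ obtained via the implicit function theorem, and then computing directly from the orthogonal-intersection condition that $\partial h/\partial s(y'',0) = 0$. This is the precise content of orthogonality here: it makes the boundary contribution vanish to \emph{first} order in $s$, so your formula for $\eta'(0)$ happens to be correct, but the moving boundary \emph{does} contribute to $\eta''(0)$ and your expression $\int_{\Sigma_0}\sum_{i<j}\kappa_i\kappa_j\,d\mathcal{H}^{n-1}$ is missing a term involving the geometry of $\partial\Omega$. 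The paper does not compute $\eta''(0)$ explicitly; it only needs that the localized area function is $C^2$ in $s$, which follows once $h$ is shown to have $\partial^2 h/\partial s^2$ continuous.

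For boundedness, none of your proposed arguments closes. Sublevel sets of a $1$-Lipschitz function need not have uniformly bounded relative perimeter in a bounded Lipschitz domain, and the isoperimetric function $\I$ bounds perimeter from \emph{below}, not above, so it cannot be invoked in the direction you need. The paper does not argue this from scratch either: it simply cites a result of Oleksiv and Pesin on level sets of the distance function in bounded domains.
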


\begin{proof}
By classical results (see \cite{EvansGariepy}), we have that \eqref{levelSetAtZero} holds. To prove that $\eta$ is twice differentiable at $0$ and that \eqref{levelSetDerivative} holds we are primarily concerned with possible interactions with $\partial\Omega$. By \cite{GruterBoundaryRegularity} we know that $\partial E$ is a $C^{2,\alpha}$ surface, that intersects $\partial \Omega$ orthogonally. By appropriately reflecting $E$ outside of $\Omega$ we may assume that $E$ is a $C^{2,\alpha}$ set in $\mathbb{R}^n$. Since $\partial E$ is of class $C^{2,\alpha}$ for every $x\in\partial E$ there
exist a ball $B\left(  x,r_{x}\right)  $, with local coordinates $y=(y^{\prime
},y_{n})\in\mathbb{R}^{n-1}\times\mathbb{R}$ such that $x$ corresponds to
$y=0$, and a function $g$ of class $C^{2,\alpha}(\mathbb{R}^{n-1})$ such that
$g(0)=0$, $\frac{\partial g}{\partial y_{i}}(0)=0$ for all $i=1,\ldots,n-1$,
and
\begin{align*}
E\cap B(x,r_{x})  &  =\{y\in B(0,r_{x}):\,y_{n}<g(y^{\prime})\},\\
\partial E\cap B(x,r_{x})  &  =\{y\in B(0,r_{x}):\,y_{n}=g(y^{\prime})\}.
\end{align*}
In what follows we use local coordinates and we set $y^{\prime\prime}%
:=(y_{2},\ldots,y_{n-1})$. In particular, since $\partial E$ meets the
boundary of $\Omega$ transversally, if $x\in\partial E\cap\partial\Omega$, by
a rotation and by taking $r_{x}$ smaller, we can assume that
\begin{align*}
\Omega\cap B(x,r_{x})  &  =\{y\in B(0,r_{x}):\,y_{1}<f(y^{\prime\prime}%
,y_{n})\},\\
\partial\Omega\cap B(x,r_{x})  &  =\{y\in B(0,r_{x}):\,y_{1}=f(y^{\prime
\prime},y_{n})\}
\end{align*}
for some function $f \in C^{2,\alpha}(\mathbb{R}^{n-1})$. Setting
$F(y):=y_{1}-f(y^{\prime\prime},y_{n})$ and $G(y):=y_{n}-g(y_1,y^{\prime\prime
})$, by the transversality condition we have that
\begin{align}
0  &  =\nabla F(y)\cdot\nabla G(y)=-\frac{\partial g}{\partial y_{1}%
}(y^{\prime})\label{transversal 1}\\
&  \quad+\sum_{k=2}^{n-1}\frac{\partial f}{\partial y_{k}}(y^{\prime\prime
},y_{n})\frac{\partial g}{\partial y_{k}}(y^{\prime})-\frac{\partial
f}{\partial y_{n}}(y^{\prime\prime},y_{n})\nonumber
\end{align}
for all $y\in\partial E\cap\partial\Omega\cap B(0,r_{x})$. For $|s|$
sufficiently small the set
\[
E_{s}:=\{\zeta \in\mathbb{R}^{n}:\,\operatorname*{d}\nolimits_{E}(\zeta)=s\}
\]
in a neighborhood of $x$ is given by an $n-1$-dimensional manifold
parametrized by
\begin{align}
\varphi_{i}(y^{\prime},s)  &  =y_{i}-s\frac{\partial g}{\partial y_{i}%
}(y^{\prime})\left(  1+\left| \nabla g(y^{\prime})\right| _{n-1}%
^{2}\right)  ^{-1/2}\label{chart1}\\
\varphi_{n}(y^{\prime},s)  &  =g(y^{\prime})+s\left(  1+\left| \nabla
g(y^{\prime})\right|_{n-1}^{2}\right)  ^{-1/2} \label{chart2}%
\end{align}
for all $i=1,\ldots,n-1$ and for $y^{\prime}\in Q_{n-1}(0,r_{x}^{\prime})$,
where $0<r_{x}^{\prime}<r_{x}$ and we are using local coordinates (see \cite{krantz1981distance}). Here $Q_{n-1}(0,r)$ denotes the cube $(-r/2,r/2)^{n-1}$ and $|\cdot|_{n-1}$ denotes the norm in $\mathbb{R}^{n-1}$.  We
are interested in in the surface area of $E_{s}$ contained in $\Omega$, or in other words, the area of the region characterized by
\begin{equation}
\varphi_{1}(y^{\prime},s)>f(\varphi_{2}(y^{\prime},s),\ldots,\varphi
_{n}(y^{\prime},s)). \label{inside Omega}%
\end{equation}
Consider the function%
\begin{equation}
H(y^{\prime},s):=\varphi_{1}(y^{\prime},s)-f(\varphi_{2}(y^{\prime}%
,s),\ldots,\varphi_{n}(y^{\prime},s)). \label{function H}%
\end{equation}
By \eqref{chart1} and \eqref{chart2} we have%
\[
\frac{\partial H}{\partial y_{1}}(y^{\prime},s)=\frac{\partial\varphi_{1}%
}{\partial y_{1}}(y^{\prime},s)-\sum_{k=2}^{n}\frac{\partial f}{\partial
y_{k}}(\varphi_{2}(y^{\prime},s),\ldots,\varphi_{n}(y^{\prime},s))\frac
{\partial\varphi_{k}}{\partial y_{1}}(y^{\prime},s).
\]
Taking $y^{\prime}=0$ and recalling that $\frac{\partial g}{\partial y_{i}%
}(0)=0$ for all $i=1,\ldots,n-1$ gives%
\begin{equation}
\frac{\partial H}{\partial y_{1}}(0,s)=1-s\frac{\partial^{2}g}{\partial
y_{1}^{2}}(0)+s\sum_{k=2}^{n-1}\frac{\partial f}{\partial y_{k}}%
(0)\frac{\partial^{2}g}{\partial y_{k}^{2}}(0)>0 \label{d100}%
\end{equation}
provided we take
\[
1>s(1+\left\Vert \nabla f\right\Vert _{L^{\infty}(Q_{n-1}(0,r_{x}^{\prime}%
))})\left\Vert \nabla^{2}g\right\Vert _{L^{\infty}(Q_{n-1}(0,r_{x}^{\prime}%
))}.
\]
By taking $r_{x}^{\prime}>0$ smaller, it follows by the implicit function
theorem that there exists a function $h(y^{\prime\prime},s)$ of class
$C^{1,\alpha}$ such that the condition \eqref{inside Omega} is equivalent to%
\begin{equation}\label{Eqn:ImplicitFunctionArg}
y_{1}>h(y^{\prime\prime},s)
\end{equation}
for all for $y^{\prime}=(y_{1},y^{\prime\prime})\in Q_{n-1}(0,r_{x}^{\prime})$
and all $s>0$ sufficiently small. Moreover, since $f$ is of class
$C^{2,\alpha}$ and the functions $\varphi_{i}$ and $\frac{\partial\varphi_{i}%
}{\partial y_{k}}$ are infinitely differentiable in the variable $s$ (by
\eqref{chart1} and \eqref{chart2}), we have that $\frac{\partial^{2}%
h}{\partial s^{2}}$ and $\frac{\partial^{2}h}{\partial s\partial y_{i}}$ exist
and are continuous. By \eqref{transversal 1}, \eqref{chart1}, \eqref{chart2},
and \eqref{function H},%
\begin{align*}
\frac{\partial H}{\partial s}(y^{\prime},0)  &  =\frac{\partial\varphi_{1}%
}{\partial s}(y^{\prime},0)-\sum_{k=2}^{n}\frac{\partial f}{\partial y_{k}%
}(\varphi_{2}(y^{\prime},0),\ldots,\varphi_{n}(y^{\prime},0))\frac
{\partial\varphi_{k}}{\partial s}(y^{\prime},0)\\
&  =-\left(  1+\left| \nabla g(y^{\prime})\right| _{n-1}^{2}\right)
^{-1/2}\frac{\partial g}{\partial y_{1}}(y^{\prime})\\
&  \quad+\left(  1+\left| \nabla g(y^{\prime})\right| _{n-1}%
^{2}\right)  ^{-1/2}\sum_{k=2}^{n-1}\frac{\partial f}{\partial y_{k}%
}(y^{\prime\prime},g(y^{\prime}))\frac{\partial g}{\partial y_{k}}(y^{\prime
})\\
&  \quad-\left(  1+\left| \nabla g(y^{\prime})\right|_{n-1}%
^{2}\right)  ^{-1/2}\frac{\partial f}{\partial y_{n}}(y^{\prime\prime
},g(y^{\prime}))=0.
\end{align*}
It follows by the implicit function theorem that
\begin{equation}
\frac{\partial h}{\partial s}(y^{\prime\prime},0)=-\frac{\partial H}{\partial
s}(h(y^{\prime\prime},0),y^{\prime\prime},0)\left(  \frac{\partial H}{\partial
y_1}(h(y^{\prime\prime},0),y^{\prime\prime},0)\right)  ^{-1}=0. \label{Dh Ds=0}%
\end{equation}

By \eqref{Eqn:ImplicitFunctionArg}, in a neighborhood of $x$ the surface area of $E_{s}$ inside $\Omega$ is given
by the surface integral
\[
A_{x}(s):=\int_{Q_{n-2}(0,r_{x}^{\prime})}\int_{h(y^{\prime\prime},s)}%
^{r_{x}^{\prime}/2}\sqrt{\sum_{\alpha\in\Xi}\left[  \det\frac
{\partial\left(  \varphi_{\alpha_{1}},\ldots,\varphi_{\alpha_{n-1}}\right)
}{\partial\left(  y_{1},\ldots,y_{n-1}\right)  }\left(  y^{\prime},s\right)
\right]  ^{2}}\,dy_{1}dy^{\prime\prime},
\]
where
\[
\Xi:=\left\{  \alpha\in\mathbb{N}^{n-1}:\,1\leq\alpha_{1}<\alpha
_{2}<\cdots<\alpha_{n-1}\leq n\right\}  .
\]
By standard theorems of differentiation under the integral sign, we have that
$A_{x}$ is of class $C^{2}$. %Moreover, by Jacobi's formula,
%\eqref{chart1}, and \eqref{chart2}, if
%\[
%\frac{\partial}{\partial s}\left(  \det\frac{\partial\left(  \varphi
%_{1},\ldots,\varphi_{n-1}\right)  }{\partial\left(  y_{1},\ldots
%,y_{n-1}\right)  }\left(  y^{\prime},s\right)  \right)  _{s=0}%
%=\operatorname*{div}\nolimits_{y^{\prime}}\left(  \frac{\nabla_{y^{\prime}%
%}g(y^{\prime})}{\left(  1+\left\Vert \nabla_{y^{\prime}}g(y^{\prime
%})\right\Vert _{n-1}^{2}\right)  ^{1/2}}\right)  ,
%\]
%while for every other $\alpha\in\Xi$
%\[
%\frac{\partial}{\partial s}\left(  \det\frac{\partial\left(  \varphi
%_{\alpha_{1}},\ldots,\varphi_{\alpha_{n-1}}\right)  }{\partial\left(
%y_{1},\ldots,y_{n-1}\right)  }\left(  y^{\prime},s\right)  \right)
%_{s=0}=,??????
%\]
%since 

Moreover, by \eqref{Dh Ds=0},
\begin{align*}
A_{x}^{\prime}(0) &  =\int_{Q_{n-1}(0,r_{x}^{\prime})}\frac{\partial}{\partial
s}\left(  \sqrt{\sum_{\alpha\in\Xi}\left[  \det\frac{\partial\left(
\varphi_{\alpha_{1}},\ldots,\varphi_{\alpha_{n-1}}\right)  }{\partial\left(
y_{1},\ldots,y_{n-1}\right)  }\left(  y^{\prime},s\right)  \right]  ^{2}%
}\right)  _{s=0}\,dy^{\prime}\\
&  \quad-\frac{\partial h}{\partial s}(y^{\prime\prime},0)\int_{Q_{n-2}%
(0,r_{x}^{\prime})}\sqrt{\sum_{\alpha\in\Xi}\left[  \det\frac
{\partial\left(  \varphi_{\alpha_{1}},\ldots,\varphi_{\alpha_{n-1}}\right)
}{\partial\left(  y_{1},\ldots,y_{n-1}\right)  }\left(  h(y^{\prime\prime
},0,),y^{\prime\prime},0\right)  \right]  ^{2}}\,dy^{\prime\prime}\\
&  =\int_{Q_{n-1}(0,r_{x}^{\prime})}\frac{\partial}{\partial s}\left(
\sqrt{\sum_{\alpha\in\Xi}\left[  \det\frac{\partial\left(  \varphi
_{\alpha_{1}},\ldots,\varphi_{\alpha_{n-1}}\right)  }{\partial\left(
y_{1},\ldots,y_{n-1}\right)  }\left(  y^{\prime},s\right)  \right]  ^{2}%
}\right)  _{s=0}\,dy^{\prime}.
\end{align*}

As the boundary term has dropped out we can then obtain \eqref{levelSetDerivative} using a partition of unity and classical formulas (see, e.g., \cite{MaggiBook}).

The fact that $\eta$ is bounded follows from \cite{Oleksiv}. This completes the proof.

\end{proof}

We then prove our $\Gamma$-$\limsup$ inequality.

\begin{proof}[Proof of Theorem \ref{thm:nDLimsup}]
If $u \notin \mathcal{U}_1$ the inequality is trivial. Thus, let $u \in \mathcal{U}_1$. Then we have that $u$ is of the form $a\chi_{E} + b\chi_{E^c}$. Define
\begin{equation} \label{etaChoice}
\eta(t) := \mathcal{H}^{n-1}(\{x: d_E(x) = t\}).
\end{equation}
By Lemma \ref{levelSetLemma} we have that $\eta$ satisfies the assumptions of Theorem \ref{1DLimsupOrder2}. Let $v_\e$ be the one-dimensional function constructed in Theorem \ref{1DLimsupOrder2}, using $\eta$ chosen via \eqref{etaChoice}.   Define $u_\e(x) := v_\e(d_{E}(x))$. By the coarea formula for Lipschitz functions we have that
\[
F_\e^{(2)}(u_\e) =\frac1\e\left( \int_\mathbb{R} (\e^{-1}W(v_\e(t)) + \e (v_\e')^2)  \mathcal{H}^{n-1}(\{x: d_E(x) = t\}) \dt -2c_W\eta(0)\right).
\]
Applying Theorem \ref{1DLimsupOrder2} then gives the desired result.
\end{proof}

\begin{remark}
We note that our recovery sequence construction does not require any smoothness properties on $\I$, such as \eqref{isoFunctionSmooth}. The technique given in this section thus establishes the $\Gamma$-$\limsup$ inequality at any mass level. Although we do not currently have a liminf inequality to support it, we suspect that the $\Gamma$-limit we establish in this paper should hold at all mass levels. When $W$ is symmetric with respect to $\frac{a+b}{2}$ this would imply that surfaces with higher magnitude mean curvature are (slightly) energetically favored.
\end{remark}

Next we prove Corollary \ref{cor:symmetricMainResult}.

\begin{proof}[Proof of Corollary \ref{cor:symmetricMainResult}.]
If $W$ is symmetric about $\frac{a+b}{2}$ then $W \circ z$ will be an even function. Thus (see \eqref{c1Definition})
\[
c_{\operatorname*{sym}} = \int_\mathbb{R} W(z(t)) t  \dt = 0 .
\]
Furthermore if $W$ is symmetric then $z$ will satisfy $b-z(t) = z(-t) - a$ for all $t > 0$. In turn, this implies that
\[
\int_\mathbb{R} \operatorname*{sgn}\nolimits_{a,b}(t) - z(t) \dt = 0,
\]
and hence
\[
\int_\mathbb{R} z(t-\tau_u) - \operatorname*{sgn}\nolimits_{a,b}(t) \dt = \int_\mathbb{R} z(t-\tau_u) - z(t)  \dt.
\]
Utilizing the fundamental theorem of calculus and Fubini's theorem
\begin{align}
\int_\mathbb{R} z(t-\tau_u) - z(t)  \dt &= -\int_\mathbb{R} \int_{t-\tau_u}^t z'(s) \ds \dt \\
&= -\int_{\mathbb{R}} \left(\int_{s}^{s+\tau_u}\dt \right)z'(s)\ds = -\tau_u \int_{\mathbb{R}} z'(s) \ds =  -\tau_u(b-a).
\end{align}
Recalling \eqref{deltaUDefinition}, \eqref{def:lambdaNot} and \eqref{deltaUDefinitionCase2}we then find that
\[
\tau_u = \begin{cases}
-\dfrac{\Lambda_u}{W''(a)\operatorname*{P}(\{u=a\};\Omega) (b-a)} &\text{ if } q =1, \\
0 & \text{ if } q<1.
\end{cases}
\]
This then gives
\[
 \mathcal{F}^{(2)}(u) = \begin{cases}
\frac1{W''(a)} \left(\frac{\Lambda_u^2}{2} - 2\kappa_u(n-1)c_W\frac{\Lambda_u}{ (b-a)}  \right)&\text{ if } q = 1, \\
0 &\text{ if } q<1.
\end{cases}
\]
Again recalling \eqref{def:lambdaNot} we find that
\[
 \mathcal{F}^{(2)}(u) = \begin{cases}
-\frac{\Lambda_u^2}{2W''(a)} &\text{ if } q =1, \\
0 &\text{ if } q<1,
\end{cases}
\]
as desired.
\end{proof}

\section*{Acknowledgments}
This paper is part of the second author's Ph.D. thesis, Carnegie Mellon University. 
The authors warmly thank the Center for Nonlinear Analysis. The authors would like to thank Bob Pego for his helpful insights. R. Murray also wishes to acknowledge the hospitality and helpful discussions of Gianni Dal Maso at SISSA, Trieste, Italy, where part of this work was carried out. The authors would also like to thank the anonymous referee for carefully reading the manuscript and making many very valuable suggestions that improved the presentation.

\section*{Compliance with Ethical Standards}
Part of this research was carried out at Center for Nonlinear Analysis. The center is partially
supported by NSF Grant No. DMS-0635983 and NSF PIRE Grant No. OISE-0967140. 
The research of G. Leoni was partially funded by the NSF under Grant Nos.  DMS-1007989 and DMS-1412095 and the one of R. Murray by NSF PIRE Grant No. OISE-0967140.

\bibliography{ModicaMortolaRefs}

\begin{thebibliography}{10}

\bibitem{AlbericoCianchi}
{\sc Alberico, A., and Cianchi, A.}
\newblock Borderline sharp estimates for solutions to {N}eumann problems.
\newblock {\em Ann. Acad. Sci. Fenn. Math. 32}, 1 (2007), 27--53.

\bibitem{AlikakosBronsardFusco}
{\sc Alikakos, N., Bronsard, L., and Fusco, G.}
\newblock Slow motion in the gradient theory of phase transitions via energy
  and spectrum.
\newblock {\em Calc. Var. Partial Differential Equations 6}, 1 (1998), 39--66.

\bibitem{AlikakosFusco}
{\sc Alikakos, N., and Fusco, G.}
\newblock Slow dynamics for the {C}ahn-{H}illiard equation in higher space
  dimensions. {I}. {S}pectral estimates.
\newblock {\em Comm. Partial Differential Equations 19}, 9-10 (1994),
  1397--1447.

\bibitem{AlikakosFusco1998}
{\sc Alikakos, N., and Fusco, G.}
\newblock Slow dynamics for the {C}ahn-{H}illiard equation in higher space
  dimensions: the motion of bubbles.
\newblock {\em Arch. Rational Mech. Anal. 141}, 1 (1998), 1--61.

\bibitem{AlikakosBatesChen}
{\sc Alikakos, N.~D., Bates, P.~W., and Chen, X.}
\newblock Convergence of the cahn-hilliard equation to the hele-shaw model.
\newblock {\em Archive for rational mechanics and analysis 128}, 2 (1994),
  165--205.

\bibitem{AmbrosioFuscoPallara}
{\sc Ambrosio, L., Fusco, N., and Pallara, D.}
\newblock {\em Functions of bounded variation and free discontinuity problems}.
\newblock Oxford Mathematical Monographs. The Clarendon Press, Oxford
  University Press, New York, 2000.

\bibitem{AnzellottiBaldo}
{\sc Anzellotti, G., and Baldo, S.}
\newblock Asymptotic development by {$\Gamma$}-convergence.
\newblock {\em Appl. Math. Optim. 27}, 2 (1993), 105--123.

\bibitem{Orlandi96}
{\sc Anzellotti, G., Baldo, S., and Orlandi, G.}
\newblock {$\Gamma$}-asymptotic developments, the {C}ahn-{H}illiard functional,
  and curvatures.
\newblock {\em J. Math. Anal. Appl. 197}, 3 (1996), 908--924.

\bibitem{BavardPansu}
{\sc Bavard, C., and Pansu, P.}
\newblock Sur le volume minimal de {${\bf R}^2$}.
\newblock {\em Ann. Sci. \'Ecole Norm. Sup. (4) 19}, 4 (1986), 479--490.

\bibitem{Bellettini2013}
{\sc Bellettini, G., hassem Nayam, A., and Novaga, M.}
\newblock {$\Gamma$}-type estimates for the one-dimensional allen-cahn's
  action, 2013.

\bibitem{BraidesLocalMinNotes}
{\sc Braides, A.}
\newblock {\em Local minimization, variational evolution and
  {$\Gamma$}-convergence}, vol.~2094 of {\em Lecture Notes in Mathematics}.
\newblock Springer, Cham, 2014.

\bibitem{BraidesTruskinovsky}
{\sc Braides, A., and Truskinovsky, L.}
\newblock Asymptotic expansions by {$\Gamma$}-convergence.
\newblock {\em Contin. Mech. Thermodyn. 20}, 1 (2008), 21--62.

\bibitem{BronsardKohn}
{\sc Bronsard, L., and Kohn, R.}
\newblock On the slowness of phase boundary motion in one space dimension.
\newblock {\em Comm. Pure Appl. Math. 43}, 8 (1990), 983--997.

\bibitem{BronsardKohn2}
{\sc Bronsard, L., and Kohn, R.}
\newblock Motion by mean curvature as the singular limit of {G}inzburg-{L}andau
  dynamics.
\newblock {\em J. Differential Equations 90}, 2 (1991), 211--237.

\bibitem{BronsardStoth}
{\sc Bronsard, L., and Stoth, B.}
\newblock Volume-preserving mean curvature flow as a limit of a nonlocal
  {G}inzburg-{L}andau equation.
\newblock {\em SIAM J. Math. Anal. 28}, 4 (1997), 769--807.

\bibitem{CahnHilliard}
{\sc Cahn, J., and Hilliard, J.}
\newblock Free energy of a nonuniform system. i. interfacial free energy.
\newblock {\em The Journal of chemical physics 28}, 2 (1958), 258--267.

\bibitem{CarrGurtinSlemrod}
{\sc Carr, J., Gurtin, M., and Slemrod, M.}
\newblock Structured phase transitions on a finite interval.
\newblock {\em Arch. Rational Mech. Anal. 86}, 4 (1984), 317--351.

\bibitem{CarrPego}
{\sc Carr, J., and Pego, R.}
\newblock Metastable patterns in solutions of ut= ϵ2uxx- f (u).
\newblock {\em Communications on pure and applied mathematics 42}, 5 (1989),
  523--576.

\bibitem{Chen1992}
{\sc Chen, X.}
\newblock Generation and propagation of interfaces for reaction-diffusion
  equations.
\newblock {\em Journal of Differential equations 96}, 1 (1992), 116--141.

\bibitem{Cianchi1996}
{\sc Cianchi, A., Edmunds, D.~E., and Gurka, P.}
\newblock On weighted poincar{\'e} inequalities.
\newblock {\em Mathematische Nachrichten 180}, 1 (1996), 15--41.

\bibitem{FuscoCianchi}
{\sc Cianchi, A., Esposito, L., Fusco, N., and Trombetti, C.}
\newblock A quantitative {P}\'olya-{S}zeg\"o principle.
\newblock {\em J. Reine Angew. Math. 614\/} (2008), 153--189.

\bibitem{FuscoCianchiBVRearrangement}
{\sc Cianchi, A., and Fusco, N.}
\newblock Functions of bounded variation and rearrangements.
\newblock {\em Arch. Ration. Mech. Anal. 165}, 1 (2002), 1--40.

\bibitem{CianchiMazya}
{\sc Cianchi, A., and Maz'ya, V.}
\newblock Neumann problems and isocapacitary inequalities.
\newblock {\em J. Math. Pures Appl. (9) 89}, 1 (2008), 71--105.

\bibitem{CianchiPick2009}
{\sc Cianchi, A., and Pick, L.}
\newblock Optimal gaussian sobolev embeddings.
\newblock {\em Journal of Functional Analysis 256}, 11 (2009), 3588--3642.

\bibitem{CrandallTartar}
{\sc Crandall, M., and Tartar, L.}
\newblock Some relations between nonexpansive and order preserving mappings.
\newblock {\em Proc. Amer. Math. Soc. 78}, 3 (1980), 385--390.

\bibitem{DalMasoBook}
{\sc Dal~Maso, G.}
\newblock {\em An introduction to {$\Gamma$}-convergence}.
\newblock Progress in Nonlinear Differential Equations and their Applications,
  8. Birkh\"auser Boston, Inc., Boston, MA, 1993.

\bibitem{DalMasoFonsecaLeoni}
{\sc Dal~Maso, G., Fonseca, I., and Leoni, G.}
\newblock Second order asymptotic development for the cahn-hilliard functional.
\newblock {\em To Appear\/} (2013).

\bibitem{EvansGariepy}
{\sc Evans, L., and Gariepy, R.}
\newblock {\em Measure theory and fine properties of functions}.
\newblock Studies in Advanced Mathematics. CRC Press, Boca Raton, FL, 1992.

\bibitem{FocardiReview}
{\sc Focardi, M.}
\newblock {$\Gamma$}-convergence: a tool to investigate physical phenomena
  across scales.
\newblock {\em Mathematical Methods in the Applied Sciences 35}, 14 (2012),
  1613--1658.

\bibitem{FonsecaTartar1989}
{\sc Fonseca, I., and Tartar, L.}
\newblock The gradient theory of phase transitions for systems with two
  potential wells.
\newblock {\em Proceedings of the Royal Society of Edinburgh: Section A
  Mathematics 111}, 1-2 (1989), 89--102.

\bibitem{FuscoHale}
{\sc Fusco, G., and Hale, J.}
\newblock Slow-motion manifolds, dormant instability, and singular
  perturbations.
\newblock {\em J. Dynam. Differential Equations 1}, 1 (1989), 75--94.

\bibitem{GarckeReview}
{\sc Garcke, H.}
\newblock Curvature driven interface evolution.
\newblock {\em Jahresber. Dtsch. Math.-Ver. 115}, 2 (2013), 63--100.

\bibitem{GMT83}
{\sc Gonzalez, E., Massari, U., and Tamanini, I.}
\newblock On the regularity of boundaries of sets minimizing perimeter with a
  volume constraint.
\newblock {\em Indiana Univ. Math. J. 32}, 1 (1983), 25--37.

\bibitem{Grant}
{\sc Grant, C.}
\newblock Slow motion in one-dimensional {C}ahn-{M}orral systems.
\newblock {\em SIAM J. Math. Anal. 26}, 1 (1995), 21--34.

\bibitem{GruterBoundaryRegularity}
{\sc Gr{\"u}ter, M.}
\newblock Boundary regularity for solutions of a partitioning problem.
\newblock {\em Arch. Rational Mech. Anal. 97}, 3 (1987), 261--270.

\bibitem{Gurtin}
{\sc Gurtin, M.}
\newblock On a theory of phase transitions with interfacial energy.
\newblock {\em Arch. Rational Mech. Anal. 87}, 3 (1985), 187--212.

\bibitem{GurtinMatano}
{\sc Gurtin, M., and Matano, H.}
\newblock On the structure of equilibrium phase transitions within the gradient
  theory of fluids.
\newblock {\em Quart. Appl. Math. 46}, 2 (1988), 301--317.

\bibitem{ConvexBook}
{\sc Hiriart-Urruty, J.-B., and Lemar{\'e}chal, C.}
\newblock {\em Convex analysis and minimization algorithms. {I}}, vol.~305 of
  {\em Grundlehren der Mathematischen Wissenschaften [Fundamental Principles of
  Mathematical Sciences]}.
\newblock Springer-Verlag, Berlin, 1993.
\newblock Fundamentals.

\bibitem{Kawohl}
{\sc Kawohl, B.}
\newblock {\em Rearrangements and convexity of level sets in {PDE}}, vol.~1150
  of {\em Lecture Notes in Mathematics}.
\newblock Springer-Verlag, Berlin, 1985.

\bibitem{KesavanBook}
{\sc Kesavan, S.}
\newblock {\em Symmetrization \& applications}, vol.~3 of {\em Series in
  Analysis}.
\newblock World Scientific Publishing Co. Pte. Ltd., Hackensack, NJ, 2006.

\bibitem{KohnSternberg}
{\sc Kohn, R., and Sternberg, P.}
\newblock Local minimisers and singular perturbations.
\newblock {\em Proc. Roy. Soc. Edinburgh Sect. A 111}, 1-2 (1989), 69--84.

\bibitem{krantz1981distance}
{\sc Krantz, S.~G., and Parks, H.~R.}
\newblock Distance to c k hypersurfaces.
\newblock {\em Journal of Differential Equations 40}, 1 (1981), 116--120.

\bibitem{kurata2001one}
{\sc Kurata, K., and Shibata, M.}
\newblock On a one-dimensional variational problem related to the
  {C}ahn--{H}illiard energy in a bent strip-like domain.
\newblock {\em Nonlinear Analysis: Theory, Methods \& Applications 47}, 2
  (2001), 1059--1068.

\bibitem{LeoniBook}
{\sc Leoni, G.}
\newblock {\em A first course in {S}obolev spaces}, vol.~105 of {\em Graduate
  Studies in Mathematics}.
\newblock American Mathematical Society, Providence, RI, 2009.

\bibitem{LeoniCompactness}
{\sc Leoni, G.}
\newblock A remark on the compactness for the {C}ahn-{H}illiard functional.
\newblock {\em ESAIM Control Optim. Calc. Var. 20}, 2 (2014), 517--523.

\bibitem{LuckhausModica}
{\sc Luckhaus, S., and Modica, L.}
\newblock The {G}ibbs-{T}hompson relation within the gradient theory of phase
  transitions.
\newblock {\em Arch. Rational Mech. Anal. 107}, 1 (1989), 71--83.

\bibitem{MaggiBook}
{\sc Maggi, F.}
\newblock {\em Sets of finite perimeter and geometric variational problems},
  vol.~135 of {\em Cambridge Studies in Advanced Mathematics}.
\newblock Cambridge University Press, Cambridge, 2012.
\newblock An introduction to geometric measure theory.

\bibitem{MazyaBook}
{\sc Maz'ya, V.}
\newblock {\em Sobolev spaces with applications to elliptic partial
  differential equations}, augmented~ed., vol.~342 of {\em Grundlehren der
  Mathematischen Wissenschaften [Fundamental Principles of Mathematical
  Sciences]}.
\newblock Springer, Heidelberg, 2011.

\bibitem{Modica87}
{\sc Modica, L.}
\newblock The gradient theory of phase transitions and the minimal interface
  criterion.
\newblock {\em Arch. Rational Mech. Anal. 98}, 2 (1987), 123--142.

\bibitem{ModicaMortola}
{\sc Modica, L., and Mortola, S.}
\newblock Un esempio di {$\Gamma$}-convergenza.
\newblock {\em Boll. Un. Mat. Ital. B (5) 14}, 1 (1977), 285--299.

\bibitem{MurrayRinaldi}
{\sc Murray, R., and Rinaldi, M.}
\newblock Slow motion for the nonlocal {A}llen--{C}ahn equation in $n$
  dimensions.
\newblock {\em In Preparation\/} (2015).

\bibitem{Ni1983}
{\sc Ni, W.~M.}
\newblock On the positive radial solutions of some semilinear elliptic
  equations on {${\bf R}^{n}$}.
\newblock {\em Appl. Math. Optim. 9}, 4 (1983), 373--380.

\bibitem{NiethammerRadial}
{\sc Niethammer, B.}
\newblock Existence and uniqueness of radially symmetric stationary points
  within the gradient theory of phase transitions.
\newblock {\em European J. Appl. Math. 6}, 1 (1995), 45--67.

\bibitem{Oleksiv}
{\sc Oleksiv, I.~Y., and Pesin, N.}
\newblock Finiteness of {H}ausdorff measure of level sets of bounded subsets of
  {E}uclidean space.
\newblock {\em Mathematical Notes 37}, 3 (1985), 237--242.

\bibitem{OttoReznikoff}
{\sc Otto, F., and Reznikoff, M.}
\newblock Slow motion of gradient flows.
\newblock {\em Journal of Differential Equations 237}, 2 (2007), 372--420.

\bibitem{Pego89}
{\sc Pego, R.}
\newblock Front migration in the nonlinear {C}ahn-{H}illiard equation.
\newblock {\em Proc. Roy. Soc. London Ser. A 422}, 1863 (1989), 261--278.

\bibitem{Ros}
{\sc Ros, A.}
\newblock The isoperimetric problem.
\newblock {\em Global theory of minimal surfaces 2\/} (2001), 175--209.

\bibitem{SerfatySandier2004}
{\sc Sandier, E., and Serfaty, S.}
\newblock Gamma-convergence of gradient flows with applications to
  ginzburg-landau.
\newblock {\em Communications on Pure and Applied mathematics 57}, 12 (2004),
  1627--1672.

\bibitem{Spector2011}
{\sc Spector, D.}
\newblock Simple proofs of some results of {R}eshetnyak.
\newblock {\em Proc. Amer. Math. Soc. 139}, 5 (2011), 1681--1690.

\bibitem{Sternberg88}
{\sc Sternberg, P.}
\newblock The effect of a singular perturbation on nonconvex variational
  problems.
\newblock {\em Arch. Rational Mech. Anal. 101}, 3 (1988), 209--260.

\bibitem{SternbergZumbrunBarrier}
{\sc Sternberg, P., and Zumbrun, K.}
\newblock Connectivity of phase boundaries in strictly convex domains.
\newblock {\em Archive for Rational Mechanics and Analysis 141}, 4 (1998),
  375--400.

\bibitem{SternbergZumbrunIsoPer}
{\sc Sternberg, P., and Zumbrun, K.}
\newblock On the connectivity of boundaries of sets minimizing perimeter
  subject to a volume constraint.
\newblock {\em Comm. Anal. Geom. 7}, 1 (1999), 199--220.

\bibitem{Talenti1976}
{\sc Talenti, G.}
\newblock Elliptic equations and rearrangements.
\newblock {\em Annali della Scuola Normale Superiore di Pisa - Classe di
  Scienze 3}, 4 (1976), 697--718.

\bibitem{Ziemer}
{\sc Ziemer, W.}
\newblock {\em Weakly differentiable functions}, vol.~120 of {\em Graduate
  Texts in Mathematics}.
\newblock Springer-Verlag, New York, 1989.
\newblock Sobolev spaces and functions of bounded variation.

\end{thebibliography}
\bibliographystyle{acm}

\end{document}